\def\R{\mathbb{R}}
\def\d{|\nabla|}
\def\p{\partial}
\def\vo{\vspace{1\baselineskip}}
\def\h{\frac{1}{2}}
\def\be{\begin{equation}}
\def\ee{\end{equation}}
\newtheorem{theorem}{Theorem}[section]
\newtheorem{lemma}{Lemma}[section]
\newtheorem{proposition}{Proposition}[section]
\theoremstyle{definition}
\newtheorem{definition}{Definition}[section]
\theoremstyle{remark}
\newtheorem{remark}{Remark}[section]
\numberwithin{equation}{section}
\begin{document}
 \title[3D Relativistic Vlasov-Nordstr\"om system]{Propagation of  regularity and long time behavior of   $3D$ massive relativistic transport equation I: Vlasov-Nordstr\"om system}
\author{Xuecheng Wang}
\address{YMSC, Tsinghua  University, Beijing, China,  10084
}
\email{xuecheng@tsinghua.edu.cn,\quad xuecheng.wang.work@gmail.com 
}

\thanks{}

\maketitle

\begin{abstract}

In this paper, we introduce a new set of vector fields for the relativistic transport equation, which is applicable for general Vlasov-Wave type coupled systems. By  combining the strength of Klainerman vector field method and  Fourier method, we  prove  global regularity and scattering  for the $3D$ massive relativistic Vlasov-Nordstr\"om system for small initial data without any compact support assumption, which is widely used in the literature for the study of Vlasov solution.

\end{abstract}
  \tableofcontents
\section{Introduction}

\subsection{An overview}\label{overviewsubsection}

In plasma physics and also in general relativity, the dynamics of many physical systems can be  described by Vlasov-wave type coupled systems of equations. For example, the motions of electrons and protons, which are particles,  in solar wind are subjected to the electromagnetic field created by the particles  themselves.  The dynamics of particles and electromagnetic field can be described by the Vlasov-Maxwell system in the  collisionless  case. Moreover, in general relativity,  the motion of  a collection of collisionless particles in the framework of Einstein's general relativity is described by the Einstein-Vlasov system, which is also a Vlasov-wave type coupled system. For our interest, we restrict ourself to the three dimensions case. 

An interesting  problem, which is also widely studied in the literature, is to understand the long term regularity for these  Vlasov-wave type coupled systems. Most of previous results imposed compact support assumption on the initial data. The long term regularity problem for unrestricted data is less studied.

 In this paper and its companion \cite{wang}, based on two new observations, we prove  global regularity for the $3D$ massive relativistic Vlasov-Nordstr\"om   system (see subsection \ref{introductionvn}) and the $3D$ massive relativistic Vlasov-Maxwell  system (see subsection \ref{introductionvm})  without imposing   compact support assumption on the small initial data. 

The first observation   is that there exists  a new set of vector fields for the massive relativistic Vlasov equation, which depend on a geometric observation about the light cone  in $(x,v)$-space instead of solely in physical space. 
The second observation, which will be elaborated in \cite{wang},  is that the spatial derivative in the rotational in $v$ direction, i.e., $v/|v|\times \nabla_x$, plays a role of null structure for the \textit{Vlasov-wave} type nonlinearity of the Vlasov equation.

In this paper,    we will elaborate the first observation and the construction of the new set of vector fields, which  helps to control the    energy of Vlasov part near the light cone. Moreover, we introduce a framework for the Vlasov-wave type coupled system that allows us to   combine the  strength of Klainerman vector field method and  Fourier method. As a result,   we prove small data global regularity for the $3D$ relativistic Vlasov-Nodstr\"om system without compact support condition.  The Fourier method implemented here  is motivated from the method developed in the study of relatively simpler Vlasov-Poisson system in  \cite{wang3}.  We remark that,  thanks to the smallness of   coefficient ``$m^2/\sqrt{m^2+|v|^2}$'' in the relativistic Vlasov-Nodstr\"om system (\ref{vlasovnordstrom}), there is no need to exploit the null structure    mentioned  in the second observation.

Since the benefit of good coefficients is not available for the  relativistic Vlasov-Maxwell system, in  \cite{wang},  we will elaborate the second observation and show how to exploit the hidden null structure by using a Fourier method, which relates it to the structure of time resonance set.

\subsection{The Vlasov-Nordstr\"om system}\label{introductionvn}
The Vlasov-Nordstr\"om system describes the collective motion of collisionless particles interacting by means of their own self-generated gravitational forces     under the assumption that   the gravitational forces are mediated by a scalar field.  This system was proposed by  Calogero \cite{calogero1} as a \textit{laboratory} substitution for the more complicated  and also   more physical  Einstein-Vlasov system. We refer readers to \cite{calogero1,calogero2,calogero3} for more detailed introduction.

   Mathematically speaking, the $3D$ relativistic Vlasov-Nordstr\"om system reads as follows, 
\be\label{vlasovnordstrom}
(\textup{RVN})\quad \left\{\begin{array}{l}
(\p^2_t - \Delta) \phi = \displaystyle{\int_{\R^3} \frac{m^2  f}{\sqrt{m^2+|v|^2}} d v } \\
 
\displaystyle{\p_t f + \hat{v}\cdot \nabla_x f - \big((\p_t + \hat{v}\cdot \nabla_x)\phi(t,x)\big)(4 f + v\cdot \nabla_v f )- \frac{m^2}{\sqrt{m^2+|v|^2}}\nabla_x \phi \cdot \nabla_v f =0},\\
 
f(0,x,v)=f_0(x,v), \quad \phi(0,x)=\phi_0(x), \quad \p_t \phi(0,x)=\phi_1(x),\\
\end{array}\right.
\ee
where $\hat{v}=v/\sqrt{m^2+|v|^2}$, ``$m$'' denotes the \textit{mass} of particles,  $\phi(t,x)$ denotes the scalar gravitational field generated by the particles and $f(t,x,v)$ denotes the density distribution of the particles. After normalizing the mass of particles, we only need to consider the case when $m\in\{0,1\}$, which corresponds to the massless  case and the massive case respectively. 

If  the initial particle density $f(0,x,v)$ has compact support,  Calogero \cite{calogero2} proved that    the $3D$ RVN system (\ref{vlasovnordstrom})  has a unique global classical solution.      However, no asymptotic behavior nor the decay estimate of the scalar field were obtained in \cite{calogero2}. If the initial data is small and moreover the initial scalar field has  a compact support, Friedrich \cite{friedrich} proved  the global existence of the solution   and   the decay estimates for the scalar field. Note that the high order derivatives of solution   were not studied in \cite{calogero2,friedrich}. 

 Recently,  in the spirit of the Klainerman's  classic vector field method\cite{Klainerman1,Klainerman2} for the nonlinear wave equations,  Fajman-Joudioux-Smulevici \cite{smulevic1,smulevic2} proposed a very interesting modified vector field method to study the propagation of   regularity for the solution of the RVN system. For the $3D$ \textit{massless} RVN system, i.e., $m=0$ in (\ref{vlasovnordstrom}),  and the \textit{massive}  RVN system in dimension $n$, $n\geq 4$, they proved global regularity for suitably small initial data. In particular, the important \textit{compact support assumption} was removed for these scenarios.

For the $3D$ massive RVN system,   it was not clear whether the compact support assumption assumed in  \cite{calogero2,friedrich} can be removed and the propagation of regularity can be studied at the same time. A recent interesting result by Fajman-Joudioux-Smulevici \cite{smulevic2} shows that  the $3D$ massive relativistic RVN can be solved forwardly in the sense of the hyperboloid foliation of the space-time, which in particular implies the case with compact support assumption.  

 For the more physical Einstein-Vlasov system,  Taylor\cite{taylor} proved that global stability holds for the massless case  if the initial small particle density has compact support in ``$x$'' and also in ``$v$''. For the massive case, global stability  also holds if the small initial particle density has compact support in ``$x$'' and in ``$v$'', see Lindblad-Taylor \cite{lindblad}; or the small perturbation of the Minkowski spacetime is compact, see Fajman-Joudioux-Smulevici \cite{smulevic3}. See also \cite{hwang,smulevici4} for related work on the Vlasov-Poisson system.

\subsection{The Vlasov-Maxwell system}\label{introductionvm}
In plasma physics,  a sufficiently diluted ionized gas or solar wind can be considered as a collisionless plasma, which means that  the collision effect between the particles is  not as  important  as the electromagnetic force. The dynamics of the   collisionless plasma is described the    Vlasov-Maxwell system. We are interested in the physical massive relativistic case, in which the  the speed of light is finite,  particles are massive and the speed of massive particles   is strictly less than the speed of light.

After normalizing the mass of particles and the speed of light to be one, the relativistic  Vlasov-Maxwell system with given initial data reads as follows,
 \be\label{mainequation}
(\textup{RVM})\qquad \left\{\begin{array}{c}
\p_t f + \hat{v} \cdot \nabla_x f + (E+ \hat{v}\times B)\cdot \nabla_v f =0,\\

\nabla \cdot E = 4 \pi \displaystyle{\int  f(t, x, v) d v}, \qquad \nabla \cdot B =0, \\

\p_t E = \nabla \times B - 4\pi \displaystyle{\int f(t, x, v) \hat{v} d v }, \quad \p_t B  =- \nabla\times E,\\

f(0,x,v)=f_0(x,v), \quad E(0,x)=E_0(x), \quad B(0,x)=B_0(x),
\end{array}\right.
\ee
where  $f(t,x,v)$ denotes the density distribution function of particles, $(E, B)$ stands for the classic electromagnetic field, and $\hat{v}$ denotes the relativistic speed. More precisely,  $\hat{v}:= \displaystyle{v/\sqrt{1+|v|^2}}$.

 The Cauchy problem of  long term regularity for the $3D$ RVM system   has been considered by many authors. A remarkable result   obtained by the Glassey-Strauss \cite{glassey3} says that the  classical solution can be globally  extended  as long as the particle density has compact support in $v$ for all the time. A new proof of this result based on Fourier analysis was   given by Klainerman-Staffilani \cite{Klainerman3}, which adds a new perspective to the study of $3D$ RVM system. See also \cite{alfonso,glasseys2, pallard2} for improvements of this result. An interesting line of research is the continuation criterion for the global existence of the Vlasov-Maxwell system instead of assuming the compact support in ``$v$'' assumption. In \cite{glassey4}, if the initial data decay at rate $|v|^{-7}$ as $|v|\rightarrow \infty$ and the assumption that $\| (1+|v|)f(t,x,v)\|_{L^\infty_x L^1_v}$ remains bounded form all time, then the lifespan of the solution can be continued.
 Recently,    an interesting new continuation criterion was given by Luk-Strain \cite{luk}, which says that a regular solution can be extended as long as $\|(1+|v|^2)^{\theta/2} f(t,x,v)\|_{L^q_x L_v^1}$ remains bounded for $\theta > 2/q, 2< q \leq +\infty$.  See also Kunze\cite{kunze}, Pallard\cite{pallard3}, and Patel\cite{patel} for the recent improvements  on the continuation criterion. See also \cite{glasseys1,luk2,luk} for     results in other physical dimensions. 

Although the assumptions in above mentioned results don't depend on the size of data, the assumptions are    imposed for  all   time, which are strong.  One can   ask whether it is possible to obtain global solution by only imposing assumptions on the initial data. The first positive result was given by Glassey-Strauss \cite{glassey2}. It, roughly speaking,  says that  if   the initial particle density $f(0,x,v)$ has a compact supports in both ``$x$'' and ``$v$'' and also the electromagnetic field $(E(0), B(0))$ has compact support in ``$x$'', and moreover the initial data is suitably small, then there exists a unique classical solution. An improvement by  Schaffer \cite{schaeffer1}    shows   that a similar result as in \cite{glassey2} also holds without compact support assumption   in ``$v$'' but with compact support assumption in ``$x$'' for both the initial particle density and the electromagnetic field. 

It is also interesting to   ask for the $3D$ RVM  that whether the regularity of solution can be unconditionally propagated for all the time and whether sharp decay estimates hold.  This question can be answered  in higher physical dimension $n\geq 4$ for small initial data, see   Bigorgne\cite{bigorgne}. For the two dimensional case, global regularity for large data has been established by Luk-Strain \cite{luk}.

 The detailed analysis of small data global regularity for the $3D$ relativistic  Vlasov-Maxwell system will be carried out in \cite{wang}. In this paper, we mainly use it as a comparison for the study of the Vlasov-Nordstr\"om system. 

\subsection{A rough statement of the main result and the outline of this  paper}
Since the precise statement of the main theorem,  Theorem \ref{precisetheorem},  depends on two sets of vector fields    discussed later, we postpone it to subsection \ref{precisetheoremsubsect}. In this subsection, to give a sense,  we provide a rough theorem for the $3D$ relativistic Vlasov-Nordstr\"om system.

\begin{theorem}[A rough statement]
Given    any smooth  suitably small localized initial particle density $f(0,x,v)$ and initial data of  scalar field $\big(\phi_0(x),   \phi_1(x))$, where the particle density $f(0,x,v)$ decays fast but \textit{polynomially} as $(x,v)$ goes to infinity. Then the $3D$ massive relativistic Vlasov-Nordstr\"om \textup{(\ref{vlasovnordstrom})} admits a unique global solution. The regularity of initial data can be globally propagated and the nonlinear solution scatters to a linear solution in a low regularity space. Moreover, the high order energy of solution only grows sub-polynomially and the scalar field and its derivatives decay sharply at rate $1/ (1+|t|)$ over time.  
\end{theorem}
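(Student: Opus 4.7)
The plan is to run a single bootstrap argument on two coupled high-order norms: a weighted energy for the scalar field $\phi$ built from the standard Klainerman vector fields $\{\p_\alpha, \Omega_{ij}, S, B_i\}$ for the wave equation, and a weighted $L^2_{x,v}$ norm for the particle density $f$ built from the new set of $(x,v)$-space vector fields promised in the first observation in Subsection~\ref{overviewsubsection}. The bootstrap hypothesis will be that both norms grow at most like $(1+t)^{\delta}$ for some small $\delta>0$, and that $\|\p\phi(t,\cdot)\|_{L^\infty_x}\lesssim \epsilon/(1+t)$. The polynomial (but fast) decay of $f_0$ in $(x,v)$ provides enough weights to initialize the bootstrap and, once translated through the new vector fields, to make the Klainerman-type weights in the $v$-variable finite.

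To close the bootstrap I would proceed in two coupled estimates. First, for the wave equation $(\p^2_t-\Delta)\phi=\int \frac{f}{\sqrt{1+|v|^2}}\, dv$, a combined Klainerman-Fourier argument---modeled on the Vlasov-Poisson treatment of \cite{wang3}---converts the weighted Sobolev control of $f$ provided by the new vector fields into the sharp $1/(1+t)$ pointwise decay of $\p\phi$ and into sub-polynomial growth of the wave energy. Second, commuting the new vector fields through the relativistic transport operator $\pt+\hat v\cdot\n_x$ should, by construction, not produce the secular $t\xi$-factors that the bare derivative $\n_v$ creates when applied to the free transport flow; this is the content of the geometric observation about the light cone in $(x,v)$-space. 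The three nonlinear contributions in (\ref{vlasovnordstrom})---the multiplicative piece $\bigl((\pt+\hat v\cdot\n_x)\phi\bigr)\cdot 4f$, the dilation piece $\bigl((\pt+\hat v\cdot\n_x)\phi\bigr)\cdot(v\cdot\n_v f)$, and the force piece $\frac{1}{\sqrt{1+|v|^2}}\n_x\phi\cdot\n_v f$---are then controllable by a Gr\"onwall scheme because the $1/(1+t)$ decay of $\p\phi$ is time-integrable up to a logarithm, and because the small coefficient $1/\sqrt{1+|v|^2}$ absorbs the only place where large $|v|$ would be dangerous, in line with the comment made in Subsection~\ref{overviewsubsection} about why no null structure is needed here.

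The main obstacle I expect is the dilation term $\bigl((\pt+\hat v\cdot\n_x)\phi\bigr)(v\cdot\n_v f)$. The field $v\cdot\n_v$ is not in the list of ``good'' Vlasov vector fields, and without compact support in $v$ it creates unbounded weights as $|v|\to\infty$ and, under the characteristic flow, unbounded factors of $t$ on the Fourier side. Closing the estimate requires proving commutator identities that express $v\cdot\n_v$ applied to $f$ as a bounded combination of the new vector fields acting on $f$, plus harmless zero-order remainders. At the same time the factor $(\pt+\hat v\cdot\n_x)\phi$ must be unpacked via a Fourier-side decomposition into a truly transport-tangential derivative of $\phi$, which decays better than $1/(1+t)$ in the regime where the above weights are largest. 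Identifying the exact form of the new vector fields so that both of these mechanisms work together, and then verifying the resulting multilinear estimates in a weighted $L^2_{x,v}\times H^N_x$-type framework, is the technical heart of the argument, and the step where the combination of Klainerman vector field method with Fourier analysis genuinely pays off; scattering in a low-regularity space will then follow once the nonlinearity is shown to be time-integrable in the appropriate weighted norm.
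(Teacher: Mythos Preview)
Your high-level strategy---bootstrap on coupled high-order norms, Klainerman commutators on the wave side, the new $(x,v)$-space vector fields on the Vlasov side, and a Fourier argument in the spirit of \cite{wang3}---matches the paper. But the mechanism you propose for closing the Vlasov estimate is not the one that actually works, and the one you describe would fail.

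You say the closure requires expressing $v\cdot\nabla_v f$ (equivalently $D_v g$ on the profile) as a \emph{bounded} combination of the new vector fields, and that $(\partial_t+\hat v\cdot\nabla_x)\phi$ should decay better than $1/(1+t)$. Neither is true. The decomposition of $D_v$ in terms of the new vector fields (Lemma~\ref{twodecompositions}) has coefficients proportional to the inhomogeneous modulation $\tilde d(t,x,v)\sim 1+\big||t|-|x+\hat v t|\big|$, which can be as large as $t$; there is no way to make these coefficients bounded. And since $|\hat v|<1$ in the massive case, $(\partial_t+\hat v\cdot\nabla_x)\phi$ is \emph{not} a good (tangential) derivative for the wave equation and carries no improved decay. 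The actual compensation is different: the paper proves that $\partial\phi$ decays like $(1+|t|)^{-1}(1+||t|-|x||)^{-1}$ (Lemma~\ref{sharplinftydecay}, obtained by trading vector fields for powers of $|t|-|x|$ via Lemma~\ref{tradethreetimes}), and it is this extra light-cone factor, evaluated at $x+\hat v t$, that exactly cancels the $\tilde d(t,x,v)$ in the $D_v$-coefficients. Making this cancellation usable also requires a specific \emph{anisotropic} choice of weight $\omega_\beta^\alpha(x,v)$ (see (\ref{highorderweight}) and Lemma~\ref{derivativeofweightfunction}), so that $D_v$ acting on the weight, divided by the weight, is controlled by $1+||t|-|x+\hat v t||$; a generic polynomial weight would not give this.

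Two further ingredients you have not mentioned are essential and nontrivial. First, the wave energy estimate is not a direct Gr\"onwall: the source $\int f\,dv$ is \emph{linear} in $f$, and one must pass to a modified profile $\widetilde{h^\alpha}$ via a normal form exploiting the oscillation of $e^{it(|\xi|-\hat v\cdot\xi)}$ (see (\ref{feb1eqn41}) and Proposition~\ref{highorderenergyprop1}). Second, the sharp decay of $\int f\,dv$ requires a separate low-order energy tracking the zero-frequency behavior $\widehat g(t,0,v)$ with a correction term (see (\ref{octeqn1896}) and Proposition~\ref{loworderenergy}); without this the density decay needed to feed back into the wave estimate is not available at top order.
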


The rest of this paper is organized as follows.

\begin{enumerate}
	\item[$\bullet$] In section \ref{preliminary}, we define notations used in this paper, prove two basic $L^\infty_x$-type linear estimates, and introduce   \textit{profiles} for the RVN system (\ref{vlasovnordstrom}), which suggest  us to study the  derivative ``$D_v:=\nabla_v - t\nabla_v \hat{v}\cdot\nabla_x$'' associated with the profile of the Vlasov part.
	\item[$\bullet$]
   In section \ref{construction}, we introduce the concept of inhomogeneous modulation of light cone in $(x,v)$-space,  construct two sets of vector fields and decompose the bulk derivative ``$D_v$'' in terms of the new set of vector fields. In particular, the coefficients of the decompositions of $D_v$ are related to the inhomogeneous modulation.
   \item[$\bullet$] In section \ref{setup}, we set up the high order energy estimates for the Vlasov-Nordstr\"om system, give a more precise statement of the main theorem of this paper, Theorem \ref{precisetheorem}, and use a bootstrap argument to prove our main theorem. 
   \item[$\bullet$] In section \ref{energyestimatewave}, we control the increment of both the high order energy and the low order energy over time  for the profiles of  the nonlinear wave equation.
   \item[$\bullet$] In section \ref{energyestimatevlasov}, we control the increment of both the high order energy and the low order energy over time  for the profiles of  the particle distribution function.  
   \item[$\bullet$]    In Appendix \ref{toolkit}, for the sake of readers,  we give detailed computations for the  commutation rules between the bulk derivative $D_v$ and the new set of vector fields as well as the case when the vector fields act on  the inhomogeneous modulation function.   

\end{enumerate}
\vo  

\noindent \textbf{Acknowledgment}\qquad This project was initiated when I was a semester postdoc at ICERM, Brown University. I thank   Yan Guo for suggesting this direction in kinetic theory and thank ICERM for the warm hospitality. Also, I thank Lingbing He and Pin Yu for several helpful conversations. The author is supported by a start-up grant at Tsinghua University. 

\section{Preliminaries}\label{preliminary}
 For any two numbers $A$ and $B$, we use  $A\lesssim B$, $A\approx B$,  and $A\ll B$ to denote  $A\leq C B$, $|A-B|\leq c A$, and $A\leq c B$ respectively, where $C$ is an absolute constant and $c$ is a sufficiently small absolute constant. We use $A\sim B$ to denote the case when $A\lesssim B$ and $B\lesssim A$.   For an integer $k\in\mathbb{Z}$, we use ``$k_{+}$'' to denote $\max\{k,0\}$ and  use ``$k_{-}$'' to denote $\min\{k,0\}$. For any two vectors $\xi, \eta\in \R^3$, we use $\angle(\xi, \eta)$ to denote the angle between ``$\xi$'' and ``$\eta$''. Moreover, we use the convention that $\angle(\xi, \eta)\in [0, \pi]$. 

 For  $f\in L^1(\R^3)$, we use both $\widehat{f}(\xi)$ and $\mathcal{F}(f)(\xi)$ to denote the Fourier transform of $f$, which is defined as follows, 
\[
\mathcal{F}(f)(\xi)= \int_{\R^3} e^{-ix \cdot \xi} f(x) d x.
\]
We use $\mathcal{F}^{-1}(g)$ to denote the inverse Fourier transform of $g(\xi)$. Moreover, for a distribution function $f:\R_x^3 \times \R_v^3\rightarrow \mathbb{C}$,  we treat ``$v$''   as a fixed parameter and use the following notation to denote the Fourier transform of $f(x,v)$ in ``$x$'', 
\[ 
\widehat{f}( \xi, v):=\int_{\R^3} e^{-i x\cdot \xi} f( x,v) d x. 
\]

We  fix an even smooth function $\tilde{\psi}:\R \rightarrow [0,1]$, which is supported in $[-3/2,3/2]$ and equals to ``$1$'' in $[-5/4, 5/4]$. For any $k\in \mathbb{Z}$, we define
\[
\psi_{k}(x) := \tilde{\psi}(x/2^k) -\tilde{\psi}(x/2^{k-1}), \quad \psi_{\leq k}(x):= \tilde{\psi}(x/2^k)=\sum_{l\leq k}\psi_{l}(x), \quad \psi_{\geq k}(x):= 1-\psi_{\leq k-1}(x).
\]
Moreover, we use  $P_{k}$, $P_{\leq k}$ and $P_{\geq k}$ to denote the projection operators  by the Fourier multipliers $\psi_{k}(\cdot),$ $\psi_{\leq k}(\cdot)$ and $\psi_{\geq k }(\cdot)$ respectively. We   use  $f_{k}(x)$ to abbreviate $P_{k} f(x)$. For   $f\in L^1(\R^3)$,   we define
\be\label{signnotation}
f^{+}:=f,\quad P_{+}[f]:=f , \quad f^{-}:= \bar{f}, \quad P_{-}[f]:=\bar{f}.
\ee

Define
\be\label{highhighlow}
\chi_k^1:=\{(k_1,k_2): |k_1-k_2|\leq 10, k\leq \max\{k_1,k_2\}+1\},  
\ee
\be\label{lowhigh}
 \chi_k^2:=\{(k_1,k_2): k_2<  k_1-10, |k-k_1|\leq 1\},\quad  \chi_k^3:=\{(k_1,k_2): k_1<  k_2-10, |k-k_2|\leq 1\},
\ee
where $\chi_k^1$ corresponds to the High $\times$ High type interaction and $\chi_k^2$ and $\chi_k^3$ correspond to the High $\times$ Low type interaction and the Low $\times$ High type interaction respectively. We define  unit vectors in $\R^3$ as follows, 
\[
e_1:=(1,0,0), \quad e_2:=(0,1,0), \quad e_3:=(0,0,1).
\]
Moreover, we define the  following  vectors, 
\be\label{eqn16}
 X_i =  e_i\times x,\quad   V_i=e_i\times v, \quad \hat{V}_i=e_i \times \hat{v},\quad \tilde{V}_i=e_i\times \tilde{v},   \quad \tilde{v}:=\frac{v}{|v|},\quad \tilde{v}_i:= \tilde{v}\cdot e_i,\quad \hat{v}_i:=\hat{v}\cdot e_i. 
\ee
where $  i=1,2,3.$ As a result of direct computations,   the following equalities  hold for $\forall u, v\in \R^3$,
\be\label{jan17eqn1}
u = \tilde{v} \tilde{v}\cdot u   + \sum_{i=1,2,3}\tilde{V}_i \tilde{V}_i \cdot u, \quad 
\tilde{v}\cdot\nabla_v \hat{v}= \frac{\tilde{v}}{(1+|v|^2)^{3/2}}, \quad \tilde{V}_i\cdot\nabla_v \hat{v}= \frac{\tilde{V}_i}{(1+|v|^2)^{1/2}}, \quad i\in\{1,2,3\}.
\ee
For any $k\in \mathbb{Z}$, we define   	 $\mathcal{S}^\infty_k$-norm associated with symbols as follows, 
\be\label{symbolnorm}
\|m(\xi)\|_{\mathcal{S}^\infty_k}:=  \sum_{|\alpha|=0, 1,\cdots,10 }2^{|\alpha| k}\|\mathcal{F}^{-1}[\nabla_\xi^\alpha m(\xi)\psi_k(\xi)]\|_{L^1}.
\ee
Moreover, we define a class of symbol as follows, 
\be
\mathcal{S}^\infty:=\{m(\xi): \quad\| m(\xi)\|_{\mathcal{S}^\infty}:=\sup_{k\in \mathbb{Z}} \| m(\xi)\|_{\mathcal{S}^\infty_k} < \infty \}. 
\ee

 \begin{definition}\label{definitioninversederivative}
We define a linear operator as follows, 
\be\label{sepeqn732}
Q_i:=-R_i \d^{-1}, Q:=(Q_1, Q_2, Q_3), \quad i\in \{1,2,3\},
\ee
where $R_i$, $i\in \{1,2,3\}$, denote the Riesz transforms.  Hence, we have 
\be\label{sepeqn734}
Id = \nabla\cdot Q. 
\ee
 \end{definition} 
  It is well known that the density of the distribution function decays over time. Now, there are several ways to prove this fact, e.g., performing change of variables, using the vector fields method. We refer readers to a recent result  by Wong \cite{wong} for more detailed discussion. In \cite{wang3},  we provided another proof  for this fact by using a Fourier transform method. More precisely, the following Lemma holds, 

\begin{lemma}\label{decayestimateofdensity}
For any fixed $a(v)\in\{v,\hat{v}\}$,  $x\in \R^3$, $a, t\in \R$, s.t, $|t|\geq 1$, $a> -3$, and any given symbol $m(\xi,v)\in L^\infty_v \mathcal{S}^\infty$, the following decay estimate holds, 
\[
 \big|\int_{\R^3}\int_{\R^3} e^{i x\cdot \xi + i t a(v)  \cdot \xi} m(\xi, v)|\xi|^{a}\widehat{g}(t, \xi, v) d v d\xi   \big|\lesssim \sum_{|\alpha|\leq 5+\lfloor a\rfloor} \big(\sum_{|\beta|\leq 5+\lfloor a\rfloor}   \|\nabla_v^\beta m(\xi,v)\|_{L^\infty_v\mathcal{S}^\infty}\big)
 \]
\be\label{densitydecay}
\times  \big[|t|^{-3-a}    \|(1+|v|)^{5+|a|} \nabla_v^\alpha \widehat{g}(t,0,v) \|_{L^1_v } +|t|^{-4-a} \| (1+|v|)^{5+|a|} (1+|x|)\nabla_v^\alpha g(t,x,v)\|_{L^1_x L^1_v}\big].
\ee
\end{lemma}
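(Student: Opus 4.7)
The plan is to prove this dispersive decay by combining a dyadic decomposition in $\xi$ with non-stationary phase integration by parts in $v$, after a Taylor-type separation $\widehat{g}(t,\xi,v)=\widehat{g}(t,0,v)+\bigl(\widehat{g}(t,\xi,v)-\widehat{g}(t,0,v)\bigr)$. Insert the partition $1=\sum_{k\in\mathbb{Z}}\psi_k(\xi)$ from Section \ref{preliminary} and split at the critical scale $k_0=\lfloor -\log_2|t|\rfloor$. In the low-frequency regime $k\le k_0$ one uses only the crude bound
$$|I_k|\lesssim 2^{(3+a)k}\,\|m(\cdot,v)\|_{L^\infty_v \mathcal{S}^\infty}\,\|\widehat{g}(t,\cdot,v)\|_{L^\infty_\xi L^1_v},$$
and sums the resulting geometric series, which converges because $a>-3$ and already produces the $|t|^{-3-a}$ rate.

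In the high-frequency regime $k>k_0$, the phase $e^{it\,a(v)\cdot\xi}$ is non-stationary and integration by parts in $v$ gains a factor of $(|t|\,2^k)^{-1}$ per step. For $a(v)=v$ this uses simply $\nabla_v(v\cdot\xi)=\xi$, with IBP operator $(it|\xi|^2)^{-1}\xi\cdot\nabla_v$. For $a(v)=\hat v$ one must invert $\nabla_v\hat v$, and this is exactly where the identities (\ref{jan17eqn1}) are used: decomposing any vector along the radial frame $\tilde v$ and the tangential frames $\tilde V_i$, one has $\tilde v\cdot\nabla_v\hat v=(1+|v|^2)^{-3/2}\tilde v$ and $\tilde V_i\cdot\nabla_v\hat v=(1+|v|^2)^{-1/2}\tilde V_i$, so inverting block by block yields an IBP operator whose coefficients are bounded by $(1+|v|)^3$ in the radial direction and $(1+|v|)$ in the tangential directions. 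Iterate the IBP $N=5+\lfloor a\rfloor$ times, which is enough that $\sum_{k>k_0}2^{(3+a-N)k}\lesssim |t|^{N-3-a}$, and the cumulative $(|t|\,2^k)^{-N}$ prefactor produces exactly $|t|^{-3-a}$.

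The Taylor splitting sends the bulk to $\widehat{g}(t,0,v)$, which after $N$ integrations by parts enters through $\|\nabla_v^\alpha\widehat g(t,0,v)\|_{L^1_v}$ with $|\alpha|\le N$, as in the statement. The remainder carries an extra $|\xi|$ via $|\widehat g(t,\xi,v)-\widehat g(t,0,v)|\lesssim|\xi|\,\|xg(t,\cdot,v)\|_{L^1_x}$, which effectively shifts $a$ to $a+1$ and accounts for the second term in (\ref{densitydecay}) with its $|t|^{-4-a}$ rate and $(1+|x|)$ weight. The bound on $\nabla_v^\beta m$ with $|\beta|\le N$ is what Leibniz produces when IBP derivatives fall on the symbol $m(\xi,v)|\xi|^a \psi_k(\xi)$; at dyadic scale $2^k$ each such derivative is controlled by $\|\nabla_v^\beta m\|_{L^\infty_v \mathcal{S}^\infty}$ thanks to (\ref{symbolnorm}).

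The main obstacle is the case $a(v)=\hat v$: the relativistic map $v\mapsto\hat v$ degenerates at infinity in the radial direction, so a naive inverse of $\nabla_v\hat v$ grows like $(1+|v|)^3$, and one must track the resulting weights carefully through each IBP and each Leibniz expansion. Using the anisotropic frame decomposition from (\ref{jan17eqn1}) keeps the bookkeeping clean and produces the final weight $(1+|v|)^{5+|a|}$; a cruder radial/tangential lumping would overcount and force a much larger weight. Everything else — the $\xi$-integration, the $\mathcal{S}^\infty$ symbol estimates, and the geometric summation over dyadic scales — is routine within the framework set up in Section \ref{preliminary}.
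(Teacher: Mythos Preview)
The paper does not give its own proof of this lemma; it simply cites \cite{wang3}[Lemma~3.1]. Your sketch lays out exactly the standard non-stationary-phase argument one expects in that reference: dyadic localization in $\xi$ at the scale $2^{k_0}\sim |t|^{-1}$, the crude volume bound below that scale (which sums because $a>-3$), $N=5+\lfloor a\rfloor$ integrations by parts in $v$ above that scale (which sums because $N>4+a$), and the Taylor splitting $\widehat g(t,\xi,v)=\widehat g(t,0,v)+\int_0^1\xi\cdot\nabla_\xi\widehat g(t,s\xi,v)\,ds$ to separate the two rates $|t|^{-3-a}$ and $|t|^{-4-a}$. The counting you give for the geometric sums is correct in both regimes.

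The one place where your write-up is a sketch rather than a proof is the $v$-weight accounting in the relativistic case $a(v)=\hat v$, which you yourself flag. A naive inversion of $\nabla_v(\hat v\cdot\xi)$ costs $(1+|v|)^3$ per step, and $N$ iterations would overshoot the stated $(1+|v|)^{5+|a|}$; to land on that weight one must indeed work in the anisotropic frame of (\ref{jan17eqn1}) and keep track of which components of $\xi$ see the $(1+|v|)^{-3/2}$ eigenvalue versus the $(1+|v|)^{-1/2}$ one, or equivalently change variables $u=\hat v$ and control the Jacobian $(1-|u|^2)^{-5/2}$ together with its $u$-derivatives near the boundary. Either route closes, but the details are not written here. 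Since the paper defers entirely to \cite{wang3}, there is nothing further to compare against; your plan is the right one and matches what the cited lemma does.
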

\begin{proof}
See \cite{wang3}[Lemma 3.1].
\end{proof}

 In  later argument, instead of studying the nonlinear wave equation, we will reduce it to a nonlinear half wave equation, which is convenient to study on the Fourier side. 
Hence, we provide a $L^\infty_x$-type decay estimate for the linear half wave equation  in the following Lemma. 
\begin{lemma}[The linear decay estimate]\label{twistedlineardecay}
For any   $\mu \in\{+,-\}$, the following estimate holds, 
\[
\big|\int_{\R^3}  e^{ i x\cdot \xi-i \mu t |\xi|  }     m(\xi) \widehat{f}(\xi) \psi_k(\xi) d \xi \big|   \lesssim   \min\{2^{ k_{-}}, (1+|t|+|x|)^{-1} \} 2^{k}\|m(\xi)\|_{\mathcal{S}^\infty_k} 
\]
\be\label{noveqn555}
\times \big(\sum_{|\alpha|\leq 1} 2^{k} \|\widehat{\nabla_x^\alpha f}(t, \xi)\psi_k(\xi)\|_{L^\infty_\xi} + 2^{2k} \|\nabla_\xi \widehat{  f}(t, \xi)\psi_k(\xi)\|_{L^\infty_\xi} \big).
\ee
\end{lemma}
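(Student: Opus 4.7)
The plan is to establish the two alternatives in $\min\{2^{k_-},(1+|t|+|x|)^{-1}\}$ separately. Let $\Phi(\xi)=x\cdot\xi-\mu t|\xi|$ denote the phase, so that $\nabla_\xi\Phi = x - \mu t\,\xi/|\xi|$ and $|\nabla_\xi^2\Phi|\lesssim|t|\,2^{-k}$ on $\mathrm{supp}\,\psi_k$. The ``trivial'' volume bound
\[
|\mathrm{LHS}|\leq 2^{3k}\|m\psi_k\|_{L^\infty_\xi}\|\widehat{f}\psi_k\|_{L^\infty_\xi}\lesssim 2^{3k}\|m\|_{\mathcal{S}^\infty_k}\|\widehat{f}\psi_k\|_{L^\infty_\xi}
\]
(using $\|m\psi_k\|_{L^\infty}\leq\|\mathcal{F}^{-1}[m\psi_k]\|_{L^1}\leq\|m\|_{\mathcal{S}^\infty_k}$) already accounts for the $2^{k_-}\cdot 2^k$ factor on the RHS: for $k\leq 0$ it is absorbed by the $|\alpha|=0$ term, while for $k\geq 0$ it matches the $|\alpha|=1$ term via $\|\widehat{\nabla_x f}\psi_k\|_{L^\infty}\sim 2^k\|\widehat{f}\psi_k\|_{L^\infty}$. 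This also settles the $(1+|t|+|x|)^{-1}$ alternative when $|t|+|x|\lesssim 1$, so the remaining task is to prove the dispersive decay when $|t|+|x|\gg 1$.

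I split the dispersive estimate by the geometry of $\Phi$. In the \emph{non-stationary} regime $|x|\geq 2|t|$ or $|x|\leq |t|/2$, we have $|\nabla_\xi\Phi|\gtrsim |x|+|t|$ uniformly on $\mathrm{supp}\,\psi_k$. Applying the adjoint of $L=-i|\nabla\Phi|^{-2}\nabla\Phi\cdot\nabla_\xi$ once yields the factor $(|x|+|t|)^{-1}$ and transfers one $\xi$-derivative onto $m(\xi)\widehat{f}(\xi)\psi_k(\xi)$. Derivatives landing on $m$ or on $\psi_k$ cost $2^{-k}$ (by the $\mathcal{S}^\infty_k$-norm definition and smoothness of $\psi_k$), which combined with the volume estimate yields the $|\alpha|=0$ summand of the RHS; a derivative landing on $\widehat{f}$ produces the $\nabla_\xi\widehat{f}$ summand; and a derivative of the coefficient $\nabla\Phi/|\nabla\Phi|^2$, controlled by $|\nabla^2\Phi|\lesssim|t|\,2^{-k}$, gives a contribution bounded by $|t|/((|x|+|t|)^2\,2^k)$ times the amplitude, strictly smaller than and absorbed into the main term.

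In the \emph{stationary} regime $|t|/2<|x|<2|t|$ (near the light cone), I pass to polar coordinates $\xi=r\omega$ with $r>0$ and $\omega\in S^2$, so the phase becomes $r(x\cdot\omega-\mu t)$. The only critical points of $\omega\mapsto x\cdot\omega$ on $S^2$ are $\omega_0=\pm x/|x|$, and the full phase has a stationary point there precisely on the light cone $|x|=|t|$; away from $\omega_0$, either radial IBP in $r$ (gain $(r|x\cdot\omega-\mu t|)^{-1}$) or tangential IBP on $S^2$ (gain $(r|x-(x\cdot\omega)\omega|)^{-1}$) is effective. Near $\omega_0$, the tangential Hessian of $x\cdot\omega$ at $\omega_0$ is $-|x|$ times the identity on $T_{\omega_0}S^2$, i.e., non-degenerate of size $|x|\sim|t|$, so two-dimensional stationary phase in $\omega$ yields the leading factor $(r|x|)^{-1}\sim(2^k|t|)^{-1}$. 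Combining with the radial measure $r^2\,dr$ integrated over a support of length $\sim 2^k$ gives total size $\sim 2^{2k}|t|^{-1}\|m\|_{\mathcal{S}^\infty_k}\|\widehat{f}\psi_k\|_{L^\infty_\xi}$, i.e., $2^k(1+|t|+|x|)^{-1}$ times the $|\alpha|=0$ term on the RHS; the $\nabla_\xi\widehat{f}$ term arises as the second-order correction in the stationary-phase Taylor expansion, since tangential $\omega$-derivatives of $\widehat{f}(r\omega)$ generate $r(\nabla_\xi\widehat{f})(r\omega)\sim 2^k\nabla_\xi\widehat{f}$.

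The main obstacle will be the careful bookkeeping in the stationary regime --- ensuring that the $S^2$-stationary-phase loss is exactly $(2^k|t|)^{-1}$ (and not worse) and that derivatives of the amplitude are matched precisely to one of the three norms on the right-hand side without losing any additional powers of $2^k$.
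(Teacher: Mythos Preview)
Your overall plan matches the paper's: the trivial volume bound handles the $2^{k_-}$ alternative, and a single integration by parts in $\xi$ handles the non-stationary region (your geometric split $|x|\le |t|/2$ or $|x|\ge 2|t|$ differs from the paper's $|x|\ge 3(1+|t|)$, but both give $|\nabla_\xi\Phi|\gtrsim |t|+|x|$ and are equally valid).

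The genuine difference is the stationary regime. The paper does \emph{not} invoke two-dimensional stationary phase on $S^2$; it works directly in $\xi$ with a dyadic angular decomposition around the critical direction $\xi_0=\mu x/|x|$. On the cap $\angle(\xi,\xi_0)\lesssim (2^k|t|)^{-1/2}$ it takes the volume bound, and on each annulus $\angle(\xi,\xi_0)\sim 2^l$ it integrates by parts in $\xi$ once and then \emph{splits} the result: the piece where $\nabla_\xi$ fell on $\widehat f$ is estimated directly and summed in $l$ (the factor $2^l$ sums up to $l\le 2$), while the remaining piece is integrated by parts a second time (now producing $2^{-2l}$, which sums from below against $2^{2\bar l}\sim (2^k|t|)^{-1}$). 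This asymmetric one-versus-two IBP is precisely what allows the estimate to close using only $\|\widehat f\psi_k\|_{L^\infty_\xi}$ and $\|\nabla_\xi\widehat f\psi_k\|_{L^\infty_\xi}$.

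Your black-box stationary phase would, taken at face value, ask for more: the standard remainder after the leading $(r|x|)^{-1}$ term in a $2$D stationary-phase expansion requires at least $C^2$ control of the amplitude, hence $\nabla_\xi^2\widehat f$, which the right-hand side does not provide. Your phrase ``second-order correction'' refers to a term built from \emph{two} tangential derivatives of the amplitude, not one, so that is not where $\nabla_\xi\widehat f$ actually enters. To make your route rigorous with only one derivative of $\widehat f$, you would have to unpack stationary phase into an angular localization plus the same mixed IBP scheme --- at which point you have reproduced the paper's argument. (Alternatively, one can reduce the $S^2$ integral to a $1$D oscillatory integral in the height variable $s=\omega\cdot x/|x|$ with linear phase and integrate by parts once in $s$; this also needs only $C^1$ amplitude, but it is not what you described.)
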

\begin{proof}
By using the volume of support of $\xi$, we have
\be\label{nove562}
\big|\int_{\R^3}  e^{ i x\cdot \xi-i \mu t |\xi|  }    m(\xi) \widehat{f}(\xi) \psi_k(\xi) d \xi \big| \lesssim 2^{3k} \| m(\xi)\|_{\mathcal{S}^\infty_k}\|\widehat{f}(\xi)\psi_k(\xi)\|_{L^\infty_\xi}.
\ee
Hence finishing the proof of the first part of the desired estimate (\ref{noveqn555}). 

It remains to prove the second part of the desired estimate (\ref{noveqn555}).Based on the possible size of $t$ and $x$, we separate into two cases as follows. 

\noindent \textbf{Case} $1$:\quad If $|x|\geq 3(1+|t|)$ or $|t|\leq 1$.	
 
 For this case, we do integration by parts in $\xi$ once. As a result, we have
\[
\int_{\R^3}  e^{ i x\cdot \xi-i \mu t |\xi|  }     m(\xi) \widehat{f}(\xi) \psi_k(\xi) d \xi = \int_{\R^3}  e^{ i x\cdot \xi-i \mu t |\xi|  } i \nabla_\xi\cdot \big[  \frac{x- \mu t\xi/|\xi|}{\big|x- \mu t\xi/|\xi| \big|^2}     m(\xi) \widehat{f}(\xi) \psi_k(\xi)\big] d \xi. 
\]
After using the volume of support of $\xi$ for the above equality, we have
\be\label{feb13eqn310}
\big|
\int_{\R^3}  e^{ i x\cdot \xi-i \mu t |\xi|  }     m(\xi) \widehat{f}(\xi) \psi_k(\xi) d \xi \big| \lesssim \frac{2^k \|m(\xi)\|_{\mathcal{S}^\infty_k } }{|x|}\big(2^{ k}\|\widehat{f}(t, \xi)\psi_k(\xi)\|_{L^\infty_\xi} + 2^{2k} \|\nabla_\xi \widehat{f}(t, \xi)\psi_k(\xi)\|_{L^\infty_\xi} \big).
\ee
Now,  our desired estimate  (\ref{noveqn555}) holds from the above estimate and the estimate (\ref{nove562}) if  $|x|\geq 3(1+|t|)$ or $|t|\leq 1$.	

\noindent \textbf{Case} $2$:\quad If $|x|\leq 3(1+|t|)$ or $|t|\geq 1$.	

  Note that $\nabla_\xi(x\cdot\xi-\mu t|\xi|)=0$ if and only if \mbox{${\xi}/{|\xi|} = \mu {x}/{t} = \mu{x}/{|x|} := \xi_{0} $}.   Let $\bar{l} $ be the least integer such that  $2^{ \bar{l} } \geq   2^{ -k/2}(1+|t|)^{-1/2}$. From the volume of support of $\xi$, we have
\[
 \Big| 
\int_{\R^3}    e^{ i x\cdot \xi-i \mu t |\xi|  }  
 \widehat{f}(\xi) m(\xi) \psi_{k}(\xi) \psi_{\leq  \bar{l} }(\angle(\xi,\xi_0))
  d \xi \Big|\lesssim 2^{3 k+ 2\bar{l}}\|m(\xi)\|_{\mathcal{S}^\infty_k}\|\widehat{f}(\xi)\psi_k(\xi)\|_{L^\infty_\xi} \]
  \be\label{noveqn550}
  \lesssim (1+|t|)^{-1} 2^{  2k}  \|m(\xi)\|_{\mathcal{S}^\infty_k}   \|\widehat{f}(t, \xi)\psi_k(\xi)\|_{L^\infty_\xi}.
\ee
 For the case when the angle is localized around $2^l$ where $l> \bar{l}$, we first do integration by parts in $\xi$ once. As a result, we have
\[
 \int_{\R^3}  e^{ i x\cdot \xi-i \mu t |\xi|  }  
   \widehat{f}(\xi) m(\xi) \psi_{k}(\xi) \psi_{l }(\angle(\xi,\xi_0))
  d \xi = I^1_l + I_l^2,
\]
where
\[
 I^1_l= \int_{\R^3}  e^{ i x\cdot \xi-i \mu t |\xi|  }   \frac{i}{t}  \frac{x/t + \mu\xi/|\xi|}{\big|x/t + \mu\xi/|\xi|\big|^2}\cdot
 \nabla_\xi   \widehat{f}(\xi) m(\xi) \psi_{k}(\xi) \psi_{l }(\angle(\xi,\xi_0))
  d \xi,
\]
\[
 I^2_l=  \int_{\R^3}   e^{ i x\cdot \xi-i \mu t |\xi|  }   \frac{i}{t}  
 \nabla_\xi\cdot\Big[\frac{x/t + \mu\xi/|\xi|}{\big|x/t + \mu\xi/|\xi|\big|^2}   m(\xi) \psi_{k}(\xi) \psi_{l }(\angle(\xi,\xi_0)) \Big] \widehat{f}(\xi)
  d \xi.
\]
From the volume of support of $\xi$, the following estimate holds for $I_l^1$, 
\be\label{noveqn741}
 |I^1_l|\lesssim (1+|t|)^{-1} 2^{-l}2^{3k+2l} \|m(\xi)\|_{\mathcal{S}^\infty_k} \|\nabla_\xi  \widehat{f}(\xi)\psi_k(\xi)\|_{L^\infty_\xi} \lesssim (1+|t|)^{-1} 2^{ 3k+l} \|m(\xi)\|_{\mathcal{S}^\infty_k} \|\nabla_\xi \widehat{f}(t, \xi)\psi_k(\xi)\|_{L^\infty_\xi}.
\ee
For $I_l^2$, we do integration by parts in ``$\xi$'' one more time. As a result, the following estimate holds after using the volume of support of $\xi$, 
\[
|I_l^2|\lesssim (1+|t|)^{-2} 2^{ -2l}2^{3k+2l}\|m(\xi)\|_{\mathcal{S}^\infty_k} \big(2^{-2k-2l}\| \widehat{f}(t, \xi)\psi_k(\xi)\|_{L^\infty_\xi} + 2^{- k- l}\|\nabla_\xi \widehat{f}(t, \xi)\psi_k(\xi)\|_{L^\infty_\xi} \big)
\]
\be\label{noveqn742}
\lesssim  (1+|t|)^{-2} 2^{ -2l}\|m(\xi)\|_{\mathcal{S}^\infty_k} \big( 2^{k} \|\widehat{f}(t, \xi)\psi_k(\xi)\|_{L^\infty_\xi} + 2^{2k} \|\nabla_\xi \widehat{f}(t, \xi)\psi_k(\xi)\|_{L^\infty_\xi} \big).
\ee
Hence, from (\ref{noveqn741}) and (\ref{noveqn742}), we have
\be\label{noveqn744}
\sum_{ \bar{l}< l \leq 2} |I_l^1|+|I_l^2|\lesssim (1+|t|)^{-1} 2^{  k}\|m(\xi)\|_{\mathcal{S}^\infty_k} \big( 2^{k} \|\widehat{f}(t, \xi)\psi_k(\xi)\|_{L^\infty_\xi} + 2^{2k} \|\nabla_\xi \widehat{f}(t, \xi)\psi_k(\xi)\|_{L^\infty_\xi} \big).
\ee
Now,  our desired estimate  (\ref{noveqn555}) holds from the estimates (\ref{nove562}), (\ref{feb13eqn310}), (\ref{noveqn550}) and (\ref{noveqn744}) for the case $|x|\leq 3(1+|t|)$ and $|t|\geq 1$.	
\end{proof}

 \subsection{The profiles of the relativistic Vlasov-Nordstr\"om  system }\label{firstlevelprofile}

The idea of studying the profile of the system instead of the original variables is not new, it has been widely used in the study of nonlinear dispersive equation recently. One benefit of  studying the profile is that the effect of linear flow has already been taken into account in the transformation, we can focus on the nonlinear effect. 

 In this subsection, we define the profiles of the Vlasov part and the scalar field part and obtain the evolution equations for the profiles over time. We will also perform similar procedures in subsection \ref{profileofvectorfield} when we apply vector fields on the Vlasov-Nordstr\"om  system.

 We first define the profile  ``$g(t, x, v) $'' of the particle distribution function  ``$f(t,x,v)$ ''as follows, 
\be\label{profileoff}
  g(t,x, v)= f(t, x+\hat{v}t , v),\quad \Longrightarrow f(t, x, v)= g(t, x - \hat{v}t, v).
\ee 
As a result of direct computations, 
 the profile $g(t,x,v)$ satisfies the following equation from the system of equations in (\ref{vlasovnordstrom}),
\be\label{eqnn5}
 \p_t g(t,x,v) =  (\p_t +\hat{v}\cdot \nabla_x) \phi(t,x+\hat{v}t) \big(4g(t,x,v)+ v\cdot D_v g(t,x,v)\big)
+ \frac{ \nabla_x \phi(t,x+\hat{v}t) }{\sqrt{1+|v|^2}}\cdot D_v  g(t,x, v). 
\ee
where
\be\label{pullback}
 {D}_v:= \nabla_v - t   \nabla_v \hat{v}\cdot \nabla_x ,\quad {D}_{v_i}:= \p_{v_i} - t  \p_{v_i} \hat{v}\cdot \nabla_x, \quad i \in\{1,2,3\}.
\ee
Hence, to control the nonlinear effect in (\ref{eqnn5}), it's crucial to understand the role of derivative ``$D_v$'' acts on the profile $g$, which will be elaborated in subsection \ref{studyofDv}.

Next, we  define the profile ``$h(t)$'' and the half wave $u(t)$ of the scalar field part in (\ref{vlasovnordstrom})   as follows, 
\[
 h(t):=e^{i t\d} u(t), \quad u(t):= (\p_t - i\d) \phi(t).
 \]
Note that, we can recover $\phi$ and $\p_t \phi$ from the half wave $u(t)$ and  the profile $h(t)$ as follows, 
\[
\p_t\phi= \frac{u(t)+\overline{u(t)}}{2}, \quad  \phi= \frac{-u(t)+\overline{u(t)}}{2i \d}:=\sum_{\mu\in\{+,-\}} c_{\mu} \d^{-1} u^{\mu}(t), \quad c_{\mu}:=i \mu/2, \quad u(t)= e^{-it\d} h(t). 
\]

In terms of the half wave $u(t)$, we can rewrite the equation satisfied by the profile $g(t,x,v)$ in  (\ref{eqnn5}) as follows,
\be\label{jan29eqn1}
\p_t g(t,x,v) =\sum_{\mu\in \{+,-\}}\big(  \frac{1}{2} + c_{\mu}\hat{v}\cdot R\big) u^{\mu}(t, x+\hat{v}t)\big(4g(t,x,v)+ v\cdot D_v g(t,x,v)\big)+ \frac{ c_{\mu}R u^{\mu}(t,x+\hat{v}t) }{\sqrt{1+|v|^2}} \cdot D_v  g(t,x, v ),
\ee 
where $ R:=\nabla_x/\d $ denotes the Riesz transforms.  Moreover, on the Fourier side, we have
\[
\p_t \widehat{g}(t,\xi, v) =  \sum_{\mu\in\{+,-\}}  \int_{\R^3} e^{it \hat{v}\cdot (\xi-\eta) - it \mu |\xi-\eta|  }\widehat{h^\mu}(t, \xi-\eta)  \big( a^1_{\mu}(v,\xi-\eta)  \widehat{g}(t, \eta, v)   
\]
\be\label{feb20eqn1}
+ a^2_{\mu}(v,\xi-\eta)  \cdot\big(\nabla_v - i t \nabla_v \hat{v}\cdot \eta  \big)\widehat{g}(t, \eta, v) \big)  d \eta, 
\ee
where
\be\label{jan29eqn4}
a_\mu^1(v, \xi)= \big(2+ i 4 c_{\mu}\hat{v}\cdot \xi |\xi|^{-1} \big), \quad a_\mu^2(v, \xi)= \frac{v}{2}\big(1+ i 2 c_{\mu}\hat{v}\cdot \xi |\xi|^{-1} \big) + \frac{i   c_{\mu} \xi}{\sqrt{1 +|v|^2} |\xi|}. 
\ee

From the system of equations in (\ref{vlasovnordstrom}), we can derive the equation satisfied by $u(t)$ as follows, 
\be
(\p_t + i \d) u(t) = \int_{\R^3} \frac{1 }{\sqrt{1+|v|^2} }  f (t, x,v ) d v = \int_{\R^3} \frac{1 }{\sqrt{1+|v|^2} }  g (t, x-\hat{v} t,v ) d v. 
\ee
 On the Fourier side, we have
\be\label{jan29eqn2}
\p_t \widehat{h}(t, \xi ) = \int_{\R^3} e^{i t |\xi| - it \hat{v}\cdot \xi}  \frac{1 }{\sqrt{1 +|v|^2} }  \widehat{g}(t, \xi,  v) dv. 
\ee

Note that, the nonlinearity of the above equation is linear with respect to the Vlasov part, which, generally speaking, very hard to be controlled directly in the energy estimate. 

However, we observe that  the nonlinearity in (\ref{jan29eqn2}) is actually oscillating  in time. To take the advantage of the oscillation of the phase ``$|\xi|-\hat{v}\cdot \xi$'' in (\ref{jan29eqn2}) over time, instead of controlling the increment of the profile $h(t)$ over time, we control the following \textit{modified profile}, 
\[
\widehat{\widetilde{h}}(t, \xi ):= \widehat{h}(t, \xi ) + \int_{\R^3} e^{i t |\xi| - it \hat{v}\cdot \xi} \frac{i}{|\xi| - \hat{v}\cdot \xi }  \frac{1 }{\sqrt{1 +|v|^2} } \widehat{g}(t, \xi,  v) dv .
\]

Recall the   equations (\ref{feb20eqn1}) and (\ref{jan29eqn2}).  After doing integration by parts in $v$ once,  we can derive the equation satisfied by the modified profile $\widetilde{h}(t)$ as follows, 
\[
\p_t \widehat{\widetilde{h}}(t, \xi )= \int_{\R^3} e^{i t |\xi| - it \hat{v}\cdot \xi}  \frac{1 }{\sqrt{1 +|v|^2} }   \frac{ i }{|\xi| - \hat{v}\cdot \xi } \p_t\widehat{g}(t, \xi,  v)   dv 
\]
\be\label{jan29eqn5}
= \sum_{\mu\in\{+,-\}} \int_{\R^3} \int_{\R^3} e^{it |\xi| - it \mu |\xi-\eta| - i t\hat{v}\cdot \eta} a^3_{\mu}(v,\xi, \xi-\eta)   \widehat{h^\mu}(t, \xi-\eta)  \widehat{g}(t, \eta, v)  d \eta d v
\ee
where
\be\label{jan29eqn7}
a^3_{\mu}(v,\xi, \xi-\eta):=\frac{i   a_{\mu}^1(v, \xi-\eta)  }{\sqrt{1 +|v|^2}\big(|\xi|-v\cdot\xi\big)   } - \nabla_v\cdot \big(\frac{i  a_{\mu}^2(v, \xi-\eta)}{\sqrt{1 +|v|^2}\big(|\xi|-v\cdot\xi \big)} \big).
\ee
where $a_{\mu}^i(  v,\xi )$, $i\in\{1,2\}$, are defined in (\ref{jan29eqn4}).

\section{Constructing vector fields  for the relativistic Vlasov-wave type coupled system}\label{construction}

In this section, based an observation  on the light cone $C_t:=\{(x,v): x,v\in \R^3, |t|-|x+t\hat{v}|=0\}$ in $(x,v)$ space, we   construct a new set of vector fields, which will be used to decompose the bulk derivative $D_v$ defined in  (\ref{pullback}). Before that, we first introduce a set of classic vector fields which commutes with both   the Vlasov equation and the nonlinear wave equation. The classic set of vector fields enables us to obtain decay estimate from the energy estimate for the wave equation, which is well known  as the Klainerman-Sobolev embedding.

Recall (\ref{eqn16}). For any $   i,j =1,2,3, $ we define the first set of vector fields for the Vlasov-Nordstr\"om  system  and  the Vlasov-Maxwell system as follows,
\be\label{dec18eqn5}
S:=t \p_t + x \cdot\nabla_x,\quad \Omega_{i,j}=x_i\p_{x_j}-x_j\p_{x_i},\quad  \Omega_i= X_i \cdot \nabla_x,  \quad \tilde{\Omega}_{i}:=  {V}_i \cdot \nabla_v +X_i \cdot \nabla_x,\quad 
\ee
\be\label{lorentz}
 \quad L_i:= t \p_{x_i} + x_i \p_t, \quad \tilde{L}_i: = t \p_{x_i} + x_i \p_t + \sqrt{1+|v|^2} \p_{v_i},\,\, L:=(L_1, L_2, L_3), \,\, \tilde{L}:=(\tilde{L}_1, \tilde{L}_2, \tilde{L}_3),
\ee
where ``$S$'', ``$\Omega_{i,j}$'', and ``$L_i$'' are the well-known scaling vector field, rotational vector fields, and the Lorentz vector fields, which all commutate with the linear operator of the nonlinear wave equation, see the classic works of Klainerman\cite{Klainerman1,Klainerman2} for the introduction of the original vector field method. Note that $\Omega_{i,j}\in\{\mu \Omega_i, \mu\in\{+,-\}, i\in\{1,2,3\}\}$ for any $i,j\in\{1,2,3\}$.

As pointed out in Fajman-Joudioux-Smulevici \cite{smulevic1} that the vector fields $S, \tilde{\Omega}_i$, and $\tilde{L}_i$ all commutate with the linear operator of the relativistic   Vlasov equation. More precisely, we have 
\be\label{eqn825}
[\p_t + \hat{v}\cdot \nabla_x , S]=\p_t + \hat{v}\cdot \nabla_x,\quad [\p_t + \hat{v}\cdot \nabla_x, \tilde{\Omega}_i]=0,\quad  [\p_t + \hat{v}\cdot \nabla_x, \tilde{L}_i]= \hat{v}_i\big(\p_t + \hat{v}\cdot \nabla_x\big),
\ee
\be\label{dec27eqn1}
 [\p_t + \hat{v}\cdot \nabla_x, \tilde{\Omega}_i]=[\p_t +\hat{v}\cdot \nabla_x, V_i\cdot \nabla_v +X_i \cdot \nabla_x]=(\hat{v}\cdot \nabla_x(X_i))\cdot \nabla_x -  \hat{V}_i \cdot \nabla_x=0,
\ee

We define the first set of vector fields for the distribution function $f(t,x,v)$ as follows, 
\be\label{firstsetfordist}
 \mathcal{P}_1:=\{S,\tilde{\Omega}_i, \tilde{L}_i, \p_{x_i}, i\in \{1,2,3\} \}. 
\ee
Correspondingly, we define the following set of vector fields for the nonlinear wave part  as follows, 
 \be\label{firstsetforeb}
 \mathfrak{P}_1:=\{S,  {\Omega}_i,  {L}_i, \p_{x_i}, i\in \{1,2,3\} \}.
 \ee
\begin{lemma}\label{somebasicidentity}
For any $t\in \R, x\in \R^3$, the following equalities hold, 
\be\label{sepeqn190}
(t^2-|x|^2)\p_t = t S - x\cdot L, \quad (t^2-|x|^2)\p_i = \sum_{j=1,2,3}- x_j \Omega_{ij} + t L_i - x_i S, \quad i \in \{1,2,3\}. 
\ee
\end{lemma}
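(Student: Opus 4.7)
The plan is to verify both identities in Lemma 2.2 by straightforward algebraic expansion, using only the definitions of $S$, $L_i$, and $\Omega_{ij}$ given in (3.1)--(3.2). No analysis is involved; the statement is a purely algebraic relation among first order differential operators in $(t,x)$, and both the $\partial_t$ and the $\partial_i$ identities will reduce to a few cancellations between $t\partial_t$ pieces and $(x\cdot\nabla_x)$ pieces.

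For the first identity, I would substitute $S = t\partial_t + x\cdot\nabla_x$ and $L_k = t\partial_k + x_k\partial_t$ directly and compute
\[
tS - x\cdot L = t^2\partial_t + t(x\cdot\nabla_x) - \sum_{k=1}^3 x_k\bigl(t\partial_k + x_k\partial_t\bigr) = t^2\partial_t - |x|^2 \partial_t,
\]
since the middle two terms $t(x\cdot\nabla_x)$ and $-t\sum_k x_k\partial_k$ are identical and cancel, leaving the desired $(t^2-|x|^2)\partial_t$.

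For the second identity, I would handle the three summands in parallel. The Lorentz contribution expands as $tL_i = t^2\partial_i + tx_i\partial_t$ and the scaling contribution as $-x_iS = -tx_i\partial_t - x_i(x\cdot\nabla_x)$; their sum is $t^2\partial_i - x_i(x\cdot\nabla_x)$, the unwanted $\partial_t$ terms having cancelled. The rotational contribution requires a small bookkeeping step: using $\Omega_{ii}=0$ and the antisymmetry $\Omega_{ij}=-\Omega_{ji}$, the remaining sum collapses to $-|x|^2\partial_i + x_i(x\cdot\nabla_x)$, so that adding it to $t^2\partial_i - x_i(x\cdot\nabla_x)$ gives exactly $(t^2-|x|^2)\partial_i$.

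There is really no main obstacle here; the only thing to be careful about is the sign convention for the rotational vector fields $\Omega_{ij}$, since the index pattern $\sum_j - x_j \Omega_{ij}$ must be interpreted consistently with the antisymmetric convention in (3.1) in order for the cross terms to cancel the scaling contribution $-x_i(x\cdot\nabla_x)$ rather than reinforcing it. Once that is fixed, the identities follow by inspection, and they will be used later (for example through the Klainerman--Sobolev type embeddings) to trade the flat derivatives $\partial_t, \partial_i$ for controlled quantities involving $S$, $L_i$, $\Omega_{ij}$ weighted by the Minkowskian factor $t^2-|x|^2$.
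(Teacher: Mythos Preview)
Your approach is exactly the paper's: the proof there reads in full ``Desired identities follow from direct computations,'' and your expansion of $S$, $L_i$, $\Omega_{ij}$ is precisely that computation. One caution on your bookkeeping: with the paper's convention $\Omega_{ij}=x_i\partial_{x_j}-x_j\partial_{x_i}$, one has $\sum_j(-x_j)\Omega_{ij}=|x|^2\partial_i-x_i(x\cdot\nabla_x)$, the opposite sign from what you wrote; your own warning about the sign convention is on point, and the identity in (\ref{sepeqn190}) should be read with the rotation term as $+\sum_j x_j\Omega_{ij}$ (equivalently $-\sum_j x_j\Omega_{ji}$) for the cross terms to cancel against $-x_i(x\cdot\nabla_x)$ from the scaling piece.
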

\begin{proof}
Desired identities follow from direct computations.
\end{proof}

Unfortunately,  we cannot represent the bulk derivative ``$D_v$''
 as a ``\textit{good}'' linear combination of vector fields defined previously, i.e., $\nabla_x$, $S$, $\tilde{\Omega}_i$, and $\tilde{L}_i$, in the sense that one of coefficients is of size ``$t$'', which is too big to control in the long run. This is also why we seek for a new set of vector fields.

\subsection{A new set of vector fields}\label{studyofDv}

Our new set of vector fields   is inspired from  the following identity regards the light cone $C_t:=\{(x,v): x,v\in \R^3, |t|-|x+t\hat{v}|=0\}$ in $(x,v)$ space.  
\begin{lemma}\label{essentialidentity}
The following identity holds,
\be\label{eqn20}
t^2-|x+\hat{v}t|^2 = \big(\frac{t}{ {1+|v|^2}}-\frac{\omega_{+}(x,v)}{\sqrt{1+|v|^2}}\big) \big( {t}-{\sqrt{1+|v|^2}}{\omega}_{-}(x,v)\big),
\ee
where
\be\label{eqn19}
\omega_+(x,v)= x\cdot v +\sqrt{(x\cdot {v})^2  + |x|^2 },\quad 	 {\omega_{-}}(x,v) =  x\cdot v -\sqrt{(x\cdot {v})^2  + |x|^2 }. 
\ee
Moreover, the following rough estimates hold,
\be\label{eqn50}
 {\omega_{-}}(x,v)\lesssim \frac{-|x|}{1+|v|}, \quad 0\leq {\omega_{+} }(x,v) \lesssim |x|(1+|v|).
\ee
\end{lemma}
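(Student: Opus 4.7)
My plan is to prove the identity by direct algebraic expansion and factoring, and then to obtain the estimates on $\omega_{\pm}$ via the standard conjugate trick.

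First I would expand the left-hand side of \eqref{eqn20}. Writing $a := 1+|v|^2$ so that $|\hat{v}|^2 = |v|^2/a = 1 - 1/a$, a direct computation gives
\begin{equation*}
t^2 - |x+\hat{v}t|^2 = (1-|\hat{v}|^2)t^2 - 2t\, x\cdot\hat{v} - |x|^2 = \frac{1}{a}\,t^2 - \frac{2(x\cdot v)}{\sqrt{a}}\,t - |x|^2.
\end{equation*}
Viewed as a quadratic polynomial in $t$, the quadratic formula yields the two roots
\begin{equation*}
t_{\pm} = \sqrt{a}\Bigl(x\cdot v \pm \sqrt{(x\cdot v)^2 + |x|^2}\Bigr) = \sqrt{1+|v|^2}\,\omega_{\pm}(x,v),
\end{equation*}
so the LHS factors as $\frac{1}{1+|v|^2}(t - \sqrt{1+|v|^2}\,\omega_+)(t - \sqrt{1+|v|^2}\,\omega_-)$. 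Distributing the prefactor $1/(1+|v|^2)$ into the first parenthesis rewrites it as $\frac{t}{1+|v|^2} - \frac{\omega_+}{\sqrt{1+|v|^2}}$, which matches the RHS of \eqref{eqn20} exactly.

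For the rough estimates \eqref{eqn50}, I would multiply $\omega_{-}$ by its conjugate to get the rationalized form
\begin{equation*}
\omega_{-}(x,v) = x\cdot v - \sqrt{(x\cdot v)^2 + |x|^2} = \frac{-|x|^2}{x\cdot v + \sqrt{(x\cdot v)^2 + |x|^2}}.
\end{equation*}
Since $\sqrt{(x\cdot v)^2+|x|^2} \geq |x\cdot v|$, the denominator is nonnegative, which already shows $\omega_- \leq 0$ (hence $\omega_+ \geq 0$ from the same identity written for $\omega_+$). For the size, I bound the denominator crudely by $|x\cdot v| + \sqrt{(x\cdot v)^2+|x|^2} \lesssim |x|(1+|v|)$, giving $-\omega_{-}(x,v) \gtrsim |x|/(1+|v|)$, i.e. the first bound in \eqref{eqn50}. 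The upper bound $\omega_+ \lesssim |x|(1+|v|)$ follows immediately from $|x\cdot v| \leq |x||v|$ and the triangle inequality applied to the definition of $\omega_+$.

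There is no real obstacle here; the content of the lemma is a factorization identity, and the only mild subtlety is recognizing that the discriminant of the quadratic in $t$ cleans up to the expression $\sqrt{(x\cdot v)^2 + |x|^2}$ appearing in the definition of $\omega_{\pm}$, which is what makes the whole structure of the light cone in $(x,v)$-space visible.
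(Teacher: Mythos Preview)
Your proposal is correct and follows essentially the same approach as the paper: expand $t^2-|x+\hat v t|^2$ as a quadratic in $t$ and factor it using the roots $\sqrt{1+|v|^2}\,\omega_\pm$, then read off the estimates on $\omega_\pm$ directly from the explicit formulas. The only difference is that you spell out the conjugate trick for the bound on $\omega_-$, whereas the paper simply asserts that the estimates \eqref{eqn50} follow ``straightforwardly'' from the formulas.
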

\begin{proof}
Note that 
\[
t^2-|x+\hat{v}t|^2 =    \frac{t^2}{1+|v|^2} -\frac{2 t x\cdot v}{\sqrt{1+|v|^2}} - |x|^2 
=    \big(\frac{t}{\sqrt{1+|v|^2}}- \omega_{+}(x,v)\big) \big(\frac{t}{\sqrt{1+|v|^2}}- \omega_{-}(x,v)\big),\quad
\]
where
\be\label{sepeqn19}
\omega_{+}(x,v)= x\cdot v +\sqrt{(x\cdot {v})^2  + |x|^2 },\quad 	 {\omega_{-}}(x,v) =  x\cdot v -\sqrt{(x\cdot {v})^2  + |x|^2 }. 
\ee
From the above detailed formulas, the desired estimate  (\ref{eqn50})  holds straightforwardly. 
 \end{proof}
 From the equality (\ref{eqn20}) and the estimate (\ref{eqn50}), we know that ``$|t| -|x+\hat{v} t| =0$'' if and only if   ``$ {t}/{(1+|v|^2)}- {\omega_{+}(x,v)}/(\sqrt{1+|v|^2})  =0  $''. This fact motivates us to define the \textit{modulation} of the light cone  in $(x,v)$-space as follows, which   plays the role of distance to the light cone. 
 \begin{definition}
We define the following function as the \textit{ modulation} with respect to the ``light cone'' in $(x,v)$-space
\be\label{modulation}
d(t,x,v):=\frac{t}{1+|v|^2}-\frac{\omega_{+}(x,v)}{\sqrt{1+|v|^2}}.
\ee

Since we will only care about the  distance with respect  to the ``light cone'' when ``$|x| +|x\cdot v|\gtrsim 1$'', we define an \textit{inhomogeneous modulation}  $\tilde{d}(t,x,v)$ as follows, 
\be\label{inhomogeneousmodulation}
\tilde{d}(t,x,v):=  \frac{t}{1+|v|^2}-\frac{\omega (x,v)}{\sqrt{1+|v|^2}} ,
\ee
where  $\omega(x,v)$ is defined as follows,
\be\label{sepeqn975}
\omega(x, v)= \psi_{\geq 0}(|x|^2 +(x\cdot v)^2) \omega_{+}(x,v)=\psi_{\geq 0}(|x|^2 +(x\cdot v)^2)\big(x\cdot v +\sqrt{(x\cdot {v})^2  + |x|^2 }\big). 
\ee
From (\ref{eqn20}) and (\ref{eqn50}), we have
\be\label{nov43}
|\tilde{d}(t,x,v)| \lesssim 1+ |t-|x+\hat{v} t||.
\ee
\end{definition}

With the above definition of the inhomogeneous modulation function $\tilde{d}(t,x,v)$, the construction of  our new set of vector fields  is motivated from a good decomposition of the derivative $D_v$. Recall (\ref{pullback}). Instead of  ``naturally'' decompose $D_v$ into two parts, which are ``$\nabla_v$'' and ``$t \nabla_v \hat{v}\cdot \nabla_x$'', by using the inhomogeneous modulation defined in (\ref{inhomogeneousmodulation}),  we decompose $D_v$ into two parts  as follows,
\be\label{dec22eqn1}
D_v = \nabla_v - t \nabla_v \hat{v}\cdot\nabla_x=\underbrace{\nabla_v - \sqrt{1+|v|^2} \omega(x,v)\nabla_v  \hat{v}\cdot \nabla_x}_{\text{Part I}} - \underbrace{(t-  \sqrt{1+|v|^2} \omega(x,v) )\nabla_v  \hat{v}\cdot \nabla_x}_{\text{Part II}}. 
 \ee
The main intuition behind the above ``\textit{good}''  decomposition is that \textit{it is more promising to control the burden of extra modulation than the burden of extra ``$t$'' over time for the nonlinear wave solution}.

With the above motivation, we define, 
\be\label{eqq10}
K_v:= \nabla_v - \sqrt{1+|v|^2} \omega(x,v)\nabla_v  \hat{v}\cdot \nabla_x ,\quad S^v: = \tilde{v} \cdot \nabla_v, \quad S^x:= \tilde{v}  \cdot \nabla_x, \quad \Omega^v_i= \tilde{V}_i \cdot \nabla_v, \quad \Omega^x_i= \tilde{V}_i \cdot \nabla_x, 
\ee
where $i \in \{1,2,3\}$, $\tilde{v}$ and $\tilde{V}_i$  are defined in (\ref{eqn16}).

 Moreover, we   define a set of vector fields  as follows, 
\be\label{sepeqq2}
   \widehat{S}^v:= \tilde{v}\cdot K_v= S^v - \frac{\omega(x,v)}{ {1+|v|^2}} S^x, \quad \widehat{\Omega}^{v}_i:=  \tilde{V}_i \cdot K_v = \Omega_i^v - \omega(x,v) \Omega_i^x,\quad  K_{v_i}:=K_v \cdot e_i,\quad i\in\{1,2,3\}.
\ee
Note that we used the equalities in (\ref{jan17eqn1}) in the above equation.

We remark that the vector fields defined in (\ref{sepeqq2}) will be applied on the profile ``$g(t,x,v)$'' instead of the original distribution ``$f(t,x,v)$''. Also, it's not difficult to find the corresponding vector fields act  on the original distribution function $f(t,x,v)$. For example, 	from (\ref{profileoff}), we have
\[
K_{v } g(t,x,v) = \big(\nabla_{v }- \sqrt{1+|v|^2}\omega(x,v) \nabla_{v }\hat{v}\cdot \nabla_x \big) \big(f(t,x+\hat{v}t, v)\big)
\]
\[
= (\nabla_{v }f)(t, x+\hat{v}t, v) +(t-  \sqrt{1+|v|^2}\omega(x,v)) \nabla_{v }\hat{v}\cdot \nabla_x f(t,x+\hat{v}t, v)=: (\widetilde{K}_{v } f)(t, x+\hat{v}t, v),
\]
where
\be\label{pullbackvector}
\widetilde{K}_{v }:= \nabla_{v }+ (t-  \sqrt{1+|v|^2}\omega(x-\hat{v} t ,v)) \nabla_{v }\hat{v}\cdot \nabla_x.
\ee
As a result of direct computation, we know that  $\widetilde{K}_{v}$ commutates with the linear transport operator ``$\p_t + \hat{v} \cdot \nabla_x$'' of the relativistic  Vlasov equation.

With the above defined new set of vector fields, in the following Lemma,  we   decompose the bulk derivative ``$D_v$'' in terms of the new set of vector fields. 

 \begin{lemma}\label{twodecompositions}
 The following two decompositions holds for ``$D_v$'', 
 \be\label{eqn32}
D_v  =  (\tilde{v} \widehat{S}^v + \tilde{V}_i  \widehat{\Omega}^v_i) -\tilde{d}(t,x,v)\big( \frac{\tilde{v} S^x}{ \sqrt{1+|v|^2} } + \sum_{i=1,2,3} \sqrt{1+|v|^2} {\tilde{V}_i \Omega^x_i} \big), 
 \ee
\be\label{eqq17}
D_v= \tilde{v}\widehat{S}^v - \tilde{d}(t,x,v)   \frac{\tilde{v} S^x}{\sqrt{1+|v|^2}} +\sum_{i=1,2,3} \frac{\tilde{V}_i}{|v|} \tilde{\Omega}_i -\frac{\tilde{V}_i}{|v|}\big(  X_i\cdot \tilde{v} S^x +  \sum_{j=1,2,3} ( X_i + \hat{V}_i t )\cdot \tilde{V}_j \Omega^x_j\big).
\ee

 \end{lemma}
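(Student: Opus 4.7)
The plan is to prove both decompositions by pure algebraic manipulation, using the spectral structure of $\nabla_v \hat{v}$ recorded in (\ref{jan17eqn1}) together with the basis decomposition $\nabla_x = \tilde{v} S^x + \sum_i \tilde{V}_i \Omega_i^x$.

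First I would observe the key identity $t - \sqrt{1+|v|^2}\,\omega(x,v) = (1+|v|^2)\,\tilde{d}(t,x,v)$, which follows directly from the definition (\ref{inhomogeneousmodulation}). Using the definition (\ref{eqq10}) of $K_v$, this rewrites $D_v$ cleanly as
\[
D_v \;=\; K_v \;-\; (1+|v|^2)\,\tilde{d}(t,x,v)\,\nabla_v\hat{v}\cdot \nabla_x.
\]
For the first piece, I apply the identity $u = \tilde{v}(\tilde{v}\cdot u) + \sum_i \tilde{V}_i(\tilde{V}_i\cdot u)$ componentwise to $K_v$, which by (\ref{sepeqq2}) immediately gives $K_v = \tilde{v}\widehat{S}^v + \sum_i \tilde{V}_i \widehat{\Omega}^v_i$. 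For the second piece, the eigen-relations $\tilde{v}\cdot \nabla_v\hat{v} = \tilde{v}/(1+|v|^2)^{3/2}$ and $\tilde{V}_i\cdot \nabla_v \hat{v} = \tilde{V}_i/(1+|v|^2)^{1/2}$ of (\ref{jan17eqn1}) combined with decomposing $\nabla_x$ in the $\{\tilde{v},\tilde{V}_i\}$ frame yield
\[
(1+|v|^2)\,\nabla_v\hat{v}\cdot \nabla_x \;=\; \frac{\tilde{v}}{\sqrt{1+|v|^2}}\,S^x + \sum_i \sqrt{1+|v|^2}\,\tilde{V}_i\,\Omega^x_i.
\]
Assembling these two pieces produces (\ref{eqn32}) directly.

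To derive (\ref{eqq17}), I would start from (\ref{eqn32}) and merge the two coefficients of $\tilde{V}_i\Omega^x_i$: expanding $\widehat{\Omega}^v_i = \Omega^v_i - \omega(x,v)\Omega^x_i$ and combining with the $-\sqrt{1+|v|^2}\,\tilde{d}\,\Omega^x_i$ term, the coefficient collapses via the key identity to exactly $-t/\sqrt{1+|v|^2}$, since $\omega(x,v) + \sqrt{1+|v|^2}\,\tilde{d}(t,x,v) = t/\sqrt{1+|v|^2}$. Then I replace $\Omega_i^v$ via the relation $V_i = |v|\tilde{V}_i$, so that $\tilde{\Omega}_i = |v|\Omega_i^v + X_i\cdot \nabla_x$, i.e.\ $\Omega_i^v = \tfrac{1}{|v|}(\tilde{\Omega}_i - X_i\cdot \nabla_x)$. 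Combined with the $\Omega_i^x$ term this rewrites the angular piece as $\tfrac{1}{|v|}\tilde{\Omega}_i - \tfrac{1}{|v|}(X_i + t\hat{V}_i)\cdot \nabla_x$, where I have used $\hat{V}_i = |v|\tilde{V}_i/\sqrt{1+|v|^2}$.

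Finally I decompose the remaining physical-space derivative $(X_i + t\hat{V}_i)\cdot \nabla_x$ in the $\{\tilde{v},\tilde{V}_j\}$ frame. Because $\hat{V}_i \cdot \tilde{v} = (e_i\times \hat{v})\cdot \tilde{v} = 0$, the $S^x$ coefficient simplifies to $X_i\cdot \tilde{v}$, while the $\Omega_j^x$ coefficients remain $(X_i + t\hat{V}_i)\cdot \tilde{V}_j$. Combining everything produces exactly (\ref{eqq17}). The whole argument is entirely algebraic; the only conceptual step that is not routine is spotting that the modulation $\tilde{d}(t,x,v)$ was designed precisely so that the combination $\omega + \sqrt{1+|v|^2}\,\tilde{d}$ telescopes to $t/\sqrt{1+|v|^2}$, which is what makes the $\omega(x,v)$-dependence in $\widehat{\Omega}^v_i$ disappear in favor of the Klainerman vector field $\tilde{\Omega}_i$ in the second decomposition.
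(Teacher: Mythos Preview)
Your proposal is correct and follows essentially the same route as the paper: both arguments hinge on the frame identity $u=\tilde v(\tilde v\cdot u)+\sum_i\tilde V_i(\tilde V_i\cdot u)$, the eigen-relations (\ref{jan17eqn1}) for $\nabla_v\hat v$, the substitution $\Omega_i^v=\tfrac{1}{|v|}(\tilde\Omega_i-X_i\cdot\nabla_x)$, and the orthogonality $\hat V_i\cdot\tilde v=0$. The only cosmetic difference is that the paper derives (\ref{eqq17}) from the intermediate expansion (\ref{eqq13}) rather than from (\ref{eqn32}), whereas you telescope the $\omega$ and $\tilde d$ terms back to $t/\sqrt{1+|v|^2}$ --- these are two ways of writing the same computation.
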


 \begin{proof}
 Recall (\ref{pullback}),  (\ref{eqq10}), and (\ref{sepeqq2}). From (\ref{jan17eqn1}),  we have
\be\label{eqq13}
D_v= \tilde{v} S^v -  \frac{t \tilde{v} S^x}{(1+|v|^2)^{3/2}} +\sum_{i=1,2,3} \tilde{V}_i \Omega^v_i  - \frac{t \tilde{V}_i \Omega^x_i }{\sqrt{1+|v|^2}}
\ee
\be\label{dec18eqn10}
= \sum_{i=1,2,3}  (\tilde{v} \widehat{S}^v + \tilde{V}_i  \widehat{\Omega}^v_i) -\tilde{d}(t,x,v)   \big( \frac{\tilde{v} S^x}{\sqrt{ 1+|v|^2 }} +  \sqrt{ 1+|v|^2 }{\tilde{V}_i \Omega^x_i} \big).
\ee
Hence, finishing the proof of (\ref{eqn32}).

Now, let's proceed to prove the desired equality (\ref{eqq17}).   Recall (\ref{dec18eqn5})  and (\ref{eqq10}), we have
\be\label{eqq14}
  \Omega^v_i   = \frac{1}{|v|} \tilde{\Omega}_i -   \frac{X_i}{|v|} \cdot \nabla_x. 
\ee
From (\ref{jan17eqn1}), (\ref{eqq13}) and (\ref{eqq14}), we have
\[
D_v= \tilde{v}\widehat{S}^v -  \tilde{d}(t,x,v)    \frac{\tilde{v} S^x}{\sqrt{1+|v|^2} } + \sum_{i=1,2,3}\frac{\tilde{V}_i}{|v|} \tilde{\Omega}_i -\frac{\tilde{V}_i}{|v|}( X_i + \hat{V}_i t )\cdot \nabla_x  
\]
\[
= \tilde{v}\widehat{S}^v - \tilde{d}(t,x,v)   \frac{\tilde{v} S^x}{\sqrt{1+|v|^2}} + \sum_{i=1,2,3}\frac{\tilde{V}_i}{|v|} \tilde{\Omega}_i -\frac{\tilde{V}_i}{|v|}( X_i + \hat{V}_i t )\cdot \big(\tilde{v} S^x  + \sum_{j=1,2,3} \tilde{V}_j \Omega^x_j\big) 
\]
\[
=\tilde{v}\widehat{S}^v - \tilde{d}(t,x,v)   \frac{\tilde{v} S^x}{\sqrt{1+|v|^2} } + \sum_{i=1,2,3} \frac{\tilde{V}_i}{|v|} \tilde{\Omega}_i -\frac{\tilde{V}_i}{|v|}\big[ X_i\cdot \tilde{v} S^x + \sum_{j=1,2,3} ( X_i +  \hat{V}_i t )\cdot \tilde{V}_j \Omega^x_j\big].
\]
Hence, finishing the proof of (\ref{eqq17}).
 \end{proof}
\begin{remark}
Because the coefficients of $\Omega_i^x$  in  the first decomposition (\ref{eqn32})  are of size ``$(1+|v|)|\tilde{d}(t,x,v)|$'', we   use it when ``$|v|$'' is relatively small and use the second decomposition (\ref{eqq17}) when ``$|v|$'' is relatively large.

 Thanks to the coefficient $m^2/\sqrt{1+|v|^2}$ in the Vlasov-Nordstr\"om system, see (\ref{vlasovnordstrom}), the first decomposition of ``$D_v$'' in (\ref{eqn32}) is sufficient for the Vlasov-Nordstr\"om system. We will use both decompositions  for the  Vlasov-Maxwell system because the benefit of good coefficient is no longer available, see (\ref{mainequation}).

\end{remark}

Motivated from   the two decompositions in the above Lemma, we define the following  set of vector fields, which act on \textit{the profile } $g(t,x,v)$, 
\be\label{dec28eqn1}
\mathfrak{P}_2:=\{\Gamma_i, \quad i\in\{1,\cdots, 17\}\}, 
\ee
where 
\be\label{dec26eqn5}
\Gamma_1= \psi_{\geq 1}(|v|) \widehat{S}^v, \quad \Gamma_2:=\psi_{\geq 1}(|v|)S^x, \quad  \Gamma_{i+2}:=\psi_{\geq 1}(|v|)\widehat{\Omega}^v_i, \quad  \Gamma_{i+5}:=\psi_{\geq 1}(|v|) \Omega^x_i, 
\ee
\be\label{dec26eqn6}
\Gamma_{i+8}:=\psi_{\leq 0}(|v|) K_{v_i},\quad \Gamma_{i+11}:=\psi_{\leq 0}(|v|) \p_{x_i},\quad \Gamma_{i+14}:=\tilde{\Omega}_i, \quad i=1,2,3. 
\ee

Correspondingly, we define the following associated set of  vector fields which act on the original distribution function $f(t,x,v)$, 
\be\label{secondsetfordist}
\mathcal{P}_2:=\{\widehat{\Gamma}_i, \quad i\in\{1,\cdots, 17\}\}, 
\ee
where
\be\label{dec27eqn21}
\widehat{\Gamma}_1= \psi_{\geq 1}(|v|) \tilde{v}\cdot \widetilde{K}_v, \quad\widehat{\Gamma}_2:=\psi_{\geq 1}(|v|)S^x, \quad  \Gamma_{i+2}:=\psi_{\geq 1}(|v|)\tilde{V}_i \cdot \widetilde{K}_v, \quad  \widehat{\Gamma}_{i+5}:=\psi_{\geq 1}(|v|) \Omega^x_i, 
\ee
\be\label{dec27eqn22}
\widehat{\Gamma}_{i+8}:=\psi_{\leq 0}(|v|) \widetilde{K}_{v_i},\quad \widehat{\Gamma}_{i+11}:=\psi_{\leq 0}(|v|) \p_{x_i},\quad \widehat{\Gamma}_{i+14}:=\tilde{\Omega}_i, \quad i=1,2,3. 
\ee
 
For the convenience of notation,  we don't distinguish these two sets of vector fields ($\mathfrak{P}_2$ and  $\mathcal{P}_2$) if there is no confusion. For simplicity, we define a set of  notations 	  to  represent the above defined vector fields uniformly.  
 
\begin{definition}

For any vectors $e=(e_1, \cdots, e_n)\in R^{n}$ and $f=(f_1,\cdots, f_m) \in \R^m$,   {where}\, $e_1, \cdots, e_n, f_1, \cdots, f_m \in \R$,   
we define
\[
 e\circ f :=(e_1,\cdots, e_n,f_1, \cdots, f_m),\quad |e|:=\sum_{i=1,\cdots, n}|e_i|,\quad \Longrightarrow |e\circ f|=|e|+|f|.
\]Let
\[
\mathcal{A}:=\{\vec{a}: \vec{a}\in\{0,1\}^{10}, |\vec{a}|=0,1\}, \quad \vec{0}:=(0,\cdots, 0),
\]
\[
  \vec{a}_i:=(0,\cdots,\underbrace{1}_{\text{$i$-th} },\cdots,0),\quad  \textit{if\,\,}   \vec{0}, \vec{a}_i\in \mathcal{A}, \quad \mathcal{B}:=\cup_{k\in\mathbb{N}_{+}}\mathcal{A}^k. 
\]
\be\label{dec28eqn28}
\Gamma^{\vec{0}} :=Id, \quad \Gamma^{\vec{a}_1}:=S, \quad \Gamma^{\vec{a}_{i+1}}:=\p_{x_i}, \quad \Gamma^{\vec{a}_{i+4}}:=\Omega_{i}, \quad \Gamma^{\vec{a}_{i+7}}:=L_i, \quad i=1,2,3,
\ee
\be\label{dec28eqn29}
\tilde{\Gamma}^{\vec{0}} :=Id, \quad \tilde{\Gamma}^{\vec{a}_1}:=  {S}, \quad \tilde{\Gamma}^{\vec{a}_{i+1}}:=\p_{x_i}, \quad \tilde{\Gamma}^{\vec{a}_{i+4}}:=\tilde{\Omega}_{i}, \quad \tilde{\Gamma}^{\vec{a}_{i+7}}:=\tilde{L}_i, \quad i=1,2,3.
\ee
We  represent the high order derivatives of the first set of vector field $\mathfrak{P}_1$ and $\mathcal{P}_1$ (see (\ref{firstsetforeb}) and (\ref{firstsetfordist})) through composition as follows, 
\be\label{dec28eqn30}
  \Gamma^{\alpha_1\circ\alpha_2}:=\Gamma^{\alpha_1}\Gamma^{\alpha_2},\quad \tilde{\Gamma}^{\alpha_1\circ\alpha_2}:=\tilde{\Gamma}^{\alpha_1}\tilde{\Gamma}^{\alpha_2} \quad \alpha_1, \alpha_2\in \mathcal{B}.
\ee
\end{definition}
\begin{definition}
We define
\[
\mathcal{K}:=\{\vec{e}:\,\,\vec{e}\in \{0,1\}^{17}, |\vec{e}|=0,1\},\quad \vec{0}:=(0,\cdots, 0), 
\vec{e}_i:=(0,\cdots,\underbrace{1}_{\text{$i$-th} },\cdots,0),\quad \textit{if\,\,} \vec{0}, \vec{e}_i\in \mathcal{K}, \]
\[
  \mathcal{S}:=\cup_{k\in\mathbb{N}_{+}}\mathcal{K}^{k},\quad 
\Lambda^{\vec{ {0}}}:=Id, \quad \widehat{\Gamma}^{\vec{0}}:= Id,  \quad \Lambda^{\vec{e}_i}:=\Gamma_i, \quad \widehat{\Gamma}^{\vec{e}_i}:= \widehat{\Gamma}_i,  \quad \Gamma_i\in \mathfrak{P}_2, \widehat{\Gamma}_i\in \mathcal{P}_2,  \quad  \vec{e}_i \in \mathcal{K},
\]
where ``$\mathcal{P}_2$'' is defined in (\ref{secondsetfordist}) and ``$\mathfrak{P}_2$'' is defined in (\ref{dec28eqn1}). Hence, we can represent the high order derivatives of the second set of vector fields for the profile ``$g(t,x,v)$'' or the original distribution function ``$f(t,x,v)$'' through composition as follows, 
\[ 
\Lambda^{e\circ f}:= \Lambda^{e}\Lambda^{f},\quad \widehat{\Gamma}^{e\circ f}:= \widehat{\Gamma}^{e}\widehat{\Gamma}^{f},  \quad e, f\in \mathcal{S}.
\]
\end{definition}

\begin{definition}
For any $\kappa, \gamma \in \mathcal{S}$, we define the equivalence relation between ``$ \kappa$'' and ``$ \gamma$'' as follows,
\be\label{equivalencerelation}
 {\kappa}\thicksim  {\gamma}\, \textup{and}\,  \Lambda^{\kappa}\thicksim  \Lambda^{\gamma}, \,\, \textup{if and only if\,} \Lambda^{\kappa} h(x,v)= \Lambda^{\gamma} h(x,v)\,\,\textup{for any differentiable function\,} h(x,v) ,
\ee
\be\label{nontequivalencerelation}
  {\kappa}\nsim  {\gamma}\, \textup{and}\,  \Lambda^{\kappa}\nsim  \Lambda^{\gamma},\,\,   \textup{if and only if\,\,} \Lambda^{\kappa} h(x,v)\neq \Lambda^{\gamma} h(x,v)\,\,\textup{for all non-constant differentiable function\,} h(x,v).
\ee
Very similarly, we can define the corresponding equivalence relation for $\alpha_1, \alpha_2\in \mathcal{B}$. 
Note that, for any $\beta\in \mathcal{S}$ and $\alpha \in \mathcal{B}$, there exists a unique expansion such that 
\be\label{uniqueexpansion}
 {\beta} \thicksim {\iota_1\circ \cdots\iota_{|\beta|}},\quad \iota_i\in \mathcal{K}, |\iota_i|=1,\quad i\in\{1,\cdots, |\beta|\},
\ee
\be\label{uniqueexpansionBeta}
\alpha \thicksim \gamma_{1}\circ  \cdots \gamma_{|\alpha|}, \quad \gamma_i\in \mathcal{A}, |\gamma_i|=1, \quad i \in \{1,\cdots, |\alpha|\}.
\ee
\end{definition}
With the above notation, without the complexity caused the constant coefficients, we can represent the Leibniz rule by the equality as follows,
\be\label{Leibnizrule}
\Lambda^{\beta}(fg) = \sum_{\beta_1, \beta_2\in \mathcal{S}, \beta_1+\beta_2=\beta} \Lambda^{\beta_1} f \Lambda^{\beta_2} g,\quad \beta\in \mathcal{S},
\ee
where $f$ and $g$ are two smooth functions.

To capture the effect of different sizes of coefficients in the decompositions of ``$D_v$'' in (\ref{eqn32}) and (\ref{eqq17}) and effect of good efficients in the Vlasov-Nordstr\"om system, we  define  an index ``$c_{\textup{vn}}(\iota)$'' for the Vlasov-Nordstr\"om system.

As a comparison, we also define the corresponding index ``$c_{\textup{vm}}(\iota)$'' for the Vlasov-Maxwell system. For simplicity, we will not redo similar computations  and will use part of results obtained in this paper directly in \cite{wang}. Hence, related results will be formulated in both indexes.

\begin{definition}\label{definitiongoodderivative}
For any $\iota\in \mathcal{K} $, we define  an index ``$c_{\textup{vn}}(\iota)$'' for the Vlasov-Nordstr\"om system (\ref{vlasovnordstrom}) and   indexes ``$c_{\textup{vm}}(\iota)$'' for the Vlasov-Maxwell system  (\ref{mainequation}) as follow, 
 \be\label{countingnumber}
c_{\textup{vn}}(\iota) =\left\{\begin{array}{ll}
1 & \textup{if\,} \Lambda^\iota \thicksim \widehat{S}^v  \\
-1 & \textup{if\,} \Lambda^\iota \thicksim \widehat{\Omega}_i, i\in\{1,2,3\} \\
0 & \textup{otherwise} \\
\end{array}\right. ,  \quad  c_{\textup{vm}}(\iota) =\left\{\begin{array}{ll}
1 & \textup{if\,} \Lambda^\iota \thicksim \widehat{S}^v \textup{or}\, \Omega_i^x, i\in\{1,2,3\}\\
\\
0 & \textup{otherwise}\\
\end{array}\right..
\ee 
Moreover, for any $\beta\in \mathcal{S}$, we have $\beta\thicksim \iota_1\circ \cdots \iota_{|\beta|},  \iota_i\in \mathcal{K}/\{\vec{0}\}$. We define
\be\label{feb8eqn41}
c_{\textup{vn}}(\beta)= \sum_{i=1,\cdots, |\beta|}c_{\textup{vn}}(\iota_i),\quad c_{\textup{vm}}(\beta)= \sum_{i=1,\cdots, |\beta|}c_{\textup{vm}}(\iota_i), \quad i(\beta)=\sum_{i=1,\cdots,|\beta|} i(\iota_i).
\ee
 \end{definition}

  With the above defined notation,    we  can  reformulate the results in Lemma \ref{twodecompositions} systematically as follows. 	
 \begin{lemma}\label{twodecompositionlemma}
The following two decompositions for ``$D_v$'' holds, 
\be\label{summaryoftwodecomposition}
D_v=\sum_{\rho\in \mathcal{K},|\rho|=1}d_{\rho}(t,x,v) \Lambda^\rho= \sum_{\rho\in \mathcal{K},|\rho|=1}e_{\rho}(t,x,v) \Lambda^\rho,
\ee
where the detailed formulas of coefficients $d_{\rho}(t,x,v)$ and $e_{\rho}(t,x,v)$ are given as follow, 
\be\label{sepeq947}
 {d}_{\rho}(t,x,v)=\left\{\begin{array}{ll}
\tilde{v} \psi_{\geq -1}(|v|)& \textup{if\,\,} \Lambda^\rho \thicksim \psi_{\geq   1}(|v|) \widehat{S}^v \\
\tilde{v} \tilde{d}(t,x,v) (1+|v|^2)^{-1/2}  \psi_{\geq -1}(|v|) & \textup{if\,\,} \Lambda^\rho \thicksim \psi_{\geq 1}(|v|) {S}^x\\
\tilde{V}_i  \psi_{\geq -1}(|v|) & \textup{if\,\,} \Lambda^\rho \thicksim  \psi_{\geq 1}(|v|) \widehat{\Omega}_i^v , i=1,2,3\\
\tilde{V}_i \tilde{d}(t,x,v) (1+|v|^2)^{ 1/2}  \psi_{\geq -1}(|v|) & \textup{if\,\,} \Lambda^\rho \thicksim   \psi_{\geq 1}(|v|) {\Omega}^x_i , i=1,2,3\\
  \psi_{\leq 2 }(|v|) &  \textup{if\,\,} \Lambda^\rho \thicksim   \psi_{\leq 0}(|v|) K_{v_i} , i=1,2,3	\\
- \tilde{d}(t,x,v)(1+|v|^2)\nabla_v\hat{v}_i \psi_{\leq 2}(|v|) &  \textup{if\,\,} \Lambda^\rho \thicksim   \psi_{\leq 0}(|v|) \p_{x_i} , i=1,2,3 \\
0 & \textup{if\,\,}  \Lambda^\rho \thicksim   \tilde{\Omega}_i , i=1,2,3 \\
 \end{array}\right. ,
\ee
\be\label{sepeq932}
 {e}_{\rho}(t,x,v)=\left\{\begin{array}{ll}
\tilde{v} \psi_{\geq -1}(|v|) & \textup{if\,\,} \Lambda^\rho \thicksim \psi_{\geq 1}(|v|) \widehat{S}^v \\
\displaystyle{-\psi_{\geq -1}(|v|)\big(\frac{ \tilde{d}(t,x,v) \tilde{v}}{(1+|v|^2)^{1/2}}+\frac{\tilde{V}_i(X_i\cdot \tilde{v})}{|v|}}\big)  &   \textup{if\,\,} \Lambda^\rho \thicksim \psi_{\geq 1}(|v|) {S}^x\\
0 &  \textup{if\,\,} \Lambda^\rho \thicksim  \psi_{\geq  1}(|v|) \widehat{\Omega}_i^v , i=1,2,3\\
-\psi_{\geq -1 }(|v|) |v|^{-1}{\tilde{V}_j}(X_j+\hat{V}_j t )\cdot \tilde{V}_i  & \textup{if\,\,} \Lambda^\rho \thicksim   \psi_{\geq 1}(|v|) {\Omega}^x_i , i=1,2,3\\
\psi_{\leq  2}(|v|)&   \textup{if\,\,} \Lambda^\rho \thicksim   \psi_{\leq 0}(|v|) K_{v_i} , i=1,2,3	\\
- \psi_{\leq  2}(|v|) \tilde{d}(t,x,v)(1+|v|^2)\nabla_v\hat{v}_i &  \textup{if\,\,} \Lambda^\rho \thicksim   \psi_{\leq 0}(|v|) \p_{x_i} , i=1,2,3 \\
   \psi_{\geq -1}(|v|)|v|^{-1}{\tilde{V}_i} & \textup{if\,\,}  \Lambda^\rho \thicksim   \tilde{\Omega}_i , i=1,2,3 \\
\end{array}\right. .
\ee
Moreover, the following estimates hold,
\[
\sum_{\rho\in \mathcal{K}, |\rho|=1} \big| (1+|v|)^{-1-c_{\textup{vn}}(\rho)} d_{\rho}(t,x,v)\big| + \big| (1+|v|)^{ 1-c_{\textup{vn}}(\rho)} \tilde{v}\cdot d_{\rho}(t,x,v)\big| + \big| (1+|v|)^{-c_{\textup{vm}}(\rho)} d_{\rho}(t,x,v)\big|
\]
\be\label{jan15eqn2}
\lesssim 1+|\tilde{d}(t,x,v)|. 
\ee
 
 \end{lemma}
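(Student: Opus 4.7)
The plan is to deduce the lemma by combining the two decompositions already established in Lemma \ref{twodecompositions} with a direct identity on the low-velocity region, and then to verify the weighted bound (\ref{jan15eqn2}) by a case-by-case inspection of the tables. Concretely, I would start from the partition of unity $1 = \psi_{\geq 1}(|v|) + \psi_{\leq 0}(|v|)$ and write
\[
D_v = \psi_{\geq 1}(|v|) D_v + \psi_{\leq 0}(|v|) D_v,
\]
handling the two pieces by different decompositions but reading off coefficients into the same uniform notation $\sum_{\rho} d_\rho \Lambda^\rho$ and $\sum_{\rho} e_\rho \Lambda^\rho$.

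For the large-velocity piece $\psi_{\geq 1}(|v|) D_v$, I would insert the two decompositions (\ref{eqn32}) and (\ref{eqq17}) of Lemma \ref{twodecompositions} directly and read off the coefficients of each vector field $\widehat{S}^v$, $S^x$, $\widehat{\Omega}^v_i$, $\Omega^x_i$, and $\tilde{\Omega}_i$. Since every $\Lambda^\rho$ on the large-$|v|$ list already carries a $\psi_{\geq 1}(|v|)$ cutoff (see (\ref{dec26eqn5})), it is harmless to replace the sharp cutoff $\psi_{\geq 1}$ on the coefficient side with the slightly wider $\psi_{\geq -1}$, using the identity $\psi_{\geq -1}(|v|)\psi_{\geq 1}(|v|) = \psi_{\geq 1}(|v|)$. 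This reads off exactly the entries corresponding to $\widehat{S}^v$, $S^x$, $\widehat{\Omega}^v_i$, $\Omega^x_i$ in both tables and, for $e_\rho$, produces the additional $\tilde{\Omega}_i$ entry $|v|^{-1}\tilde{V}_i\psi_{\geq -1}(|v|)$ coming from the second decomposition. The $\tilde{\Omega}_i$ entry is zero in the first decomposition since $\tilde{\Omega}_i$ does not appear in (\ref{eqn32}).

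For the low-velocity piece $\psi_{\leq 0}(|v|) D_v$, the starting point is the elementary identity $D_v = K_v - \bigl(t - \sqrt{1+|v|^2}\,\omega(x,v)\bigr)\nabla_v\hat{v}\cdot\nabla_x$, obtained from (\ref{pullback}), (\ref{eqq10}) and the definition (\ref{sepeqn975}). Using (\ref{inhomogeneousmodulation}) to rewrite $t - \sqrt{1+|v|^2}\,\omega(x,v) = (1+|v|^2)\,\tilde{d}(t,x,v)$, the $i$-th component becomes
\[
D_{v_i} = K_{v_i} - (1+|v|^2)\,\tilde{d}(t,x,v)\sum_{j=1,2,3}\p_{v_i}\hat{v}_j\,\p_{x_j},
\]
so that multiplying by $\psi_{\leq 0}(|v|)$ and replacing the sharp cutoff by $\psi_{\leq 2}(|v|)$ on the coefficient side (via $\psi_{\leq 2}(|v|)\psi_{\leq 0}(|v|) = \psi_{\leq 0}(|v|)$) yields exactly the three last entries of both tables. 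At this point the decompositions are complete and identical for $d_\rho$ and $e_\rho$ on the low-velocity region.

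The remaining task is to verify the weighted bound (\ref{jan15eqn2}), which is a finite case check against the indices $c_{\textup{vn}}(\rho)$ and $c_{\textup{vm}}(\rho)$ defined in (\ref{countingnumber}). I would tabulate each of the seven lines in (\ref{sepeq947}), multiply by the appropriate weight $(1+|v|)^{-1-c_{\textup{vn}}(\rho)}$, $(1+|v|)^{1-c_{\textup{vn}}(\rho)}$, or $(1+|v|)^{-c_{\textup{vm}}(\rho)}$, and use the elementary bounds $|\tilde{v}| = 1$, $|\tilde{V}_i|\leq 1$, $\tilde{v}\cdot \tilde{V}_i = 0$, $|\nabla_v\hat{v}_i|\lesssim (1+|v|)^{-1}$, together with the compact-support effect of the cutoffs (so that $(1+|v|^2)^{1/2}\psi_{\leq 2}(|v|) \lesssim 1$ and $(1+|v|)^{-1}\sqrt{1+|v|^2}\,\psi_{\geq -1}(|v|) \lesssim 1$). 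In every row the result is bounded either by a constant or by $|\tilde{d}(t,x,v)|$, which gives the desired $1 + |\tilde{d}(t,x,v)|$. The step that requires the most care, and is really the only subtlety here, is matching the counting function $c_{\textup{vn}}(\rho)$ with the coefficient structure so that the orthogonality $\tilde{v}\cdot\tilde{V}_i = 0$ is invoked precisely in the rows ($\widehat{\Omega}^v_i$ and $\Omega^x_i$) where the middle weight $(1+|v|)^{1-c_{\textup{vn}}(\rho)}$ would otherwise be too large; I would handle this by isolating those rows first and then disposing of the remaining rows by the crude triangle inequality.
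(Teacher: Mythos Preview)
Your proposal is correct and follows essentially the same approach as the paper: the paper's proof likewise invokes Lemma~\ref{twodecompositions} for the region $|v|\gtrsim 1$, uses the identity $D_v = K_v - (1+|v|^2)\tilde{d}(t,x,v)\nabla_v\hat{v}\cdot\nabla_x$ for $|v|\lesssim 1$, and then verifies (\ref{jan15eqn2}) by inspecting the table against the definitions of $c_{\textup{vn}}$ and $c_{\textup{vm}}$ in (\ref{countingnumber}). Your write-up is in fact more detailed than the paper's, particularly in making explicit the cutoff compatibility $\psi_{\geq -1}\psi_{\geq 1}=\psi_{\geq 1}$, $\psi_{\leq 2}\psi_{\leq 0}=\psi_{\leq 0}$ and in flagging that the orthogonality $\tilde{v}\cdot\tilde{V}_i=0$ is what saves the middle weight on the $\widehat{\Omega}^v_i$ and $\Omega^x_i$ rows.
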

 \begin{proof}
The above results follow directly from the results  in Lemma \ref{twodecompositions}, which will be used for the case when $|v|\gtrsim 1$ and the validity of    the following decomposition, which will be used  for the case when $|v|\lesssim 1$,
\[
D_v= K_v-(1+|v|^2)\tilde{d}(t,x,v)\nabla_v(\hat{v}\cdot \nabla_x).
\]
From the detailed formula of $d_{\rho}(t,x,v)$ in \textup{(\ref{sepeq947})} and the definition of $c_{\textup{vn}}(\iota)$ and $c_{\textup{vm}}(\iota)$ in (\ref{countingnumber}), it is easy to verify that the desired estimate (\ref{jan15eqn2}) holds straightforwardly.
 \end{proof}

 \subsection{Commutation rules}

In this subsection, we mainly obtain two types of commutation rules.

The first type of commutation rules apply to the commutation between the classic vector fields associated with the wave equation in $\mathfrak{P}_1$ (see (\ref{firstsetforeb})), and general Fourier multiplier operators. This type of commutation rules appears when we try to prove that  the scalar field $\phi$ not only    decays sharply over time but also has extra $1/(1+||t|-|x||)$ decay rate with respect to the light cone.

The second type of commutation rules apply to the commutation between    the vector fields defined in previous subsection and the derivative ``$D_v$''. This type of commutation rules appears when we do high order energy estimate for the profile $g(t,x,v)$ of the Vlasov part.

\begin{definition}
For any linear operator $T$ and any $\alpha\in \mathcal{B}, |\alpha|=1$, we use the following notation to denote the commutator between $T$ and $\Gamma^\alpha \in \mathfrak{P}_1$,
\be\label{octeqn6330}
T_{\alpha}:= (T)_{\alpha}:= [\Gamma^\alpha, T].
\ee
We use  ``$T_k$ '' to denote $T\circ P_k$, where $k\in \mathbb{Z}$. 
\end{definition}

  We    explicitly compute the commutator ``$T_\alpha$'' if ``$T$'' is a {Fourier multiplier operator} as follows. 
 \begin{lemma}\label{commutation1basicgeneral}
For any Fourier multiplier operator $T$, which has a Fourier multiplier $m(\xi) $, we have 
\be\label{octeqn6340}
T_{\alpha}= \widetilde{T}_{\alpha}\circ \hat{T}_{\alpha}, \quad 
\ee
where 
\be
\hat{T}_\alpha=\left\{\begin{array}{ll}
\p_t & \textup{if\,\,} \Gamma^\alpha\in\{L_1,L_2,L_3\} \\
 {Id}  & \textup{otherwise},\\
\end{array}\right.,\quad \widetilde{T}_{\alpha}= \left\{\begin{array}{ll}
 -\mathcal{F}^{-1}\circ\big(\xi\cdot \nabla_\xi m(\xi)\big)\circ \mathcal{F} & \textup{if\,\,} \Gamma^\alpha=S,
\\
0 &  \textup{if\,\,} \Gamma^\alpha=\p_{x_i}, i =1,2,3,
\\
 \mathcal{F}^{-1}\circ\big((e_i\times \xi)\cdot \nabla_\xi m(\xi)\big)\circ \mathcal{F}  &  \textup{if\,\,} \Gamma^\alpha=\Omega_i, i =1,2,3,\\
 
   \mathcal{F}^{-1}\circ\big(i\p_{\xi_i} m(\xi)\big)\circ \mathcal{F}    &  \textup{if\,\,} \Gamma^\alpha=L_i, i =1,2,3.\\
\end{array}\right.
\ee
\end{lemma}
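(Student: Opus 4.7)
The plan is to reduce every case to the two elementary identities
\[
\mathcal{F}(\partial_{x_j} f)(\xi) = i\xi_j \widehat{f}(\xi), \qquad \mathcal{F}(x_j f)(\xi) = i\partial_{\xi_j}\widehat{f}(\xi),
\]
and use that $T = \mathcal{F}^{-1}\circ m(\xi)\circ \mathcal{F}$ acts only in the spatial variable (so that $T$ commutes with $\partial_t$, with multiplication by $t$, and with multiplication by $m(\xi)$ on the Fourier side). The proof splits into the four cases $\Gamma^\alpha \in \{\partial_{x_i}, S, \Omega_i, L_i\}$.

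For $\Gamma^\alpha = \partial_{x_i}$ the commutator is zero, since on the Fourier side both operators are multiplications (by $i\xi_i$ and by $m(\xi)$) and multiplication operators commute; this gives $\widetilde{T}_\alpha = 0$ and $\hat{T}_\alpha = \mathrm{Id}$. For $\Gamma^\alpha = S = t\partial_t + x\cdot\nabla_x$, the part $t\partial_t$ commutes with $T$ trivially, while for the part $x_j\partial_{x_j}$ one computes on the Fourier side
\[
\mathcal{F}(x_j\partial_{x_j} Tf)(\xi) - \mathcal{F}(T x_j\partial_{x_j} f)(\xi) = i\partial_{\xi_j}\bigl[i\xi_j m(\xi)\widehat{f}\bigr] - m(\xi) i\partial_{\xi_j}\bigl[i\xi_j\widehat{f}\bigr] = -\xi_j\,\partial_{\xi_j} m(\xi)\,\widehat{f}(\xi),
\]
and summing in $j$ yields the stated formula with $\hat{T}_\alpha = \mathrm{Id}$. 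For $\Gamma^\alpha = \Omega_i = (e_i\times x)\cdot\nabla_x$, an entirely analogous computation (or a direct application of the previous identity after expanding $(e_i\times x)_j = \epsilon_{ijk}x_k$) produces the symbol $(e_i\times\xi)\cdot\nabla_\xi m(\xi)$.

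For $\Gamma^\alpha = L_i = t\partial_{x_i} + x_i\partial_t$, the part $t\partial_{x_i}$ commutes with $T$ as before. For the remaining part $x_i\partial_t$, I first pull $\partial_t$ through $T$ (they commute) and then observe that on the Fourier side
\[
\mathcal{F}(x_i Tg)(\xi) - \mathcal{F}(T x_i g)(\xi) = i\partial_{\xi_i}\bigl[m(\xi)\widehat{g}\bigr] - m(\xi) i\partial_{\xi_i}\widehat{g} = i\,\partial_{\xi_i} m(\xi)\,\widehat{g}(\xi);
\]
applying this with $g = \partial_t f$ gives $[x_i\partial_t, T] = \mathcal{F}^{-1}\circ (i\partial_{\xi_i} m(\xi))\circ \mathcal{F}\circ \partial_t$, which is exactly the claimed factorization $\widetilde{T}_\alpha \circ \hat{T}_\alpha$ with $\hat{T}_\alpha = \partial_t$.

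There is no real obstacle here: the computation is a bookkeeping exercise once the Fourier dictionary is invoked and the vector fields are split into their $t$\nobreakdash-independent and $t$\nobreakdash-linear pieces. The only point that requires a little care is the $L_i$ case, where one must keep the factor $\partial_t$ outside the Fourier multiplier factor to match the stated structure $T_\alpha = \widetilde{T}_\alpha \circ \hat{T}_\alpha$; this is the reason the statement distinguishes the auxiliary factor $\hat{T}_\alpha$ in the Lorentz case.
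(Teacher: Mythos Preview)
Your proof is correct and follows essentially the same approach as the paper: a direct case-by-case computation on the Fourier side, splitting each vector field into a piece that commutes trivially with $T$ and a piece whose commutator is read off from the product rule applied to $m(\xi)\widehat{f}$. The only cosmetic difference is that the paper handles the $S$ case via the identity $\mathcal{F}(x\cdot\nabla_x f)=-(3+\xi\cdot\nabla_\xi)\widehat{f}$ in one step, whereas you compute $[x_j\partial_{x_j},T]$ term by term and sum.
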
 
\begin{proof}
  Since $\p_{x_i}$ commutes with $T$, it is easy to see that the commutator is zero for this case. Now, we consider the case when $\Gamma^\alpha=S$. Note that the following equality holds on the Fourier side, 
\[
\mathcal{F}[[S, T]f ](t,\xi)=  - \big(3 +\xi\cdot\nabla_\xi\big)\big(m(\xi)\widehat{f}(t,\xi) \big) + m(\xi) \big(3 +\xi\cdot\nabla_\xi\big)\widehat{f}(t,\xi) =  -\big(\xi\cdot\nabla_\xi m(\xi)\big)\widehat{f}(t,\xi).
\]
If $\Gamma^\alpha =\Omega_i$, $i\in\{1,2,3\}$, we know that the following equality holds on the Fourier side,
\[
\mathcal{F}[[\Omega_i, T]f](t, \xi)=  (e_i \times \xi)\cdot\nabla_\xi\big(m(\xi)\widehat{f}(t,\xi) \big) -  m(\xi) (e_i \times \xi)\cdot\nabla_\xi\widehat{f}(t,\xi)
= \big((e_i \times \xi)\cdot\nabla_\xi m(\xi)\big) \widehat{f}(t,\xi).
\]
Lastly, we consider the case when $\Gamma^\alpha = L_i, i \in \{1,2,3\}$. For this case, we have
\[
\mathcal{F}[[L_i, T]f](t, \xi)= \mathcal{F}[ x_iT(\p_t f ) - T(x_i\p_t f )  ](t, \xi)
\]
\[
= i\p_{\xi_i}\big(m(\xi) \widehat{\p_t f}(t, \xi)\big)-  i m(\xi) \p_{\xi_i}  \widehat{\p_t f}(t, \xi) = i\p_{\xi_i}m(\xi) \widehat{\p_t f}(t, \xi).
\]
Hence finishing the proof.

\end{proof}

Recall the equality (\ref{sepeqn190}) in Lemma \ref{somebasicidentity}.  We use the following notation to represent it systematically,
\be\label{definitionofcoefficient1}
(|t|^2-|x|^2)\p_i = \sum_{\alpha\in\mathcal{B}, |\alpha|=1}c_{\alpha,i}(t,x) \Gamma^\alpha, i\in\{1,2,3\}, \quad  c_{\alpha}(t,x)=(c_{\alpha,1} (t,x), c_{\alpha,2} (t,x), c_{\alpha,3} (t,x)),
\ee
where ``$c_\alpha(t,x)$'' denotes the unique determined vector coefficient associated with $\Gamma^\alpha$ in (\ref{sepeqn190}). Moreover, we have 
\be\label{noveqn151}
\sum_{\alpha\in \mathcal{B}, |\alpha|=1}|c_{\alpha}(t,x)| +|t\p_t c_{\alpha}(t,x)| \lesssim (|t|+|x|),\quad \sum_{\alpha\in \mathcal{B}, |\alpha|=1}|\nabla_x c_{\alpha}(t,x)| \lesssim 1.
\ee

With the above notation and the commutation rules in Lemma \ref{commutation1basicgeneral}, in Lemma \ref{tradethreetimes}, we prove a Fourier version   of the equality (\ref{sepeqn190}) in Lemma \ref{somebasicidentity}.  It enables us to prove that the scalar field decays at rate $1/((1+|t|+|x|)(1+||t|-|x||))$, see Lemma \ref{sharplinftydecay} for more details.

\begin{lemma}\label{tradethreetimes}
For any given Fourier multiplier operator $T$ with Fourier symbol $m(\xi)$,  
the following equalities hold for any $k\in \mathbb{Z},$ 
\be\label{noveqn171}
(|t|^2-|x|^2)^3 T_k[f](t,x)= \sum_{
\begin{subarray}{c}
i=0,1,2,
\alpha\in \mathcal{B}, |\alpha|\leq 3 
\end{subarray} } \tilde{c}_{\alpha}^i(t,x) \tilde{T}^i_{k,\alpha}(\p_t^i f^\alpha)+(|t|^2-|x|^2)e_{\alpha}(t, x) \tilde{T}_{k,\alpha}^3((\p_t^2-\Delta)f),
\ee
\[
(|t|^2-|x|^2)^3 T_k[\p_t  f ](t,x) =  \sum_{
\begin{subarray}{c}
i=0,1,2, 
\alpha\in \mathcal{B}, |\alpha|\leq 3\\
\end{subarray}} \mathfrak{c}_{\alpha}^i(t,x) \widehat{T}^i_{k,\alpha}(\p_t^i f^\alpha)(t,x)+ (|t|^2-|x|^2)\mathfrak{e}_{\alpha}^1(t,x)   \]
\be\label{feb12eqn39}
  \times \widehat{T}_{k,\alpha}^3((\p_t^2-\Delta)f^{\alpha})(t,x) + (|t|^2-|x|^2)^2\mathfrak{e}_{\alpha}^2(t,x) \widehat{T}_{k,\alpha}^4((\p_t^2-\Delta)f^{\alpha})(t,x),
\ee
where   	 the coefficients $\tilde{c}_{\alpha}^i(t,x),  \mathfrak{c}_{\alpha}^i(t,x), i=0,1,2 $,  $e_\alpha(t,x)$, $\mathfrak{e}^1_\alpha(t,x)$, and $\mathfrak{e}^2_\alpha(t,x)$  satisfy the estimates in  \textup{(\ref{noveqn152})}, \textup{(\ref{noveqn157})}, \textup{(\ref{feb12eqn2})}, \textup{(\ref{feb12eqn10})}, and \textup{(\ref{feb12eqn15})}.

 Moreover, the symbols $\tilde{m}_{k,\alpha}^i(\xi) $   of the Fourier multiplier operators ``$\tilde{T}_{k,\alpha}^i(\cdot)$'', $i\in\{0,1,2,3\}$, and the symbols $\widehat{m}_{k,\alpha}^i(\xi)$ of the Fourier multiplier operators $\widehat{T}_{k,\alpha}^i(\cdot)$ satisfy the estimates in \textup{(\ref{noveqn141})} and \textup{(\ref{feb12eqn1})} respectively. 
\end{lemma}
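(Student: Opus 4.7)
The plan is a two-layer procedure. I first convert $T_k[f]$ into an expression with explicit time derivatives on $f$ using the elementary identity
\[
f = -|\nabla|^{-2}\partial_t^2 f + |\nabla|^{-2}(\partial_t^2 - \Delta) f,
\]
where $|\nabla|^{-2}$ is the Fourier multiplier with symbol $|\xi|^{-2}$, well-defined on the frequency-localized piece $P_k f$. Iterating three times yields
\[
T_k[f] = -T_k|\nabla|^{-6}[\partial_t^6 f] + T_k|\nabla|^{-6}[\partial_t^4(\partial_t^2-\Delta)f] - T_k|\nabla|^{-4}[\partial_t^2(\partial_t^2-\Delta)f] + T_k|\nabla|^{-2}[(\partial_t^2-\Delta)f].
\]
Multiplying by $(t^2-|x|^2)^3$ splits the left hand side into four pieces, and I then apply the trade identity $(t^2-|x|^2)\partial_t = tS - x\cdot L$ from Lemma \ref{somebasicidentity} as many times as each piece permits, producing vector fields from $\mathfrak{P}_1$ acting on $f$ or on its wave-operator image.

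On the $\partial_t^6 f$ piece, three trades consume all three weight factors and three of the six $\partial_t$'s, producing three vector fields acting on $\partial_t^3 f$; the further identity $\partial_t^3 f = \Delta\partial_t f + (\partial_t^2-\Delta)\partial_t f$ together with $\Delta|\nabla|^{-6}=-|\nabla|^{-4}$ then reduces this to a main term of the form $\tilde c^i_\alpha(t,x)\, T_k|\nabla|^{-4}[\partial_t f^\alpha]$ with $i=1,|\alpha|=3$, plus a $(\partial_t^2-\Delta)f^\alpha$ remainder. The $\partial_t^4(\partial_t^2-\Delta)f$ piece similarly admits three trades whose remainder folds into the correction. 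The $\partial_t^2(\partial_t^2-\Delta)f$ piece allows only two trades, since only two $\partial_t$'s are available outside of $\Box$, leaving one explicit $(t^2-|x|^2)$ factor on a $\Box f^\alpha$ remainder, which is exactly the $(t^2-|x|^2)$-weighted correction claimed in (\ref{noveqn171}). The final piece $T_k|\nabla|^{-2}[(\partial_t^2-\Delta)f]$ carries three weights on a term without external derivatives, and is absorbed into the correction through the polynomial bound on $e_\alpha(t,x)$. Alongside the trades, commuting $\Gamma^\alpha\in\mathfrak{P}_1$ past the Fourier multipliers $T_k|\nabla|^{-2l}$ is controlled by Lemma \ref{commutation1basicgeneral} (the commutators being Fourier multipliers at the same frequency scale, possibly with an extra $\partial_t$ from $L_i$ that is re-absorbed by adjusting the index $i$); commuting $\Gamma^\alpha$ past $\partial_t$ uses $[\partial_t, S]=\partial_t$ and $[\partial_t, L_i]=\partial_{x_i}$; and commuting $(t^2-|x|^2)$ past vector fields picks up either $0$ (for $L_i$) or $2(t^2-|x|^2)$ (for $S$), both of which preserve the claimed form.

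The identity (\ref{feb12eqn39}) follows by the same scheme applied to $T_k[\partial_t f]$: the extra outer $\partial_t$ allows one additional trade on the $\partial_t^2(\partial_t^2-\Delta)f^\alpha$-type remainder, producing the $(t^2-|x|^2)^1$ correction; alternatively, stopping one trade earlier on the $T_k|\nabla|^{-2}[(\partial_t^2-\Delta)f^\alpha]$-type remainder produces the $(t^2-|x|^2)^2$ correction, accounting for the two correction families in (\ref{feb12eqn39}). The main obstacle is the combinatorial bookkeeping: every commutator arising from the interleaved moves of $(t^2-|x|^2)$, vector fields, $\partial_t$, and Fourier multipliers must be verified to fit either a main term with $i\leq 2, |\alpha|\leq 3$ or a wave-operator correction with the prescribed weight prefactor. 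Once this is organized, the polynomial bounds on $\tilde c^i_\alpha, \mathfrak c^i_\alpha, e_\alpha, \mathfrak e^i_\alpha$ and the $\mathcal{S}^\infty_k$-estimates on $\tilde T^i_{k,\alpha}, \widehat T^i_{k,\alpha}$ are read off directly: polynomial degree grows by at most one per trade, and the symbol norms inherit the $2^{-2lk}$ factors from each $|\nabla|^{-2l}$ composition together with $\xi$-derivative contributions from Lemma \ref{commutation1basicgeneral}.
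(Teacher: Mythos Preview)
Your approach is genuinely different from the paper's --- you trade with the temporal identity $(t^2-|x|^2)\partial_t = tS - x\cdot L$ after first expanding $f$ via iterated applications of $f = -|\nabla|^{-2}\partial_t^2 f + |\nabla|^{-2}\Box f$, whereas the paper trades with the spatial identity $(t^2-|x|^2)\partial_i = \sum_\alpha c_{\alpha,i}\Gamma^\alpha$ after inserting $Id = \nabla\cdot Q$ at each step. The paper's move has a crucial advantage: it manufactures the derivative needed for the trade, at the cost of one $|\nabla|^{-1}$ in the symbol, regardless of whether the input already carries external derivatives. Your move requires an external $\partial_t$ to be available.

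This is exactly where your argument breaks. The piece $(t^2-|x|^2)^3\,T_k|\nabla|^{-2}[\Box f]$ has no external $\partial_t$, so no trade is possible, and you claim it is ``absorbed into the correction through the polynomial bound on $e_\alpha$.'' That does not work: writing it as $(t^2-|x|^2)\cdot (t^2-|x|^2)^2\cdot T_k|\nabla|^{-2}[\Box f]$ would force $e_\alpha(t,x) = (t^2-|x|^2)^2$, which is of size $||t|-|x||^2(|t|+|x|)^2$ and violates the required bound $|e_\alpha|\lesssim(|t|+|x|)^2$ in (\ref{noveqn152}); simultaneously the symbol $m(\xi)|\xi|^{-2}\psi_k$ has $\mathcal{S}^\infty_k$-norm $\sim 2^{-2k}$, not the $2^{-4k}$ demanded of $\tilde m^3_{k,\alpha}$ in (\ref{noveqn141}). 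A similar (though subtler) mismatch hits piece 2: after your three trades on $(t^2-|x|^2)^3 T_k|\nabla|^{-6}[\partial_t^4\Box f]$ the remainder is schematically $c(t,x)\,\tilde T[\Box\partial_t f^\gamma]$ with $|c|\lesssim(|t|+|x|)^3$ and \emph{no} factor of $(t^2-|x|^2)$; since $\partial_t\notin\mathfrak P_1$, this is not of the form $\Box f^\alpha$, and the missing $(t^2-|x|^2)$ factor cannot be conjured from the polynomial bound alone. To repair this within your framework you would need to reintroduce spatial derivatives on the $\Box$-remainders (e.g.\ via $Id = \nabla\cdot Q$) and trade again --- but that is precisely the paper's mechanism.
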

\begin{remark}
We trade $|t|-|x|$ three times in (\ref{noveqn171}) and (\ref{feb12eqn39}) because of the process  of optimizing in the proof of Lemma \ref{sharplinftydecay}. 
\end{remark}
\begin{proof}
Recall the definition of commutator in (\ref{octeqn6330}) and the definition of operator ``$Q$'' in (\ref{sepeqn732}). From the equality (\ref{definitionofcoefficient1}), we have 
\[
(|t|^2-|x|^2)T_k[f](t,x  )= (|t|^2-|x|^2)\nabla_x \cdot Q\circ T_k(f)(t, x)=\sum_{\alpha\in \mathcal{B}, |\alpha|=1} c_{\alpha}(t,x)\cdot \Gamma^\alpha Q\circ T_k(f)(t, x)
\]
\be
=\sum_{\alpha\in \mathcal{B}, |\alpha|=1} c_{\alpha}(t,x)\cdot\big[ Q\circ T_k(\Gamma^\alpha f)(t, x) + \big(Q \circ T_k\big)_\alpha(f)(t, x)\big].
\ee
Very similarly, after doing this process one more time, we have 
\[
(|t|^2-|x|^2)^2 T_k[f](t,x  )= \sum_{\alpha\in \mathcal{B}, |\alpha|=1} c_{\alpha}(t,x)\cdot\Big[(|t|^2-|x|^2)\nabla\cdot Q\circ\big[ Q\circ T_k(\Gamma^\alpha f)  +  \big(Q \circ T_k\big)_\alpha(f)\big](t, x)\Big]
\]
\[
= \sum_{\alpha_1, \alpha_2\in \mathcal{B}, |\alpha_1|=|\alpha_2|=1} c_{\alpha_1}(t,x)\cdot\Big[ c_{\alpha_2}(t, x)\cdot\big[Q\circ Q\circ T_k( \Gamma^{\alpha_2\circ \alpha_1} f) + Q\circ(Q\circ T_k)_{\alpha_1}(\Gamma^{\alpha_2} f )\]
\be\label{noveqn811}
+(Q\circ Q\circ T_k)_{\alpha_2}(\Gamma^{\alpha_1} f ) +(Q\circ (Q\circ T_k)_{\alpha_1})_{\alpha_2}(f) \big](t,x).
\ee
 Note that the following commutation rule holds for any linear Fourier multiplier  operator $K$, 
\be\label{noveqn202}
[\Gamma^\alpha, K\circ\p_t] = K_\alpha \circ \p_t  + K[\Gamma^\alpha, \p_t],\quad [\Gamma^\alpha, \p_t]=\left\{\begin{array}{ll}
-\p_t & \textup{if\,\,} \Gamma^\alpha = S\\
-\p_{x_i} & \textup{if\,\,} \Gamma^\alpha = L_i,i\in\{1,2,3\}\\
0 & \textup{otherwise}.
\end{array}\right. 
\ee
Therefore, from the equality in (\ref{noveqn811}), we have
\be\label{noveqn876}
(|t|^2-|x|^2)^2 T_k[f](t,x  )=\sum_{\alpha\in \mathcal{B}, |\alpha|\leq 2}\sum_{i=0,1,2} \hat{c}_{\alpha}^i(t,x) T^i_{k,\alpha}(\p_t^i f^\alpha),
\ee
where the symbol $m^i_{k, \alpha}(\xi)$ of the Fourier multiplier $T_{k,\alpha}^i$ and the coefficient $\hat{c}_\alpha^i(t,x)$ satisfy the following estimate, 
\be\label{noveqn851}
\sum_{\alpha\in \mathcal{B}, |\alpha|\leq 2}\sum_{i=0,1,2} 2^{i k }\| m^i_{k, \alpha}(\xi)\|_{\mathcal{S}^\infty_k}\lesssim 2^{-2k }, \quad \sum_{\alpha\in \mathcal{B}, |\alpha|\leq 2}|\hat{c}_{\alpha}^i(t,x)| + (  |t|+|x|) |\nabla_x \hat{c}^i_{\alpha}(t,x)|\lesssim (|t|+|x|)^2.
\ee

Lastly, we do this process one more time.   Note that  the commutator   contains the time derivative ``$\p_t$'' when $\Gamma^\alpha = L_i, i\in\{1,2,3\}$. For simplicity, we don't want to introduce an operator with a third order time derivative   ``$\p_t^3$''. Hence, we need to be  more careful when commutating with a second order commutator term. More precisely, from (\ref{noveqn876}), we have 
\[
(|t|^2-|x|^2)^3 T_k[f](t,x  ) 
=\sum_{\alpha\in \mathcal{B}, |\alpha|\leq 2}\sum_{i=0,1 } \hat{c}_{\alpha}^i(t,x) (|t|^2-|x|^2)\nabla_x\cdot Q\circ T^i_{k,\alpha}(\p_t^i f^\alpha)
\]
\[
+ \hat{c}_{\alpha}^2(t,x) (|t|^2-|x|^2)\nabla_x \cdot \nabla_x T^2_{k,\alpha}(   f^\alpha)+  (|t|^2-|x|^2) \hat{c}_{\alpha}^2(t,x)T^2_{k,\alpha}((\p_t^2-\Delta)  f^\alpha),
\]
\[
= \sum_{\alpha\in \mathcal{B}, |\alpha|\leq 2}\sum_{\rho\in \mathcal{B}, |\rho|=1}\sum_{i=0,1 } \hat{c}_{\alpha}^i(t,x) c_{\rho}(t,x)\cdot\big(Q\circ T_{k,\alpha}^i(\Gamma^{\rho}\p_t^i f^\alpha) +  (Q\circ T_{k,\alpha}^i)_{\rho}(\p_t^i f^\alpha) \big)
\]
\be\label{noveqn135}
+ \hat{c}_{\alpha}^2(t,x)   c_{\rho}(t,x) \cdot \big[(\nabla_x T^2_{k,\alpha})(  \Gamma^\rho f^\alpha) + (\nabla_x T^2_{k,\alpha})_{\rho}(    f^\alpha)\big]+  (|t|^2-|x|^2) \hat{c}_{\alpha}^2(t,x)T^2_{k,\alpha}((\p_t^2-\Delta)  f^\alpha).
\ee

 From  the results in Lemma \ref{commutation1basicgeneral},  (\ref{noveqn202}),   and (\ref{noveqn135}), we know that the following equality holds for some uniquely determined coefficients  $\tilde{c}_{\alpha}^i(t, x)$ and $e_{\alpha}(t,x)$, 
\[
(|t|^2-|x|^2)^3 T_k[f](t,x)= \sum_{
\begin{subarray}{c}
i=0,1,2,
\alpha\in \mathcal{B}, |\alpha|\leq 3 
\end{subarray}}   \tilde{c}_{\alpha}^i(t,x) \tilde{T}^i_{k,\alpha}(\p_t^i f^\alpha)(t, x) 
+(|t|^2-|x|^2)e_{\alpha}(t, x) \tilde{T}_{k,\alpha}^3((\p_t^2-\Delta)f)(t,x),
\]
where the symbol $\tilde{m}_{k,\alpha}^i(\xi)$ of the Fourier multiplier operator ``$\tilde{T}_{k,\alpha}^i(\cdot)$ '', $i\in\{0,1,2,3\}$, satisfies the following estimate, 
\be\label{noveqn141}
\sum_{i=0,1,2 } 2^{ik }\|\tilde{m}_{k,\alpha}^i(\xi)  \|_{\mathcal{S}^\infty_k} \lesssim  2^{-3k}, \quad \| \tilde{m}_{k,\alpha}^3(\xi) \|_{\mathcal{S}^\infty_k}\lesssim 2^{-4k}.
\ee
Moreover, for $i\in\{0,1,2\}, \alpha \in \mathcal{B}, |\alpha|\leq 3$, the coefficients $\tilde{c}_{\alpha}^i(t,x)$ and $e_{\alpha}(t,x)$   satisfy the following estimates, 
\be\label{noveqn152}
 |\tilde{c}_{\alpha}^i(t,x)|+ |t\p_t \tilde{c}_{\alpha}^i(t,x)|\lesssim (|t|+|x|)^3,\quad  |e_{\alpha}(t, x)|+  |t\p_t e_{\alpha}(t,x)|\lesssim (|t|+|x|)^2,
\ee
\be\label{noveqn157}
  |\nabla_x \tilde{c}_{\alpha}^i(t,x)|\lesssim (|t|+|x|)^2, \quad   |\nabla_x e_{\alpha}(t, x)|\lesssim (|t|+|x|),
\ee
which can be derived from the estimate (\ref{noveqn151}). 

With minor modifications, we can   derive the following equality
\[
(|t|^2-|x|^2)^3 T_k[\p_t  f ](t,x) =  \sum_{
\begin{subarray}{c}
i=0,1,2,
\alpha\in \mathcal{B}, |\alpha|\leq 3\\
\end{subarray}} \mathfrak{c}_{\alpha}^i(t,x) \widehat{T}^i_{k,\alpha}(\p_t^i f^\alpha)(t,x) \]
\be\label{feb12eqn30}
+ (|t|^2-|x|^2)\mathfrak{e}_{\alpha}^1(t,x) \widehat{T}_{k,\alpha}^3((\p_t^2-\Delta)f^{\alpha})(t,x)  + (|t|^2-|x|^2)^2\mathfrak{e}_{\alpha}^2(t,x) \widehat{T}_{k,\alpha}^4((\p_t^2-\Delta)f^{\alpha})(t,x),
\ee
where the symbols $\widehat{m}_{k,\alpha}^i(\xi)$ of the Fourier multiplier operators $\widehat{T}^i_{k,\alpha}(\cdot)$, $i\in\{0,1,2,3,4\}$, satisfy the following estimate, 
\be\label{feb12eqn1}
\sum_{i=0,1,2} 2^{i k } \|\widehat{m}_{k,\alpha}^i(\xi)\|_{\mathcal{S}^\infty_k} \lesssim 2^{-2k}, \quad  \|\widehat{m}_{k,\alpha}^3(\xi)\|_{\mathcal{S}^\infty_k} \lesssim 2^{-3k}, \quad \|\widehat{m}_{k,\alpha}^4(\xi)\|_{\mathcal{S}^\infty_k} \lesssim 2^{-2k}. 
\ee
Moreover, the uniquely determined  coefficients $\mathfrak{c}_{\alpha}^i(t,x)$, $i\in\{0,1,2\}$,  $\mathfrak{e}_{\alpha}^1(t,x)$, and $\mathfrak{e}_{\alpha}^2(t,x)$, satisfy the following estimates, 
\be\label{feb12eqn2}
\sum_{i=0,1,2} |\mathfrak{c}_{\alpha}^i(t,x)| + |t\p_t \mathfrak{c}_{\alpha}^i(t,x)| \lesssim (|t|+|x|)^3, 
\ee 
\be\label{feb12eqn10}
 |\mathfrak{e}_{\alpha}^1(t,x)| +  |t\p_t \mathfrak{e}_{\alpha}^1(t,x)| \lesssim (|t|+|x|)^2, \quad |\mathfrak{e}_{\alpha}^2(t,x)|  + |t\p_t \mathfrak{e}_{\alpha}^2(t,x)|  \lesssim (|t|+|x|), 
\ee
\be\label{feb12eqn15}
\sum_{i=0,1,2} |\nabla_x \mathfrak{c}_{\alpha}^i(t,x)|\lesssim (|t|+|x|)^2, \quad  |\nabla_x \mathfrak{e}_{\alpha}^1(t,x)|\lesssim |t|+|x|, \quad |\nabla_x \mathfrak{e}_{\alpha}^2(t,x)|\lesssim 1.
\ee
\end{proof}

Now, we proceed to the second type of commutation rules. For simplicity, we  define a set of vector fields as follows, 
\be\label{lowhighvdecomposition}
  X_1:= \psi_{\geq 1}(|v|)\tilde{v}\cdot D_v, \quad X_{i+1}=  \psi_{\geq 1}(|v|) \tilde{V}_i \cdot D_v,\quad  
X_{i+4}=\psi_{\leq 0}(|v|) D_{v_i}, \quad i=1,2,3,	
\ee
 From (\ref{lowhighvdecomposition}), we have 
 \be\label{noveq1}
 D_v = \tilde{v} X_1 + \tilde{V}_i X_{i+1} + e_i X_{i+4} :=\sum_{i=1,\cdots 7} \alpha_i(v) X_i,
 \ee
 where 
 \be\label{dec28eqn10}
\alpha_1(v):= \psi_{\geq -1}(|v|)\tilde{v}, \quad \alpha_{i+1}(v):= \psi_{\geq -1}(|v|)\tilde{V}_i, \quad \alpha_{i+4}(v):= \psi_{\leq 2}(|v|){e}_i,\quad i=1,2,3. 
\ee

As a basic step, we first consider the first order commutation rule, i.e.,  the case when $\rho
\in \mathcal{K}, |\rho|=1$. 
 \begin{lemma}\label{firstordercommutationlemma} 
For any $\rho \in \mathcal{K}, |\rho|=1$, and $i\in \{1,\cdots, 7\}$, the following commutation rules holds, 
\be\label{feb14eqn1}
[X_i, \Lambda^\rho]= \sum_{\kappa\in \mathcal{K},|\kappa|=1} \tilde{d}_{\rho, i}^{\kappa}(t,x, v) \Lambda^\kappa, \quad  \tilde{d}_{\rho, i}^{\kappa}(t,x, v)  :=  \tilde{c}_{i}^{\rho, \kappa}( x,v) \tilde{d}(t,x,v) + \hat{c}_{i}^{\rho, \kappa}( x,v), 
\ee
where the coefficients $\tilde{c}_{i}^{\rho, \kappa}(t,x,v) $ and $\hat{c}_{i}^{\rho, \kappa}(t,x,v)$ satisfy the following  rough estimates,
\be\label{feb14eqn2}
 |\tilde{c}_{i}^{\rho, \kappa}( x,v)  | +  |\hat{c}_{i}^{\rho, \kappa}( x,v)  |  \lesssim \min\{(1+|v|)^{1+c_{\textup{vn}}(\kappa)-c_{\textup{vn}}(\rho)},(1+|v|)^{ c_{\textup{vm}}(\kappa)-c_{\textup{vm}}(\rho)}\},
\ee
\be\label{feb14eqn3}
| \Lambda^\beta \big( \tilde{c}_{i}^{\rho, \kappa}( x,v)\big)|+ | \Lambda^\beta \big( \hat{c}_{i}^{\rho, \kappa}( x,v)\big)|\lesssim (1+|v|)^{|\beta|+2} (1+|x|)^{|\beta|+2}, \quad \beta \in \mathcal{S}.
\ee
In particular, for the case when $i=1,$ the following improved estimate holds, 
\be\label{feb14eqn4}
 |\tilde{c}_{1}^{\rho, \kappa}(  x,v)  | +  |\hat{c}_{1}^{\rho, \kappa}( x,v)  |  \lesssim  (1+|v|)^{-1+c_{\textup{vn}}(\kappa)-c_{\textup{vn}}(\rho)}.
\ee
Moreover, if   $i(\kappa)-i(\rho)>0$, where $i(\kappa)$ denotes the total number of vector fields $\Omega_i^x$ in $\Lambda^\kappa$,  then the following improved estimate holds for the coefficients of the commutation rule in \textup{(\ref{feb14eqn1})},  
\be\label{april5eqn3}
  |\hat{c}_i^{\rho, \kappa}(x,v)|\lesssim  (1+|v|)^{-1+c_{\textup{vm}}(\kappa)-c_{\textup{vm}}(\rho)}.
\ee
\end{lemma}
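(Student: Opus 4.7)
The plan is to reduce everything to two ingredients: (a) the decomposition of $D_v$ in Lemma \ref{twodecompositionlemma}, which expresses each $X_i$ as a linear combination of the elementary vector fields $\Lambda^\kappa$ with coefficients whose time-dependence is isolated in a $\tilde{d}(t,x,v)$-factor; and (b) a bookkeeping of the vector-field weights $c_{\textup{vn}}(\cdot)$, $c_{\textup{vm}}(\cdot)$ and the index $i(\cdot)$. Concretely, using the formulas (\ref{sepeq947})--(\ref{sepeq932}) together with the definitions (\ref{lowhighvdecomposition}), I write
\[
X_i \;=\; \sum_{\kappa\in\mathcal{K},\,|\kappa|=1} \bigl(\,\tilde{a}_i^{\kappa}(x,v)\,\tilde{d}(t,x,v)\;+\;\hat{a}_i^{\kappa}(x,v)\,\bigr)\,\Lambda^{\kappa},
\]
where $\tilde{a}_i^\kappa, \hat{a}_i^\kappa$ are built from the frequency cutoffs $\psi_{\geq 1}(|v|), \psi_{\leq 0}(|v|)$ and the algebraic tensors $\tilde{v}, \tilde{V}_j, e_j, \nabla_v\hat{v}$. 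This is the key structural input: the commutator is driven entirely by differentiation of these coefficients, since differentiation of each $\Lambda^\kappa$ against $\Lambda^\rho$ stays within the span $\{\Lambda^\sigma : |\sigma|=1\}$.

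With that reduction, I invoke the Leibniz-type identity
\[
[X_i,\Lambda^\rho] \;=\; \sum_{\kappa}\bigl(\tilde{a}_i^\kappa\,\tilde d+\hat{a}_i^\kappa\bigr)\,[\Lambda^\kappa,\Lambda^\rho] \;-\; \sum_{\kappa}\bigl(\Lambda^\rho(\tilde{a}_i^\kappa\,\tilde d+\hat{a}_i^\kappa)\bigr)\,\Lambda^\kappa.
\]
For the first sum, I verify that $[\Lambda^\kappa,\Lambda^\rho]$ is again a linear combination of $\Lambda^\sigma$'s with purely $(x,v)$-dependent coefficients of polynomial growth; this is a direct check in the commuting algebra generated by $\{\widehat{S}^v, S^x, \widehat{\Omega}^v_j, \Omega^x_j, K_{v_j}, \p_{x_j}, \tilde\Omega_j\}$, using $\tilde{v}\cdot\nabla_v\hat{v} = \tilde{v}/(1+|v|^2)^{3/2}$ and $\tilde{V}_j\cdot\nabla_v\hat{v} = \tilde{V}_j/(1+|v|^2)^{1/2}$ from (\ref{jan17eqn1}). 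For the second sum, I use that $\Lambda^\rho\bigl(\tilde d(t,x,v)\bigr) = \bigl(\Lambda^\rho \tilde d\bigr)$ is of the form (time-independent $(x,v)$-function) $\cdot\,\tilde d$ plus a remainder independent of $\tilde d$; this is where the promised additive split $\tilde{d}_{\rho,i}^\kappa = \tilde{c}_i^{\rho,\kappa}\tilde d + \hat{c}_i^{\rho,\kappa}$ in (\ref{feb14eqn1}) comes from. The polynomial bound (\ref{feb14eqn3}) is then immediate from iterated application of $\Lambda^\beta$ to the explicit building blocks.

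The size bound (\ref{feb14eqn2}) is extracted by matching the $(1+|v|)$-weights of the factors in $\tilde{a}_i^\kappa, \hat{a}_i^\kappa$ against the weight convention (\ref{countingnumber}): each occurrence of $\widehat{S}^v$ adds a $+1$ to $c_{\textup{vn}}$, each $\widehat\Omega^v_j$ adds a $-1$, each $\Omega^x_j$ adds a $+1$ to $c_{\textup{vm}}$, and the tensor prefactors $\tilde V_j, \tilde v, \sqrt{1+|v|^2}$ contribute explicit powers of $(1+|v|)$. The improved bound (\ref{feb14eqn4}) for $i=1$ uses the special identity $\tilde v\cdot D_v = \widehat{S}^v - \tilde d(t,x,v)S^x/\sqrt{1+|v|^2}$, which is $c_{\textup{vn}}=+1$-weighted and therefore saves one power of $(1+|v|)$ relative to a generic $X_i$. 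The improved bound (\ref{april5eqn3}) in the case $i(\kappa) > i(\rho)$ records the fact that a newly produced $\Omega^x_j$ factor is necessarily accompanied by a $\tilde V_j/|v|$-type prefactor from the decomposition (\ref{eqq17}), giving the extra $(1+|v|)^{-1}$.

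The main obstacle is purely combinatorial: there are $7\times 17$ pairs $(i,\rho)$, and each pair needs its own computation of the commutator and its decomposition back into the $\Lambda^\kappa$ basis, while simultaneously tracking both $c_{\textup{vn}}$ and $c_{\textup{vm}}$ weights and the $i(\cdot)$ index. My organizational strategy is to first partition the cases by the frequency localization in $|v|$ (the $\psi_{\geq 1}$ vs.\ $\psi_{\leq 0}$ cutoff), which pins down which of the two decompositions of $D_v$ to use, and then by the type of $\Lambda^\rho$ (pure $v$-derivative, pure $x$-derivative, or the coupled $\tilde\Omega_j$). Most of the detailed algebra is precisely the content promised in Appendix \ref{toolkit}, and the lemma serves as the organizing statement for those calculations.
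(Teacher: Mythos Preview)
Your organization differs from the paper's. You first expand $X_i=\sum_\kappa(\tilde a_i^\kappa\tilde d+\hat a_i^\kappa)\Lambda^\kappa$ via Lemma~\ref{twodecompositionlemma} and then reduce to the commutator algebra $[\Lambda^\kappa,\Lambda^\rho]$ together with Lemma~\ref{derivativesofcoefficient} for $\Lambda^\rho(\tilde d)$. The paper instead never decomposes $X_i$ into the $\Lambda^\kappa$-basis: it writes each $X_i$ and each relevant $\Lambda^\rho$ directly as explicit first-order operators in $S^v,S^x,\Omega_j^v,\Omega_j^x$ (for instance $X_1=\psi_{\geq 1}(|v|)\bigl(S^v-t(1+|v|^2)^{-3/2}S^x\bigr)$ and $\widehat S^v=S^v-\omega(x,v)(1+|v|^2)^{-1}S^x$), reduces by (\ref{jan26eqn3100}) to the cases $\Lambda^\rho\in\{\widehat S^v,\widehat\Omega_j^v,K_{v_j}\}$, and computes each commutator by hand using the identities (\ref{jan25eqn51})--(\ref{jan25eqn52}) for how the ``$D_v$-like'' operator acts on $\omega_\pm(x,v)$. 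Your route is legitimate in principle, but it does not shorten the work: establishing that $[\Lambda^\kappa,\Lambda^\rho]$ lies in the span with the right $(1+|v|)$-weights is itself a case-by-case computation of the same nature and at least the same size as the paper's direct one, and the weight bounds (\ref{feb14eqn2}) and (\ref{feb14eqn4}) would then have to be tracked through the composition rather than read off from a single formula.

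There is also a genuine gap in your justification of (\ref{april5eqn3}). You attribute the extra $(1+|v|)^{-1}$ to a $\tilde V_j/|v|$ prefactor coming from the second decomposition (\ref{eqq17}), but the commutator $[X_i,\Lambda^\rho]$ is intrinsic and does not depend on which decomposition of $D_v$ one chooses; moreover, in the explicit formulas the $\tilde d$-\emph{coefficient} of a newly produced $\Omega_j^x$ is typically of order one in $|v|$, not $|v|^{-1}$ (see e.g.\ $\widehat{\tilde c}_i$ in (\ref{sepeq27}) or $\hat c_i$ in (\ref{sepeq15})). The actual mechanism is that the $\tilde d$-\emph{independent} part of the $\Omega_j^x$ coefficient either vanishes identically (as for the last three components of $\widehat{\tilde e}_i$ in (\ref{sepeq27})) or is supported on $\{|x|^2+(x\cdot v)^2\lesssim 1\}$ and hence trivially bounded (as for $\hat e_i$ in (\ref{sepeq16}) and $e_{i,j}$ in (\ref{sepeq31})); this cancellation is only visible after the explicit computation in Lemma~\ref{proofoffirstordercommutationrule} and is not captured by the abstract weight-matching you describe.
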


\begin{proof}
Postponed to  Appendix \ref{toolkit}. See the proof of Lemma \ref{proofoffirstordercommutationrule} in Appendix \ref{toolkit}.
 \end{proof}

 In the process of commutation between high order vector fields and $X_i$, it is unavoidable  that the vector field $\Lambda^\kappa, \kappa\in \mathcal{K},$ might act on the coefficients. From the equality (\ref{feb14eqn1}) and the estimate (\ref{feb14eqn3}) in Lemma \ref{firstordercommutationlemma}, in the sense of tracking the growth rate with respect to time, we only need to consider the case when $\Lambda^\kappa$ hits the inhomogeneous modulation $\tilde{d}(t,x,v)$. The result is summarized in the following Lemma.

\begin{lemma}\label{derivativesofcoefficient}
For any $\rho \in \mathcal{K}, |\rho|=1$, the following equality holds, 
\be\label{dec26eqn1}
\Lambda^\rho(\tilde{d}(t, x,v)) :=\tilde{d}_{\rho}(x,v)= \tilde{d}(t, x,v)e^{\rho}_1( x,v) + e^{\rho}_2( x,v),
\ee
where the coefficients $b_{\rho}^1(t,x,v)$ and $b_{\rho}^2(t,x,v)$ are some explicit coefficients and satisfy the following estimate, 
\be\label{jan23eqn11}
\sum_{i=1,2}|e^{\rho}_i(  x,v)| \lesssim 1, \quad  \sum_{i=1,2}|\Lambda^{\beta}(e^{\rho}_i( x,v))|\lesssim (1+|x|)^{|\beta|}(1+|v|)^{|\beta|}, \quad \beta\in \mathcal{S}. 
\ee
\end{lemma}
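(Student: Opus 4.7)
The plan is to exploit the affine-in-$t$ structure of $\tilde{d}(t,x,v) = t/(1+|v|^2) - \omega(x,v)/\sqrt{1+|v|^2}$. Since none of the vector fields in $\mathfrak{P}_2$ contains $\partial_t$, for any $\rho \in \mathcal{K}$ with $|\rho|=1$ I can write
\[
\Lambda^\rho(\tilde{d}(t,x,v)) \;=\; t\,A^\rho(x,v) \;-\; \Lambda^\rho\!\Big(\frac{\omega(x,v)}{\sqrt{1+|v|^2}}\Big),
\qquad A^\rho(x,v) := \Lambda^\rho\!\Big(\frac{1}{1+|v|^2}\Big),
\]
where only the $\nabla_v$ component of $\Lambda^\rho$ contributes to $A^\rho$. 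Rearranging the defining identity of $\tilde d$ yields the algebraic substitution $t = (1+|v|^2)\tilde{d}(t,x,v) + \sqrt{1+|v|^2}\,\omega(x,v)$, and plugging it back gives the decomposition (\ref{dec26eqn1}) with the explicit choice $e_1^\rho = (1+|v|^2)A^\rho$ and $e_2^\rho = \sqrt{1+|v|^2}\,\omega\, A^\rho - \Lambda^\rho(\omega/\sqrt{1+|v|^2})$.

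The bulk of the work is verifying $|e_i^\rho(x,v)|\lesssim 1$ by case analysis on $\rho$. For $\tilde{\Omega}_i$, the triple-product identities $V_i\cdot x + X_i\cdot v = 0$ and $V_i\cdot v = 0$ force $\tilde{\Omega}_i(\omega)\equiv 0$, so both coefficients vanish identically. For the low-$|v|$ fields $\Gamma_{i+8}$ and $\Gamma_{i+11}$, the cutoff $\psi_{\leq 0}(|v|)$ makes every factor $1/(1+|v|^2)$ harmless. For the purely spatial large-$|v|$ fields $\Gamma_2$ and $\Gamma_{i+5}$, one has $A^\rho \equiv 0$, and the explicit formulas
\[
\nabla_v\omega_+ = \frac{x\,\omega_+}{\sqrt r},\qquad \nabla_x\omega_+ = \frac{v\,\omega_+ + x}{\sqrt r},\qquad r := (x\cdot v)^2+|x|^2,
\]
combined with the orthogonality $\tilde V_i\perp v$, reduce the remaining spatial derivatives of $\omega/\sqrt{1+|v|^2}$ to bounded ratios of the form $(\tilde V_i\cdot x)/\sqrt r$ and $|v|/\sqrt{1+|v|^2}$; meanwhile on the support of $\psi'_{\geq 0}(r)$ one has $r\sim 1$, hence $|x|, |x\cdot v|, \omega_+ \lesssim 1$ and all pieces are trivially controlled.

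The main subtlety, and what drives the whole construction, is the case $\Lambda^\rho = \Gamma_1 = \psi_{\geq 1}(|v|)\widehat{S}^v$ (and similarly $\Gamma_{i+2}$). Here $A^\rho = -2|v|\psi_{\geq 1}(|v|)/(1+|v|^2)^2$ is nonzero and, written term by term, $e_2^\rho$ is a sum of individually unbounded pieces of size $|x|/|v|$. On the region where $\psi_{\geq 0}(r)\equiv 1$, however, a direct computation using the formulas above shows the three pieces telescope exactly via the identities $\omega_+ - \sqrt r = x\cdot v$ and $|v|\,\tilde v\cdot x = v\cdot x$, yielding $e_2^\rho \equiv 0$ on this region; on the complementary support of $\psi'_{\geq 0}(r)$ one again has $r\sim 1$ and boundedness is automatic. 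This exact cancellation is precisely the reason for the $-\omega\nabla_v\hat v\cdot\nabla_x$ correction built into $K_v$, and it is the main technical obstacle of the proof. Once it is in hand, the polynomial derivative bound in (\ref{jan23eqn11}) follows by induction on $|\beta|$: each $\Lambda^\iota$ with $|\iota|=1$ is first order with coefficients of size at most $(1+|x|)(1+|v|)$ (see Lemma \ref{twodecompositionlemma}), and the cutoffs $\psi_{\leq 0}(|v|),\psi_{\geq 1}(|v|)$ forbid any singular behavior at $v=0$, so iterating $|\beta|$ times produces at most a factor $(1+|x|)^{|\beta|}(1+|v|)^{|\beta|}$.
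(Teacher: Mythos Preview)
Your argument is correct and takes a genuinely different route from the paper. The paper (Lemma~\ref{derivativeofmodulation} in the Appendix) computes the $D_v$--type operators $\tilde v\cdot D_v$ and $\tilde V_i\cdot D_v$ acting on $\omega_+$ and shows directly the factorization
\[
\big(\tilde v\cdot D_v\big)\omega_+=\frac{-\tilde v\cdot x}{\sqrt{(x\cdot v)^2+|x|^2}}\Big(\tfrac{t}{\sqrt{1+|v|^2}}-\omega_+\Big)-\frac{t|v|}{(1+|v|^2)^{3/2}},
\]
and analogously for $\tilde V_i\cdot D_v$; it then recovers $\Lambda^\rho(\tilde d)$ from these via $\widehat{S}^v=\tilde v\cdot D_v+\frac{\tilde d}{\sqrt{1+|v|^2}}S^x$ (and similarly for $\widehat{\Omega}^v_i$). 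Your affine--in--$t$ substitution $t=(1+|v|^2)\tilde d+\sqrt{1+|v|^2}\,\omega$ attacks $\Lambda^\rho(\tilde d)$ directly and packages the same algebraic miracle as the vanishing $e_2^\rho\equiv 0$ on $\{\psi_{\ge 0}(r)=1\}$ for $\widehat{S}^v$ (your computation of this cancellation is correct, and the $\widehat{\Omega}^v_i$ case is in fact easier since $A^\rho=0$). What the paper's organization buys is that it simultaneously proves the companion identity $D_v(\tilde d)=\hat e_1\tilde d+\hat e_2$, which feeds into the commutator Lemma~\ref{firstordercommutationlemma}; what your approach buys is a cleaner, self--contained derivation of (\ref{dec26eqn1}). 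One small correction: the singularity you must avoid in the higher--derivative induction is at $r=(x\cdot v)^2+|x|^2\to 0$ (handled by the cutoff $\psi_{\ge 0}(r)$ in the definition of $\omega$), not only at $v=0$; once this is noted, your induction goes through.
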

\begin{proof}
Postponed to Appendix \ref{toolkit}. See the proof of Lemma \ref{derivativeofmodulation} in Appendix \ref{toolkit}.
\end{proof}

Now we are ready to introduce the high order commutation rules, which  are basic tools to compute the equation satisfied by the high order derivatives of the profile   ``$g(t,x,v)$''.

We will be very  precise about the estimate   of  the top order coefficients, which matter very much in the energy estimate. However, the estimate of  the lower order coefficients in (\ref{sepeqn524}), i.e., the case when $|\kappa|\leq |\beta|-1$, are rough because we will set a hierarchy for   different orders of the derivatives of the profile $g(t,x,v)$. 

\begin{lemma}\label{summaryofhighordercommutation}
For any $i\in \{1,\cdots,7\}$, and $\beta\in \mathcal{S},$
we have
\be\label{noveq521}
[X_i, \Lambda^\beta]= Y_i^\beta +   \sum_{\kappa\in \mathcal{S}, |\kappa|\leq |\beta|-1 } \big[ \tilde{d}(t,x,v)\tilde{e}_{\beta,i}^{\kappa, 1}(x,v) +\tilde{e}_{\beta,i}^{\kappa, 2}(x,v)\big]  \Lambda^\kappa,
\ee
where $Y_i^\beta$ denotes the top order commutators and it is given as follows, 
\be\label{sepeqn522}
Y_i^\beta= \sum_{\kappa\in \mathcal{S}, |\kappa|=|\beta|,|i(\kappa)-i(\beta)|\leq 1   } \big[  \tilde{d}(t,x,v)\tilde{e}_{\beta,i}^{\kappa, 1}(x,v) +\tilde{e}_{\beta,i}^{\kappa, 2}(x,v)\big]  \Lambda^\kappa,
\ee
where $i(\kappa)$ denotes the total number of vector fields $\Omega_i^x$ in $\Lambda^\kappa$.

For  any $  i\in \{1, \cdots, 7\}$, and $\kappa\in \mathcal{S}$,   the following   estimates hold for the coefficients   $\tilde{e}_{\beta,i}^{\kappa, 1}(x,v) $ and  $\tilde{e}_{\beta,i}^{\kappa,2}(x,v) $,
\be\label{sepeqn524}
 |\tilde{e}_{\beta,i}^{\kappa, 1}(x,v) |+|\tilde{e}_{\beta,i}^{\kappa, 2}(x,v) |\lesssim (1+|x|)^{|\beta|-|\kappa| +2} (1+|v|)^{|\beta|-|\kappa| +4}, \quad \textup{when\,\,}  |\kappa|\leq |\beta|-1,
\ee
 \be\label{sepeqn904}
|\tilde{e}_{\beta,i}^{\kappa, 1}(x,v)| + |\tilde{e}_{\beta,i}^{\kappa, 2}(x,v) |\lesssim \min\{(1+|v|)^{1+ c_{\textup{vn}}(\kappa)-c_{\textup{vn}}(\beta)}, (1+|v|)^{  c_{\textup{vm}}(\kappa)-c_{\textup{vm}}(\beta)}\},\,\, \textup{when}\, |\kappa|=|\beta|,
\ee
\be\label{jan15eqn41}
 |\Lambda^\rho \tilde{e}_{\beta,i}^{\kappa, 1}(x,v) |+|\Lambda^{\rho} \tilde{e}_{\beta,i}^{\kappa, 2}(x,v) |\lesssim (1+|x|)^{|\rho|+ |\beta|-|\kappa| +2} (1+|v|)^{|\rho|+|\beta|-|\kappa| +4}.
\ee
In particular, the following improved estimate holds if $i=1$,
 \be\label{jan31eqn110}
|\tilde{e}_{\beta,i}^{\kappa, 1}(x,v)| + |\tilde{e}_{\beta,i}^{\kappa, 2}(x,v) |\lesssim (1+|v|)^{ -1+ c_{\textup{vn}}(\kappa)-c_{\textup{vn}}(\beta)},\quad \textup{when\,\,} |\kappa|=|\beta|. 
\ee
 Moreover, if $i(\kappa)-i(\beta)>0$ and $|\kappa|=|\beta|$,  where $i(\kappa)$ denotes the total number of vector fields $\Omega_i^x$ in $\Lambda^\kappa$, then   the following improved estimate holds for the coefficients $\tilde{e}_{\beta,i}^{\kappa, 2}(x,v)$ of the commutation rule in \textup{(\ref{noveq521})}, 
\be\label{april5eqn1}
 |\tilde{e}_{\beta,i}^{\kappa, 2}(x,v) |\lesssim (1+|v|)^{-1+c_{\textup{vm}}(\kappa)-c_{\textup{vm}}(\beta)}.
\ee
\end{lemma}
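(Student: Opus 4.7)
\vspace{0.5\baselineskip}
\noindent\textbf{Proof plan.} The argument is a straightforward but bookkeeping-heavy induction on $|\beta|$, building on the first-order commutation rule in Lemma \ref{firstordercommutationlemma} and on the formula in Lemma \ref{derivativesofcoefficient} for how $\Lambda^\rho$ acts on the inhomogeneous modulation $\tilde{d}(t,x,v)$. The base case $|\beta|=1$ is exactly Lemma \ref{firstordercommutationlemma}: the commutator $[X_i,\Lambda^\rho]$ is a linear combination of $\Lambda^\kappa$'s ($|\kappa|=1$) with coefficients of the form $\tilde{c}_i^{\rho,\kappa}(x,v)\tilde{d}(t,x,v)+\hat{c}_i^{\rho,\kappa}(x,v)$, and the stated bounds \eqref{sepeqn904}, \eqref{jan31eqn110}, \eqref{april5eqn1} coincide with \eqref{feb14eqn2}, \eqref{feb14eqn4}, \eqref{april5eqn3}.

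\vspace{0.3\baselineskip}
\noindent\textbf{Inductive step.} Given $\beta\in\mathcal{S}$ with $|\beta|\geq 2$, use the unique decomposition \eqref{uniqueexpansion} to write $\Lambda^\beta\thicksim \Lambda^\rho\Lambda^{\beta'}$ with $|\rho|=1$ and $|\beta'|=|\beta|-1$, and expand
\[
[X_i,\Lambda^\rho\Lambda^{\beta'}]=[X_i,\Lambda^\rho]\Lambda^{\beta'}+\Lambda^\rho[X_i,\Lambda^{\beta'}].
\]
Applying Lemma \ref{firstordercommutationlemma} to the first term and the inductive hypothesis to the second, the result is a sum
\[
\sum_{\mu\in\mathcal{K},|\mu|=1}\tilde{d}_{\rho,i}^{\mu}(t,x,v)\Lambda^\mu\Lambda^{\beta'}
+\Lambda^\rho\Bigl(Y_i^{\beta'}+\sum_{|\kappa'|\leq|\beta'|-1}\bigl[\tilde d\,\tilde e_{\beta',i}^{\kappa',1}+\tilde e_{\beta',i}^{\kappa',2}\bigr]\Lambda^{\kappa'}\Bigr).
\]
In the first sum, $\Lambda^\mu\Lambda^{\beta'}$ is already in the canonical form \eqref{uniqueexpansion} and has length $|\beta|$; its coefficient inherits its size and $|\tilde d|$-dependence from \eqref{feb14eqn1}--\eqref{feb14eqn3}. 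For the second piece, distribute $\Lambda^\rho$ using the Leibniz rule \eqref{Leibnizrule}: either $\Lambda^\rho$ passes through onto the tail vector fields, contributing terms of length $|\beta|$ (top-order) or, when $\rho$ first commutes with an internal vector field in $\Lambda^{\beta'}$ through a chain of first-order commutators whose coefficients are handled by Lemma \ref{firstordercommutationlemma}, contributes strictly lower-order terms; or $\Lambda^\rho$ lands on a coefficient, generating either $\Lambda^\rho\tilde e_{\beta',i}^{\kappa',j}$, handled by \eqref{jan15eqn41}, or $\Lambda^\rho\tilde d$, handled by \eqref{dec26eqn1}--\eqref{jan23eqn11}. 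In both of these latter possibilities the output has length $\leq|\beta|-1$ and falls into the lower-order sum.

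\vspace{0.3\baselineskip}
\noindent\textbf{Verifying the coefficient bounds.} Additivity of the indices $c_{\textup{vn}}$, $c_{\textup{vm}}$, $i(\cdot)$ along the decomposition $\beta=\rho\circ\beta'$ converts the $v$-weight estimate \eqref{feb14eqn2} for the first-order step into the inductive estimate \eqref{sepeqn904}: writing $\kappa=\mu\circ\kappa'$ one has $c(\kappa)-c(\beta)=[c(\mu)-c(\rho)]+[c(\kappa')-c(\beta')]$, and each factor contributes a weight from the corresponding step. The improvement \eqref{jan31eqn110} for $i=1$ follows identically from \eqref{feb14eqn4}, since the ``$-1$'' improvement only needs to be present at the one step where the $X_1$ actually engages. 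The rough weight \eqref{sepeqn524} for lower-order terms is obtained from \eqref{feb14eqn3}, \eqref{jan15eqn41}, and \eqref{jan23eqn11} by induction, keeping track that each loss of a vector field costs at most a power of $(1+|x|)(1+|v|)$. The improvement \eqref{april5eqn1} for $i(\kappa)>i(\beta)$ in top-order terms uses \eqref{april5eqn3} in the step that introduces the extra $\Omega_i^x$: the single first-order commutator responsible for $i(\kappa)-i(\beta)>0$ contributes the purely ``non-$\tilde d$'' coefficient $\hat c_i^{\rho,\kappa}$, which by \eqref{april5eqn3} satisfies the sharpened bound, while all remaining commutators preserve $i(\cdot)$.

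\vspace{0.3\baselineskip}
\noindent\textbf{Main obstacle.} The heart of the argument is not the algebra but the combinatorial bookkeeping: one must simultaneously track (i) whether a given term is truly top-order or is demoted to the lower-order bucket, (ii) the constraint $|i(\kappa)-i(\beta)|\leq 1$ on top-order terms (which reflects the fact that at each first-order commutation the count $i(\cdot)$ can shift by at most one through the rotational vector field interactions), and (iii) the separate improvements for $i=1$ and for $i(\kappa)>i(\beta)$. The cleanest way to organize this is to fix at each induction step the canonical expansion \eqref{uniqueexpansion}, collect top-order contributions into $Y_i^\beta$ only when no $\Lambda^\rho$ has hit a coefficient along the chain of commutations, and to dump every other contribution into the lower-order sum where the rough bounds \eqref{sepeqn524}, \eqref{jan15eqn41} are sufficient.
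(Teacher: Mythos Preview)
Your proposal is correct and takes essentially the same approach as the paper: induct on $|\beta|$ using the first-order commutation rule (Lemma \ref{firstordercommutationlemma}) together with Lemma \ref{derivativesofcoefficient} for $\Lambda^\rho\tilde d$, separating at each step the top-order contributions (no derivative hits a coefficient) from the lower-order ones. The only cosmetic difference is that the paper peels from the right, writing $[X_i,\Lambda^{\iota_1\cdots\iota_{|\beta|-1}}]\Lambda^{\iota_{|\beta|}}+\Lambda^{\iota_1\cdots\iota_{|\beta|-1}}[X_i,\Lambda^{\iota_{|\beta|}}]$ and unrolling this into the explicit formula \eqref{highordercommutation}--\eqref{sepeqn510}, whereas you peel from the left and keep the argument inductive; one small clause in your write-up (``when $\rho$ first commutes with an internal vector field in $\Lambda^{\beta'}$'') is superfluous and slightly confused, since after the inductive hypothesis $\Lambda^\rho$ simply prepends to $\Lambda^{\kappa'}$ to give $\Lambda^{\rho\circ\kappa'}$ with no further commutation needed.
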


\begin{proof}
From the equality (\ref{feb14eqn1}) in Lemma \ref{firstordercommutationlemma}, we have
\[
[X_i, \Lambda^\beta]= [X_i, \Lambda^{\iota_1}\cdots \Lambda^{\iota_{|\beta|-1}} ] \Lambda^{\iota_{|\beta|}} + \Lambda^{\iota_1}\cdots \Lambda^{\iota_{|\beta|-1}}[X_i, \Lambda^{\iota_{|\beta|}}]
\]
\[
= [X_i, \Lambda^{\iota_1}\cdots   \Lambda^{\iota_{|\beta|-1}} ] \Lambda^{\iota_{|\beta|}} + \Lambda^{\iota_1}\cdots \Lambda^{\iota_{|\beta|-1}}\big( \sum_{\gamma_1\in \mathcal{K}, |\gamma_1|=1}\tilde{d}_{\iota_{|\beta|}, i}^{\gamma_1}(t,x,v) \Lambda^{\gamma_1} \big)
\]
\[
= [X_i, \Lambda^{\iota_1}\cdots   \Lambda^{\iota_{|\beta|-1}} ] \Lambda^{\iota_{|\beta|}} + \sum_{\kappa_1,\rho_1\in \mathcal{S}, \kappa_1+\rho_1=\iota_1\circ\cdots \circ\iota_{|\beta|-1}} \sum_{\gamma_1 \in \mathcal{K},|\gamma_1|=1}\big( \Lambda^{\kappa_1}( \tilde{d}^{\gamma_1}_{\iota_{|\beta|},i}(t,x,v))  \Lambda^{\rho_1}\Lambda^{\gamma_1} \big).
\]
By induction, from the above equality,  we have 
\[ 
[X_i, \Lambda^{\beta}]=     \sum_{j=2\cdots,|\beta|-1 }\sum_{
\begin{subarray}{c}
 \kappa_j, \rho_j\in \mathcal{S}, |\kappa_j|\geq 1,
\kappa_j+\rho_j=\iota_{1}\circ\cdots \circ\iota_{|\beta|-j}\\
\end{subarray}
 } \sum_{\gamma_j\in \mathcal{K}, |\gamma_j|=1} \Lambda^{\kappa_j}( \tilde{d}^{\gamma_j}_{\iota_{|\beta|-j+1},i}(t,x,v))   \Lambda^{\rho_j\circ \gamma_j\circ \iota_{|\beta|-j+2}\circ \cdots \iota_{|\beta|}}  
\]
\be\label{highordercommutation}
+ \sum_{
\begin{subarray}{c}
 \kappa_1, \rho_1\in \mathcal{S}, |\kappa_1|\geq 1,
\kappa_1+\rho_1=\iota_{1}\circ\cdots \circ\iota_{|\beta|-1}\\
\end{subarray}
 } \sum_{\gamma_1\in \mathcal{K},|\gamma_1|=1}   \Lambda^{\kappa_1}( \tilde{d}^{\gamma_1}_{\iota_{|\beta|},i}(t,x,v))   \Lambda^{\rho_1\circ \gamma_1}  
+Y_i^\beta ,
\ee
where $\tilde{d}_{\iota,i}^\gamma(t,x,v)$ is defined in (\ref{feb14eqn1}) and   $Y_i^\beta$ is defined as follows, 
\[
Y_i^\beta:= \sum_{\gamma\in \mathcal{K}, |\gamma|=1}
  \tilde{d}^{\gamma }_{\iota_{1},i}(t,x,v)    \Lambda^{\gamma \circ \iota_2\circ \cdots \circ \iota_{|\beta|}} +  \tilde{d}^{\gamma }_{\iota_{|\beta|},i}(t,x,v)  \Lambda^{\iota_{1}\circ\cdots\iota_{|\beta|-1} \circ \gamma} 
\]
\be\label{sepeqn510}
 + \sum_{i=2\cdots,|\beta|-1 } \tilde{d}_{\iota_{|\beta|-i +1}, i}^{\gamma}(t,x,v) \Lambda^{\iota_{1}\circ\cdots\iota_{|\beta|-i}\circ \gamma \circ \iota_{|\beta|-i+2}\circ\cdots \iota_{|\beta|}}.
\ee
From (\ref{highordercommutation}) and (\ref{dec26eqn1}), we have
\[ 
[X_i, \Lambda^{\beta}]=     \sum_{
\begin{subarray}{c}
j=2\cdots,|\beta|-1\\
 \kappa_j, \rho_j\in \mathcal{S}, |\kappa_i|\geq 1\\
\kappa_j+\rho_j=\iota_{1}\circ\cdots \circ\iota_{|\beta|-i}\\
\end{subarray}
 } \sum_{\gamma_j\in \mathcal{K}, |\gamma_i|=1}  \big(\tilde{d}(t,x,v)e_{\iota_{|\beta|-j+1},i}^{\gamma_j,\kappa_j;1}(x,v)+ e_{\iota_{|\beta|-j+1},i}^{\gamma_j,\kappa_j;2}(x,v) \big)    \Lambda^{\rho_j\circ \gamma_j\circ \iota_{|\beta|-j+2}\circ \cdots \iota_{|\beta|}}\]
 \[  + \sum_{
\begin{subarray}{c}
 \kappa_1, \rho_1\in \mathcal{S}, |\kappa_1|\geq 1, 
\kappa_1+\rho_1=\iota_{1}\circ\cdots \circ\iota_{|\beta|-1}\\
\end{subarray}
 } \sum_{\gamma_1\in \mathcal{K},|\gamma_1|=1}  \big(\tilde{d}(t,x,v)e_{\iota_{|\beta|},i}^{\gamma_1,\kappa_1;1}(x,v) 
  + e_{\iota_{|\beta|},i}^{\gamma_1,\kappa_1;2}(x,v) \big)   \Lambda^{\rho_1\circ \gamma_1}  
+Y_i^\beta.
\]

Hence,  our desired equality (\ref{noveq521}) holds for some determined coefficients $\tilde{e}_{\beta,k}^{\kappa, 1}(x,v)$ and $\tilde{e}_{\beta,k}^{\kappa, 2}(x,v)$, whose explicit formulas are not pursued here. 

Recall (\ref{sepeqn510}).  Our desired equality (\ref{sepeqn522}) and  desired estimates (\ref{sepeqn904}), (\ref{jan31eqn110}), and (\ref{april5eqn1}) hold   from the decomposition  (\ref{feb14eqn1}) and the estimates (\ref{feb14eqn2}), (\ref{feb14eqn4}), and (\ref{april5eqn3}) in Lemma \ref{firstordercommutationlemma}. The desired estimate (\ref{sepeqn524}) follows from (\ref{feb14eqn1}), (\ref{feb14eqn2}), and  (\ref{feb14eqn3}) in Lemma \ref{firstordercommutationlemma} and (\ref{dec26eqn1}) and (\ref{jan23eqn11}) in Lemma \ref{derivativesofcoefficient}.
\end{proof}	
 
\section{Set-up of the energy estimate}\label{setup}

\subsection{The equation satisfied by the high order derivatives of the profile $g(t,x,v)$}

In this subsection, our main goal is to compute the equation satisfied by the high order derivatives of the Vlasov-Nordstr\"om system. 

Recall (\ref{vlasovnordstrom}). For the sake of readers, we restate the equation satisfied by ``$f(t,x,v)$'' as follows,
\[
\p_t f + \hat{v}\cdot \nabla_x f =  \big((\p_t + \hat{v}\cdot \nabla_x)\phi(t,x)\big)(4 f + v\cdot \nabla_v f )+ \frac{1}{\sqrt{1+|v|^2}}\nabla_x \phi \cdot \nabla_v f 
\]
For any $\alpha \in \mathcal{B}$, we define 
\be\label{dec29eqn1}
f^{\alpha}(t,x,v):=\tilde{\Gamma}^\alpha f(t, x,v), \quad \phi^\beta(t, x):= \Gamma^\beta \phi(t,x). 
\ee
Note that the following equality holds,  
\[
\tilde{\Gamma}^\alpha\big((\p_t +\hat{v}\cdot \nabla_v) f \big)=  \sum_{\beta, \gamma\in \mathcal{B}, \beta+\gamma=\alpha} \tilde{\Gamma}^\beta \big((\p_t + \hat{v}\cdot \nabla_x)\phi(t,x)\big)\tilde{\Gamma}^\gamma  (4 f + v\cdot \nabla_v f ) 
+ \frac{1}{\sqrt{1+|v|^2}}\Gamma^\beta\big( \nabla_x \phi\big) \cdot \tilde{\Gamma}^\gamma \nabla_v f.
\]
As a result of direct computations, the following commutation rules hold for any $i,j\in\{1,2,3\}$,
\be\label{commutationrules1}
[\p_{v_i}, S]=0, \,\,\, [\p_{v_i}, \tilde{\Omega}_j]= \p_{v_i} V_j\cdot \nabla_v,  \,\,\, [\p_{v_i}, \tilde{L}_j] = \frac{v_i}{\sqrt{1	+|v|^2}} \p_{v_j}, 
\ee
\be\label{feb12eqn51}
[\p_{x_i}, S]=\p_{x_i}, \quad [\p_{x_i}, \tilde{\Omega}_j]= \p_{x_i} X_j\cdot \nabla_x, \quad [\p_{x_i}, \tilde{L}_j]= \delta_{ij}\p_t,
\ee
\be\label{jan31eqn21}
[v\cdot \nabla_v, \tilde{\Omega}_j]=0,\quad [v\cdot\nabla_v, S]=0,\quad [v\cdot\nabla_v, \tilde{L}_j]=\frac{-1}{\sqrt{1+|v|^2}}\p_{v_j}. 
\ee
From the commutation rules in (\ref{eqn825}), and  (\ref{dec27eqn1}) and the above commutation rules,  we  have
\[
 (\p_t +\hat{v}\cdot \nabla_v) f^\alpha  =  \sum_{\beta, \gamma\in \mathcal{B}, |\beta|+|\gamma|\leq |\alpha|}     \big( (\p_t+\hat{v}\cdot \nabla_x)\phi^\beta (t,x)   \big)\big( a_{\alpha;\beta, \gamma}^1(v) f^\gamma+ a_{\alpha;\beta, \gamma}^2(v) v\cdot\nabla_v f^\gamma\big)  \]
\be\label{jan29eqn81}
+   \big(  a_{\alpha;\beta, \gamma}^3(v)  \nabla_x \phi^\beta (t,x) + a_{\alpha;\beta, \gamma}^4(v)  \p_t \phi^\beta (t,x)  \big)  \cdot  \nabla_v f^\gamma,
\ee
where $a_{\alpha;\beta, \gamma}^i(v), i\in\{1,2,3,4\}$, are some determined coefficients, whose explicit formulas are not pursued here.  
The following rough estimate holds for any $\beta, \gamma\in\mathcal{B}$, s.t., $|\beta|+|\gamma|\leq |\alpha|$, 
\be\label{jan31eqn91}
|a_{\alpha;\beta, \gamma}^1(v) |+ |a_{\alpha;\beta, \gamma}^2(v)| +(1+|v|)\big( |a_{\alpha;\beta, \gamma}^3(v)| + |a_{\alpha;\beta, \gamma}^4(v)|\big) \lesssim 1. 
\ee

Similar to the profiles defined in subsection \ref{firstlevelprofile}, we  define the profile of    $f^{\alpha}(t,x,v)$ as follows, 
\[
g^\alpha(t,x,v):= f^\alpha(t,x+\hat{v}t,v), \quad \Longrightarrow f^\alpha(t,x,v)= g^\alpha(t,x-\hat{v}t,v). 
\]
From (\ref{jan29eqn81}), we can compute the equation satisfied by the profile $g^\alpha(t,x,v)$ as follows, 
 \[
 \p_t g^\alpha(t,x,v)=     \sum_{\beta, \gamma\in \mathcal{B}, |\beta|+|\gamma|\leq |\alpha|}     \big( (\p_t\phi^\beta   +\hat{v}\cdot \nabla_x\phi^\beta )(t,x+\hat{v}t)   \big)\big( a_{\alpha;\beta, \gamma}^1(v) g^\gamma  + a_{\alpha;\beta, \gamma}^2(v) v\cdot D_v g^\gamma\big)(t,x,v) \]
\be\label{jan30eqn1}
+   \big(  a_{\alpha;\beta, \gamma}^3(v)  \nabla_x \phi^\beta  + a_{\alpha;\beta, \gamma}^4(v)  \p_t \phi^\beta  \big)(t,x+\hat{v}t)   \cdot  D_v g^\gamma(t,x,v),
\ee

Now, we apply the second set of vector fields on $g^\alpha(t,x,v)$.  
For any $\beta \in \mathcal{S} $ and any $\alpha\in \mathcal{B}$, we define 
\be\label{jan29eqn20}
g^\alpha_\beta(t,x,v):= \Lambda^{\beta} g^\alpha(t,x,v),\quad \beta\thicksim \iota_1\circ\iota_2\circ\cdots\circ\iota_{|\beta|},\quad \iota_i\in \mathcal{K}, |\iota_i|=1,  \quad i=1,\cdots, |\beta|.
\ee
Note that $[\p_t, \Lambda^\beta]=0$. From (\ref{jan30eqn1}) and (\ref{jan29eqn20}), based on the order of derivatives, we classify the nonlinearities of  $\p_t g^\alpha_\beta(t, x,v)$ as follows, 
\be\label{jan29eqn21}
\p_t g^\alpha_\beta(t,x,v)= K (t,x+\hat{v}t,v)\cdot D_v g^\alpha_\beta(t,x,v)+ \textit{h.o.t}_{\beta}^\alpha(t,x,v) + \textit{l.o.t}_{\beta}^\alpha(t,x,v),
\ee
where
\be\label{jan31eqn1}
 K(t,x+\hat{v}t,v):= v\big( \p_t\phi(t, x+\hat{v}t) + \hat{v}\cdot \nabla_x\phi(t,x+\hat{v}t)\big)  + \frac{1}{\sqrt{1+|v|^2}} \nabla_x\phi(t,x+\hat{v}t), 
\ee
and  ``$\textit{h.o.t}_{\beta}^\alpha(t,x,v)$'' denotes all the terms in which the total number of derivatives act  on $g(t,x,v)$ is  ``$|\alpha|+|\beta|$''  and  ``$\textit{l.o.t}_{\beta}^\alpha(t,x,v)$'' denotes all the terms in which  the total number of derivatives  act  on $g(t,x,v)$  is strictly less than `` $|\alpha|+|\beta|$''.  We remind readers that  the case when the total number of derivatives act on the   scalar field is $|\alpha| +|\beta|$ is included in ``$\textit{l.o.t}_{\beta}^\alpha(t,x,v)$''.

For any $\alpha\in \mathcal{B}, \beta\in \mathcal{S},$ from the decomposition of $D_v$ in (\ref{noveq1}),  we   decompose ``$K (t,x+\hat{v}t,v)\cdot D_v g^\alpha_\beta(t,x,v)$''    as follows, 	
\be\label{april10eqn21}
K (t,x+\hat{v}t,v)\cdot D_v g^\alpha_\beta(t,x,v)=  \sum_{i=1,\cdots, 7}K_i(t,x+\hat{v}t,v)\cdot X_i g^\alpha_\beta(t,x,v),
\ee
where
\be\label{jan31eqn61}
K_1(t,x+\hat{v}t,v)=\psi_{\geq -1}(|v|) |v|\big( \p_t\phi(t, x+\hat{v}t) + \hat{v}\cdot \nabla_x\phi(t,x+\hat{v}t)\big)+ \frac{m^2\alpha_1(v)}{\sqrt{1+|v|^2}} \nabla_x\phi(t,x+\hat{v}t), 
\ee
\be\label{jan31eqn62}
K_{i+1} (t,x+\hat{v}t,v)= \frac{m^2\alpha_{i+1}(v)}{\sqrt{1+|v|^2}} \nabla_x\phi(t,x+\hat{v}t), \quad i\in\{1,2,3\}, 
\ee
\be\label{jan31eqn63}
K_{i+4}(t,x+\hat{v}t,v) = v_i\psi_{\leq 2}(|v|)\big( \p_t\phi(t, x+\hat{v}t) + \hat{v}\cdot \nabla_x\phi(t,x+\hat{v}t)\big)+ \frac{m^2\alpha_{i+4}(v)}{\sqrt{1+|v|^2}} \nabla_x\phi(t,x+\hat{v}t), \quad i\in\{1,2,3\}. 
\ee

Based on the source of the high order terms, recall (\ref{noveq1}),  we classify the high order terms $\textit{h.o.t}_{\beta}^\alpha(t,x,v)$ as follows, 
\be\label{jan30eqn100}
\textit{h.o.t}_{\beta }^\alpha(t,x,v)=\sum_{i=1,2,3} \textit{h.o.t}_{\beta;i}^\alpha(t,x,v)
\ee
where
\be\label{jan30eqn51}
\textit{h.o.t}_{\beta;1}^\alpha(t,x,v)=\sum_{
\begin{subarray}{c}
\iota+\kappa=\beta, |\iota|=1,
\iota, \kappa\in \mathcal{S}, i=1,\cdots,7\\
\end{subarray}} \Lambda^\iota(K_i(t,x+\hat{v}t, v)) X_i g^\alpha_\kappa(t,x, v),
\ee
\[
\textit{h.o.t}_{\beta;2}^\alpha(t,x,v)=
    \sum_{|\rho|  \leq 1,|\gamma|=|\alpha|-1}   a_{\alpha;\rho, \gamma}^2(v)   \big(  \p_t\phi^\rho  (t,x+\hat{v}t)+ \hat{v}\cdot \nabla_x \phi^\rho  (t,x+\hat{v}t) \big)  v\cdot D_v g_{\beta}^\gamma(t,x,v)\big)+   \big(  a_{\alpha;\rho, \gamma}^3(v) \]
\be\label{jan30eqn52}
 \times  \nabla_x \phi^\rho (t,x+\hat{v}t) +  a_{\alpha;\rho, \gamma}^4(v)   \p_t \phi^\rho (t,x+\hat{v}t) \big)  \cdot  D_v g_\beta^\gamma(t,x,v)+ 4 \big(  \p_t\phi  (t,x+\hat{v}t) + \hat{v}\cdot \nabla_x \phi  (t,x+\hat{v}t) \big)  g_\beta^\alpha(t,x,v),
\ee
 \be\label{jan30eqn53}
\textit{h.o.t}_{\beta;3}^\alpha(t,x,v)=  \sum_{i=1,\cdots, 7}   K_i (t,x+\hat{v}t,v)    Y_i^\beta  g^\alpha(t,x,v).
\ee
 
 Similarly, we classify the low order terms ``$\textit{l.o.t}_{\beta}^\alpha(t,x,v)$'' as follows,
 \be\label{jan31eqn151}
\textit{l.o.t}_{\beta}^\alpha(t,x,v)=\sum_{i=1,\cdots, 4} \textit{l.o.t}_{\beta;i}^\alpha(t,x,v),
\ee 
where
\be\label{jan31eqn152}
 \textit{l.o.t}_{\beta;1}^\alpha(t,x,v)=\sum_{i=1,\cdots, 7}   K_i (t,x+\hat{v}t,v)  \big([\Lambda^\beta, X_i]- Y_i^\beta \big)  g^\alpha(t,x,v) ,
\ee
\[
 \textit{l.o.t}_{\beta;2}^\alpha(t,x,v)=    \sum_{|\rho|  \leq 1,|\gamma|=|\alpha|-1}       a_{\alpha;\rho, \gamma}^2(v)   \big(  \p_t\phi^\rho  (t,x+\hat{v}t)+ \hat{v}\cdot \nabla_x \phi^\rho  (t,x+\hat{v}t) \big) [\Lambda^\beta, v\cdot D_v] g^\gamma(t,x,v)\big) 
\]
\[
  +   \big(  a_{\alpha;\rho, \gamma}^3(v)  \nabla_x \phi^\rho   +  a_{\alpha;\rho, \gamma}^4(v)  \p_t \phi^\rho \big) (t,x+\hat{v}t)  \cdot \alpha_i(v) [\Lambda^\beta, X_i] g^\gamma(t,x,v)
\]
\be\label{jan31eqn153}  +  \sum_{
\begin{subarray}{c}
\iota+\kappa=\beta, |\iota|=1,
\iota, \kappa\in \mathcal{S}, i=1,\cdots,7\\
\end{subarray}} \Lambda^\iota(  K_i(t,x+\hat{v}t,v)) [\Lambda^\kappa, X_i] g^\alpha(t,x, v)   , 
\ee
\[
 \textit{l.o.t}_{\beta;3}^\alpha(t,x,v)= \sum_{
\begin{subarray}{c}
 \iota, \kappa\in \mathcal{S}, \beta_1, \gamma_1, \beta_2, \gamma_2\in \mathcal{B},\\ 
  |\rho|+|\beta_1|> 11,  |\beta_1|+|\gamma_1|\leq |\alpha|\\
 |\rho|+|\beta_2|> 11, |\beta_2|+|\gamma_2|\leq |\alpha|\\
 \end{subarray} } \Lambda^{\rho}\big( (\p_t\phi^{\beta_1} (t,x+\hat{v}t)+\hat{v}\cdot \nabla_x\phi^{\beta_1} (t,x+\hat{v}t) )  \big)   \Lambda^{\kappa}\big( a_{\alpha;{\beta_1}, \gamma_1}^1(v)  \]
\be\label{jan31eqn154}  
 \times  g^{\gamma_1}(t,x,v)+ a_{\alpha;\beta_1, \gamma_1}^2(v) v\cdot D_v g^{\gamma_1}(t,x,v)\big)+   \Lambda^\rho \big(  a_{\alpha;\beta_2, \gamma_2}^3(v)  \nabla_x \phi^{\beta_2} (t,x+\hat{v}t)  \cdot\alpha_i(v) \big) \Lambda^\kappa \big(   X_i g^{\gamma_2}(t,x,v)\big),
 \ee
\[
 \textit{l.o.t}_{\beta;4}^\alpha(t,x,v)=  \sum_{
\begin{subarray}{c}
 \iota, \kappa\in \mathcal{S}, \beta_1, \gamma_1, \beta_2, \gamma_2\in \mathcal{B},\\ 
  |\rho|+|\beta_1|\leq 11,  |\beta_1|+|\gamma_1|\leq |\alpha|\\
  |\rho|+|\beta_2|\leq 11, |\beta_2|+|\gamma_2|\leq |\alpha|\\
|\gamma_1|\leq |\alpha|+|\rho|-1, |\gamma_2|\leq |\alpha|+|\rho|-2\\
 \end{subarray} }  \Lambda^{\rho}\big( (\p_t\phi^{\beta_1} (t,x+\hat{v}t)   \]
\[
 +\hat{v}\cdot \nabla_x\phi^{\beta_1} (t,x+\hat{v}t) )  \big) \Lambda^{\kappa}\big( a_{\alpha;{\beta_1}, \gamma_1}^1(v) g^{\gamma_1}(t,x,v) + a_{\alpha;\beta_2, \gamma_2}^2(v) v\cdot D_v g^{\gamma_2}(t,x,v)\big)\]
\be\label{jan31eqn155}   
+   \Lambda^\rho \big[\big( a_{\alpha;\beta_2, \gamma_2}^3(v)  \nabla_x \phi^{\beta_2} (t,x+\hat{v}t) + a_{\alpha;\beta_2, \gamma_2}^4(v)  \p_t\phi^{\beta_2} (t,x+\hat{v}t) \big) \cdot\alpha_i(v) \big] \Lambda^\kappa \big(   X_i g^{\gamma_2}(t,x,v)\big),
 \ee
where $\textit{l.o.t}_{\beta;1}^\alpha(t,x,v)$ arises from the low order commutator between $X_i$ and $\Lambda^\beta$,  see (\ref{noveq521}) in Lemma \ref{summaryofhighordercommutation}, $\textit{l.o.t}_{\beta;2}^\alpha(t,x,v)$ arises from the commutator between $X_i$ and $\Lambda^{\kappa}$, $\kappa \in \mathcal{S}, |\kappa|=|\beta|-1$ or between $X_i$ and $\Lambda^\beta$ when there is only one derivative hits on $K^i(t,x,v)$, $\textit{l.o.t}_{\beta;3}^\alpha(t,x,v)$ arises from the case when there are at least twelve derivatives hit on the  nonlinear wave part, and $\textit{l.o.t}_{\beta;4}^\alpha(t,x,v)$ denotes all the other low order terms, in which  there are at most eleven derivatives hit on the nonlinear wave part and the total number of derivatives hit on $g(t,x,v)$ is strictly less than $|\alpha|+|\beta|$.

To  study the term  of type $\Lambda^\rho(f(t,x+\hat{v}t))$ in $ \textit{l.o.t}_{\beta;3}^\alpha(t,x,v)$, see (\ref{jan31eqn154}), the following Lemma is helpful.

\begin{lemma}\label{decompositionofderivatives}
The following identity holds for any $\rho \in \mathcal{S},  $  
\be\label{sepeqn610}
\Lambda^{\rho}\big( f  (t,x+\hat{v}t  )\big)=\sum_{\iota\in \mathcal{B}, |\iota|\leq |\rho|}  {c}_{ \rho}^{\iota} (x,v)f^{  \iota}(t,x+\hat{v} t) 
	\ee
where  the coefficients $ {c}_{ \rho}^{\iota} (x,v) $, $\iota\in \mathcal{B}, |\iota|\leq |\rho|$,  satisfy the following estimate,
\be\label{sepeqn88}
| {c}_{ \rho}^{\iota} (x,v) |  \lesssim  (1+|x|)^{|\rho|-|\iota|}  (1+|v|)^{  |\rho|-|\iota|}\min\{ (1+|v|)^{-c_{\textup{vn} }(\rho) }, (1+|v|)^{|\rho|-c_{\textup{vm}}(\rho) }  \}. 
\ee
For any $\kappa\in \mathcal{S}, $ the following rough estimate holds 
\be\label{noveq781}
| \Lambda^{\kappa}\big( {c}_{ \rho}^{\iota} (x,v)\big)| \lesssim (1+|x|)^{|\kappa|+|\rho|-|\iota| } (1+|v|)^{|\kappa|+ |\rho|-|\iota| } \min\{ (1+|v|)^{-c_{\textup{vn} }(\rho) }, (1+|v|)^{|\rho|-c_{\textup{vm}}(\rho) }  \} .
\ee
Moreover, the following improved estimate holds if $\Lambda^\rho \nsim \Omega_i^x$  \textup{or} $ \widehat{\Omega}_i^v$, $i\in\{1,2,3\}, |\rho|=1$,
\be\label{april1steqn1}
|a_{\rho,\alpha}(t,x,v) |\lesssim (1+|v|)^{-c_{\textup{vm}}(\rho)}. 
\ee
\end{lemma}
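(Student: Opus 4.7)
My plan is to prove (\ref{sepeqn610}) by induction on $|\rho|$, treating the base case $|\rho|=1$ by direct verification for each of the seventeen vector fields in $\mathfrak{P}_2$. The pure spatial derivatives $\p_{x_i}$, $S^x = \tilde{v}\cdot\nabla_x$, and $\Omega_i^x = \tilde{V}_i\cdot\nabla_x$ commute with the shift $x \mapsto x + \hat{v}t$ and immediately yield the identity with bounded coefficients depending only on $v$. For the boosted rotation $\tilde{\Omega}_i = V_i\cdot\nabla_v + X_i\cdot\nabla_x$, the chain rule together with $V_i\cdot\nabla_v\hat{v} = \hat{V}_i$ (a consequence of (\ref{jan17eqn1}) since $V_i\cdot v = 0$) gives $\tilde{\Omega}_i(f(t, x+\hat{v}t)) = (X_i + t\hat{V}_i)\cdot(\nabla_x f)(t, y) = (e_i\times y)\cdot(\nabla_x f)(t, y) = (\Omega_i f)(t, x+\hat{v}t)$ with $y = x+\hat{v}t$. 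The interesting cases are $\widehat{S}^v, \widehat{\Omega}_i^v$, and $K_{v_i}$, which by design produce the modulation $\tilde{d}(t,x,v)$ times a gradient of $f$; for instance, $\widehat{S}^v(f(t, x+\hat{v}t)) = (\tilde{v}/\sqrt{1+|v|^2})\tilde{d}(t,x,v)\cdot(\nabla_x f)(t, y)$ and $K_{v_i}(f(t, x+\hat{v}t)) = (1+|v|^2)\tilde{d}(t,x,v)(\p_{v_i}\hat{v})\cdot(\nabla_x f)(t,y)$.

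To extract a pure $(x,v)$-coefficient from the hidden $t$-factor inside $\tilde{d}(t,x,v) = t/(1+|v|^2) - \omega(x,v)/\sqrt{1+|v|^2}$, I would use the trade identities $t(\p_{x_i}f)(t, y) = (L_i f)(t, y) - y_i(\p_t f)(t, y)$ followed by $t(\p_t f)(t, y) = (Sf)(t, y) - y\cdot(\nabla_x f)(t, y)$, which are direct consequences of the definitions of $S$ and $L_i$ in $\mathfrak{P}_1$. Substituting $y_i = x_i + \hat{v}_i t$ and iterating, every $t$-factor is traded for coefficients rational in $(x, v, \hat{v})$ multiplying classical vector fields $\Gamma^\iota \in \mathfrak{P}_1$ applied to $f$ and evaluated at $(t, x+\hat{v}t)$. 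This completes the base case, and the size bound (\ref{sepeqn88}) is then verified from $|\tilde{v}|, |\tilde{V}_i|, |\hat{v}|\leq 1$, the bound $|\omega(x,v)|\lesssim |x|(1+|v|)$ from (\ref{eqn50}), and the $1/\sqrt{1+|v|^2}$ weights produced by $\tilde{v}\cdot\nabla_v\hat{v}$ and $\tilde{V}_i\cdot\nabla_v\hat{v}$, which by the definition (\ref{countingnumber}) of $c_{\textup{vn}}(\rho)$ and $c_{\textup{vm}}(\rho)$ align precisely with the claimed powers.

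For the inductive step, I would write $\Lambda^\rho \sim \Lambda^{\iota_0}\Lambda^{\rho'}$ with $|\iota_0|=1$, apply the inductive hypothesis to $\Lambda^{\rho'}(f(t, x+\hat{v}t))$, and distribute $\Lambda^{\iota_0}$ via the Leibniz rule. The terms of the form $\Lambda^{\iota_0}(c^{\iota'}_{\rho'}(x,v))\cdot f^{\iota'}(t, y)$ are already of the stated form with the rough bound (\ref{noveq781}) obtained by direct differentiation of the inductive coefficients, while the terms $c^{\iota'}_{\rho'}(x,v)\cdot \Lambda^{\iota_0}(f^{\iota'}(t, y))$ are reduced by applying the base case to $f^{\iota'}$ in place of $f$, which generates a sum over $\iota''$ with $|\iota''|\leq |\iota_0|+|\iota'|\leq |\rho|$. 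The improved estimate (\ref{april1steqn1}) is obtained by noting that $\Omega_i^x$ and $\widehat{\Omega}_i^v$ are the only vector fields in $\mathfrak{P}_2$ whose base-case coefficients carry an explicit $(1+|v|)$-growth factor: respectively from $\omega(x,v)$ paired with $|\tilde{V}_i| = 1$, and from $\tilde{V}_i\cdot\nabla_v\hat{v} = \tilde{V}_i/\sqrt{1+|v|^2}$ being one weight shy of the $1/(1+|v|^2)$ available from $\tilde{v}\cdot\nabla_v\hat{v}$. The main obstacle will be bookkeeping the competing $|v|$-weights in the $\min$ bound of (\ref{sepeqn88}), ensuring that both the $c_{\textup{vn}}$ and $c_{\textup{vm}}$ powers saturate simultaneously across all compositions, which requires a careful case analysis of how products of modulation factors $\tilde{d}(t,x,v)$ and the coefficients $\tilde{v}, \tilde{V}_i, \hat{V}_i$ combine under repeated application of the vector fields.
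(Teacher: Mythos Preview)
Your inductive scaffold and the base-case identities for $\widehat{S}^v$, $\widehat{\Omega}_i^v$, $K_{v_i}$, and $\tilde{\Omega}_i$ match the paper exactly, and the Leibniz scheme for the inductive step is also the same. The gap is in how you convert $\tilde{d}(t,x,v)\nabla_x f(t,y)$ into $\Gamma^\iota f(t,y)$ with bounded coefficients. Splitting $\tilde{d} = t/(1+|v|^2) - \omega/\sqrt{1+|v|^2}$ and treating the two pieces separately fails on both ends: the $\omega$-piece leaves $\p_{x_j}f$ with a coefficient of order $|x|$, which violates (\ref{sepeqn88}) when $|\rho|=|\iota|=1$; and your $L_i$/$S$ iteration on the $t$-piece produces a residual $x_i\,(\p_t f)(t,y)$ that no further substitution removes, since $\p_t\notin\mathfrak{P}_1$. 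Indeed, solving the system $t\p_{x_i}f + y_i\p_t f = L_i f$, $t\p_t f + y\cdot\nabla_x f = Sf$ for $\nabla_x f$ forces the determinant $t^2(t^2-|y|^2)$ into the denominator, so the coefficients cannot be made ``rational in $(x,v,\hat v)$'' alone as you claim.

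The paper's fix is to keep $\tilde{d}\nabla_x$ intact and use two ingredients you did not invoke. From Lemma~\ref{somebasicidentity}, at the point $y=x+\hat v t$ one has $(t^2-|y|^2)\p_{x_i} = -y_j\Omega_{ij} + tL_i - y_iS$, which lands entirely in $\mathfrak{P}_1$ with no $\p_t$. From the light-cone factorization of Lemma~\ref{essentialidentity}, $t^2-|y|^2 = \tilde{d}(t,x,v)\bigl(t-\sqrt{1+|v|^2}\,\omega_-(x,v)\bigr)$, and (\ref{eqn50}) gives $t-\sqrt{1+|v|^2}\,\omega_-\gtrsim t+|x|$. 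Dividing, one obtains $\tilde{d}\,\p_{x_i}f(t,y)=\sum_{|\alpha|=1} b_\alpha\,\Gamma^\alpha f(t,y)$ with $|b_\alpha|\lesssim 1$ and $|\Lambda^\beta b_\alpha|\lesssim (1+|x|)^{|\beta|}(1+|v|)^{|\beta|}$; this is precisely (\ref{noveq761}). Note that these $b_\alpha$ do depend on $t$ (the statement's notation $c_\rho^\iota(x,v)$ is slightly loose), but the bounds are uniform in $t$, which is all that is used downstream. Once you plug this step in, the rest of your argument goes through.
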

\begin{proof}
 To  calculate  $\Lambda^{\rho}\big( f  (t,x+\hat{v}t  )\big), \rho \in \mathcal{S}$, we induct on the size of $|\rho|$. Since the case when $\rho=\vec{0}$ is trivial, we  first consider the case   $ |\rho|=1,$ i.e., $\rho\in \mathcal{K}/\vec{0}$. Recall (\ref{eqq10}). It is easy to see that the  following equalities hold from direct computations, 
\be\label{noveq751}
\widehat{S}^v  f(t, x+\hat{v} t )=\tilde{d}(t,x,v)(1+|v|^2)^{-1/2} S^x f(t,x+\hat{v} t),  
\ee
\be\label{noveq752}
\widehat{\Omega}_i^v  f(t, x+\hat{v} t )=\sqrt{1+|v|^2}{\tilde{d}(t,x,v)} \Omega_i^x f(t,x+\hat{v} t),\quad \tilde{\Omega}_i\big(f(t,x+\hat{v}t) \big) = (\Omega_i f)(t,x+\hat{v}t), 
\ee
\be\label{noveq753}
K_{v_i} f(t, x+\hat{v} t )= (1+|v|^2)\tilde{d}(t,x,v) \p_{v_i}\hat{v}\cdot \nabla_x f(t, x+\hat{v}t), \quad i \in\{1,2,3\}. 
\ee 
Recall the equality (\ref{eqn20}) in Lemma \ref{essentialidentity} and the equality   (\ref{sepeqn190}) in Lemma \ref{somebasicidentity}. We know that the following equality holds for some uniquely determined  coefficient $b_{\alpha}(t,x,v)$,
\[
\tilde{d}(t,x,v)\nabla_x f(t,x+\hat{v}t)= \sum_{\alpha\in \mathcal{B}, |\alpha|=1} b_{\alpha}(t,x,v) \Gamma^\alpha f(t,x+\hat{v}t), 
\]
where the coefficient $b_{\alpha}(t,x,v)$ satisfies the following rough estimate, 
\be\label{noveq761}
|b_{\alpha}(t,x,v)|\lesssim 1, \quad |\Lambda^{\beta} b_{\alpha}(t,x,v)|\lesssim (1+|x|)^{|\beta|}(1+|v|)^{|\beta|}, \quad \textup{where\,} \beta\in \mathcal{S}.
\ee
From the above rough estimate (\ref{noveq761}) and the equalities (\ref{noveq751}), (\ref{noveq752}), and (\ref{noveq753}), we know that our desired equality (\ref{sepeqn610}) and the desired estimates (\ref{sepeqn88}), (\ref{noveq781}) and (\ref{april1steqn1}) hold for the case  $ |\rho|=1.$

 Now, we proceed to  consider the case when $\rho\in \mathcal{S}, |\rho|> 1$. Recall (\ref{uniqueexpansion}), we have
\[
\rho\thicksim \iota_1\circ \cdots \iota_{|\rho|}, \quad \iota_i \in \mathcal{K}/\vec{0},\quad i =1,\cdots, |\rho|.
\]
From the equality (\ref{sepeqn610}) for the case $|\rho|=1$ and the above equivalence relation, we have
\[
\Lambda^\rho\big( f(t,x+\hat{v}t,v)\big)= \sum_{\iota \in \mathcal{B}, |\iota |=1} 
 \Lambda^{\iota_1\circ \cdots \iota_{|\rho|-1}}\Big(  a_{ \iota_{|\rho|}, \iota}(t,x,v) f^{\iota} (t, x+\hat{v}t)\Big). 
 \]
After keeping iterating the above    process,  our desired estimate (\ref{sepeqn88}) holds from iteration and the Leibniz rule.
	\end{proof}

 \subsection{The   modified profiles of the scalar field}\label{profileofvectorfield}
 Recall (\ref{dec28eqn28}), (\ref{dec28eqn29}), and (\ref{dec28eqn30}). We know that the following equality holds, 
\be\label{jan29eqn53}
{\Gamma}^\alpha- \tilde{\Gamma}^\alpha = \sum_{\beta, \gamma\in \mathcal{B}, |\beta|+|\gamma|\leq |\alpha|, |\beta|\geq 1} a_{\alpha;\beta, \gamma}(v)\cdot\nabla_v^\beta \tilde{\Gamma}^\gamma,
\ee
where ``$a_{\alpha;\beta, \gamma}(v)$'', $\beta, \gamma\in\mathcal{B}, $ are some determined coefficients, whose explicit formulas are not pursued here and the vector field ``$\nabla_v^\beta$'', $ \beta\in \mathcal{B}$, is defined as follows, 
\[
\nabla_v^\beta:= \nabla_v^{\gamma_1}\circ   \cdots \nabla_{v}^{\gamma_{|\beta|}}, \quad \beta \thicksim \gamma_1\circ\cdots \gamma_{|\beta|}, \quad \gamma_i\in \mathcal{A}, |\gamma_i|=1, i\in\{1,\cdots, |\beta|\},
\]
\be\label{feb12eqn81}
\nabla_v^\gamma =\left\{\begin{array}{ll}
 {V}_i\cdot \nabla_v & \textup{if}\,\, \gamma=\vec{a}_{i+4}, i=1,2,3 \\
 
\sqrt{1+|v|^2}\p_{v_i} & \textup{if}\,\, \gamma=\vec{a}_{i+7}, i=1,2,3 \\
 
Id & \textup{otherwise}, \\
\end{array}
\right.\quad \textup{where}\,\, \gamma\in \mathcal{A}.
\ee

 Due to the fact that $\nabla_v$  may hit the coefficients during the expansion, we have $|\beta|+ |\gamma|\leq |\alpha|$ instead of $|\beta|+ |\gamma|= |\alpha|$ in (\ref{jan29eqn53}). Let 
\be\label{jan29eqn51}
a_{\alpha:\vec{0},\alpha}(v):=1, \Longrightarrow 
\Gamma^\alpha = \sum_{\beta,\gamma\in \mathcal{B},|\beta|+|\gamma|\leq |\alpha| } a_{\alpha;\beta, \gamma}(v)\cdot\nabla_v^\beta \tilde{\Gamma}^\gamma.
\ee

With the above equality, we are ready to compute the equation satisfied by the high order derivatives of the scalar field, i.e.,  $\Gamma^\alpha \phi$. Recall (\ref{vlasovnordstrom}). From (\ref{jan29eqn51}), we have,
\be\label{jan29eqn70}
\Gamma^{\alpha}\big((\p_t^2 -\Delta) \phi\big) =  \sum_{\beta,\gamma\in \mathcal{B},|\beta|+|\gamma|\leq |\alpha| }     \int_{\R^3} a_{\alpha;\beta, \gamma}(v)\cdot\nabla_v^\beta \tilde{\Gamma}^\gamma\big(\frac{  f}{\sqrt{1+|v|^2}}\big)  d v . 
\ee 
After doing integration by parts in ``$v$'',  we   derive the equation satisfied by $\phi^\alpha$ as follows,
\be
(\p_t^2 -\Delta) \phi^\alpha= \sum_{ \gamma\in \mathcal{B}, |\gamma|\leq |\alpha| } \int_{\R^3} \tilde{a}_{\alpha; \gamma}(v)   f^\gamma    d v,  
\ee
where $a_{\alpha;\gamma}(v)$, $\gamma\in \mathcal{B}, |\gamma|\leq |\alpha|$, are some   coefficients, whose explicit formulas are not so important and will not be pursued here. From (\ref{feb12eqn81}),  we know that the  following equality and rough estimate holds,
\be\label{feb12eqn83}
\tilde{a}_{\alpha;\alpha}(v)=  ({ {1+|v|^2}})^{-1/2}, \quad |\tilde{a}_{\alpha;\gamma}(v)| \lesssim (1+|v|)^{|\alpha|-|\gamma|}. 
\ee

 Similar to the half wave $u(t)$ and  the profile  $h(t)$ defined in subsection \ref{firstlevelprofile}, we  define
\be\label{feb8eqn31}
u^\alpha(t):= (\p_t - i\d) \phi^\alpha(t),\quad h^\alpha(t):=e^{i t\d} u^\alpha(t).
 \ee
Hence, we can recover $\p_t\phi^\alpha$ and $\phi^\alpha$ from the half wave $u^\alpha(t)$ and the profile $h^\alpha(t)$ as follow, 
\be\label{feb8eqn32}
\p_t\phi^\alpha= \frac{u^\alpha(t)+\overline{u^\alpha(t)}}{2}, \quad  \phi^\alpha = \frac{-u^\alpha(t)+\overline{u^\alpha(t)}}{2i \d}=\sum_{\mu\in\{+,-\}} c_{\mu} \d^{-1}  (u^\alpha)^{\mu} (t), \quad  u^\alpha(t):=e^{-i t\d} h^\alpha(t).
\ee
From (\ref{jan29eqn70}),  we can derive the equation satisfied by $u^\alpha(t)$ as follows, 
\be
(\p_t + i \d) u^\alpha(t) =  \sum_{ \gamma\in \mathcal{B}, |\gamma|\leq |\alpha| } \int_{\R^3} \tilde{a}_{\alpha; \gamma}(v)   f^\gamma    d v =  \sum_{ \gamma\in \mathcal{B}, |\gamma|\leq |\alpha| } \int_{\R^3} \tilde{a}_{\alpha; \gamma}(v)   g^\gamma(t, x-\hat{v}t, v)  d v. 	
\ee
On the Fourier side, we have 
\be\label{jan29eqn2000}
\p_t \widehat{h^\alpha}(t, \xi ) = \sum_{ \gamma\in \mathcal{B}, |\gamma|\leq |\alpha| }   \int_{\R^3} \tilde{a}_{\alpha; \gamma}(v)  e^{i t |\xi| - it \hat{v}\cdot \xi} \widehat{g^\gamma}(t, \xi,  v) dv. 
\ee

Correspondingly, we can write the equation satisfied by $g^\alpha(t,x,v)$ in (\ref{jan30eqn1}) in terms of profiles  on Fourier side as follows, 
\[
  \p_t \widehat{g^\alpha}(t,\xi,v)   =  \sum_{\beta, \gamma\in \mathcal{B}, |\beta|+|\gamma|\leq |\alpha|} \sum_{\mu\in\{+,-\} }  \int_{\R^3} e^{it (\xi-\eta)\cdot \hat{v} - i t\mu |\xi-\eta|}\widehat{(h^\beta)^\mu}(t, \xi-\eta) \big[   \big(\h   + i c_{\mu } \hat{v} \cdot  \frac{\xi-\eta}{|\xi-\eta|}   \big)a_{\alpha;\beta, \gamma}^1(v) 
\]
\be\label{jan30eqn31}
\times \widehat{g^\gamma}(t, \eta, v)	 +  \big( (\h+ i c_{\mu}\hat{v}\cdot \frac{\xi-\eta}{|\xi-\eta|}) a_{\alpha;\beta, \gamma}^2(v) v +  \h a_{\alpha;\beta, \gamma}^4(v)   + i c_{\mu }a_{\alpha;\beta, \gamma}^3(v)    \frac{\xi-\eta}{|\xi-\eta|}   \big) \cdot \big(\nabla_v - it \eta \big)\widehat{g^\gamma}(t, \eta, v) \big]   d \eta.
\ee

Similar to the modified profile defined in subsection \ref{firstlevelprofile}, to take the advantage of the oscillation of the phase over time, instead of controlling the increment of the profile, we control the following \textit{modified profile}, 
\be\label{feb1eqn41}
\widehat{\widetilde{h^\alpha}}(t, \xi ):= \widehat{h^\alpha}(t, \xi ) + \sum_{ \gamma\in \mathcal{B}, |\gamma|\leq |\alpha| }  \int_{\R^3} e^{i t |\xi| - it \hat{v}\cdot \xi} \frac{i \tilde{a}_{\alpha; \gamma}(v) }{|\xi| - \hat{v}\cdot \xi } \widehat{g^\gamma}(t, \xi,  v) dv.
\ee
 Define 
 \be\label{feb1eqn10}
 \widetilde{\phi^\alpha}(t):= \frac{-e^{-it \d} \widetilde{h^\alpha}(t) + \overline{e^{-it \d} \widetilde{h^\alpha}(t) } }{2 i\d 	},\quad   \widetilde{\p_t \phi^\alpha}(t):= \frac{  e^{-it \d} \widetilde{h^\alpha}(t) + \overline{e^{-it \d} \widetilde{h^\alpha}(t) } }{2  	}, 
 \ee
\be\label{noveq510}
E_{\alpha;\gamma}(f)(t,x):=  \mathcal{F}^{-1}\big[\int_{\R^3} e^{-it \hat{v}\cdot \xi } \frac{ - i \tilde{a}_{\alpha; \gamma}(v) }{ (|\xi|-\hat{v}\cdot \xi )}  \widehat{f}(t,\xi, v) d v\big](x).
\ee
Hence, from (\ref{feb8eqn32}) and (\ref{feb1eqn41}), we have
\be\label{feb1eqn1}
\phi^\alpha(t)= \widetilde{\phi^\alpha}(t) -\sum_{ \gamma\in\mathcal{B}, |\gamma|\leq |\alpha|}  \d^{-1} \big(\textup{Im}\big[ E_{\alpha;\gamma} (g^\gamma)(t)\big]\big), \,\, \p_t \phi^\alpha(t) =  \widetilde{\p_t\phi^\alpha}(t) +\sum_{ \gamma\in\mathcal{B}, |\gamma|\leq |\alpha|}  \big(\textup{Re}\big[ E_{\alpha;\gamma} (g^\gamma)(t)\big]\big).
\ee 
Recall the   equations  (\ref{jan29eqn2000}), (\ref{feb1eqn41}),  and (\ref{jan30eqn31}).  After doing integration by parts in ``$v$'' once to move around the ``$\nabla_v$'' derivative in front of $\nabla_v \widehat{g^\gamma}(t,\eta, v)$,  we can derive the equation satisfied by the modified profile $\widehat{\widetilde{h}^\alpha}(t, \xi )$ as follows, 
\[
\p_t \widehat{\widetilde{h^\alpha} } (t, \xi )= \sum_{ \gamma\in \mathcal{B}, |\gamma|\leq |\alpha| }  \int_{\R^3} e^{i t |\xi| - it \hat{v}\cdot \xi} \frac{i \tilde{a}_{\alpha; \gamma}(v)  }{|\xi| - \hat{v}\cdot \xi } \p_t\widehat{g^\gamma}(t, \xi,  v) dv
\]
\[
= \sum_{
\begin{subarray}{c}
\gamma\in \mathcal{B}, |\gamma|\leq |\alpha|,\mu\in\{+,-\} \\ 
\beta, \kappa \in \mathcal{B}, |\beta|+|\kappa|\leq |\gamma|
\end{subarray} }    \int_{\R^3}\int_{\R^3}  e^{it |\xi| - it \mu |\xi-\eta| - i t\hat{v}\cdot \eta}   \widehat{g^\kappa}(t, \eta, v) \widehat{(h^\beta)^\mu}(t, \xi-\eta)  \big[  \frac{i \tilde{a}_{\alpha; \gamma}(v)  }{|\xi| - \hat{v}\cdot \xi } \big(\h   + i c_{\mu } \hat{v} \cdot  \frac{\xi-\eta}{|\xi-\eta|}   \big) 
\]
\[
 \times a_{\gamma;\beta, \kappa}^1(v)  
 -  \nabla_v \cdot  \big(  \frac{i \tilde{a}_{\alpha; \gamma}(v)  }{|\xi| - \hat{v}\cdot \xi }\big((\h+ i c_{\mu}\hat{v}\cdot \frac{\xi-\eta}{|\xi-\eta|}) a_{\gamma;\beta, \kappa}^2(v) v +  \h a_{\gamma;\beta, \kappa}^4(v)   + i c_{\mu }a_{\gamma;\beta, \kappa}^3(v)    \frac{\xi-\eta}{|\xi-\eta|}   \big)\big)   \big] d\eta d v
\]
\[
= \sum_{
\beta, \gamma\in \mathcal{B},  
 |\beta|+|\gamma|\leq |\alpha| 
 } \sum_{\mu\in\{+,-\} }\int_{\R^3}\int_{\R^3}  e^{it |\xi| - it \mu |\xi-\eta| - i t\hat{v}\cdot \eta} \big(  m_1(\xi,v) \widehat{a}_{\alpha;\beta, \gamma}^{\mu,1}(v)    +  m_2(\xi,v) \widehat{a}_{\alpha;\beta, \gamma}^{\mu,2}(v) \cdot \frac{\xi-\eta}{|\xi-\eta|}\big) 
 \]
\be\label{feb1eqn42}
 \times  \widehat{g^\gamma}(t, \eta, v) \widehat{(h^\beta)^\mu}(t, \xi-\eta)  d \eta d v,
 \ee
where $\widehat{a}_{\alpha;\beta, \gamma}^{\mu,i}(v )$,  $\mu\in\{+,-\}, i\in\{1,2\}$, $\beta, \gamma\in\mathcal{B},$ s.t., $|\beta|+|\gamma|\leq |\alpha|$, are some determined coefficients and $m_i(\xi,v)$, $i\in\{1,2\}$, are some determined symbol, whose explicit formulas are not pursued here. 

From the  rough estimates (\ref{jan31eqn91}) and (\ref{feb12eqn83}), we know  that the following rough estimate holds, 
 \be\label{feb12eqn100}
\sum_{i=1,2} | \widehat{a}_{\alpha;\beta, \gamma}^i (v )| \lesssim  (1+|v|)^{|\alpha|-|\gamma|}, \quad \sum_{i=1,2,|a|\leq 5} \|(1+|v|)^{-10}\nabla_v^a m_i(\xi,v)\|_{\mathcal{S}^\infty_k} \lesssim 2^{-k}, \quad k \in \mathbb{Z}. 
 \ee

 \subsection{Constructing the energy  for the Vlasov-Nordstr\"om system}

\subsubsection{Control of the profile $g(t,x,v)$} 
We define the high order energy for the profile $g(t,x,v)$ of the distribution function as follows,
\be\label{highorderenergy1}
E_{\textup{high}}^{f}(t):=  E_{\textup{high}}^{f;1}(t)+ E_{\textup{high}}^{f;2}(t), \quad E_{\textup{high}}^{f:1}(t):= \sum_{\alpha\in \mathcal{B}, \beta\in \mathcal{S}, |\alpha|+|\beta|= N_0}   \| \omega_{\beta}^{\alpha}( x, v)  g^\alpha_\beta (t,x,v)\|_{L^2_{x,v}} ,
\ee 
\be\label{highorderenergy2}
E_{\textup{high}}^{f:2}(t):= \sum_{\alpha\in \mathcal{B}, \beta\in \mathcal{S}, |\alpha|+|\beta|< N_0}   \| \omega_{\beta}^{\alpha}( x, v)  g^\alpha_\beta (t,x,v)\|_{L^2_{x,v}},
\ee 
where $g^\alpha_\beta(t,x,v)$ is defined in (\ref{jan29eqn20}) and the weight function $\omega_\beta^\alpha(x,v)$ is defined as follows, 
 \be\label{highorderweight}
\omega^\alpha_\beta(t,x, v) = (1 +|x |^2 + (x\cdot v)^2 +|v|^{20} )^{20N_0-10(|\alpha|+|\beta|)}   (1+|v|)^{c_{\textup{vn}}(\beta)},
\ee
 where the index  $c_{\textup{vn}}(\beta)$ is  defined in (\ref{feb8eqn41}). We separate out $E_{\textup{high}}^{f:1}(t)$ as the strictly top order energy.

 We choose the   weight function as in (\ref{highorderweight}) based on the following ideas and reasons: (i) We set up a hierarchy for the order of weight function $\omega^\alpha_\beta(t,x, v)$.   Note that the total number of derivatives act on $g_{\beta}^\alpha(t,x,v)$ is $|\alpha|+|\beta|$.  The more  derivatives  act the profile, the lower order weight function we use for $\omega^\alpha_\beta(t,x, v)$. (ii) Comparing with the ordinary derivatives of the profile, we expect that the good derivatives of the profile can propagate more weight in ``$|v|$''; (iii)  
  We  choose an anisotropic weight in ``$x$''  in the definition of  the weight function $\omega_{\beta}^\alpha(x,v)$ in (\ref{highorderweight}) to guarantee that the following Lemma holds, which plays an essential role in the energy estimate. 	
\begin{lemma}\label{derivativeofweightfunction}
For any $\alpha\in \mathcal{B}, \beta\in \mathcal{S},$ s.t., $|\alpha|+|\beta|\leq N_0$, the following estimate holds for any $x, v\in \mathbb{R}^3$, 
\be\label{feb8eqn51}
 \Big[\Big| \frac{v\cdot {D}_v \omega_{\beta}^{\alpha}( x, v)}{\omega_{\beta}^{\alpha}( x, v)} \Big| + \Big| \frac{{D}_v \omega_{\beta}^{\alpha}( x, v)}{\omega_{\beta}^{\alpha}( x, v)} \Big| \big] \frac{1}{1+||t|-|x+\hat{v} t||} \lesssim 1.
\ee 
\end{lemma}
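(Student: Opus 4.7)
Let $w(x,v) := 1+|x|^2+(x\cdot v)^2+|v|^{20}$ and $n := 20N_0-10(|\alpha|+|\beta|)$, so that $\omega_\beta^\alpha = w^{n}(1+|v|)^{c_{\textup{vn}}(\beta)}$. Since $|\alpha|+|\beta|\leq N_0$, both $n$ and $|c_{\textup{vn}}(\beta)|$ are bounded by absolute multiples of $N_0$. First I would apply Leibniz to write
\[
\frac{D_v\omega_\beta^\alpha}{\omega_\beta^\alpha}=\frac{n\,D_v w}{w}+\frac{c_{\textup{vn}}(\beta)\,\tilde v}{1+|v|},
\]
and analogously for $v\cdot D_v\omega_\beta^\alpha/\omega_\beta^\alpha$. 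Because $|v|$ is independent of $x$, one has $D_v(1+|v|)=\tilde v$, which is trivially $O(1)$, so the second term contributes a harmless bounded quantity. Hence \eqref{feb8eqn51} is reduced to proving $|D_v w|/w+|v\cdot D_v w|/w \lesssim 1+\bigl||t|-|x+\hat v t|\bigr|$.

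The next step is to compute $D_v w$ explicitly using $D_v=\nabla_v-t\nabla_v\hat v\cdot\nabla_x$ and the identity $\partial_{v_i}\hat v_j=\delta_{ij}(1+|v|^2)^{-1/2}-v_iv_j(1+|v|^2)^{-3/2}$. This yields
\[
D_{v_i}(|x|^2)=-\frac{2tx_i}{\sqrt{1+|v|^2}}+\frac{2tv_i(x\cdot v)}{(1+|v|^2)^{3/2}},
\]
\[
D_{v_i}((x\cdot v)^2)=2(x\cdot v)x_i-\frac{2t(x\cdot v)v_i}{(1+|v|^2)^{3/2}},\qquad D_{v_i}(|v|^{20})=20|v|^{18}v_i,
\]
so that every $t$-dependent contribution to $D_v w$ appears as a factor of either $|t|(1+|v|^2)^{-1/2}$ or $|t|(1+|v|^2)^{-3/2}$. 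For the contracted quantity $v\cdot D_v w$, a useful internal cancellation gives $v\cdot D_v(|x|^2)=-2t(x\cdot v)(1+|v|^2)^{-3/2}$, which already comes with an extra $(1+|v|^2)^{-1/2}$ compared with the naive estimate.

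The crucial quantitative input will be the inhomogeneous modulation estimate \eqref{nov43} combined with the rough bound $|\omega(x,v)|\lesssim |x|(1+|v|)$ from \eqref{eqn50}. Together these give
\[
\frac{|t|}{1+|v|^2}\;\leq\;|\tilde d(t,x,v)|+\frac{|\omega(x,v)|}{\sqrt{1+|v|^2}}\;\lesssim\;1+\bigl||t|-|x+\hat v t|\bigr|+|x|.
\]
Inserting this into the formulas above, the remaining polynomial factors in $|x|$, $|x\cdot v|$, and $(1+|v|^2)^{1/2}$ are absorbed into the denominator $w$ via elementary AM--GM inequalities of the type $|x||x\cdot v|\leq |x|^2+(x\cdot v)^2 \leq w$, $|x\cdot v|/(1+(x\cdot v)^2)\lesssim 1$, and $(1+|v|^2)^{p/2}/(1+|v|^{20})\lesssim 1$ for $p\leq 2$. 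The main obstacle --- and the reason for the specific form of the weight \eqref{highorderweight} --- is that both $|x|^2$ and the anisotropic factor $(x\cdot v)^2$ are genuinely needed in $w$ to control the cross term $|x||x\cdot v|$ produced by $D_v((x\cdot v)^2)$, while the large power $|v|^{20}$ is included precisely so that $(1+|v|^2)^{1/2}$ (lost when trading $|t|$ for $|t|/(1+|v|^2)$) is absorbed at no cost. With these absorptions in place, the desired bound follows, and the same argument goes through for $v\cdot D_v w$, now using the gain from the cancellation observed above.
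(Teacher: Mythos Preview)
Your reduction to $|D_v w|/w\lesssim 1+\bigl||t|-|x+\hat v t|\bigr|$ and your explicit computation of $D_v w$ are fine, but the absorption step contains a genuine gap. The worst term is $-2tx_i/\sqrt{1+|v|^2}$ from $D_{v_i}(|x|^2)$. Writing $|t|/\sqrt{1+|v|^2}=(1+|v|^2)^{1/2}\cdot|t|/(1+|v|^2)$ and inserting your scalar bound $|t|/(1+|v|^2)\lesssim 1+M+|x|$ (with $M:=\bigl||t|-|x+\hat v t|\bigr|$) produces, after dividing by $w$, a contribution $(1+|v|^2)^{1/2}\,|x|^2/w$ coming from the $|x|$ on the right multiplied by the $|x|$ already present. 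This quantity is \emph{not} bounded: take $x\perp v$, $|x|=R$, $|v|=R^{1/10}$, so that $w\sim R^2$ while $(1+|v|^2)^{1/2}|x|^2\sim R^{2+1/10}$. None of the AM--GM inequalities you list covers this term, and the large power $|v|^{20}$ cannot absorb a stray $(1+|v|)$ once it is multiplied by $|x|^2/w\sim 1$.

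The fix is to discard the crude bound $|\omega(x,v)|\lesssim|x|(1+|v|)$ from \eqref{eqn50} in favor of the sharper $|\omega(x,v)|\lesssim|x|+|x\cdot v|$ (immediate from \eqref{sepeqn975}), and to substitute the \emph{identity} $t/\sqrt{1+|v|^2}=\sqrt{1+|v|^2}\,\tilde d(t,x,v)+\omega(x,v)$ directly into each $t$-dependent term rather than first collapsing it to a scalar inequality. For the problematic term this gives
\[
\frac{|t|\,|x|}{\sqrt{1+|v|^2}\,w}\;\le\;\frac{|x|\sqrt{1+|v|^2}}{w}\,|\tilde d|\;+\;\frac{|x|\,|\omega|}{w}\;\lesssim\;(1+M)\;+\;\frac{|x|^2+|x|\,|x\cdot v|}{w}\;\lesssim\;1+M,
\]
and it is exactly here that the anisotropic factor $(x\cdot v)^2$ in $w$ is indispensable. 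With this correction your route goes through. The paper organizes the argument differently: it decomposes $D_v$ along $\tilde v$ and $\tilde V_i$ via \eqref{eqq13}, bounds $\tilde v\cdot\nabla_x\omega^\alpha_\beta/\omega^\alpha_\beta$ and $\tilde V_i\cdot\nabla_x\omega^\alpha_\beta/\omega^\alpha_\beta$ by powers of $(1+|v|)$ times $(1+|x|+|x\cdot v|+|v|^5)^{-1}$, and then splits into the cases $|x|+|x\cdot v|\gtrsim|t|/(1+|v|)$ (direct absorption) and its complement (where one shows $1+M\gtrsim|t|/(1+|v|^2)$).
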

\begin{remark}
 Essentially speaking, the   estimate (\ref{feb8eqn51}) says that, through a good choice of the weight function $\omega_{\beta}^\alpha(x,v)$, the loss of size  $t$, which comes from the coefficient of $D_v$, can be controlled by the  distance with respect to the light cone when $D_v$ hits the weight function.  The desired estimate (\ref{feb8eqn51}) is important in the energy estimate of the distribution function for the case when all derivatives hits on $\nabla_v f$, or equivalently $D_v g(t,x,v)$, see Proposition \ref{notbulkterm1} in section \ref{highordervlasovestimate}.
\end{remark}
\begin{remark}
 Due to the presence of coefficient ``$m^2/\sqrt{1+|v|^2}$'' in the Vlasov-Nordstr\"om system (see (\ref{vlasovnordstrom})), we don't need an estimate as strong as in (\ref{feb8eqn51}) to close the energy estimate for the Vlasov-Nordstr\"om system. Since the desired estimate (\ref{feb8eqn51}) is good for both the Vlasov-Nordstr\"om system and the Vlasov-Maxwell system, we stick to the weight function defined in (\ref{highorderweight}) for this paper and also the second part \cite{wang}.
\end{remark}
\begin{proof} 
Recall the decomposition of $D_v$ in  (\ref{eqq13}). We have
\[
 \frac{{D}_v \omega_{\beta}^{\alpha}( x, v)}{ \omega_{\beta}^{\alpha}( x, v)   } = \tilde{v}\big( \frac{ \tilde{v}\cdot \nabla_v \omega_{\beta}^{\alpha}( x, v)}{\omega_{\beta}^{\alpha}( x, v)} - \frac{t}{ ({1+|v|^2})^{3/2}} \frac{ \tilde{v}\cdot \nabla_x \omega_{\beta}^{\alpha}( x, v)}{\omega_{\beta}^{\alpha}( x, v)}\big)\]
 \be\label{feb8eqn61}
  + \sum_{i=1,2,3} \tilde{V}_i\big( \frac{ \tilde{V}_i\cdot \nabla_v \omega_{\beta}^{\alpha}( x, v)}{\omega_{\beta}^{\alpha}( x, v)} - \frac{t}{ ({1+|v|^2})^{1/2}} \frac{ \tilde{V}_i\cdot \nabla_x \omega_{\beta}^{\alpha}( x, v)}{\omega_{\beta}^{\alpha}( x, v)}\big).
\ee
From the explicit formula of $\omega_{\beta}^\alpha(x,v)$ in (\ref{highorderweight}), we have 
\be\label{feb8eqn62}
(1+|v|)\Big| \frac{ \tilde{v}\cdot \nabla_v \omega_{\beta}^{\alpha}( x, v)}{\omega_{\beta}^{\alpha}( x, v)}\Big| + \sum_{i=1,2,3} \Big| \frac{ \tilde{V}_i\cdot \nabla_v \omega_{\beta}^{\alpha}( x, v)}{\omega_{\beta}^{\alpha}( x, v)}\Big|\lesssim 1,
\ee
\be\label{feb8eqn63}
 \Big| \frac{ \tilde{v}\cdot \nabla_x \omega_{\beta}^{\alpha}( x, v)}{\omega_{\beta}^{\alpha}( x, v)}\Big|\lesssim \frac{x\cdot \tilde{v} + (x\cdot v)|v|}{1+|x|^2+(x\cdot v)^2 + |v|^{10}}\lesssim \frac{1+|v|}{1+|x|+|x\cdot v| +|v|^5},
 \ee
 \be\label{feb8eqn64}
     \sum_{i=1,2,3} \Big| \frac{ \tilde{V}_i\cdot \nabla_x \omega_{\beta}^{\alpha}( x, v)}{\omega_{\beta}^{\alpha}( x, v)}\Big|\lesssim  \frac{|x|}{1+|x|^2+(x\cdot v)^2 + |v|^{10}}\lesssim \frac{1 }{1+|x|+|x\cdot v| +|v|^5}. 
\ee
Recall the decomposition (\ref{feb8eqn61}). From the estimates (\ref{feb8eqn62}), (\ref{feb8eqn63}), and (\ref{feb8eqn64}), we know that the desired estimate estimate (\ref{feb8eqn51}) holds easily if $|x|\geq 3|t|$. 

It remains to consider the case when $|x|\leq 3 |t|$. For this case,  we have 
\be\label{feb8eqn70}
\frac{1}{1+||t|-|x+\hat{v}t||} \sim  \frac{1+ |t|}{1+|t|+ |t^2-|x+\hat{v} t|^2| } = \frac{1+ |t|}{\displaystyle{1+|t|+ \Big|\frac{t^2}{1+|v|^2} - \frac{2tx\cdot  v}{\sqrt{1+|v|^2}} - |x|^2 \Big| }}.
\ee
Based on the size of $|x|$ and $x\cdot v$, we separate into two cases as follows.

\begin{enumerate}
\item[$\bullet$] If $|x|\geq 2^{-10} |t|/(1+|v|)$ or $|x\cdot v |\geq 2^{-10} |t|/(1+|v|)$, then from the estimates (\ref{feb8eqn63}) and (\ref{feb8eqn64}),  we have 
\be\label{feb8eqn81}
\frac{|t|}{1+|v|^2}\Big| \frac{ \tilde{v}\cdot \nabla_x \omega_{\beta}^{\alpha}( x, v)}{\omega_{\beta}^{\alpha}( x, v)}\Big|  +\sum_{i=1,2,3} \frac{|t|}{1+|v|} \Big| \frac{ \tilde{V}_i\cdot \nabla_x \omega_{\beta}^{\alpha}( x, v)}{\omega_{\beta}^{\alpha}( x, v)}\Big|  \lesssim 1. 
\ee
\item[$\bullet$] If $|x|\leq 2^{-10} |t|/(1+|v|)$ and  $|x\cdot v |\leq 2^{-10} |t|/(1+|v|)$, then from the estimate (\ref{feb8eqn70}), we have 
\[
\frac{1}{1+||t|-|x+\hat{v}t||} \lesssim \frac{1+|v|^2}{1+|t|}. 
\]
Therefore, from the above estimate and the estimates (\ref{feb8eqn63}) and (\ref{feb8eqn64}),  we have, 
\be\label{feb8eqn82}
\Big[\frac{|t|}{1+|v|^2}\Big| \frac{ \tilde{v}\cdot \nabla_x \omega_{\beta}^{\alpha}( x, v)}{\omega_{\beta}^{\alpha}( x, v)}\Big|  +\sum_{i=1,2,3} \frac{|t|}{1+|v|} \Big| \frac{ \tilde{V}_i\cdot \nabla_x \omega_{\beta}^{\alpha}( x, v)}{\omega_{\beta}^{\alpha}( x, v)}\Big|\Big] \frac{1}{1+||t|-|x+\hat{v}t||} \lesssim 1. 
\ee
\end{enumerate}
To sum up, for any $x, v\in \R^3$,    our desired estimate (\ref{feb8eqn51}) holds  from the decomposition (\ref{feb8eqn61}) and the estimates (\ref{feb8eqn62}), (\ref{feb8eqn81}), and (\ref{feb8eqn82}).

\end{proof}

Similar to the study of the Vlasov-Poisson system in \cite{wang3}, from the decay estimate (\ref{densitydecay}) of the average of the distribution function in Lemma \ref{decayestimateofdensity}, we know  that   the zero frequency  of the distribution function plays the leading role in the decay estimate. With this intuition, to ensure $E_{\alpha;\gamma} (g^\gamma)(t)$ defined in (\ref{noveq510}) decays sharply, we define a lower order energy for the profile $g(t,x,v)$ as follows,  
\be\label{octeqn1896}
E_{\textup{low}}^{f}(t):=\sum_{\gamma\in\mathcal{B},  |\alpha| +|\gamma|\leq N_0 }   \| \widetilde{\omega_{\gamma}^{\alpha}}(  v)\big(\nabla_v^\alpha \widehat{ g^\gamma }(t,0,v)-\nabla_v\cdot  \widetilde{g}_{\alpha,\gamma}(t, v) \big)\|_{  L^2_v},\,\,\,\,  \widetilde{\omega_{\gamma}^{\alpha}}(  v):=  (1   +|v|^{ } )^{20N_0-10(|\alpha| +|\gamma|)} 
\ee
where  the correction term $  \widetilde{g}_{\alpha,\gamma}(t, v)$ is defined as follows,
\be\label{correctionterm2}
 \widetilde{g}_{\alpha,\gamma}(t, v):=\left\{\begin{array}{ll}
 \displaystyle{\int_0^t \int_{\R^3} K(s,x+\hat{v}s,v) \nabla_v^\alpha g^{\gamma  }(s,x,v) } d x  d s & \textup{if}\, |\alpha|+|\gamma| =N_0\\ 
 &\\
 0 & \textup{if}\, |\alpha| < N_0 ,\\ 
 \end{array}
 \right.
\ee
where $K(t,x+\hat{v}t,v)$ was defined in (\ref{jan31eqn1}). We  introduce  $ \widetilde{g}_{\alpha,\gamma}(t, v)$ in (\ref{octeqn1896}) for the purpose of avoiding losing derivatives in  the study of the time evolution of $\nabla_v^\alpha \widehat{ g^\gamma }(t,0,v)$.

\subsubsection{Control of the profiles and the modified profiles of the scalar field} 
For the nonlinear wave part,  we define a high order energy as follows, 
\[
 E_{\textup{high}}^{ \phi   }(t) :=   \sup_{k\in \mathbb{Z}}  \sum_{\alpha \in \mathcal{B}, |\alpha|\leq N_0}   2^{k}  \| \widehat{h^\alpha }(t,\xi)\psi_k(\xi) \|_{L^\infty_\xi}  +  2^{k}  \| \widehat{\widetilde{h^\alpha} }(t,\xi)\psi_k(\xi) \|_{L^\infty_\xi}   +     2^{k/2} \| \nabla_\xi	 \widehat{\widetilde{h^\alpha} }(t,\xi)\psi_k(\xi)\|_{L^2_\xi}
 \]
\be\label{highorderenergyphi}
  +  \| \widehat{h^\alpha }(t,\xi) \|_{L^2_\xi} +  \|   \widehat{\widetilde{h^\alpha} }(t,\xi) \|_{L^2_\xi}. 
\ee
The first part of energy $E_{\textup{high}}^{\phi}(t)$, which is stronger than $L^2$ at low frequencies, controls the low frequency part of the profiles $h^\alpha(t)$;  the second part of energy $E_{\textup{high}}^{\phi}(t)$, which has the same scaling level as the first part of energy $E_{\textup{high}}^{\phi}(t)$,  aims to control the first order weighted norm of the modified profiles $\widetilde{h^\alpha}(t)$  ;  the third part   of energy $E_{\textup{high}}^{\phi}(t)$, controls the high frequency part of the profiles $h^\alpha(t)$ and the modified profiles $\widetilde{h^\alpha}(t)$. 

Motivated from the linear decay estimate of half wave equation in Lemma \ref{twistedlineardecay}, to prove sharp decay estimate for the nonlinear solution,  we define a low order energy for the profiles $h^\alpha(t)$, $\alpha\in\mathcal{B}$, of the nonlinear scalar field as follows, 
 \be\label{secondorderloworder}
   E_{\textup{low}}^{ \phi}(t):=   \sum_{
\begin{subarray}{c}
n=0,1,2,3\\
   \alpha\in \mathcal{B}, |\alpha|\leq 20-3n\\
   \end{subarray} } \| h^\alpha (t)\|_{X_n} + (1+|t|)\| \p_t h^\alpha (t)\|_{X_n} + (1+|t|)^2\| \p_t \nabla_x(1+|\nabla_x|)^{-1} h^\alpha (t)\|_{X_n} ,
\ee
where the $X_n$-normed space  is defined as follows, 
 \be\label{definitionofXnorm}
\|h\|_{X_n}:=  \sup_{k\in \mathbb{Z}}2^{(n+1)k }\| \nabla_\xi^n \widehat{h }(t, \xi)\psi_k(\xi)\|_{L^\infty_\xi},\quad  n\in\{0,1,2,3\}.
\ee

We remark that we also controls the second order and the third order weighted norms of the profiles in the low order energy because these will be   helpful in the energy estimate for the case when the distribution function has the top order regularity inside the nonlinearity.

\subsection{A precise statement of main theorem}\label{precisetheoremsubsect}
With   previous preparations, we are ready to state the main theorem. 	

\begin{theorem}[A precise statement]\label{precisetheorem}
Let $N_0=200, $ $\delta\in (0, 10^{-9}]$. Suppose that the given initial data $(f_0(x,v),  $  $ \phi_0(x),  \phi_1(x))$ of the $3D$ relativistic Vlasov-Nordstr\"om system \textup{(\ref{vlasovnordstrom})} satisfies the following smallness assumption, 
\[
\sum_{|\alpha_1|+|\alpha_2|\leq N_0}  \| (1 +|x |^2 + (x\cdot v)^2 +|v|^{20} )^{30N_0} \nabla_v^{\alpha_1} \nabla_x^{\alpha_2 } f_0(x,v)\|_{L^2_x L^2_v} 
+\sum_{\alpha\in \mathcal{B}, |\alpha|\leq N_0}\sum_{n\in\{0,1,2,3\}} \| \Gamma^\alpha\big(\d \phi_0(x)\big)\|_{L^2} \]
\be\label{march1steqn1}
+\| \Gamma^\alpha\big(\d \phi_0(x)\big)\|_{X_n}+ \| \Gamma^\alpha\big(  \phi_1(x)\big)\|_{L^2} + \| \Gamma^\alpha\big(  \phi_1(x)\big)\|_{X_n}\leq \epsilon_0,
\ee
where the $X_n$-normed space is defined in \textup{(\ref{definitionofXnorm})}  and   $\epsilon_0$ is some sufficiently small constant. Then the relativistic Vlasov-Nordstr\"om system \textup{(\ref{vlasovnordstrom})} admits a global solution and scatters to a linear solution. Moreover, the following estimate  holds  over time, 
\be\label{energyestimate1}
\sup_{t\in[0,\infty)} (1+t)^{-\delta}\big[ E_{\textup{high}}^{f}(t) +  E_{\textup{high}}^{\phi}(t)  \big] + E_{\textup{low}}^{f}(t) +  E_{\textup{low}}^{\phi}(t)\lesssim \epsilon_0,
\ee
As   byproducts of the above estimate, we have the following decay estimates for the derivatives of the average of the distribution function and the derivatives of the scalar field, 
\be\label{desiredecayaverage}
\sup_{t\in[0,\infty)}\sum_{|\alpha|\leq N_0-20}(1+|t|)^{ (3+|\alpha|)/p} \Big|\int_{\R^3} \nabla_x^\alpha \big|f(t,x,v)\big|^p d v  \Big|^{1/p} \lesssim \epsilon_0, \quad \textup{where}\,\, p\in[1, \infty)\cap \mathbb{Z},
\ee
\be\label{decayingeneral}
\sup_{t\in[0,\infty)}\sum_{|\alpha|\leq 10} (1+|t| )(1+||t|-|x||)^{|\alpha|+1}\big( \big|\nabla_x^{\alpha}\p_t\phi(t,x)\big| + \big|\nabla_x^{\alpha}\nabla_x \phi(t,x)\big|\big)\lesssim \epsilon_0.
\ee
\end{theorem}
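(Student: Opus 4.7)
The proof uses a bootstrap argument on the four energies in \eqref{energyestimate1}. Assuming $E_{\textup{high}}^f(t)+E_{\textup{high}}^\phi(t)\le 2\epsilon_0(1+t)^\delta$ and $E_{\textup{low}}^f(t)+E_{\textup{low}}^\phi(t)\le 2\epsilon_0$ on some maximal interval $[0,T^*)$, the goal is to replace the constant $2$ by a smaller absolute multiple of $\epsilon_0$. The first step is to convert the low-order energies into sharp pointwise decay. For the Vlasov density, Lemma \ref{decayestimateofdensity} applied to $\widehat{g^\gamma}(t,0,v)$, corrected as in \eqref{octeqn1896}--\eqref{correctionterm2}, yields \eqref{desiredecayaverage}. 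For the scalar field, I combine the half-wave decay of Lemma \ref{twistedlineardecay} with the trade identity of Lemma \ref{tradethreetimes}: writing $(t^2-|x|^2)^3 T_k\phi$ in terms of $\Gamma^\alpha\phi$ with $|\alpha|\le 3$ plus a $(\p_t^2-\Delta)\phi$ remainder whose source is the Vlasov density already controlled, one extracts the sharp factor $(1+||t|-|x||)^{-|\alpha|-1}$ in \eqref{decayingeneral}. These two decay estimates drive all subsequent energy estimates.

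Second, I propagate the high-order wave energy using the modified profile $\widetilde{h^\alpha}$ of \eqref{feb1eqn41}, which satisfies the purely bilinear equation \eqref{feb1eqn42}. Its Fourier-space bilinear term is controlled via the trichotomy $\chi_k^j$ of \eqref{highhighlow}--\eqref{lowhigh}: in the High$\times$High and High$\times$Low regimes the wave factor sits in the low-order $X_n$ norm and hence in $L^\infty_x$ via Lemma \ref{twistedlineardecay}, while the Vlasov factor is controlled in $L^2_{x,v}$ by $E_{\textup{high}}^f$; in the Low$\times$High regime the roles are reversed and one invokes the sharp density decay \eqref{desiredecayaverage} for the Vlasov factor. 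The modification \eqref{feb1eqn41} removes the dangerous unit-size linear-in-$f$ term, so the time integral of the remainder is bounded by $\epsilon_0^2$ up to the growth $(1+t)^\delta$; the weighted norm $\|\nabla_\xi \widehat{\widetilde{h^\alpha}}\|_{L^2_\xi}$ and the low-order $X_n$ norms close similarly, using an extra power of $(1+t)^{-1}$ of density decay for each additional spatial derivative.

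The heart of the proof is the high-order Vlasov energy. Applying $\Lambda^\beta\tilde{\Gamma}^\alpha$ to the Vlasov equation gives \eqref{jan29eqn21} for $g_\beta^\alpha$, and a weighted $L^2$ identity against $(\omega_\beta^\alpha)^2 g_\beta^\alpha$ must be evaluated term by term. The bulk piece $K\cdot D_v g_\beta^\alpha$, after integration by parts in $v$, contributes a factor $(v\cdot D_v \omega_\beta^\alpha)/\omega_\beta^\alpha$, which by Lemma \ref{derivativeofweightfunction} is bounded by $1+||t|-|x+\hat{v}t||$; this is exactly absorbed by the $(1+||t|-|x||)^{-1}$ light-cone decay of $K$ proved in step one, producing a time-integrable bound. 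The top-order commutator terms $\textit{h.o.t}_{\beta;3}^\alpha$ carry coefficients of order $|\tilde{d}(t,x,v)|\lesssim 1+||t|-|x+\hat{v}t||$ by Lemma \ref{summaryofhighordercommutation}, again swallowed by the light-cone decay. The crucial structural feature that makes this close for the Vlasov-Nordström system, rather than requiring the further null-structure argument deferred to \cite{wang}, is the coefficient $m^2/\sqrt{1+|v|^2}$ in \eqref{vlasovnordstrom}: it suppresses the $\nabla_x\phi\cdot\nabla_v f$ channel and makes the first decomposition \eqref{eqn32} of $D_v$ sufficient throughout. The low-order terms $\textit{l.o.t}_{\beta;j}^\alpha$ split according to the derivative hierarchy encoded in $\omega_\beta^\alpha$: whichever factor carries fewer derivatives is placed in $L^\infty$ via step one, whichever carries more is kept in $L^2$, and the anisotropic weight provides just enough room for both sides.

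The main obstacle throughout is the $t\,\nabla_v\hat{v}\cdot\nabla_x$ piece inside $D_v$, which naively costs a factor of $t$ that no $(1+t)^\delta$ growth can afford. The two decompositions of Lemma \ref{twodecompositionlemma}, together with the inhomogeneous modulation $\tilde{d}$ measuring the distance to the light cone in $(x,v)$-space, convert this apparent $t$-loss into a loss of $1+||t|-|x+\hat{v}t||$, which is then absorbed by the extra $(1+||t|-|x||)^{-1}$ decay of the scalar field produced via Lemma \ref{tradethreetimes}; the anisotropic choice of weight in \eqref{highorderweight} is designed precisely so that Lemma \ref{derivativeofweightfunction} holds uniformly in $(x,v)$. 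The low-order Vlasov energy is propagated by evaluating \eqref{feb20eqn1} at $\xi=0$ and subtracting the correction $\tilde{g}_{\alpha,\gamma}$ of \eqref{correctionterm2} to avoid losing one derivative at the top order. Combining all four improved bounds closes the bootstrap and, since $\partial_t\widehat{\widetilde{h^\alpha}}$ is then integrable in time, yields scattering of $\widetilde{h^\alpha}$ in $L^2$ and hence of $\phi$ in the claimed low-regularity space.
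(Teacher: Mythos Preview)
Your outline is faithful to the paper's own proof: the bootstrap in subsection~\ref{proofofmaintheorem} is closed exactly by combining Propositions~\ref{loworderenergyprop1}, \ref{highorderenergyprop1}, \ref{notbulkterm1}, and \ref{loworderenergy}, and the pointwise decay \eqref{desiredecayaverage}--\eqref{decayingeneral} is then read off from Lemma~\ref{decayestimateofdensity} and Lemma~\ref{sharplinftydecay}. Your identification of Lemma~\ref{derivativeofweightfunction} as the mechanism that neutralizes the bulk term, and of the $m^2/\sqrt{1+|v|^2}$ coefficient as the reason \eqref{eqn32} alone suffices, matches the paper precisely.

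There is one place where your sketch oversimplifies and would not close as written. For the low-order terms $\textit{l.o.t}_{\beta;3}^\alpha$ in \eqref{jan31eqn154}, the scalar field carries \emph{more} than eleven derivatives, so you cannot place it in $L^\infty$ via ``step one'': the decay estimate \eqref{noveqn935} only controls $\phi^\rho$ for $|\rho|\leq 13$. The paper's treatment (Case~2 in the proof of Lemma~\ref{fixedtimeestimate2}) is genuinely different from the ``put the lower-derivative factor in $L^\infty$'' scheme: after dyadic localization in $k$, one invokes the decomposition \eqref{feb1eqn1} of $\phi^{\kappa_1}$ into the modified-profile part $\widetilde{\phi^{\kappa_1}}$ and the density correction $E_{\kappa_1;\eta}(g^\eta)$, and then applies the trilinear Fourier estimate of Lemma~\ref{multilinearlemma1} to the first piece and the bilinear density estimate of Lemma~\ref{bilineardensitylemma} to the second. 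This exploits smallness of the space-resonance set and is not captured by a product $L^2\times L^\infty$ bound. The paper also splits the Vlasov high-order energy into the strictly top order $E_{\textup{high}}^{f;1}$ and the sub-top order $E_{\textup{high}}^{f;2}$ with distinct growth rates $(1+t)^\delta$ and $(1+t)^{\delta/2}$ in the bootstrap \eqref{bootstrapassumption}; this hierarchy is used in estimates such as \eqref{jan16eqn61} and \eqref{may27eqn2}, though it is a matter of bookkeeping rather than a new idea.
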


\subsection{Proof of the main theorem: A bootstrap argument}\label{proofofmaintheorem}
 In next  two sections,   we will estimate the increment of both the low order energy and the high order energy for the nonlinear wave part  and  the Vlasov part  with respect to time. Those estimates allow us to prove the desire theorem,  Theorem \ref{precisetheorem},  by using a bootstrap argument.

We state our bootstrap assumption as follows, 
\be\label{bootstrapassumption}
\sup_{t\in[0, T]} (1+t)^{-\delta}\big[ E_{\textup{high}}^{f;1}(t) +  E_{\textup{high}}^{\phi}(t)  \big] + (1+t)^{-\delta/2} E_{\textup{high}}^{f;2}(t) + E_{\textup{low}}^{f}(t) +  E_{\textup{low}}^{\phi}(t)\lesssim \epsilon_1:=\epsilon_0^{5/6},
\ee
 where, from the local theory,  $T>0$. 

 Recall the  definition of correction term in (\ref{correctionterm2}), From the linear decay estimate (\ref{noveqn935}) of the scalar field in Lemma \ref{sharplinftydecay}, we know that  the following estimate holds for any $t\in[0,T]$,   under the bootstrap assumption (\ref{bootstrapassumption}),
 
 \be\label{may17eqn51}
 \sum_{\gamma\in\mathcal{B},  |\alpha| +|\gamma|\leq N_0 }\|\widetilde{\omega_{ }^{\alpha}}(  v) \widetilde{g}_{\alpha,\gamma}(t, v) \|_{L^2_v} \lesssim \int_{0}^{t} (1+|s|)^{-2+2\delta} \epsilon_1^2 d s \lesssim \epsilon_0. 
 \ee
From the estimate (\ref{feb20eqn10}) in Proposition \ref{loworderenergy}, the estimate (\ref{sepeqn110}) in Proposition \ref{notbulkterm1}, the estimate (\ref{octeq501}) in Proposition \ref{loworderenergyprop1}, and the estimate (\ref{octe691}) in Proposition \ref{highorderenergyprop1}, the following improved estimate holds under the bootstrap assumption (\ref{bootstrapassumption}), 
\be
\sup_{t\in[0, T]}  (1+t)^{-\delta}\big[ E_{\textup{high}}^{f;1}(t) +  E_{\textup{high}}^{\phi}(t)  \big] + (1+t)^{-\delta/2} E_{\textup{high}}^{f;2}(t) + E_{\textup{low}}^{f}(t) +  E_{\textup{low}}^{\phi}(t)\lesssim \epsilon_0.
\ee
Therefore, we can   close the bootstrap argument and extend ``$T$'' to infinity. As a result, we have
\be\label{march2eqn20}
\sup_{t\in[0, \infty)}  (1+t)^{-\delta}\big[ E_{\textup{high}}^{f;1}(t) +  E_{\textup{high}}^{\phi}(t)  \big] + (1+t)^{-\delta/2} E_{\textup{high}}^{f;2}(t) + E_{\textup{low}}^{f}(t) +  E_{\textup{low}}^{\phi}(t)\lesssim \epsilon_0,
\ee
which   also implies  our desired estimate (\ref{energyestimate1}). 

The desired decay estimate (\ref{desiredecayaverage}) of the average of the derivatives of distribution function  follows directly from the estimate (\ref{march2eqn20}) and the decay estimate (\ref{densitydecay}) in Lemma \ref{decayestimateofdensity}. The desired decay estimate (\ref{decayingeneral}) of the derivatives of the scalar field  follows directly from the estimate (\ref{march2eqn20}), the equality (\ref{sepeqn190}) in Lemma \ref{somebasicidentity},  and the decay estimate (\ref{noveqn935}) in Lemma \ref{sharplinftydecay}. 

Lastly, we explain the scattering property of  the nonlinear solution.  From the estimate (\ref{feb20eqn10}) in Proposition \ref{loworderenergy} and the definition of ``$E_{\textup{low}}^{\phi}(t)$'' in (\ref{secondorderloworder}), we can  construct a limit for  the profiles of the Vlasov-Nordstr\"om  system by integrating the profile  with respect to  time up to infinity. After pulling back the limit along the linear flow, we have our desired scattering linear solution.

Hence finishing the proof of    Theorem \ref{precisetheorem}. 
\section{Energy estimates for the nonlinear wave part}\label{energyestimatewave}
This section is devoted to control both the low order energy  $E_{\textup{low}}^{\phi}(t)$    defined in (\ref{secondorderloworder})  and the high order energy $E_{\textup{high}}^{\phi}(t)$  defined in (\ref{highorderenergyphi}) of the profiles of the scalar field over time.  For the low order energy estimate, our main result is summarized  in Proposition \ref{loworderenergyprop1}. For the high order energy estimate, our main result is summarized in  Proposition \ref{highorderenergyprop1}. 

The main tools used  are some linear estimates and some bilinear estimates, which will be used as black boxes first in the proof of Proposition \ref{loworderenergyprop1} and the proof of Proposition \ref{highorderenergyprop1}. We postpone these linear estimates and bilinear estimates to  subsection \ref{linearbilinear1} and subsection \ref{linearbilinear2}.

 \begin{proposition}\label{loworderenergyprop1}
 Under the bootstrap assumption \textup{(\ref{bootstrapassumption})}, 
the following estimate holds for any $t\in[0, T]$,
\be\label{octeq501}
 E_{\textup{low}}^{\phi}(t) \lesssim    E_{\textup{low}}^f(t) + (1+|t|)^{-1}E_{\textup{high}}^f(t)+\epsilon_0.
\ee
\end{proposition}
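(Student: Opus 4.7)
The strategy is to split $h^\alpha(t)$ into the modified profile $\widetilde{h^\alpha}(t)$ plus the correction $h^\alpha(t)-\widetilde{h^\alpha}(t)$ from (\ref{feb1eqn41}), and to recognize that every term in $E_{\textup{low}}^{\phi}(t)$ falls into three natural categories: an initial-data piece controlled by (\ref{march1steqn1}) and giving $\epsilon_0$; a zero-mode-of-density piece giving $E_{\textup{low}}^f(t)$; and a Taylor/density-decay remainder giving $(1+|t|)^{-1}E_{\textup{high}}^f(t)$. The key leverage throughout is the massive lower bound $|\xi|-\hat v\cdot\xi\gtrsim |\xi|(1-|\hat v|)$, which keeps the correction-denominator non-singular in the massive case.

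\textbf{Step by step.} First, I bound $h^\alpha(t)-\widetilde{h^\alpha}(t)$, which on the Fourier side is $-i\sum_{\gamma}\int_{\R^3} e^{it|\xi|-it\hat v\cdot\xi}\tilde a_{\alpha;\gamma}(v)(|\xi|-\hat v\cdot\xi)^{-1}\widehat{g^\gamma}(t,\xi,v)\,dv$. I would Taylor-expand $\widehat{g^\gamma}(t,\xi,v)=\widehat{g^\gamma}(t,0,v)+\xi\cdot\int_0^1\nabla_\xi\widehat{g^\gamma}(t,s\xi,v)\,ds$: the first summand, after $v$-integration, is controlled in every $X_n$ by $E_{\textup{low}}^f(t)$ (this is essentially the definition of the low energy), while the second summand cancels the $|\xi|^{-1}$ behaviour of the denominator and then is attacked by Lemma \ref{decayestimateofdensity}, whose $(1+|t|)^{-3-a}$-type decay yields the $(1+|t|)^{-1}E_{\textup{high}}^f(t)$ contribution. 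Second, for the modified profile I write $\widetilde{h^\alpha}(t)=\widetilde{h^\alpha}(0)+\int_0^t\p_s\widetilde{h^\alpha}(s)\,ds$; the boundary term at $s=0$ is $O(\epsilon_0)$ by (\ref{march1steqn1}) together with the same correction bound applied at time zero, and the bilinear integrand in (\ref{feb1eqn42}) enjoys the extra factor $(|\xi|-\hat v\cdot\xi)^{-1}$ which, fed into the bilinear estimates of subsection \ref{linearbilinear2} and combined with the bootstrap (\ref{bootstrapassumption}), integrates in $s$ to $\lesssim \epsilon_1^2\lesssim \epsilon_0$. Third, the time-derivative terms $(1+|t|)\|\p_t h^\alpha\|_{X_n}$ and $(1+|t|)^2\|\p_t\nabla_x(1+\d)^{-1}h^\alpha\|_{X_n}$ are bounded directly from (\ref{jan29eqn2000}) by Lemma \ref{decayestimateofdensity}, the density-decay precisely absorbing the prefactor and again leaving a remainder of the shape $E_{\textup{low}}^f(t)+(1+|t|)^{-1}E_{\textup{high}}^f(t)$.

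\textbf{Main obstacle.} The most delicate bookkeeping is the $X_3$ norm: each $\nabla_\xi$ that lands on the phase $e^{-it\hat v\cdot\xi}$ produces a factor of $t\hat v$, while each $\nabla_\xi$ that lands on $(|\xi|-\hat v\cdot\xi)^{-1}$ produces inverse powers of $|\xi|$ (controlled away from zero only after the massive lower bound is used). The verification that these growths are absorbed by the anisotropic weight $(1+|x|^2+(x\cdot v)^2+|v|^{20})^{20N_0-10(|\alpha|+|\beta|)}$ in $E_{\textup{high}}^f$ and by the polynomial weight $\widetilde{\omega_\gamma^\alpha}(v)$ in $E_{\textup{low}}^f$ (after converting $\xi$-derivatives into $x$-weights on the Vlasov side and then, via the $(x\cdot v)^2$ factor, into $v$-weights) is where the concrete calculation will be heaviest; this is also precisely where the choice $N_0=200$ is exploited, so that enough weight remains after the ten derivatives of the top-order hierarchy have been spent.
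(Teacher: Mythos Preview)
Your proposal is correct and follows essentially the same route as the paper's proof. The paper packages your Taylor-expansion-plus-zero-mode argument for the correction $h^\alpha-\widetilde{h^\alpha}$ and for the $\p_t h^\alpha$ terms into Lemma~\ref{Alinearestimate} (estimate~(\ref{octe0303})), which directly returns the two pieces $E_{\textup{low}}^f(t)$ and $(1+|t|)^{-1}E_{\textup{high}}^f(t)$; and the bilinear control of $\p_t\widetilde{h^\alpha}$ that you plan to extract from subsection~\ref{linearbilinear2} is precisely Lemma~\ref{bilinearinXnormed} (estimate~(\ref{octe364}) with $l=1$), yielding the integrable rate $(1+|s|)^{-2+\delta}\epsilon_1^2$.

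One remark on your ``main obstacle'' paragraph: the $X_3$ bookkeeping you flag (phase derivatives producing powers of $t$, symbol derivatives producing inverse powers of $|\xi|$) is exactly what the proof of Lemma~\ref{Alinearestimate} absorbs via integration by parts in $v$---each factor of $t$ is traded for a $\nabla_v$ on $\widehat{g}$, and the resulting $\nabla_v^\alpha\widehat{g}(t,0,v)$ terms are what $E_{\textup{low}}^f$ is designed to control. So the obstacle is real but already isolated in that lemma, and the large value $N_0=200$ is not specifically stressed at this step (the low-order energy only involves $|\alpha|\leq 20-3n\leq 20$, well inside the hierarchy).
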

\begin{proof}
 Recall (\ref{secondorderloworder}). We first estimate the $X_n$-norm of $\p_t h^\alpha(t)$. Recall (\ref{jan29eqn2000}). Form the estimate of coefficients in (\ref{feb12eqn83}),  the estimate (\ref{octe0303}) in Lemma \ref{Alinearestimate}, we  have 
 \[
    \sum_{0\leq n\leq 3}\sum_{\alpha \in \mathcal{B}, |\alpha|\leq 20-3n}  (1+|t|) \| \p_t h^\alpha (t)\|_{X_n} + (1+|t|)^2 \| \frac{\nabla_x}{1+|\nabla_x|} \p_t   h^\alpha(t)\|_{X_n}
 \]
\be\label{octeq792}
\lesssim   E_{\textup{low}}^f(t) + (1+|t|)^{-1}E_{\textup{high}}^f(t)\lesssim \epsilon_0.
\ee
Now, it remains to estimate the $X_n$-norm of $h^\alpha(t)$. Recall (\ref{feb1eqn41}).  As a result of direct computation, we know that the symbol $1/(|\xi|-\hat{v}\cdot \xi)$ verifies the estimate (\ref{roughestimateofsymbolgenarl2}).   From  the estimate of coefficients in (\ref{feb12eqn83}) and the estimate (\ref{octe0303}) in Lemma \ref{Alinearestimate}, we have
 \be\label{octe673}
 \sum_{n=0,1,2,3 }\sum_{|\alpha|\leq 20-3n} \|\widetilde{h^\alpha}(t)- h^\alpha(t)\|_{X_n} \lesssim   E_{\textup{low}}^{f}(t) +  (1+|t|)^{-1}E_{\textup{high}}^f(t)\lesssim \epsilon_0.
 \ee

 Hence, it would be sufficient to estimate the $X_n$-norm     of the modified profiles $\widetilde{h^\alpha}(t) $. Recall (\ref{feb1eqn42}) and (\ref{octeqn1923}). We know that $\p_t\widetilde{h^\alpha}(t,\xi) $ is a linear combination of bilinear forms defined in (\ref{octeqn1923}). Therefore, from the estimate   (\ref{octe364}) in Lemma \ref{bilinearinXnormed}, we have
\be\label{octe667}
\sum_{n=0,1,2,3 }\sum_{|\alpha|\leq 20-3n} \| \p_t {\widetilde{h}^\alpha}(t    )\|_{X_n} \lesssim (1+|t|)^{-2}   E_{\textup{low}}^{\phi}(t)  E_{\textup{high}}^{f}(t)\lesssim (1+|t|)^{-2+ \delta}  \epsilon_1^2. 
 \ee
Hence, from the above estimate (\ref{octe667}) and the estimate (\ref{octe673}), we have
\be\label{octe671}
 \sum_{n=0,1,2,3}\sum_{|\alpha|\leq 20-3n} \|\widetilde{h}^\alpha (t)\|_{X_n} +\| {h}^\alpha (t)\|_{X_n}  \lesssim  \epsilon_0 + \int_0^t (1+|s|)^{-2+\delta}  \epsilon_1^2 d s\lesssim\epsilon_0.
\ee
To sum up, our desired estimate (\ref{octeq501}) holds from the estimates (\ref{octeq792})         and (\ref{octe671}).  
\end{proof}
 
\begin{proposition}\label{highorderenergyprop1}
 Under the bootstrap assumption \textup{(\ref{bootstrapassumption})}, 
the following estimate holds for any $t\in[0, T]$,
\be\label{octe691}
   E_{\textup{high}}^{\phi}(t)\lesssim  E_{\textup{high}}^f(t) +(1+|t|)^{\delta}\epsilon_0. 
\ee
\end{proposition}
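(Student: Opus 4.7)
The plan is to bound each of the four norm contributions to $E_{\textup{high}}^{\phi}(t)$ in (\ref{highorderenergyphi}) by integrating the corresponding time derivatives, using the linear source (\ref{jan29eqn2000}) for the unmodified profile $h^\alpha$ and the oscillatory bilinear source (\ref{feb1eqn42}) for the modified profile $\widetilde{h^\alpha}$, and passing between the two via the algebraic reconstruction (\ref{feb1eqn41}). The opening reduction is to note that $\widehat{\widetilde{h^\alpha}}-\widehat{h^\alpha}$ is a linear functional of $\widehat{g^\gamma}$, $|\gamma|\leq|\alpha|$, with multiplier $i\tilde a_{\alpha;\gamma}(v)/(|\xi|-\hat v\cdot\xi)$; the symbol estimate for $1/(|\xi|-\hat v\cdot\xi)$ invoked in the proof of Proposition \ref{loworderenergyprop1}, combined with the $v$-growth bound (\ref{feb12eqn83}) and the high polynomial $v$-weight sitting inside $E_{\textup{high}}^{f}$, controls each of the four norms of this difference by $E_{\textup{high}}^{f}(t)$. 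Hence in each of the four norms it suffices to treat a single representative on either side of (\ref{feb1eqn41}).

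For the $L^2_\xi$ norm and the weighted $L^\infty_\xi$ norm of $\widehat{h^\alpha}$, differentiate in $t$ using (\ref{jan29eqn2000}) and apply Plancherel in $\xi$ with Minkowski in $v$; the polynomial factors $(1+|v|)^{|\alpha|-|\gamma|}$ from (\ref{feb12eqn83}) are absorbed by the large polynomial $v$-weight in $\omega^\gamma_{\vec 0}$, and the resulting bound $\|\p_t\widehat{h^\alpha}(t)\|_{L^2_\xi}\lesssim E_{\textup{high}}^{f}(t)\lesssim (1+t)^{\delta}\epsilon_1$ integrates in time to $(1+t)^{\delta}\epsilon_0$ via the bootstrap (\ref{bootstrapassumption}); the weighted-$L^\infty_\xi$ case is identical after using the frequency-localization volume factor. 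For the analogous norms of $\widetilde{h^\alpha}$, differentiate (\ref{feb1eqn42}) in $t$ and apply an $X_n$-type bilinear estimate parallel to Lemma \ref{bilinearinXnormed}; the extra gain $1/(|\xi|-\hat v\cdot\xi)$ in the symbol together with the sharp decay (\ref{noveqn935}) of $h^\beta$ and the $v$-average decay (\ref{densitydecay}) of $g^\gamma$ yields an integrand of size $(1+|s|)^{-2+\delta}\epsilon_1^2$, integrable in time to $\epsilon_0$.

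The main obstacle is the weighted $L^2_\xi$ piece $\sup_k 2^{k/2}\|\nabla_\xi\widehat{\widetilde{h^\alpha}}\psi_k\|_{L^2_\xi}$. Expanding $\nabla_\xi$ inside (\ref{feb1eqn42}) produces four families of terms. The derivative hitting the cutoff $\psi_k$ or the symbol in (\ref{feb12eqn100}) is absorbed by the built-in $2^{-k}$ factor together with the $2^{k/2}$ weight; the derivative hitting $\widehat{(h^\beta)^\mu}(t,\xi-\eta)$ reproduces the same weighted $L^2_\xi$ norm of a profile and is absorbed by closing a Gr\"onwall-type inequality against the small bilinear coefficient; the derivative hitting the direction $(\xi-\eta)/|\xi-\eta|$ loses only $2^{-k}$, compensated by the $2^{k/2}$ weight. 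The critical case is $\nabla_\xi$ hitting the oscillatory phase $e^{it|\xi|-it\mu|\xi-\eta|-it\hat v\cdot\eta}$, producing a factor $it\bigl(\xi/|\xi|-\mu(\xi-\eta)/|\xi-\eta|\bigr)$. The lost $t$ is recovered by spending one unit of the sharp $1/(1+t)$ linear decay of the half wave (Lemma \ref{sharplinftydecay}) on the $h^\beta$ factor, and when necessary using (\ref{densitydecay}) on the Vlasov factor, to obtain an integrand of size $(1+|s|)^{-1+\delta}\epsilon_1^2$ whose time integral is $(1+t)^{\delta}\epsilon_0$. Combining all four bounds gives $E_{\textup{high}}^{\phi}(t)\lesssim \epsilon_0+E_{\textup{high}}^{f}(t)+(1+t)^{\delta}\epsilon_0^2$, which is (\ref{octe691}) after using $\epsilon_0\ll 1$.
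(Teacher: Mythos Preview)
Your opening reduction via (\ref{feb1eqn41}) is correct and matches the paper. However, there are two genuine gaps.

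\textbf{First gap: the $L^2_\xi$ (and $L^\infty_\xi$) norm of the unmodified profile.} You write that differentiating $\widehat{h^\alpha}$ via the linear source (\ref{jan29eqn2000}) gives $\|\p_t\widehat{h^\alpha}(t)\|_{L^2_\xi}\lesssim E_{\textup{high}}^f(t)\lesssim(1+t)^{\delta}\epsilon_1$ and that this ``integrates in time to $(1+t)^{\delta}\epsilon_0$''. It does not: $\int_0^t(1+s)^{\delta}\,ds\sim (1+t)^{1+\delta}$. The source in (\ref{jan29eqn2000}) is only \emph{linear} in $g$, so no decay in $t$ is available there; this is precisely why the modified profile is introduced. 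The paper instead bounds $\|\widehat{\widetilde{h^\alpha}}\|_{L^2_\xi}$ (whose source (\ref{feb1eqn42}) is bilinear and yields $\|\p_t\widehat{\widetilde{h^\alpha}}\|_{L^2_\xi}\lesssim(1+t)^{-1+\delta}\epsilon_1^2$ via Lemma \ref{L2bilinearvw}) and then recovers $\|\widehat{h^\alpha}\|_{L^2_\xi}$ through your own opening reduction. You should route the $L^2_\xi$ and $L^\infty_\xi$ estimates through $\widetilde{h^\alpha}$, not through $h^\alpha$.

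\textbf{Second gap: the weighted $L^2_\xi$ piece when $h^\beta$ carries many derivatives.} In the term where $\nabla_\xi$ hits the phase and produces the factor $it\big(\xi/|\xi|-\mu(\xi-\eta)/|\xi-\eta|\big)$, you propose to ``spend one unit of the sharp $1/(1+t)$ linear decay of the half wave (Lemma \ref{sharplinftydecay}) on the $h^\beta$ factor''. But Lemma \ref{sharplinftydecay} only applies when $|\beta|\leq 13$, whereas in (\ref{feb1eqn42}) one has $|\beta|+|\gamma|\leq|\alpha|\leq N_0$, so $|\beta|$ can be as large as $N_0$. The paper splits according to the size of $|\gamma|$: when $|\gamma|\geq N_0-10$ (hence $|\beta|\leq 10$) it applies the bilinear estimate (\ref{octe406}) of Proposition \ref{firstorderweighthigh1}, which uses the low-order $X_n$-norms of $h^\beta$; when $|\gamma|\leq N_0-10$ it substitutes the decomposition (\ref{feb1eqn41}) for $\widehat{h^\beta}$, yielding (\ref{may15eqn500}), and then uses (\ref{octe406}), (\ref{octe581}) on the $\widetilde{h^\beta}$ part and the Vlasov--Vlasov bilinear Lemma \ref{twoaveragedistri} on the density part. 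Your proposed Gr\"onwall closure for the case $\nabla_\xi$ hits $\widehat{h^\beta}$ would also need this split, since the weighted-$L^2$ norm appearing on the right is that of $h^\beta$, not of $\widetilde{h^\alpha}$, and at top order these are not directly comparable without passing through (\ref{feb1eqn41}).
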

\begin{proof}
Recall (\ref{highorderenergyphi}). Based on the entries of $ E_{\textup{high}}^{\phi}(t)$, we divide the high order energy estimate of the scalar field into three parts as follow. 

$\bullet$ \textbf{Case} $1$:\quad  $L^\infty_\xi$-estimate of  $\widehat{h^\alpha}(t, \xi)$ and $\widehat{ \widetilde{h^\alpha}}(t, \xi)$.

 Recall (\ref{feb1eqn41}).  We know that the following estimate holds for any $\alpha\in \mathcal{B}, |\alpha|\leq N_0$,
\be\label{octe695}
\sup_{k\in \mathbb{Z}}2^{k} \|\big(\widehat{\widetilde{h^\alpha}}(t, \xi)-\widehat{ {h^\alpha}}(t, \xi)\big)\psi_k(\xi)\|_{L^\infty_\xi} \lesssim \sum_{\gamma\in \mathcal{B}, |\gamma|\leq |\alpha|} \| (1+|v|)^{5+4(|\alpha|-|\gamma|)} \widehat{g^\gamma}(t, \xi, v)\|_{L^\infty_\xi L^1_v} \lesssim E_{\textup{high}}^f(t).
\ee

Hence,  it would be sufficient to estimate the $L^\infty_\xi$-norm of the localized $\widehat{\widetilde{h_i^\alpha}}(t, \xi)$. Recall the equation satisfied by $\p_t \widehat{\widetilde{h_i^\alpha}}(t, \xi)$ in  (\ref{feb1eqn42}) . From the estimate (\ref{octe364}) in Lemma \ref{bilinearinXnormed}, which is used when the profile $h (t)$ has relatively more derivatives, and the estimate (\ref{octe103}) in Lemma \ref{highorderbilinearlemma1}, which is used when $g(t,x,v)$ has relatively more derivatives, we have
\be\label{octe700}
\sup_{k\in \mathbb{Z}} 2^k \|\p_t \widehat{\widetilde{h^\alpha}}(t, \xi)\psi_k(\xi)\|_{L^\infty_\xi } \lesssim (1+|t|)^{-2+\delta} \big( 
E_{\textup{high}}^{\phi}(t) + E_{\textup{low}}^{\phi}(t)\big) E_{\textup{high}}^f(t)\lesssim (1+|t|)^{-2+2\delta}\epsilon_1^2. 
\ee 
Therefore, from  the estimates (\ref{octe695}) and (\ref{octe700}), we have 
 
\be\label{octe741}
  \sum_{|\alpha|\leq N_0}  \sup_{k\in \mathbb{Z}}  2^{k} \|  \widehat{ \widetilde{h^\alpha}}(t, \xi) \psi_k(\xi)\|_{L^\infty_\xi}\lesssim \epsilon_0, \quad \sum_{|\alpha|\leq N_0}  \sup_{k\in \mathbb{Z}} 2^{k} \|  \widehat{ {h^\alpha}}(t, \xi) \psi_k(\xi)\|_{L^\infty_\xi}\lesssim  E_{\textup{high}}^f(t)  + \epsilon_0. 
\ee

 $\bullet$ \textbf{Case} $2$:\quad   $L^2$-estimate  of   $\widehat{h^\alpha}(t, \xi)$ and $\widehat{ \widetilde{h^\alpha}}(t, \xi)$.

From the   estimate (\ref{octe695}),  after   dyadically decomposing     frequency ``$\xi$'', we have  
\[
   \|\big(\widehat{\widetilde{h^\alpha}}(t, \xi)-\widehat{ {h^\alpha}}(t, \xi)\big) \|_{L^2_\xi} \]
   \[
   \lesssim     \sum_{k\geq 0} \sum_{|\gamma|\leq |\alpha|}   2^{-k} \|  (1+|v|)^{5+4(|\alpha|-|\gamma|)}\widehat{g^\gamma}(t, \xi, v)\|_{L^1_v L^2_\xi }  +\sum_{k\leq 0} 2^{3k/2}  \|\big(\widehat{\widetilde{h^\alpha}}(t, \xi)-\widehat{ {h^\alpha}}(t, \xi)\big)\psi_{k}(\xi) \|_{L^\infty_\xi} \]
\be\label{deceqn191}
\lesssim E_{\textup{high}}^f(t) +  \sum_{|\gamma|\leq |\alpha|}\| \omega_{\gamma}^{\vec{0}}(x,v) g^\gamma(t,x,v)\|_{L^2_x L^2_v}\lesssim E_{\textup{high}}^f(t).  
\ee
Hence, it would be sufficient to estimate the $L^2$-norm of $\widehat{\widetilde{h^\alpha}}(t, \xi)$. After using the first   estimate of (\ref{deceqn98}) in Lemma \ref{L2bilinearvw} for the case when there are more derivatives act on  the profile  $g(t,x,v)$  and using the second  estimate of (\ref{deceqn98}) for the case when there are more derivatives act on  the profile $h(t)$, we have 
\[
 \|\p_t \widehat{\widetilde{h^\alpha}}(t, \xi) \|_{L^2_\xi } \lesssim (1+t)^{-1} E_{\textup{high}}^f(t)  E_{\textup{low}}^{\phi}(t) + (1+t)^{-2} E_{\textup{high}}^f(t) E_{\textup{high}}^{\phi} (t)\lesssim (1+|t|)^{-1+\delta}\epsilon_1^2.
\]
From the above estimate    and the estimate (\ref{deceqn191}), we have
\be\label{dec30eqn21}
\sum_{\alpha\in \mathcal{B}, |\alpha|\leq N_0}   \| \widehat{ {h^\alpha}}(t, \xi) \|_{L^2_\xi } + \| \widehat{  \widetilde{h^\alpha}}(t, \xi) \|_{L^2_\xi }   \lesssim   E_{\textup{high}}^f(t) +\epsilon_0 +\int_{0}^t (1+|s|)^{-1+\delta} \epsilon_1^2 d s \lesssim  E_{\textup{high}}^f(t) + (1+|t|)^{\delta} \epsilon_0.
\ee
$\bullet$ \textbf{Case} $3$:\quad $L^2_\xi$-estimate of $\nabla_\xi \widehat{\widetilde{h^\alpha}  }(t, \xi)$.  

  Recall the equation satisfied by $\widetilde{h^\alpha} (t, \xi)$ in (\ref{feb1eqn42}). Based on the different size of $|\gamma|$ in  (\ref{feb1eqn42}), we use different strategy.  If $|\gamma|\geq N_0-	10$,   we apply the   estimates (\ref{octe406})
in Proposition \ref{firstorderweighthigh1} directly.

 If $|\gamma|\leq N_0-10$, then we first use the decomposition  (\ref{feb1eqn41}) for  $ \widehat{h^\beta }(t, \xi)$ and have the following equality,   
\[
\p_t \widehat{\widetilde{h}^\alpha}(t, \xi )= \sum_{
\beta, \gamma\in \mathcal{B},  
 |\beta|+|\gamma|\leq |\alpha| 
 } \sum_{\mu\in\{+,-\} }\big[\int_{\R^3}\int_{\R^3}  e^{it |\xi| - it \mu |\xi-\eta| - i t\hat{v}\cdot \eta} \big(    m_2(\xi,v) \widehat{a}_{\alpha;\beta, \gamma}^{\mu,2}(v) \cdot \frac{\xi-\eta}{|\xi-\eta|} + m_1(\xi,v)\]
 \[
 \times  \widehat{a}_{\alpha;\beta, \gamma}^{\mu,1}(v)     \big) \widehat{g^\gamma}(t, \eta, v) \widehat{(\widetilde{h}^\beta)^\mu} (t, \xi-\eta)  d \eta d v + \sum_{\kappa\in\mathcal{B},|\kappa|\leq |\beta| } \int_{\R^3}\int_{\R^3} \int_{\R^3}   e^{it |\xi| - it \hat{u}\cdot(\xi-\eta) - i t\hat{v}\cdot \eta} \big(   \widehat{a}_{\alpha;\beta, \gamma}^{\mu,2}(v) \cdot \frac{\xi-\eta}{|\xi-\eta|}\]
 \be\label{may15eqn500}
 \times  m_2(\xi,v)  + m_1(\xi,v)\widehat{a}_{\alpha;\beta, \gamma}^{\mu,1}(v)     \big) \widehat{g^\gamma}(t, \eta, v)   \frac{i\mu  \tilde{a}_{\beta; \kappa}(u) }{|\xi-\eta| - \mu \hat{u}\cdot (\xi-\eta) } \widehat{(g^\kappa)^\mu}(t, \xi-\eta,  u)  d \eta d v  d u\big].
 \ee
Then we apply the estimates (\ref{octe406}) and  (\ref{octe581}) in  Proposition \ref{firstorderweighthigh1}  for the first integral in (\ref{may15eqn500}) and apply the estimate (\ref{may15eqn410}) in Lemma \ref{twoaveragedistri} for the second integral in (\ref{may15eqn500}). 

To summarize, as a result of above different strategies for different scenarios, the following estimate holds for any fixed $k\in \mathbb{Z}$, 
\[
  2^{k/2}\|\p_t \nabla_\xi \widehat{\widetilde{h}^\alpha }(t, \xi)\psi_k(\xi)\|_{L^2_\xi }     \lesssim  \big( (1+|t|)^{-1}2^{k_{-} }+ (1+|t|)^{-2+\delta} \big) E_{\textup{low}}^{\phi}(t)  E_{\textup{high}}^{f}(t) \]
  \be 
  + (1+|t|)^{-2} E_{\textup{high}}^{f}(t)\big( E_{\textup{high}}^{f}(t) +  E_{\textup{high}}^{\phi}(t)  \big)\lesssim (1+|t|)^{-1+\delta} 2^{k_{-}} \epsilon_1^2 + (1+|t|)^{-2+2\delta}\epsilon_1^2. 
  \ee
Hence, from the above estimate, we have 
\be\label{octe742}
\sum_{|\alpha|\leq N_0}   2^{k/2}\|  \nabla_\xi \widehat{\widetilde{h}^\alpha }(t, \xi)\psi_k(\xi)\|_{L^2_\xi } \lesssim \epsilon_0 + \int_{0}^{t} \big[(1+s)^{-1+\delta} 2^{k_{-}}   + (1+s)^{-2+2\delta}\big]\epsilon_1^2 d s \lesssim \epsilon_0 + (1+t)^{\delta} 2^{k_{-} } \epsilon_0 .
 \ee 

 To sum up, recall again (\ref{highorderenergyphi}),   our desired estimate (\ref{octe691}) holds from the estimates (\ref{octe741}), (\ref{dec30eqn21}),  and (\ref{octe742}). 
\end{proof}

\subsection{Linear estimates and  a bilinear estimate   in the  $L^\infty_\xi$-type space}\label{linearbilinear1}
In this subsection, we will mainly prove several linear estimates  in the  $L^\infty_\xi$-type space, which will be used for the density type function, and   a bilinear estimate    in the  $L^\infty_\xi$-type space,  which will be used for the Vlasov-wave type interaction.

Our linear estimates are summarized in the following Lemma. 

\begin{lemma}\label{Alinearestimate}
For any given $n\in \mathbb{N}_{+}, n\leq 10$,   and any given symbol $m(\xi, v)$ that satisfies the following  estimate,
 \be\label{roughestimateofsymbolgenarl2}
 \sup_{k\in \mathbb{Z}}\sum_{i=0,1,\cdots,10,|a|\leq 15 }  2^{ik-(n-1)k} \|(1+|v|)^{-20-4i} \nabla_\xi^i\nabla_v^a m(\xi, v)\psi_k(\xi)\|_{L^\infty_\xi L^\infty_v}\lesssim 1,
\ee
then the following estimate holds for any $i\in\{0,1,2,3\},$  
\[
\| \int_{\R^3} e^{i t|\xi|- i\mu t\hat{v}\cdot \xi}    m(\xi, v) \widehat{g}(t, \xi, v) d v \|_{X_i}  \lesssim   \sum_{ |\alpha|\leq  i+n }  (1+|t|)^{-n} \|(1+|v|)^{30} \nabla_v^\alpha \widehat{g}(t,  0, v)  \|_{  L^1_v }
\]
\be\label{octe0303}
 +   \sum_{\beta\in \mathcal{S}, |\beta|\leq  i+n }  (1+|t|)^{-n-1} \| (1+|x|^2+|v|^2)^{20}\Lambda^\beta g(t,x,v)\|_{L^2_{x }L^2_v}.
 \ee
 Moreover, for any fixed $k\in \mathbb{Z} $ and any differentiable   function $\tilde{g}(t,v):\R_t\times \R_v^3\rightarrow \R^3$,  the following $L^\infty_\xi$-type estimate holds,
 \[
2^k \|\int_{\R^3} e^{i t|\xi|- i\mu t\hat{v}\cdot \xi}    m(\xi, v) \widehat{g}(t, \xi, v)\psi_{k}(\xi) d v\|_{L^\infty_\xi } \lesssim 2^{nk}\big(\|(1+|v|)^{20}\big(\widehat{g}(t,0,v)-\nabla_v\cdot \tilde{g}(t,v)\big)\|_{L^1_v} \]
\be\label{may17eqn12}
+ (1+|t|2^{k})\| (1+|v|)^{20} \tilde{g}(t,v)\|_{L^1_v} + 2^{k} \| (1+|x|+|v|)^{30} g(t,x,v)\|_{L^2_x L^2_v}\big).
 \ee
\end{lemma}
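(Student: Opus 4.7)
The plan is to exploit the oscillatory phase $-t\mu\hat{v}\cdot\xi$ in the $v$-integral via integration by parts in $v$, trading powers of $|t|^{-1}$ for $v$-derivatives and polynomial weights in $|v|$. The central mechanism is that the vector field $L:=\frac{-1}{it\mu}\frac{(\nabla_v\hat{v})\xi}{|(\nabla_v\hat{v})\xi|^2}\cdot\nabla_v$ satisfies $L\,e^{-it\mu\hat{v}\cdot\xi}=e^{-it\mu\hat{v}\cdot\xi}$, and a direct computation gives $|(\nabla_v\hat{v})\xi|\gtrsim|\xi|(1+|v|)^{-3}$, so the coefficient of $L$ is bounded by $|t|^{-1}|\xi|^{-1}(1+|v|)^{3}$. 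Each IBP thus gains a factor of $(|t||\xi|)^{-1}$ at the price of a factor $(1+|v|)^{C}$ and one additional $v$-derivative on the remaining integrand.

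For the $X_i$-type bound (\ref{octe0303}), I first apply $\partial_\xi^i$ to the integral and distribute the $i$ derivatives among the phase, the symbol $m$, and $\widehat{g}$. The worst case places all $i$ derivatives on $e^{it|\xi|-it\mu\hat{v}\cdot\xi}$, producing a prefactor of size $(1+|t|)^i$. I then perform $n+i$ IBPs in $v$ to gain the aggregate factor $(|t|2^k)^{-(n+i)}(1+|v|)^{C(n+i)}$, with up to $n+i$ $v$-derivatives landing on $m\widehat{g}$. Combined with the symbol hypothesis $|\partial_\xi^{j}m\psi_k|\lesssim 2^{(n-1-j)k}(1+|v|)^{20+4j}$ and the $X_i$-normalization $2^{(i+1)k}$, the powers of $2^k$ cancel exactly, leaving the required $(1+|t|)^{-n}$ decay. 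To match the two-term structure of the RHS I split $\widehat{g}(t,\xi,v)=\widehat{g}(t,0,v)+(\widehat{g}(t,\xi,v)-\widehat{g}(t,0,v))$ via the fundamental theorem: the zero-$\xi$ piece yields the first $L^1_v$ term, while the remainder carries an additional factor of $|\xi|$. Absorbing this $|\xi|$ by one further IBP converts it into an extra $(1+|t|)^{-1}$, producing the second term, where $\|xg\|_{L^1_{x}L^1_v}$ is controlled by $\|(1+|x|^2+|v|^2)^{20}g\|_{L^2_{x}L^2_v}$ via Cauchy-Schwarz. The resulting $\nabla_v$-derivatives of $g$ are re-expressed as $\Lambda^\beta g$ with $|\beta|\leq n+i$ up to polynomially growing coefficients in $x,v$ that are absorbed by the anisotropic weight.

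For the pointwise bound (\ref{may17eqn12}), no $\xi$-derivatives are taken. Instead, I decompose
\[
\widehat{g}(t,\xi,v)=\bigl(\widehat{g}(t,0,v)-\nabla_v\cdot\tilde{g}(t,v)\bigr)+\nabla_v\cdot\tilde{g}(t,v)+\bigl(\widehat{g}(t,\xi,v)-\widehat{g}(t,0,v)\bigr).
\]
The first piece is estimated trivially in $L^\infty_\xi L^1_v$, producing the first term of the RHS of (\ref{may17eqn12}) after multiplying by the symbol bound $|m\psi_k|\lesssim 2^{(n-1)k}(1+|v|)^{20}$ and the outer factor $2^k$. For the $\nabla_v\cdot\tilde{g}$ piece I integrate by parts in $v$ once, producing $\int e^{-it\mu\hat{v}\cdot\xi}\bigl(it\mu(\nabla_v\hat{v})\xi\cdot m-\nabla_v m\bigr)\cdot\tilde{g}\,dv$ with prefactor bounded by $(1+|t||\xi|)\cdot 2^{(n-1)k}(1+|v|)^{C}$, which yields the middle $(1+|t|2^k)\|\omega\tilde{g}\|_{L^1_v}$ term. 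The Taylor remainder carries the extra factor $|\xi|\sim 2^k$, and is bounded by $2^{(n+1)k}\|xg\|_{L^1_{x,v}}$, in turn controlled by $2^{(n+1)k}\|(1+|x|+|v|)^{30}g\|_{L^2_{x}L^2_v}$ via Cauchy-Schwarz; this matches the last term of the RHS.

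The main obstacle is the bookkeeping when iterated $v$-derivatives produced by IBP land on the coefficient $(\nabla_v\hat{v})\xi/|(\nabla_v\hat{v})\xi|^2$: although smooth for $\xi\neq 0$, its $v$-derivatives grow polynomially in $|v|$, and all such growth must be absorbed by the $v$-weights in the RHS. A secondary subtlety is the translation of $\nabla_v^\alpha\widehat{g}(t,\xi,v)$ back into $\Lambda^\beta g$, which through the bulk derivative $D_v$ from (\ref{pullback}) brings in factors $t\nabla_v\hat{v}\cdot\nabla_x$; fortunately, the weight $(1+|x|^2+|v|^2)^{20}$ in the RHS of (\ref{octe0303}) comfortably absorbs these, so the argument closes.
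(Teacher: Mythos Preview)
Your overall strategy matches the paper's: expand $\nabla_\xi^i$ via Leibniz into terms indexed by $(l_1,l_2)$ (phase vs.\ symbol vs.\ $\widehat{g}$), split off the zero-frequency part $\widehat{g}(t,0,v)$ when $l_2=0$, then integrate by parts in $v$ the right number of times ($n+l_1$ for the zero-frequency piece, $n+l_1+1$ for the remainder and for $l_2\geq 1$). The treatment of (\ref{may17eqn12}) is also essentially identical to the paper's.

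There is, however, a genuine error in your final conversion step. You write that expressing $\nabla_v^\alpha\widehat{g}$ in terms of $\Lambda^\beta g$ ``through the bulk derivative $D_v$ from (\ref{pullback}) brings in factors $t\nabla_v\hat{v}\cdot\nabla_x$,'' and then claim the weight $(1+|x|^2+|v|^2)^{20}$ absorbs these. This is wrong on both counts: a spatial weight cannot absorb a factor of $t$, and more importantly $D_v$ is \emph{not} one of the vector fields $\Lambda^\iota\in\mathfrak{P}_2$. The correct identity is
\[
\nabla_v g = K_v g + \sqrt{1+|v|^2}\,\omega(x,v)\,\nabla_v\hat{v}\cdot\nabla_x g,
\]
where $K_v$ (with cutoffs) and $\nabla_x$ (with cutoffs) both lie in $\mathfrak{P}_2$, and the coefficient $\omega(x,v)$ from (\ref{sepeqn975}) satisfies $|\omega(x,v)|\lesssim(1+|x|)(1+|v|)$ with \emph{no $t$-dependence}. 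This is precisely the point of the construction in Section~\ref{construction}: the new vector fields replace the $t$-coefficient in $D_v$ by the $x,v$-polynomial coefficient $\omega(x,v)$. Iterating, $\nabla_v^\beta g$ is a combination of $\Lambda^{\beta'}g$ with $|\beta'|\leq|\beta|$ and coefficients polynomially bounded in $(x,v)$, which the weight on the right of (\ref{octe0303}) legitimately absorbs. Once you make this correction, the argument closes as you intend.

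A smaller point: you should be explicit that the number of integrations by parts is $n+l_1$ (adapted to each term), not a uniform $n+i$; doing $n+i$ IBPs on a term with $l_1<i$ would produce an uncompensated factor $2^{(l_1-i)k}$, which diverges as $k\to-\infty$.
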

\begin{proof}
Note that for any $i\in\{0,1,2,3\},$ the following equality holds,
\be\label{octe510}
\nabla_\xi^i\big( \int_{\R^3} e^{i t|\xi|- i\mu t\hat{v}\cdot \xi} m(\xi, v) \widehat{g}(t, \xi, v) d v\big)= \sum_{0\leq l_1+l_2 \leq i} H^{\mu}_{i;l_1,l_2}(t, \xi),
\ee
where
\be\label{may14eqn20}
H^{\mu}_{i;l_1,l_2}(t, \xi) = \sum_{ 0\leq k\leq l_1} \int_{\R^3} e^{i t|\xi|- i\mu t\hat{v}\cdot \xi} c_{i,l_1}^{l_2,k}\big( i t \big)^{l_1} \big(\frac{\xi}{|\xi|}-\mu \hat{v}\big)^{l_1-k} \nabla_\xi^{i-l_1-l_2}\big(  \big(\frac{\xi}{|\xi|}-\mu \hat{v}\big)^{k}  m(\xi, v)\big)\nabla_\xi^{l_2} \widehat{g}(t, \xi, v)\big) d v,
\ee
where the  $c_{i,l_1}^{l_2,k}$ are some uniquely determined coefficients.    From the above detailed formula of $ H^{\mu}_{i;l_1,l_2}(t, \xi)$, we know that our desired estimate (\ref{octe0303}) is trivial if $|t|\leq 1$. From now on, we restrict ourself to the case when $|t|\geq 1$.

If $l_2=0$, then we separate out the zero frequency of ``$\widehat{g}(t,\xi,v)$'' as follows, 
\[
H^{\mu}_{i;l_1,0}(t, \xi) = \sum_{ 0\leq k\leq l_1} \int_{\R^3} e^{i t|\xi|- i\mu t\hat{v}\cdot \xi}  c_{i,l_1}^{l_2,k}\big( i t \big)^{l_1} \big(\frac{\xi}{|\xi|}-\mu \hat{v}\big)^{l_1-k} \nabla_\xi^{i-l_1}\big(  \big(\frac{\xi}{|\xi|}-\mu \hat{v}\big)^{k}  m(\xi, v)\big) \widehat{g}(t, 0, v)\big) d v
\]
\be\label{march2eqn1}
+ \int_0^1 \int_{\R^3} e^{i t|\xi|- i\mu t\hat{v}\cdot \xi} c_{i,l_1}^{l_2,k} \big( i t \big)^{l_1} \big(\frac{\xi}{|\xi|}-\mu \hat{v}\big)^{l_1-k} \nabla_\xi^{i-l_1}\big(  \big(\frac{\xi}{|\xi|}-\mu \hat{v}\big)^{k}  m(\xi, v) \xi\cdot \nabla_\xi \widehat{g}(t, s\xi, v)\big) d v d s.
\ee
 For the first integral of    $H^{\mu}_{i;l_1,0}(t, \xi)$ in (\ref{march2eqn1}), we do integration by parts in ``$v$'' $n+l_1$ times. For the second integral of  $H^{\mu}_{i;l_1,0}(t, \xi)$ in (\ref{march2eqn1}) and $H^{\mu}_{i;l_1,l_2}(t, \xi)$, $|l_2|>0$, in (\ref{may14eqn20}),   we do integration by parts in ``$v$'' $n+l_1+1$ times. As a result, from the estimate of symbol in (\ref{roughestimateofsymbolgenarl2}), the following estimate holds for any fixed $k\in \mathbb{Z},$ $i,l_1,$ and $l_2$,
\[
2^{(i+1)k}\| H^{\mu}_{i;l_1,l_2}(t, \xi)\psi_k(\xi) \|_{L^\infty_\xi} \lesssim  \sum_{|\beta|\leq n+j}  (1+|t|)^{-n} \|(1+|v|)^{ 30} \nabla_v^\beta \widehat{g}(t,  0, v)  \|_{  L^1_v }  +  \sum_{1\leq a\leq i-j} (1+|t|)^{-n-1} \]
\[
   \times \big(2^{(a-1)k}+ 2^{  ak}\big) \|(1+|v|)^{30} \nabla_\xi^a \nabla_v^\beta \widehat{g}(t, \xi, v)\psi_{k}(\xi) \|_{L^\infty_\xi L^1_v } \lesssim \sum_{|\alpha|\leq n +i } (1+|t|)^{-n} \|(1+|v|)^{30} \nabla_v^\beta \widehat{g}(t,  0, v)  \|_{  L^1_v }
 \]
 \be\label{octe511}
 + \sum_{\beta\in \mathcal{S}, |\beta|\leq  i+n }  (1+|t|)^{-n-1} \| (1+|x|^2+|v|^2)^{20}\Lambda^\beta g(t,x,v)\|_{L^2_{x }L^2_v}.
 \ee
 Hence  our desired estimate (\ref{octe0303}) holds from (\ref{octe510}) and the above estimate (\ref{octe511}).

 Now we proceed to prove the desired estimate (\ref{may17eqn12}). Note that the following equality holds after separating out the correction term $\tilde{g}(t,v)$ and doing integration by parts in $v$, 
 \[
 \int_{\R^3} e^{i t|\xi|- i\mu t\hat{v}\cdot \xi}    m(\xi, v) \widehat{g}(t, \xi, v)\psi_{k}(\xi) d v= \int_{\R^3} e^{i t|\xi|- i\mu t\hat{v}\cdot \xi}    m(\xi, v)\big( \widehat{g}(t, 0, v)- \nabla_v\cdot \tilde{g}(t,v)\big)\psi_{k}(\xi) d v
 \]
 \be
- \int_{\R^3}  \nabla_v\big(e^{i t|\xi|- i\mu t\hat{v}\cdot \xi}    m(\xi, v)\big)\cdot \tilde{g}(t,v) \psi_{k}(\xi) d v + \int_0^1\int_{\R^3} e^{i t|\xi|- i\mu t\hat{v}\cdot \xi}    m(\xi, v) \xi\cdot \nabla_\xi \widehat{g}(t, s\xi, v)\psi_{k}(\xi) d v ds. 
 \ee
 From the above equality, it is straightforward to see that the desired estimate (\ref{may17eqn12}) holds directly from the estimate of symbol in (\ref{roughestimateofsymbolgenarl2}). 
\end{proof}

Before proceeding to our desired bilinear estimates, we need some preparation for a general bilinear form, which captures the interaction between the Vlasov equation and the wave equation.

 For any fixed $l\in\{0,1\}$ and  any given symbol  $m (\xi, v) $   that satisfies the following estimate 
\be\label{generalestimateofsymbol}
 \sup_{k\in \mathbb{Z}}\sum_{n=0,1,2,3} \sum_{|\alpha|\leq 5}  2^{l k+nk} \|(1+|v|)^{-20}\nabla_\xi^n \nabla_{v}^\alpha m (\xi, v)\psi_k(\xi)\|_{  L^\infty_v \mathcal{S}^\infty_k}  \lesssim 1, 
 \ee
 we define   a bilinear operator  $T_{\mu}  (\cdot, \cdot)(t, \xi)$   as follows, 
\be\label{octeqn1923}
T_{\mu}  (h, f)(t, \xi):= \int_{\R^3}\int_{\R^3} e^{it|\xi|-i\mu t |\xi-\eta| - i  t \hat{v}\cdot \eta} m (\xi,   v)\widehat{h^\mu}(t, \xi-\eta) \widehat{f}(t,\eta, v) d \eta d v. 
\ee

 Based on the relative size between the frequencies of two inputs, we separate $T_{\mu}(h, f)(t, \xi)$ into two parts as follows, 
 \be\label{octe361}
 T_{\mu}(h, f)(t, \xi)= T_{\mu}^1(h, f)(t, \xi) + T_{\mu}^2(h, f)(t, \xi),
 \ee
 where
 \be\label{octe190}
T_{\mu}^1(h, f)(t, \xi)= \int_{\R^3}\int_{\R^3} e^{it|\xi|-i\mu t |\xi-\eta| - i  t \hat{v}\cdot \eta} m(\xi, v)\widehat{h^\mu}(t, \xi-\eta) \widehat{f}(t,\eta, v) \psi_{\geq -10}(|\xi- \eta|/|\xi|) d \eta d v,
 \ee
 \be\label{octe191}
T_{\mu}^2(h, f)(t, \xi)= \int_{\R^3}\int_{\R^3} e^{it|\xi|-i\mu t |\eta| - i  t \hat{v}\cdot(\xi-\eta) } m(\xi, v)\widehat{h^\mu}(t,  \eta) \widehat{f}(t,\xi-\eta, v) \psi_{< -10}(| \eta|/|\xi|) d \eta d v.
 \ee
 Note that the following equality holds for any $n\in \{0,1,2,3\}$,
 \be\label{octe362}
\nabla_\xi^n \big(T_{\mu}^i(h, f)(t, \xi) \big)= \sum_{0\leq j \leq n} T_{\mu,j}^{n,i}(t, \xi),\quad i=1,2,
 \ee
 where
 \[
 T_{\mu,j}^{n,1}(t, \xi)= \sum_{0\leq a\leq j} \int_{\R^3}\int_{\R^3}  e^{it|\xi|-i\mu t |\xi-\eta| - i  t \hat{v}\cdot \eta}   c_{a,j}^{n,1} (i t)^j \big(\frac{\xi}{|\xi|} -\mu \frac{\xi-\eta}{|\xi-\eta|}\big)^{a}	\nabla_\xi^{n-j}\big[\big(\frac{\xi}{|\xi|} -\mu \frac{\xi-\eta}{|\xi-\eta|}\big)^{j-a} \]
 \be\label{octe200}
 \times  m(\xi, v) \widehat{h^\mu}(t, \xi-\eta) \psi_{\geq -10}(| \xi-\eta|/|\xi|)\big)\big]	 \widehat{f}(t,\eta, v)  d \eta d v,
 \ee
 \[
 T_{\mu,j}^{n,2 }(t, \xi)= \sum_{0\leq a\leq j} \int_{\R^3}\int_{\R^3}  e^{it|\xi|-i\mu t | \eta| - i  t \hat{v}\cdot(\xi- \eta)}   c_{a,j}^{n,2} (i t)^j \big(\frac{\xi}{|\xi|} -\hat{v}\big)^{a}	\nabla_\xi^{n-j}\big[\big(\frac{\xi}{|\xi|} -\hat{v}\big)^{j-a}  m(\xi, v)\]
 \be\label{octe201}
 \times 	 \widehat{f}(t,\xi-\eta, v)  \psi_{ < -10}(| \eta|/|\xi|)\big)\big] \widehat{h^\mu}(t, \eta) d \eta d v,
 \ee
 where $ c_{a,j}^{n,1}$ and $ c_{a,j}^{n,2}$ are some determined constants, whose explicit formulas are not pursued here. 

Our desired bilinear estimate is summarized in the following Lemma. 

\begin{lemma}\label{bilinearinXnormed}
For any $n \in\{0,1,2,3 \}$, any $l\in\{0,1\}$, and  any given symbol ``$m(\xi, v)$'' that satisfies the estimate \textup{(\ref{generalestimateofsymbol})}, the following estimate holds for the bilinear form  $T_{\mu} (h, f)(t, \xi) $ defined in \textup{(\ref{octeqn1923})}, 
\[
\sup_{k\in \mathbb{Z}} 2^{(n+1)k} \| \nabla_\xi^n \big(T_{\mu} (h, f)(t, \xi) \big)\psi_{k}(\xi)\|_{L^\infty_\xi}\lesssim \sum_{\beta\in\mathcal{S}, |\beta|\leq n+3}   (1+|t|)^{-3+l} \big( \sum_{0\leq c \leq n } \sum_{0\leq b \leq n-c}\sum_{|\alpha|\leq c}\| h^{\alpha}\|_{X_b} \big)
\]
\be\label{octe364}
 \times       \| (1+|x|^2+|v|^2)^{20} \Lambda^{\beta} {f}(t, x,v) \|_{L^2_x  L^2_v } .
\ee
\end{lemma}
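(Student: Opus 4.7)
The plan is to decompose the bilinear form via \textup{(\ref{octe361})} into the pieces $T_\mu^1$ (where the profile $\widehat{h^\mu}$ lives at frequency $\gtrsim |\xi|$) and $T_\mu^2$ (where $\widehat{h^\mu}$ is at much lower frequency), apply $\nabla_\xi^n$ to each using the Leibniz expansions \textup{(\ref{octe200})} and \textup{(\ref{octe201})}, and then treat every resulting term as a product of a pointwise half-wave evaluation of $h^\mu$ controlled by Lemma \ref{twistedlineardecay} and a density-type integration of $f$ controlled by Lemma \ref{decayestimateofdensity}. Together these produce the $(1+|t|)^{-3+l}$ decay factor, where the parameter $l$ in the symbol bound \textup{(\ref{generalestimateofsymbol})} precisely measures the loss of one factor of $|\xi|$ which, through Lemma \ref{decayestimateofdensity}, translates into one lost power of $(1+|t|)$.

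For the piece $T_{\mu,j}^{n,1}$, I would unfold $\widehat f(t,\eta,v)=\int e^{-ix\cdot\eta}f(t,x,v)\,dx$, change variables $\zeta=\xi-\eta$, and recognize the $\zeta$-integral as a dyadic half-wave evaluation at position $y=x+t\hat v$, bounded by $\min\{2^{k_-},(1+|t|+|y|)^{-1}\}\cdot 2^k$ through Lemma \ref{twistedlineardecay}. The $j$ copies of the angle factor $(\xi/|\xi|-\mu(\xi-\eta)/|\xi-\eta|)$ appearing in \textup{(\ref{octe200})} are each exchanged for one inverse power of $(1+|t|)$ via integration by parts in $\zeta$ against the phase $e^{-i\mu t|\zeta|}$, absorbing the $t^j$-loss; the remaining $v$-integration against $f$ is of density type and yields the $(1+|t|)^{-3}$ factor (or $(1+|t|)^{-2}$ when $l=1$) via Lemma \ref{decayestimateofdensity}, while the $x$-integration is absorbed into the weight $(1+|x|^2+|v|^2)^{20}$. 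For $T_{\mu,j}^{n,2}$, the angle factor is $(\xi/|\xi|-\hat v)^j$, which by the identity $\nabla_v(e^{-it\hat v\cdot\xi})=-it\nabla_v\hat v\cdot\xi\,e^{-it\hat v\cdot\xi}$ can be used to integrate by parts $j$ times in $v$; each such integration exchanges one copy of $t(\xi/|\xi|-\hat v)$ for a $\nabla_v$-derivative on $\widehat f(t,\xi-\eta,v)$, which is then absorbed into a vector field $\Lambda^\beta$ with $|\beta|\leq n+3$ via the identities \textup{(\ref{jan17eqn1})}. The remaining low-frequency $\eta$-integral of $\widehat{h^\mu}(t,\eta)\psi_{<-10}(|\eta|/|\xi|)$ is again a half-wave evaluation handled by Lemma \ref{twistedlineardecay}.

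The main obstacle will be bookkeeping the derivative count in the form $\sum_{0\leq c\leq n,\,0\leq b\leq n-c,\,|\alpha|\leq c}\|h^\alpha\|_{X_b}$: a $\xi$-derivative landing on $\widehat{h^\mu}(t,\xi-\eta)$ increases $b$ by one and contributes a factor $\|h^\mu\|_{X_b}$, while a $\xi$-derivative landing on the angle factor or on the phase $e^{it|\xi|}$ can be converted by Lemma \ref{commutation1basicgeneral} into a $\Gamma^\alpha$-derivative on $h$, contributing to $c=|\alpha|$. The constraint $c+b\leq n$ emerges from the Leibniz expansion, while the bound $|\alpha|\leq c$ holds because the conversion of each phase-derivative produces at most one vector field. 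A secondary delicacy is that the angle-factor integrations by parts must be performed at the correct dyadic angular scale so that each step gains exactly one power of $(1+|t|)^{-1}$ without losing any power of the frequency $2^k$, a matter controlled by the $\mathcal{S}^\infty_k$ smoothness hypothesis in \textup{(\ref{generalestimateofsymbol})}.
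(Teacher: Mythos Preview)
Your decomposition via \textup{(\ref{octe361})}--\textup{(\ref{octe201})} matches the paper's, and for $T_{\mu,j}^{n,2}$ your idea of integrating by parts in $v$ is essentially what the paper does (it performs $j+3$ such integrations rather than $j$, then balances against a direct volume estimate). But for $T_{\mu,j}^{n,1}$ there is a real gap.

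You propose to unfold $\widehat f$, change to $\zeta=\xi-\eta$, and then cancel the $t^j$ loss by ``integration by parts in $\zeta$ against the phase $e^{-i\mu t|\zeta|}$'', claiming each step exchanges one angle factor $(\xi/|\xi|-\mu\zeta/|\zeta|)$ for one power of $(1+|t|)^{-1}$. After your change of variables the $\zeta$-phase is $e^{i(x+t\hat v)\cdot\zeta-i\mu t|\zeta|}$, whose $\zeta$-gradient is $(x+t\hat v)-\mu t\zeta/|\zeta|$; this has nothing to do with the angle factor $(\xi/|\xi|-\mu\zeta/|\zeta|)$ and, worse, vanishes at the half-wave stationary point, so it cannot be used to gain clean powers of $t^{-1}$. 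The paper's mechanism is different and uses the \emph{original} variable $\eta$: the $\eta$-gradient of the phase $e^{-i\mu t|\xi-\eta|-it\hat v\cdot\eta}$ is $it\big(\mu(\xi-\eta)/|\xi-\eta|-\hat v\big)$, which is bounded below in norm since $|\hat v|<1$. Each integration by parts in $\eta$ therefore gains $|t|^{-1}$ (at the cost of $(1+|v|)$-weights), and the angle factors are simply carried along as bounded symbols---they are not ``exchanged''. After $j$ such steps the $t^j$ is removed, and the paper then obtains the full $(1+|t|)^{-3+l}$ by balancing a volume-of-support bound against four further integrations by parts in $v$, summed dyadically in $k_2$.

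A second gap is your invocation of Lemma~\ref{decayestimateofdensity}. That lemma estimates oscillatory integrals of the specific form $\int\!\!\int e^{ix\cdot\xi+it\hat v\cdot\xi}m(\xi,v)|\xi|^a\widehat g(t,\xi,v)\,dv\,d\xi$, where the decay comes from stationary phase in the \emph{frequency} variable $\xi$ at fixed physical $x$. After you unfold $\widehat f$ and fix $\xi$, what remains is an $(x,v)$-integral $\int\!\!\int e^{-i(x+t\hat v)\cdot\xi}\,[\text{half-wave}(x+t\hat v)]\,f(t,x,v)\,dx\,dv$, which is not of that form and carries no density-type oscillation yielding further $t$-decay. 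Combining the half-wave bound from Lemma~\ref{twistedlineardecay} (which gives only $(1+|t|)^{-1}$) with an $L^1_{x,v}$ bound on $f$ leaves you short by two powers of $t$. The paper avoids this by never going to physical space: it stays on the Fourier side, uses the volume of the $\eta$-support and integration by parts in $v$, and the dyadic sum over $k_2$ produces the $(1+|t|)^{-3+l}$ directly.
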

\begin{proof}
Recall the decompositions (\ref{octe361}) and (\ref{octe362}). To control $X_n$-norm of $T_{\mu} (h, f)(t, \xi)$, it would be sufficient to control $L^\infty$-norm of  $T_{\mu,j}^{n,1}(t, \xi)$ and $T_{\mu,j}^{n,2}(t, \xi)$.

  Recall (\ref{octe200}). We first do integration by parts in`` $\eta$'' $j$-times for $T_{\mu,j}^{n,1 }(t, \xi)$. As a result, we have
\[
T_{\mu,j}^{n,1}(t, \xi)= \sum_{0\leq a\leq j} \int_{\R^3}\int_{\R^3}  e^{it|\xi|-i\mu t |\xi-\eta| - i  t \hat{v}\cdot \eta}   c_{a,j}^{n,1} \nabla_\eta\cdot\Big[\frac{ i\mu \frac{\xi-\eta}{|\xi-\eta|}- i  \hat{v}}{\big|\frac{(\xi-\eta)}{|\xi-\eta|}-\mu\hat{v}\big|^2}\circ  \nabla_\eta\cdot\big[  \frac{i\mu \frac{\xi-\eta}{|\xi-\eta|}- i \hat{v}}{\big|\frac{(\xi-\eta)}{|\xi-\eta|}-\mu\hat{v}\big|^2}\circ 
 \cdots  
\circ\big[  \big(\frac{\xi}{|\xi|} -\mu \frac{\xi-\eta}{|\xi-\eta|}\big)^{a} \] 
\[
\circ\big[  \big(\frac{\xi}{|\xi|} -\mu \frac{\xi-\eta}{|\xi-\eta|}\big)^{a}  \nabla_\xi^{n-j}\big[\big(\frac{\xi}{|\xi|} -\mu \frac{\xi-\eta}{|\xi-\eta|}\big)^{j-a}   m(\xi, v) \widehat{h^\mu}(t, \xi-\eta) \psi_{\geq -10}(| \xi-\eta|/|\xi|)\big)\big]\widehat{f}(t,\eta, v)\big] \cdots \big]\Big]	   d \eta d v,
\]
  Now, we apply the dyadic decomposition for ``$\xi-\eta$'' and ``$\eta$''. As a result, we have
 \be\label{octe198}
 T_{\mu,j}^{n,1 }(t, \xi)\psi_k(\xi) = \sum_{(k_1,k_2)\in \chi_k^1\cup \chi_k^2} H_{k,k_1,k_2}^{\mu, n,j}(t, \xi), 
 \ee
 where 
\[
H_{k,k_1,k_2}^{\mu, n,j}(t, \xi)= \sum_{0\leq a\leq j} \int_{\R^3}\int_{\R^3}  e^{it|\xi|-i\mu t |\xi-\eta| - i  t \hat{v}\cdot \eta}   c_{a,j}^{n,1} \nabla_\eta\cdot\Big[ \frac{ i\mu \frac{\xi-\eta}{|\xi-\eta|}- i  \hat{v}}{\big|\frac{(\xi-\eta)}{|\xi-\eta|}-\mu\hat{v}\big|^2}\circ  \nabla_\eta\cdot\big[  \frac{i\mu \frac{\xi-\eta}{|\xi-\eta|}- i \hat{v}}{\big|\frac{(\xi-\eta)}{|\xi-\eta|}-\mu\hat{v}\big|^2} \]
\[\circ 
 \cdots \big[  \big(\frac{\xi}{|\xi|} -\mu \frac{\xi-\eta}{|\xi-\eta|}\big)^{a}	\nabla_\xi^{n-j}\big[\big(\frac{\xi}{|\xi|} -\mu \frac{\xi-\eta}{|\xi-\eta|}\big)^{j-a} m(\xi, v) \widehat{h^\mu}(t, \xi-\eta)	 \]
\be\label{octe231}
  \times \psi_{\geq -10}(| \xi-\eta|/|\xi|)\big)\big]\widehat{f}(t,\eta, v)\big]\circ\cdots \big]\Big]\psi_{k}(\xi)\psi_{k_1}(\xi-\eta)\psi_{k_2}(\eta)	   d \eta d v,
\ee

Recall (\ref{octe201}), after doing dyadic decomposition for $\xi-\eta$ and $\eta$, we have
\be\label{may23eqn43}
 T_{\mu,j}^{n,2 }(t, \xi)=\sum_{(k_1,k_2)\in \chi_k^3} K_{k,k_1,k_2}^{\mu, n,j}(t, \xi)
\ee
where
\[
 K_{k,k_1,k_2}^{\mu, n,j}(t,\xi)= \sum_{0\leq a\leq j} \int_{\R^3}\int_{\R^3}  e^{it|\xi|-i\mu t | \eta| - i  t \hat{v}\cdot(\xi- \eta)}   c_{a,j}^{n,2} (i t)^j \big(\frac{\xi}{|\xi|} -\hat{v}\big)^{a}	\nabla_\xi^{n-j}\big[\big(\frac{\xi}{|\xi|} -\hat{v}\big)^{j-a}  m(\xi, v)\]
 \be\label{octe281}
 \times 	 \widehat{f}(t,\xi-\eta, v)  \psi_{ < -10}(| \eta|/|\xi|)\big)\big] \widehat{h^\mu}(t, \eta) \psi_{k_1}(\eta)\psi_k(\xi)\psi_{k_2}(\xi-\eta) d \eta d v.
\ee

From the estimate of symbol ``$m(\xi, v)$'' in \textup{(\ref{generalestimateofsymbol})} and the above detailed formulas of $H_{k,k_1,k_2}^{\mu, n,j}(t, \xi)$ and $K_{k,k_1,k_2}^{\mu, n,j}(t,\xi)$, we know that our desired estimate (\ref{octe364}) is trivial if $|t|\leq 1$. From now on, we restrict ourself to the case   $|t|\geq 1$.

\noindent $\bullet$\quad The estimate of  $H_{k,k_1,k_2}^{\mu, n,j}$.

 Recall (\ref{octe198}) and (\ref{octe231}). Note that we have $(k_1,k_2)\in \chi_k^1\cup \chi_k^2$ for the case we are considering.  After using the volume of support of $\eta$ and the estimate of symbol $m(\xi, v)$ in (\ref{generalestimateofsymbol}), we have 
\[
 2^{(n+1)k} \| H_{k,k_1,k_2}^{\mu, n,j}\psi_k(\xi) \|_{L^\infty_\xi}\lesssim \sum_{|\alpha|\leq 4}  \sum_{0\leq |c| \leq n} 2^{(n+1-l)k  +3k_2 -4k_{2,+}}\| (1+|v|)^{4 n +10} \nabla_\xi^c \widehat{\nabla_x^\alpha f}(t, \xi, v)\psi_{k_2}(\xi)\|_{L^\infty_\xi L^1_v} \]
 \[
 \times  \big(\sum_{0\leq b\leq n-c } 2^{-b k  }\|\nabla_\xi^{n-c-b} \widehat{h}(t, \xi)\psi_{k_1}(\xi)\|_{L^\infty_\xi} \big)
 \]
 \be\label{octe300}
 \lesssim \sum_{|\beta|\leq 4}2^{ -k_1 + (1-l)k+  3k_{2,-} } \big( \sum_{0\leq c \leq n } \sum_{0\leq b \leq n-c}\sum_{\alpha \in \mathcal{B}, |\alpha|\leq c}\| h^{\alpha}(t)\|_{X_b} \big)  \|  (1+|x|^2+|v|^2)^{20} \Lambda^\beta f(t,x,v) \|_{L^2_xL^2_v}.
\ee
 After doing integration by parts in $v$ four times for ``$H_{k,k_1,k_2}^{\mu, n,j}$'', the following estimate holds,
\[
 2^{(n+1)k} \| H_{k,k_1,k_2}^{\mu, n,j}\psi_k(\xi) \|_{L^\infty_\xi}\lesssim   |t|^{-4} 2^{(n+1-l) k   -k_2} 	\Big[\sum_{0\leq c \leq n}\sum_{0\leq d \leq 4 }\big(\sum_{0\leq b\leq n-c } 2^{-b k  }\|\nabla_\xi^{n-c-b} \widehat{h}(t, \xi)\psi_{k_1}(\xi)\|_{L^\infty_\xi} \big)
\]
\[
\times \| (1+|v|)^{4 n +10}\nabla_v^d \nabla_\xi^c \widehat{f}(t, \xi, v)\psi_{k_2}(\xi)\|_{L^\infty_\xi L^1_v} \Big]  \]
 \be\label{octe301}
 \lesssim  |t|^{-4} 2^{-k_1+(1-l)k -k_2 } \big( \sum_{0\leq c \leq n } \sum_{0\leq b \leq n-c}\sum_{|\alpha|\leq c}\| h^{\alpha}\|_{X_b} \big) \big( \sum_{\beta\in \mathcal{S},|\beta|\leq 4}\| (1+|x|^2+|v|^2)^{20} \Lambda^{\beta}f(t,x,v) \|_{L^2_xL^2_v}\big).
\ee
Therefore, from (\ref{octe300}) and (\ref{octe301}), the following estimate holds for any fixed $k\in \mathbb{Z},$
\[
 \sum_{(k_1,k_2)\in \chi_k^1\cup \chi_k^2}2^{(n+1)k} \| H_{k,k_1,k_2}^{\mu, n,j}\psi_k(\xi) \|_{L^\infty_\xi} \lesssim \big(\sum_{k_2, 2^{k_2}\leq  |t|^{-1}   } 2^{(3-l)k_2} + \sum_{k_2, 2^{k_2}\geq |t|^{-1}  }  |t|^{-4} 2^{  -(1+l)k_2}  \big)\]
 \[
 \times \big( \sum_{0\leq c \leq n } \sum_{0\leq b \leq n-c}\sum_{|\alpha|\leq c}\| h^{\alpha}\|_{X_b} \big)   \big( \sum_{\beta\in \mathcal{S},|\beta|\leq 4}\| (1+|x|^2+|v|^2)^{20} \Lambda^{\beta}f(t,x,v) \|_{L^2_xL^2_v}\big)
 \]
 \be\label{may15eqn2}
 \lesssim  |t|^{-3+l}  \big( \sum_{0\leq c \leq n } \sum_{0\leq b \leq n-c}\sum_{|\alpha|\leq c}\| h^{\alpha}\|_{X_b} \big)   \big( \sum_{\beta\in \mathcal{S},|\beta|\leq 4}\| (1+|x|^2+|v|^2)^{20} \Lambda^{\beta}f(t,x,v) \|_{L^2_xL^2_v}\big).
\ee

\noindent $\bullet$\quad The estimate of  $K_{k,k_1,k_2}^{\mu, n,j}$.

Recall (\ref{may23eqn43}) and (\ref{octe281}).  From the estimate of symbol ``$m(\xi, v)$'' in \textup{(\ref{generalestimateofsymbol})}, the following estimate holds   after using the volume of support of ``$\eta$'', 
\[
2^{(n+1)k}\| K_{k,k_1,k_2}^{\mu, n,j}(t,\xi)\|_{L^\infty_\xi} \lesssim  |t|^{j} 2^{(n+1-l) k  + 3k_1 }\big[\sum_{0\leq b\leq n-j} 2^{-b k_2}\|(1+|v|)^{4n +10}\nabla_\xi^{n-j-b}\widehat{f}(t, \xi, v)\psi_{k_2}(\xi)\|_{L^\infty_\xi L^1_v } 
\]
\be\label{octe291}
\times \|\widehat{h}(t, \xi)\psi_{k_1}(\xi)\|_{L^\infty_\xi}\big] \lesssim   \sum_{\beta\in\mathcal{S}, |\beta|\leq n-j}  |t|^{j}  2^{  (j+1-l) k +2k_1} \| h\|_{X_0} \| (1+|x|^2+|v|^2)^{20} \Lambda^{\beta}{f}(t, x,v) \|_{L^2_x  L^2_v }.
\ee

Moreover, after doing integration by parts in ``$v$'' ``$(j+3)$'' times for ``$K_{k,k_1,k_2}^{\mu, n,j}(t,\xi)$''  in (\ref{octe281}), the following estimate holds after using the volume of support of ``$\eta$'',  
\[
2^{(n+1)k}\| K_{k,k_1,k_2}^{\mu, n,j}(t,\xi)\|_{L^\infty_\xi} \lesssim  (1+|t|)^{-3} 2^{ (n-j-l-2)k+3k_1}
\]
\[
\times\big[\sum_{0\leq b\leq n-j} \sum_{  |\alpha|\leq j+3} 2^{-b k_2} \|\widehat{h}(t, \xi)\psi_{k_1}(\xi)\|_{L^\infty_\xi} \|(1+|v|)^{4n +10}\nabla_v^{\alpha}\nabla_\xi^{n-j-b}\widehat{f}(t, \xi,v)\psi_{k_2}(\xi)\|_{L^\infty_\xi L^1_v }\big]\]
\be\label{octe321}
\lesssim \sum_{\beta\in\mathcal{S}, |\beta|\leq n+3}  (1+|t|)^{-3} 2^{2k_1-(2+l)k}  \| h\|_{X_0} \|  (1+|x|^2+|v|^2)^{20} \Lambda^{\beta} {f}(t, x,v) \|_{L^2_x  L^2_v }.
\ee

Combining the estimates (\ref{octe291}) and (\ref{octe321}), we have
\[
\sum_{(k_1,k_2)\in \chi_k^3} 2^{(n+1)k}\| K_{k,k_1,k_2}^{\mu, n,j}(t,\xi)\|_{L^\infty_\xi} \]
\[
 \lesssim \sum_{\beta\in\mathcal{S}, |\beta|\leq n+3}    \| h\|_{X_0} \| (1+|x|^2+|v|^2)^{20} \Lambda^{\beta} {f}(t, x,v) \|_{L^2_x  L^1_v } \min\big\{ 
  (1+|t|)^{j } 2^{  (j+3-l) k  }, (1+|t|)^{-3}2^{ -lk} \big\} \]
 \be\label{may15eqn1}
    \lesssim \sum_{\beta\in\mathcal{S}, |\beta|\leq n+3}    (1+|t|)^{-3+l}  \| h\|_{X_0} \|  (1+|x|^2+|v|^2)^{20}  \Lambda^{\beta} 	{f}(t, x,v) \|_{L^2_x L^2_v }.
\ee
To sum up, 
  our desired estimate (\ref{octe364}) holds from  the estimates (\ref{may15eqn2}) and (\ref{may15eqn1}).
\end{proof}

 \subsection{Bilinear estimates in the high order energy space: Vlasov-Wave type interaction}\label{linearbilinear2}
 In this subsection, we   mainly prove several bilinear estimates for the Vlasov-Wave type interaction  in different function spaces. As we have seen in the proof of Proposition \ref{loworderenergyprop1} and Proposition \ref{highorderenergyprop1}, these bilinear estimates   play important roles in  the estimate of  the high order energy  $E_{\textup{high}}^{\phi}(t)$ (see (\ref{highorderenergyphi})) of the nonlinear wave part.

We first prove a $L^\infty_\xi$-type bilinear estimate. 

   \begin{lemma}\label{highorderbilinearlemma1}
Given   any    symbol ``$m(\xi, v)$'' that satisfies the estimate \textup{(\ref{generalestimateofsymbol})} with $l=1$, 
  the following estimate holds for the bilinear form  $T_{\mu} (h, f)(t, \xi) $ defined in \textup{(\ref{octeqn1923})}, 
\be\label{octe103}
 \sup_{k \in \mathbb{Z}} 2^{ k }\| T_{\mu} (h, f)(t, \xi)\psi_k(\xi)\|_{L^\infty_\xi} \lesssim \sum_{n=0,1,2, \alpha\in \mathcal{B},  |\alpha|\leq 4} (1+|t|)^{-2+\delta}  \|  {h^\alpha}(t )\|_{X_n}   \| (1+|x|^2+|v|^2)^{20} f(t,x,v)\|_{L^2_x L^2_v}. 
\ee
\end{lemma}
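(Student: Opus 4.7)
The plan is to bound the bilinear form $T_\mu(h,f)(t,\xi)$ by first splitting according to relative frequencies of the two inputs, and then, for each dyadic piece, combining a pointwise decay estimate for the wave component with the weighted $L^2_{x,v}$ structure of the Vlasov component. Unlike the proof of Lemma \ref{bilinearinXnormed}, the hypothesis here gives no $v$-derivatives on $f$, so the $(1+|t|)^{-3+l}$ decay obtained there by four integrations by parts in $v$ is no longer directly available; the trade-off is that we are allowed to spend up to four vector-field derivatives and up to two $X_n$-weights on $h$.

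First I would follow the initial decomposition $T_\mu = T_\mu^1 + T_\mu^2$ introduced in \textup{(\ref{octe361})}, splitting according to whether $|\xi-\eta|/|\xi|\ge 2^{-10}$ or not, and then perform dyadic decompositions in $|\xi-\eta|$ at scale $2^{k_1}$ and in $|\eta|$ at scale $2^{k_2}$, producing pieces indexed by $(k_1,k_2)\in \chi_k^1\cup\chi_k^2$ (for $T_\mu^1$) and $(k_1,k_2)\in\chi_k^3$ (for $T_\mu^2$), just as in \textup{(\ref{octe198})} and \textup{(\ref{may23eqn43})}.

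For each dyadic piece I would integrate by parts in $\eta$ up to two times, using the phase gradient $\nabla_\eta[-\mu t|\xi-\eta|-t\hat{v}\cdot\eta]=t[\mu(\xi-\eta)/|\xi-\eta|-\hat{v}]$, which has magnitude $\sim t$ on the bulk of the integration region. Each IBP produces a factor of $(1+|t|)^{-1}$. The resulting $\nabla_\eta$-derivatives fall on $\widehat{h^\mu}(\xi-\eta)$ (bounded via $\|h^\alpha\|_{X_n}$ for $n\le 2$ and $|\alpha|\le 4$ using Lemma \ref{commutation1basicgeneral} to convert $\nabla_\xi^j\widehat{h^\mu}$-type quantities into controlled vector-field norms), on $\widehat{f}(\eta,v)$ (producing an $x$-moment absorbed by the spatial weight $(1+|x|^2)^{20}$ through Plancherel), or on the amplitude, all of which are within our budget. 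In parallel, for pieces where $(k_1,k_2)$ is such that the phase gradient degenerates or where $\eta$ is at small scale, I would apply instead the physical-space decay estimate of Lemma \ref{twistedlineardecay} for $H_\mu = e^{-i\mu t|\nabla|}h^\mu$, giving $\|P_{k_1}H_\mu\|_{L^\infty_x}\lesssim \min\{2^{k_{1,-}}, (1+|t|)^{-1}\}2^{k_1}(\|h^\alpha\|_{X_0}+\|h^\alpha\|_{X_1})$, and bound the Vlasov factor in $L^1_x$ via Cauchy–Schwarz with the $(1+|x|^2+|v|^2)^{20}$-weight. Taking the better of the two estimates dyadic-piece-by-dyadic-piece, as in \textup{(\ref{octe300})}–\textup{(\ref{octe321})}, and summing in $k_1,k_2$ yields the factor $(1+|t|)^{-2}$, with the boundary dyadic scales $2^{k_2}\sim (1+|t|)^{-1}$ contributing a logarithm in $t$ that is absorbed into the $(1+|t|)^\delta$.

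The main obstacle is handling the degeneracy of the $\eta$-phase on the codimension-one set where $\mu(\xi-\eta)/|\xi-\eta|=\hat{v}$, since there IBP in $\eta$ is not productive; I would deal with it by an angular dyadic decomposition in $\eta$ around the bad direction (in the spirit of \textup{Case 2} in the proof of Lemma \ref{twistedlineardecay}) and using on the non-degenerate angular shells the $(1+|t|)^{-1}$-IBP gain together with the volume of the angular cutoff. The small loss $(1+|t|)^\delta$ in the final bound is precisely the summation cost of these angular and frequency-boundary dyadic shells, and it is this loss that distinguishes Lemma \ref{highorderbilinearlemma1} from the sharper, but derivative-heavier, estimate of Lemma \ref{bilinearinXnormed}.
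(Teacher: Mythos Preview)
Your core mechanism---integrate by parts in $\eta$ once and twice, then interpolate the two resulting bounds to trade a small time loss $(1+|t|)^{\delta}$ for summability over dyadic scales---is exactly what the paper does (see \textup{(\ref{octe56})}, \textup{(\ref{may15eqn900})}, and \textup{(\ref{octe59})}). In that sense your plan is correct.

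However, your ``main obstacle'' paragraph is based on a false premise, and the machinery you propose to address it is unnecessary. The phase gradient in $\eta$ is $t\bigl(\mu(\xi-\eta)/|\xi-\eta|-\hat v\bigr)$, and since we are in the \emph{massive} case one has $|\hat v|=|v|/\sqrt{1+|v|^2}<1$ while $|(\xi-\eta)/|\xi-\eta||=1$; hence this gradient \emph{never} vanishes. Concretely $\bigl|\mu(\xi-\eta)/|\xi-\eta|-\hat v\bigr|\ge 1-|\hat v|\gtrsim (1+|v|^2)^{-1}$, so each integration by parts in $\eta$ costs only a polynomial power of $(1+|v|)$, which is absorbed by the weight $(1+|x|^2+|v|^2)^{20}$ on $f$. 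There is no angular degeneracy to localize around, and Lemma~\ref{twistedlineardecay} is not invoked at all in the paper's argument for this lemma.

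Correspondingly, the $(1+|t|)^{\delta}$ loss does not come from summing angular shells near a critical direction; it comes purely from interpolating the one-IBP estimate $|t|^{-1}2^{\min\{k_1,k_2\}-2k_{1,+}}$ against the two-IBP estimate $|t|^{-2}$ with exponents $(\delta,1-\delta)$, so that the resulting factor $2^{\delta k_1-2\delta k_{1,+}}$ is summable in $k_1$. Once you drop the nonexistent degeneracy and the physical-space detour, your proof collapses to the paper's.
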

\begin{proof}
Recall (\ref{octeqn1923}). After doing dyadic decompositions for two inputs, we have
 \be\label{octe100}
T_{\mu} (h, f)(t, \xi)\psi_k(\xi)=  \sum_{i=1,2,3}\sum_{(k_1,k_2)\in \chi_k^i} T_{k,k_1,k_2}^{\mu }(t, \xi), 
\ee
 where $\chi_k^i, i\in\{1,2,3\}$, are defined in (\ref{highhighlow}) and (\ref{lowhigh}) and the detailed formula of $T_{k,k_1,k_2}^{\mu }(t, \xi)$ is given as follows, 
 \be\label{octeq41}
 T_{k,k_1,k_2}^{\mu }(t, \xi)= \int_{\R^3}\int_{\R^3} e^{it|\xi|-i\mu t |\xi-\eta| - i  t \hat{v}\cdot \eta} m(\xi, v)\widehat{h^\mu}(t, \xi-\eta) \widehat{f}(t,\eta, v)\psi_{k_1}(\xi-\eta)\psi_{k_2}(\eta)\psi_k(\xi) d \eta d v.
 \ee

 From the above detailed formula,  our desired estimates (\ref{octe103}) and (\ref{deceqn98}) hold easily if $|t|\leq 1$. Therefore, we restrict ourself to the case when $|t|\geq 1$. 
For this case, we do integration by parts in $\eta$ once for $ T_{k,k_1,k_2}^{\mu }(t, \xi)$ in (\ref{octeq41}). As a result, we have
 \[
 T_{k,k_1,k_2}^{\mu }(t, \xi)= \int_{\R^3}\int_{\R^3} e^{it|\xi|-i\mu t |\xi-\eta| - i  t \hat{v}\cdot \eta}   m(\xi, v) \psi_k(\xi)\frac{i}{t} \nabla_\eta\cdot\big(\frac{\mu (\xi-\eta)/|\xi-\eta|-  \hat{v}}{|\mu (\xi-\eta)/|\xi-\eta|-  \hat{v}|^2}	\]
 \be\label{deceqn3}
 \times  \widehat{h^\mu}(t, \xi-\eta)  \widehat{f}(t,\eta, v) \psi_{k_1}(\xi-\eta)\psi_{k_2}(\eta)\big) d \eta d v,
 \ee
  From the estimate of symbol ``$m(\xi, v)$'' in (\ref{generalestimateofsymbol}) and the volume of support of $\eta$,   we have 
\[
	2^{k }\| T_{k,k_1,k_2}^{\mu}(t, \xi)\|_{L^\infty_\xi} \lesssim  |t|^{-1} 2^{ 3\min\{k_1,k_2\}}  \big(2^{-\min\{k_1,k_2\}}\|\widehat{h}(t, \xi)\psi_{k_1}(\xi)\|_{L^\infty_\xi} + \| \nabla_\xi\widehat{h}(t, \xi)\psi_{k_1}(\xi)\|_{L^\infty_\xi} \big)\]
\[
\times \big( \| (1+|v|)^{10} \widehat{f}(t, \xi, v)\psi_{k_2}(\xi)\|_{L^\infty_\xi L^1_v} +  2^{\min\{k_1,k_2\}}\| (1+|v|)^{10} \nabla_\xi \widehat{f}(t, \xi, v)\psi_{k_2}(\xi)\|_{L^\infty_\xi L^1_v} \big) 
\]
\be\label{octe56}
\lesssim  \sum_{n=0,1,\alpha\in \mathcal{B}, |\alpha|\leq 4} |t|^{-1}2^{\min\{k_1,k_2\}-2k_{1,+}} \| h^\alpha(t)\|_{X_n}\| (1+|x|^2+|v|^2)^{20} f(t,x,v)\|_{L^2_x L^2_v}. 
\ee
Similarly, the following estimate holds after doing integration by parts in ``$\eta$'' twice, 
\be\label{may15eqn900}
	2^{k }\| T_{k,k_1,k_2}^{\mu}(t, \xi)\|_{L^\infty_\xi} \lesssim     \sum_{n=0,1,2,\alpha\in \mathcal{B}, |\alpha|\leq 4} |t|^{-2} \| h^\alpha(t)\|_{X_n}\| (1+|x|^2+|v|^2)^{20} f(t,x,v)\|_{L^2_x L^2_v}. 
\ee
Combining  the    estimates  (\ref{octe56}) and (\ref{may15eqn900}), the following estimate holds for any fixed $k\in \mathbb{Z}, $
\[
 \sum_{i=1,2,3,(k_1,k_2)\in \chi_k^i } 2^{k}\|T_{k,k_1,k_2}^{\mu}(t, \xi)\|_{L^\infty_\xi}\lesssim \sum_{n=0,1,2,k_1\in \mathbb{Z}, |\alpha|\leq 4}     |t|^{-2+\delta }    2^{\delta k_1 -2\delta k_{1,+}}  \| (1+|x|^2+|v|^2)^{20} f(t,x,v)\|_{L^2_x L^2_v}
\]
\be\label{octe59}
\times\| h^\alpha(t)\|_{X_n} \lesssim    \sum_{n=0,1,2, \alpha\in \mathcal{B}, |\alpha|\leq 4}   |t|^{-2+\delta }   \big\|  {h^\alpha}(t )\|_{X_n}\| (1+|x|^2+|v|^2)^{20} f(t,x,v)\|_{L^2_x L^2_v}.
\ee 
Hence finishing the proof of the desired estimate (\ref{octe103}).

\end{proof}
Now, we prove a $L^2_\xi$-type bilinear estimate. 
\begin{lemma}\label{L2bilinearvw}
Given   any    symbol ``$m(\xi, v)$'' that satisfies the estimate \textup{(\ref{generalestimateofsymbol})} with $l=1$, 
  the following estimate holds for the bilinear form  $T_{\mu} (h, f)(t, \xi) $ defined in \textup{(\ref{octeqn1923})}, 
\[ 
  \| T_{\mu} (h, f)(t, \xi) \|_{L^2_\xi}\lesssim   \min \big\{  \sum_{n=0,1, \alpha\in \mathcal{B},  |\alpha|\leq 4}  (1+|t|)^{-1}  \|  {h^\alpha}(t )\|_{X_n}    \| (1+|x|^2+|v|^2)^{20} f(t,x,v)\|_{L^2_x L^2_v}, 
\]
\be\label{deceqn98}
  \sum_{\beta\in \mathcal{S}, |\beta|\leq 3} (1+|t|)^{-2}  \|  h(t)\|_{L^2}   \| (1+|x|^2+|v|^2)^{20}\Lambda^\beta f(t,x,v) \|_{L^2_{x,v}} \big\}.
\ee
\end{lemma}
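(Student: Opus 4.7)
The plan is to follow the same strategy as the proof of Lemma \ref{highorderbilinearlemma1} — dyadic decomposition in the frequencies of both inputs followed by integration by parts in $\eta$ — but to replace $L^\infty_\xi$ control by $L^2_\xi$ control at the final step, using Plancherel and Young's convolution inequality in $\xi$. First I would dyadically decompose as in (\ref{octe100}) into pieces $T^\mu_{k,k_1,k_2}(t,\xi)$ (see (\ref{octeq41})). The case $|t|\le 1$ is handled directly by Cauchy--Schwarz in $v$, the symbol bound (\ref{generalestimateofsymbol}), and the volume of support of $\eta$, so I may restrict to $|t|\ge 1$. Since $\nabla_\eta(-\mu t|\xi-\eta|-t\hat{v}\cdot\eta) = t[\mu(\xi-\eta)/|\xi-\eta|-\hat{v}]$ and $|\mu(\xi-\eta)/|\xi-\eta|-\hat{v}|^{-1}\lesssim(1+|v|)$, one integration by parts in $\eta$ produces a factor $1/t$ at the cost of a harmless polynomial factor in $|v|$ (absorbed by the weight $(1+|x|^2+|v|^2)^{20}$), together with either $\nabla_\eta\widehat{h^\mu}(t,\xi-\eta)$ (handled by the $X_n$-norm) or $\nabla_\eta\widehat{f}(t,\eta,v)$, or a derivative landing on the localization or the symbol.

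For the first bound, one integration by parts in $\eta$ leaves $\widehat{h^\mu}(t,\xi-\eta)$ (or its $\nabla_\eta$) which I place in $L^\infty_\xi$ using the $X_n$-norms, while $\widehat{f}(t,\eta,v)$ is placed in $L^2_{\eta}L^1_v$ via Plancherel and the weight $(1+|v|)^{20}$. Viewing $T^\mu_{k,k_1,k_2}$ as a convolution in the $\xi$-variable of a function of $\xi-\eta$ against a function of $\eta$ (with parameters $\xi$ in the symbol treated as harmless via the $\mathcal S^\infty_k$-bound (\ref{generalestimateofsymbol})), Young's inequality $\|F\ast G\|_{L^2_\xi}\lesssim \|F\|_{L^2_\xi}\|G\|_{L^1_\xi}$ together with $\|\widehat h\psi_{k_2}\|_{L^1_\xi}\lesssim 2^{3k_{2}/2}\|\widehat h\psi_{k_2}\|_{L^2}$ gives a bound of the form $(1+|t|)^{-1}2^{c_1\min(k_1,k_2)-c_2\max(k_1,k_2)_+}$ times the product of norms, which is summable over $(k_1,k_2)\in\chi_k^1\cup\chi_k^2\cup\chi_k^3$ and $k\in\mathbb{Z}$.

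For the second bound, I perform the integration by parts in $\eta$ twice, gaining $(1+|t|)^{-2}$. Derivatives $\nabla_\eta$ landing on $\widehat{f}(t,\eta,v)$ on the Fourier side correspond to $x$-multipliers on the physical side, which are absorbed into the weighted vector-field norm $\|(1+|x|^2+|v|^2)^{20}\Lambda^\beta f\|_{L^2_{x,v}}$ via the fact that the second set of vector fields together with $\partial_x$ controls multiplication by $x_i$ after pairing with the weight. Then $\widehat{h}$ is placed in $L^2_\xi$ directly (no $X_n$-norm needed), and the argument again closes by Young's inequality and summation over $(k_1,k_2)$. The main technical obstacle I anticipate is bookkeeping the $v$-weights produced by repeated differentiation of $[\mu(\xi-\eta)/|\xi-\eta|-\hat{v}]^{-1}$ and checking summability in the low $\times$ high region $\chi_k^3$ (where $\eta$ is the large frequency and $\xi-\eta$ is small), since there the convolution structure is less favourable and one must trade scaling powers carefully between the volume factor $2^{3k_1}$ and the symbol bound $2^{-k}$ from (\ref{generalestimateofsymbol}).
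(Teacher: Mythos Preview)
Your approach for the first bound is essentially the same as the paper's: dyadic decomposition, one integration by parts in $\eta$, and placing $\widehat{h}$ in the $X_n$-norms. The paper uses volume-of-support and Minkowski arguments rather than Young's inequality, but this difference is cosmetic.

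For the second bound, however, there is a genuine gap. When you integrate by parts in $\eta$, the derivative $\nabla_\eta$ hits \emph{every} $\eta$-dependent factor in the integrand, including $\widehat{h^\mu}(t,\xi-\eta)$. After two integrations by parts in $\eta$ you will have terms containing $\nabla_\xi\widehat{h}$ and $\nabla_\xi^2\widehat{h}$, and these cannot be controlled by $\|h\|_{L^2}$ alone (which is all that the second bound allows). Your proposal only discusses the case where $\nabla_\eta$ lands on $\widehat{f}$, and silently drops the terms where it lands on $\widehat{h}$.

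The paper avoids this obstruction by integrating by parts in $v$ rather than in $\eta$. The phase contains $e^{-it\hat v\cdot\eta}$, so each $\nabla_v$ gains a factor $|t\eta|^{-1}$ (times a harmless polynomial in $v$); crucially, $\widehat{h^\mu}(t,\xi-\eta)$ is \emph{independent of $v$}, so no derivative ever lands on it. Three integrations by parts in $v$ produce $|t|^{-3}2^{-3k_2}$, which is bad for small $k_2$ but interpolates against the direct estimate (good for small $k_2$) to give the claimed $(1+|t|)^{-2}$ after summing over $k_2$. The $\nabla_v$-derivatives landing on $\widehat{f}$ are precisely what is absorbed by the $\Lambda^\beta f$ norms with $|\beta|\le 3$ in the statement; this is why the right-hand side involves $\Lambda^\beta$ (which includes $\nabla_v$-type vector fields) rather than only $x$-weights.
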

\begin{proof}
Note that the decomposition (\ref{octe100}) and the equality (\ref{octeq41}) still hold and   our desired estimate  (\ref{deceqn98})  hold straightforwardly if $|t|\leq 1$. Hence, we restrict ourself to the case when $|t|\geq 1$.   Since our desired estimate (\ref{deceqn98}) consists of two estimates, we separate naturally into two parts as follow. 

\noindent $\bullet$\quad  Proof of the first estimate in (\ref{deceqn98}). \qquad Based on the possible sizes of $k,k_1,k_2$, we separate into three sub-cases as follow.

\textbf{Subcase} $1$:\qquad If $k\leq 0$. \qquad Note that, from   the   estimate (\ref{octe56})  and the volume of support of $\xi$, we have 
\[
\sum_{k\in\mathbb{Z}, k\leq 0} \sum_{(k_1,k_2)\in \chi_k^1 \cup \chi_k^2 \cup \chi_k^3} \|T_{k,k_1,k_2}^{\mu}(t, \xi)\|_{L^2_\xi} \lesssim  \sum_{k\in\mathbb{Z}, k\leq 0} \sum_{n=0,1, \alpha\in \mathcal{B}, |\alpha|\leq 4}  \| (1+|x|^2+|v|^2)^{20} f(t,x,v)\|_{L^2_x L^2_v}  \]
\be\label{feb9eqn20}
\times  |t|^{-1}  2^{ k/2} \big\| {h^\alpha}(t )\|_{X_n} \lesssim  \sum_{n=0,1,\alpha\in \mathcal{B},	 |\alpha|\leq 4}  |t|^{-1}    \big\| {h^\alpha}(t )\|_{X_n}\| (1+|x|^2+|v|^2)^{20} f(t,x,v)\|_{L^2_x L^2_v}.
\ee

\textbf{Subcase} $2$:\qquad If $k\geq 0$ and      $(k_1,k_2)\in \chi_k^1\cup \chi_k^2$. 

Note that  we have $k_1\geq k-10$ and  $k_2\leq k+10$ for the subcase we are considering. Recall (\ref{deceqn3}).  From the estimate of symbol ``$m(\xi, v)$'' in  (\ref{generalestimateofsymbol}) and the volume of support of ``$\xi$'' and ``$\eta$'', we have 
\[
\| T_{k,k_1,k_2}^{\mu }(t, \xi) \|_{L^2} \lesssim \sum_{\alpha\in \mathcal{B}, |\alpha|\leq 4} |t|^{-1}  2^{ k/2+3k_2-4k_{1,+}}\big( 2^{- k_2} \| \widehat{h^\alpha}(t, \xi)\psi_{k_1}(\xi)\|_{L^\infty_\xi} + \|\nabla_\xi \widehat{h^\alpha}(t, \xi)\psi_{k_1}(\xi)\|_{L^\infty_\xi} \big) 
\]
\[
\times \big( \| (1+|v|)^{10} \widehat{f}(t, \xi, v)\psi_{k_2}(\xi)\|_{L^\infty_\xi L^1_v} + 2^{k_2} \| (1+|v|)^{10} \nabla_\xi \widehat{f}(t, \xi, v)\psi_{k_2}(\xi)\|_{L^\infty_\xi L^1_v} \big)
\]
\be\label{deceqn7500}
\lesssim \sum_{n=0,1, \alpha\in \mathcal{B}, |\alpha|\leq 4}  |t|^{-1}  2^{ k_2-3k_{1,+ }/2 } \| {h^\alpha}(t  ) \|_{X_n}\| (1+|x|^2+|v|^2)^{20} f(t,x,v)\|_{L^2_x L^2_v}. 
\ee
From the above estimate, we have 
\be\label{deceqn77}
\sum_{k\in\mathbb{Z}, k\geq 0} \sum_{(k_1,k_2)\in \chi_k^1\cup \chi_k^2} \| T_{k,k_1,k_2}^{\mu }(t, \xi) \|_{L^2}
\lesssim \sum_{n=0,1,\alpha\in \mathcal{B},|\alpha|\leq 4}  |t|^{-1}   \| {h^\alpha}(t  ) \|_{X_n} \|  (1+|x|^2+|v|^2)^{20} f(t,x,v)\|_{L^2_x L^2_v}.
\ee
\textbf{Subcase} $3$:\qquad If $k\geq 0$ and      $(k_1,k_2)\in \chi_k^3$. 

  For this case, we  first switch the
role of ``$\xi-\eta$'' and ``$\eta$'' in (\ref{deceqn3}). Instead of using the volume of support of ``$\xi$'', we use the Minkowski inequality.  As a result,  from the estimate of symbol ``$m(\xi, v)$'' in (\ref{generalestimateofsymbol}), we have
\[
  \sum_{k\in \mathbb{Z}, k\geq 0, (k_1,k_2)\in \chi_k^3}   \| T_{k,k_1,k_2}^{\mu }(t, \xi) \|_{L^2}  
 \lesssim  \sum_{k\in \mathbb{Z}, k\geq 0} \sum_{k_1\leq k-5, |k-k_2|\leq 5}\sum_{\alpha\in \mathcal{B}, |\alpha|\leq 4}  |t|^{-1} 2^{ 3k_1-k-4k_{1,+}} 
\]
 \[\times\big(\| (1+|v|)^{10} \widehat{f}(t, \xi, v)\psi_{k_2}(\xi) \|_{L^1_v L^2_\xi }  
+ 2^{k_1}\| (1+|v|)^{10} \nabla_\xi \widehat{f}(t, \xi, v) \psi_{k_2}(\xi)\|_{L^1_v L^2_\xi } \big) \big( 2^{-k_1}\|\widehat{h^\alpha}(t, \xi)\psi_{k_1}(\xi)\|_{L^\infty_\xi}
\]
\be\label{deceqn158}
 +\|\nabla_\xi \widehat{h^\alpha}(t, \xi)\psi_{k_1}(\xi)\|_{L^\infty_\xi}   \big) \lesssim \sum_{n=0,1} \sum_{\alpha\in \mathcal{B}, |\alpha|\leq 4}   |t|^{-1}  \| h^\alpha(t)\|_{X_n}   \|  (1+|x|^2+|v|^2)^{20} f(t,x,v)\|_{L^2_x L^2_v} . 
\ee

To sum up, recall the decomposition (\ref{octe100}), our desired first estimate in  (\ref{deceqn98}) holds from the estimates (\ref{feb9eqn20}), (\ref{deceqn77}) and (\ref{deceqn158}). 

 \noindent $\bullet$\quad  Proof of the second estimate in (\ref{deceqn98}). \qquad   Based on the size of $k_1$ and $k_2$, we separate into three subcases as follows.

 \textbf{Subcase} $1$:\qquad If  $(k_1,k_2)\in \chi_k^1 $.
 
 For this case, we have $|k_1-k_2|\leq 10$ and $k\leq k_1+10$. Recall (\ref{octeq41}).   After using the volume of ``$\xi$'', the estimate of symbol ``$m(\xi, v)$'' in (\ref{generalestimateofsymbol}),  the $L^2-L^2$ type bilinear estimate, and the volume of support of ``$\eta$'',  we have
\[
\| T_{k,k_1,k_2}^{\mu }(t, \xi) \|_{L^2} \lesssim   2^{k/2 } \|  \widehat{h}(t, \xi )\psi_{k_1}(\xi) \|_{L^2 } \| (1+|v|)^{25}\widehat{ f}(t, \xi, v)\psi_{k_2}(t,\xi)\|_{L^2_\xi L^1_v} 
\]
\be\label{feb10eqn1}
\lesssim 2^{k/2+3k_2/2} \|  \widehat{h}(t, \xi )\psi_{k_1}(\xi) \|_{L^2 } \|  (1+|v|)^{25}\widehat{ f}(t, \xi, v)\psi_{k_2}(t,\xi)\|_{L^\infty_\xi L^1_v}.
\ee

Moreover,  after first  doing  integration by parts in ``$v$'' three times and then using the volume of ``$\xi$'' and ``$\eta$'',  the following estimate holds from  the estimate of symbol ``$m(\xi, v)$'' in (\ref{generalestimateofsymbol}),  
\be\label{feb10eqn2}
\| T_{k,k_1,k_2}^{\mu }(t, \xi) \|_{L^2} \lesssim \sum_{ |\alpha|\leq 3} |t|^{-3} 2^{k/2 -3k_2/2} \|  \widehat{h}(t, \xi )\psi_{k_1}(\xi) \|_{L^2_\xi } \| (1+|v|)^{25} \nabla_v^\alpha \widehat{ f}(t, \xi, v)\psi_{k_2}(t,\xi)\|_{L^\infty_\xi L^1_v}.
\ee

From (\ref{feb10eqn1}) and (\ref{feb10eqn2}), we have 
\[
\sum_{k\in \mathbb{Z}} \sum_{(k_1,k_2)\in \chi_k^1} \| T_{k,k_1,k_2}^{\mu }(t, \xi) \|_{L^2}\]
\[
 \lesssim \sum_{ \beta\in \mathcal{S}, |\beta|\leq 3} \big[\big(\sum_{k_1, 2^{k_1}\leq  |t|^{-1}} \sum_{k\leq k_1+10} 2^{k/2+3k_1/2} \big) + \big(\sum_{k_1, 2^{k_1}\geq  |t|^{-1}} \sum_{k\leq k_1+10}  |t|^{-3}
  2^{k/2 -3k_1/2} \big) \big]\| h(t)\|_{L^2} \]
\be\label{feb10eqn3}
 \times \| (1+|x|^2+|v|^2)^{20}\Lambda^\beta f(t,x,v) \|_{L^2_{x,v}}\lesssim  \sum_{ \beta\in \mathcal{S}, |\beta|\leq 3}   |t|^{-2}\| h(t)\|_{L^2} \| (1+|x|^2+|v|^2)^{20}\Lambda^\beta f(t,x,v) \|_{L^2_{x,v}}. 
\ee

 \textbf{Subcase} $2$:\qquad If  $(k_1,k_2)\in \chi_k^2 $.

  For this case we have $|k-k_1|\leq 10, k_2\leq k_1-5$. Recall (\ref{octeq41}).  After using the volume of ``$\eta$'' and the estimate of symbol ``$m(\xi, v)$'' in (\ref{generalestimateofsymbol}), the following estimate holds,
\be\label{deceqn134}
\| T_{k,k_1,k_2}^{\mu }(t, \xi) \|_{L^2} \lesssim 2^{-k+3k_2} \|  \widehat{h}(t, \xi)\psi_{k_1}(\xi)\|_{L^2_\xi }  \|(1+|v|)^{25} \widehat{f}(t, \xi, v)\|_{L^\infty_\xi L^1_v}.
\ee

Moreover,  after first doing  integration by parts in ``$v$'' three times and then using the volume of support of ``$\eta$ '',  the following estimate holds from  the estimate of symbol $m(\xi, v)$ in (\ref{generalestimateofsymbol}), 
\be\label{feb10eqn13}
\| T_{k,k_1,k_2}^{\mu }(t, \xi) \|_{L^2} \lesssim \sum_{|\alpha|\leq 3}  2^{-k   } |t|^{-3}  \|  \widehat{h}(t, \xi)\psi_{k_1}(\xi)\|_{L^2_\xi } \|(1+|v|)^{25}\nabla_v^\alpha \widehat{f}(t, \xi, v)\|_{L^\infty_\xi L^1_v}.
\ee
Therefore, from the orthogonality in $L^2_\xi$ and the estimates (\ref{deceqn134}) and (\ref{feb10eqn13}) derived above, we have
\[
\| \sum_{k\in \mathbb{Z}} \sum_{(k_1,k_2)\in \chi_k^2}  T_{k,k_1,k_2}^{\mu }(t, \xi) \|_{L^2}^2 \lesssim \sum_{|k-k_1|\leq 10}\big( \sum_{k_2\leq k_1-5} \| T_{k,k_1,k_2}^{\mu }(t, \xi) \|_{L^2}  \big)^2 \lesssim \sum_{\beta\in \mathcal{S}, |\beta|\leq 3}\sum_{k_1\in\mathbb{Z}}\big[
\]
\[
 \big( \sum_{ 2^{k_2}\leq |t|^{-1}} 2^{2k_2} + \sum_{  2^{k_2}\geq  |t|^{-1}}  |t|^{-3}2^{ -k_2}\big)  \|  \widehat{h}(t, \xi)\psi_{k_1}(\xi)\|_{L^2_\xi }   \|(1+|x|^2+|v|^2)^{20} \Lambda^\beta f(t,x,v) \|_{L^2_{x,v}}\big]^2  \]
\[
\lesssim \sum_{\beta\in \mathcal{S}, |\beta|\leq 3} \sum_{k_1 \in \mathbb{Z} }   |t|^{-4} \|  \widehat{h}(t, \xi)\psi_{k_1}(\xi)\|_{L^2_\xi }^2 \|(1+|x|^2+|v|^2)^{20} \Lambda^\beta f(t,x,v) \|_{L^2_{x,v}}^2\]
\be\label{feb10eqn31}
\lesssim \sum_{\beta\in \mathcal{S}, |\beta|\leq 3}  |t|^{-4}\| h(t)\|_{L^2}^2 \|(1+|x|^2+|v|^2)^{20} \Lambda^\beta f(t,x,v) \|_{L^2_{x,v}}^2. 
\ee

 \textbf{Subcase} $3$:\qquad If  $(k_1,k_2)\in \chi_k^3 $.

  For this case we have $|k-k_2|\leq 10$ and $k_1\leq k- 5$. Recall (\ref{octeq41}). On one hand, after using the $L^2-L^\infty$ type bilinear estimate,  the $L^\infty\rightarrow L^2$ type Sobolev embedding, the estimate of symbol ``$m(\xi, v)$'' in (\ref{generalestimateofsymbol}) and the volume of support of the frequency variable, we have 
\[
 \| T_{k,k_1,k_2}^{\mu }(t, \xi) \|_{L^2} \lesssim 2^{-k+3k_1/2 } \|  \widehat{h}(t, \xi)\psi_{k_1}(\xi)\|_{L^2_\xi }  \|(1+|v|)^{25} \widehat{f}(t, \xi, v)\psi_{k_2}(\xi) 	\|_{L^2_\xi L^1_v}  
\]
\be\label{feb10eqn21}
\lesssim  2^{-k+3k_1/2+3k_2/2} \|  \widehat{h}(t, \xi)\psi_{k_1}(\xi)\|_{L^2_\xi }  \|(1+|v|)^{25} \widehat{f}(t, \xi, v)\|_{L^\infty_\xi L^1_v}. 
\ee

On the other hand, we first do integration by parts in ``$v$'' three times and then  use the $L^2-L^\infty$ type bilinear estimate,  the $L^\infty\rightarrow L^2$ type Sobolev embedding, the estimate of symbol ``$m(\xi, v)$'' in (\ref{generalestimateofsymbol}) and the volume of support of the frequency. As a result, we have 
\[
 \| T_{k,k_1,k_2}^{\mu }(t, \xi) \|_{L^2} \lesssim \sum_{|\alpha|\leq 3}  |t|^{-3} 2^{-k -3k_2+ 3k_1/2 } \|  \widehat{h}(t, \xi)\psi_{k_1}(\xi)\|_{L^2_\xi }  \| (1+|v|)^{25} \nabla_v^\alpha \widehat{f}(t, \xi, v)\psi_{k_2}(\xi) \|_{L^2_\xi L^1_v}  
\]
\be\label{feb10eqn24}
\lesssim   \sum_{|\alpha|\leq 3}  |t|^{-3}  2^{-k + 3k_1/2-3k_2/2} \|  \widehat{h}(t, \xi)\psi_{k_1}(\xi)\|_{L^2_\xi }  \| (1+|v|)^{2 5} \nabla_v^\alpha \widehat{f}(t, \xi, v)\|_{L^\infty_\xi L^1_v}. 
\ee
 Therefore, from the estimates (\ref{feb10eqn21}) and (\ref{feb10eqn24}), we have 
 \[
\sum_{k\in \mathbb{Z}} \sum_{(k_1,k_2)\in \chi_k^3}   \| T_{k,k_1,k_2}^{\mu }(t, \xi) \|_{L^2}\]
\[
 \lesssim  \sum_{\beta\in \mathcal{S}, |\beta|\leq 3}  \big(\sum_{k_2, 2^{k_2}\leq  |t|^{-1}}\sum_{k_1\leq k_2-5} 2^{3k_1/2+k_2/2} + \sum_{k_2, 2^{k_2}\geq   |t|^{-1}}\sum_{k_1\leq k_2-5}  |t|^{-3}  2^{ 3k_1/2-5k_2/2}  \big) \|h(t)\|_{L^2}  
 \]
 \be\label{feb10eqn33}
 	 \times  \|(1+|x|^2+|v|^2)^{20} \Lambda^\beta f(t,x,v) \|_{L^2_{x,v}} \lesssim \sum_{\beta\in \mathcal{S}, |\beta|\leq 3} |t|^{-2} \|h(t)\|_{L^2}  \|(1+|x|^2+|v|^2)^{20} \Lambda^\beta f(t,x,v) \|_{L^2_{x,v}} .
 \ee

 To sum up, recall the decomposition (\ref{octe100}),  our desired second estimate in   (\ref{deceqn98}) holds  from the estimates (\ref{feb10eqn3}), (\ref{feb10eqn31}) and (\ref{feb10eqn33}).  Hence finishing the proof. 

 \end{proof}
 Our last bilinear estimates concern the weighted $L^2$-estimate, which is also the last part of the high order energy defined in (\ref{highorderenergyphi}). 
 Recall (\ref{octe361}), (\ref{octe190}), and (\ref{octe191}). We have
\be\label{octeq262}
\nabla_\xi\big(T_{\mu}(h, f)(t, \xi) \big)= O^1(t,\xi)+O^2(t,\xi),
\ee
where
\[
O^1(t,\xi)=\int_{\R^3}\int_{\R^3} e^{it|\xi|-i\mu t |\xi-\eta| - i  t \hat{v}\cdot \eta} \nabla_\xi\big(m(\xi, v)\widehat{h^\mu}(t, \xi-\eta)\psi_{\geq -10}(|\xi-\eta|/|\xi|) \big)\widehat{f}(t,\eta, v)  
\]
\be\label{octeq1}
+   e^{it|\xi|-i\mu t | \eta| - i  t \hat{v}\cdot (\xi-\eta)}\nabla_\xi \big( m(\xi, v)\psi_{< -10}(|\eta|/|\xi|) \widehat{f}(t,\xi-\eta, v)\big) \widehat{h^\mu}(t, \eta) d \eta d v ,
\ee
\[
O^2(t,\xi)=\int_{\R^3}\int_{\R^3} e^{it|\xi|-i\mu t |\xi-\eta| - i  t \hat{v}\cdot \eta} it\big(\frac{\xi}{|\xi|} -\mu \frac{\xi-\eta}{|\xi-\eta|} \big)  m(\xi, v)\widehat{h^\mu}(t, \xi-\eta)  \widehat{f}(t,\eta, v) \psi_{\geq -10}(|\xi-\eta|/|\xi|)  
\]
\be\label{octeq2}
+ e^{it|\xi|-i\mu t | \eta| - i  t \hat{v}\cdot (\xi-\eta)} it\big(\frac{\xi}{|\xi|} - \hat{v} \big)  m(\xi, v)\widehat{h^\mu}(t,  \eta)  \widehat{f}(t,\xi-\eta, v)\psi_{< -10}(|\eta|/|\xi|) d \eta d v.
\ee

For the sake of clarity, we first summarize our desired two weighted $L^2$-type bilinear estimates in the  following Proposition. Its proof consists of four parts, which will be elaborated in the next four Lemmas separately. 

\begin{proposition}\label{firstorderweighthigh1}
Given   any    symbol ``$m(\xi, v)$'' that satisfies the estimate \textup{(\ref{generalestimateofsymbol})} with $l=1$ and any fixed $k\in \mathbb{Z}$, 
  the following estimate holds for the bilinear form  $T_{\mu} (h, f)(t, \xi) $ defined in \textup{(\ref{octeqn1923})}, 
  \[
    2^{k/2}\|\nabla_\xi\big(T_{\mu}(h, f)(t, \xi) \big)\psi_k(\xi)\|_{L^2}\lesssim   \sum_{0\leq n \leq 3}\sum_{ \alpha\in\mathcal{B}, |\alpha|\leq 4} \big( (1+|t|)^{-1}2^{k_{-}   } + (1+|t|)^{-2+\delta}\big)  \| {h^\alpha}(t   )\|_{X_n} 
  \]
\be\label{octe406}
\times \| (1+|x|^2+|v|^2)^{20}  f(t, x, v)\|_{L^2_x L^2_v}.
\ee
Moreover, the following bilinear estimate also holds,
\[
  2^{k/2}\|\nabla_\xi\big(T_{\mu}(h, f)(t, \xi) \big)\psi_k(\xi)\|_{L^2}\lesssim  \sum_{\beta\in \mathcal{S}, |\beta|\leq 4}  (1+|t|)^{-2}\big(\sup_{k\in \mathbb{Z}} 2^{k} \| \widehat{h}(t, \xi)\psi_k(\xi)\|_{L^\infty_\xi} + 2^{k/2}\|\nabla_\xi \widehat{h}(t, \xi)\psi_k(\xi)\|_{L^2}\big)
\]
\be\label{octe581}
\times \|  (1+|x|^2+|v|^2)^{20} \Lambda^\beta f(t,x,v)\|_{L^2_x L^2_v}.
\ee
\end{proposition}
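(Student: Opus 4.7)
The starting point is the decomposition $\nabla_\xi T_\mu(h,f)=O^1+O^2$ already produced in (\ref{octeq262})--(\ref{octeq2}), which separates the contributions in which $\nabla_\xi$ falls on the non-phase factors ($O^1$, carrying no power of $t$) from those in which it falls on the exponential ($O^2$, carrying an explicit factor $it$ together with a null-type linear combination of unit vectors). The plan is to estimate $O^1$ by a small variant of the three-case dyadic scheme used in the proof of Lemma \ref{L2bilinearvw}, and to reduce $O^2$ to a bilinear form of the same shape as $O^1$ by one integration by parts in $\eta$ that exploits the null structure.

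For $O^1$, I would perform the dyadic decomposition (\ref{octe100}) in $\xi-\eta$ and $\eta$, split according to $(k_1,k_2)\in\chi_k^i$, $i=1,2,3$, and note that $\nabla_\xi$ either hits $m(\xi,v)$ (producing a harmless $2^{-k}$ via hypothesis (\ref{generalestimateofsymbol})), the profile $\widehat{h^\mu}(t,\xi-\eta)$ (absorbed by the $n=1$ component of the $X_n$-norms on the right of (\ref{octe406})), or the cut-off $\psi_{\geq -10}(|\xi-\eta|/|\xi|)$ (producing a $2^{-k}$ together with an extra dyadic localization). In the low-frequency regime $k\leq 0$ a pure volume-of-support argument yields the slowly decaying $(1+|t|)^{-1}2^{k_-}$ rate, while in the high-frequency regime two integrations by parts in $v$, mimicking (\ref{feb10eqn13}) and (\ref{feb10eqn24}), give $(1+|t|)^{-2+\delta}$ after a small-loss summation in the dyadic indices.

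The main difficulty is $O^2$, whose amplitude is $it\bigl(\xi/|\xi|-\mu(\xi-\eta)/|\xi-\eta|\bigr)$ in the high-high/high-low piece and $it\bigl(\xi/|\xi|-\hat v\bigr)$ in the low-high piece. The key identity is the splitting of the null factor
\[
\frac{\xi}{|\xi|}-\mu\frac{\xi-\eta}{|\xi-\eta|}=\Bigl(\frac{\xi}{|\xi|}-\hat v\Bigr)-\Bigl(\mu\frac{\xi-\eta}{|\xi-\eta|}-\hat v\Bigr),
\]
the second summand being, up to a factor $-(it)^{-1}$, exactly $\nabla_\eta$ of the phase $-i\mu t|\xi-\eta|-it\hat v\cdot\eta$. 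I plan to integrate by parts in $\eta$ against this second summand, which cancels the $t$ entirely and leaves a bilinear form of the same shape as $O^1$; the remaining first summand, $it(\xi/|\xi|-\hat v)$, is treated by an integration by parts in $v$ that uses $\nabla_v\hat v\cdot\eta$ in the gradient of $e^{-it\hat v\cdot\eta}$ to absorb the extra $t$. The anticipated main obstacle is controlling the region where the phase gradient is small, i.e.\ $|\mu(\xi-\eta)/|\xi-\eta|-\hat v|\ll1$ (nearly stationary phase in $\eta$); I would overcome it by a further dyadic decomposition of the angle analogous to the one in the proof of Lemma \ref{twistedlineardecay}, combined with a direct volume bound on the near-stationary shell. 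Once this reduction is carried out, $O^2$ is controlled by the same three-case scheme as $O^1$.

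The second estimate (\ref{octe581}) is proved by the same decomposition, but with a different trade-off: the hypothesis provides only weighted $L^2$ control of $\nabla_\xi\widehat h$, so the derivative on the profile must remain in $L^2$ and the decay has to be manufactured on the $f$-side. After the same $\eta$-IBP on $O^2$, I plan to perform three additional integrations by parts in $v$, producing $(1+|t|)^{-3}$ in the $O^1$-part and $(1+|t|)^{-4}$ in the $O^2$-part; the resulting $v$-derivatives on $\widehat f$ are converted into vector fields $\Lambda^\beta f$, $|\beta|\leq 4$, via the decomposition of $D_v$ in (\ref{summaryoftwodecomposition}) and the commutation rules of Lemma \ref{summaryofhighordercommutation}, and are absorbed by the weighted $L^2$ norm on the right-hand side. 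Summing in the dyadic indices along the three-case split borrowed from Lemma \ref{L2bilinearvw} gives the claimed $(1+|t|)^{-2}$ rate.
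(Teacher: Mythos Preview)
Your decomposition $\nabla_\xi T_\mu=O^1+O^2$ and the overall scheme of dyadic case-splitting are the same as the paper's (the four Lemmas~\ref{basichighenergylemma1}--\ref{highorderbilinear2} are exactly the four pieces $O^i\times$\,estimate~(\ref{octe406})/(\ref{octe581})). However, the mechanism you propose to generate the time decay in estimate~(\ref{octe406}) does not work.

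For the first estimate~(\ref{octe406}) the right-hand side carries \emph{no} $v$-derivatives of $f$; only the weighted norm $\|(1+|x|^2+|v|^2)^{20}f\|_{L^2_{x,v}}$ is available. Your plan for $O^1$ in the high-frequency regime is ``two integrations by parts in $v$, mimicking (\ref{feb10eqn13}) and (\ref{feb10eqn24})'', but those two estimates produce factors $\nabla_v^\alpha\widehat f$ which have nowhere to go here. Likewise, for the residual piece $it(\xi/|\xi|-\hat v)$ in your treatment of $O^2$ you propose an integration by parts in $v$, which again lands $v$-derivatives on $f$. The correct move (Lemmas~\ref{basichighenergylemma1} and \ref{basichighenergylemma2}) is repeated integration by parts in $\eta$ using the phase gradient $\nabla_\eta\bigl(-\mu t|\xi-\eta|-t\hat v\cdot\eta\bigr)=t\bigl(\mu(\xi-\eta)/|\xi-\eta|-\hat v\bigr)$: each application gains $t^{-1}$ at the cost of either a $\xi$-derivative on $\widehat h$ (absorbed by the $X_n$-norms, $n\le 3$), a $\xi$-derivative on $\widehat f$ (which is an $x$-weight and is absorbed by $(1+|x|^2)^{20}$), or a factor $2^{-\min\{k_1,k_2\}}$ from the kernel/cutoff. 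One such IBP gives the $(1+|t|)^{-1}2^{k_-}$ term (from the Low$\times$High case $\chi_k^3$); two or three give $(1+|t|)^{-2+\delta}$ after interpolation. Your ``pure volume-of-support'' claim for the $(1+|t|)^{-1}2^{k_-}$ rate is also incorrect---volume gives no $t$-decay; one $\eta$-IBP is required.

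A related point: your anticipated ``main obstacle'', the near-stationary region $|\mu(\xi-\eta)/|\xi-\eta|-\hat v|\ll 1$, does not exist in the massive case. Since $|\hat v|=|v|/\sqrt{1+|v|^2}<1$, one has the uniform lower bound $|\mu(\xi-\eta)/|\xi-\eta|-\hat v|\gtrsim(1+|v|)^{-2}$, and the resulting powers of $(1+|v|)$ are absorbed by the weight $(1+|v|)^{25}$ already present in the estimates. Hence the $\eta$-IBP is nonsingular and the angular decomposition you borrow from Lemma~\ref{twistedlineardecay} is unnecessary; so is the null-factor splitting for $O^2$---the paper simply does two (resp.\ three) integrations by parts in $\eta$ directly on (\ref{octeq100}). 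Your plan for the second estimate~(\ref{octe581}) (three or four IBP in $v$, derivatives absorbed into $\Lambda^\beta f$) is essentially the paper's approach in Lemmas~\ref{highorderbilinear1} and~\ref{highorderbilinear2}, though no passage through $D_v$ or the commutation rules is needed: the $\nabla_v^\alpha$ on the profile are dominated by $\Lambda^\beta$ after trading the $\omega(x,v)$-correction for $x,v$-weights.
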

\begin{proof}
Recall (\ref{octeq262}), (\ref{octeq1}), and (\ref{octeq2}). The desired estimates (\ref{octe406}) and (\ref{octe581}) follows directly from   estimate (\ref{octe140}) in Lemma \ref{basichighenergylemma1}, estimate (\ref{octe540}) in Lemma \ref{highorderbilinear1},     estimate (\ref{octe149}) in Lemma \ref{basichighenergylemma2}, and estimate (\ref{octe580}) in Lemma \ref{highorderbilinear2}.
\end{proof}

\begin{lemma}\label{basichighenergylemma1}
For ``$O^1(t, \xi)$'' defined in \textup{(\ref{octeq1})},  the following estimate holds for any fixed $k\in \mathbb{Z}$,
\[
  2^{k/2}\| O^1(t, \xi) \psi_k(\xi)\|_{L^2}
\]
\be\label{octe140}
 \lesssim  \sum_{0\leq n \leq 3}\sum_{ \alpha\in\mathcal{B},  |\alpha|\leq 4} \big( (1+|t|)^{-1}2^{k_{-}  } + (1+|t|)^{-2+\delta}\big) \| {h^\alpha}(t   )\|_{X_n}   \| (1+|x|^2+|v|^2)^{20} f(t, x, v)\|_{L^2_x L^2_v}.
\ee
\end{lemma}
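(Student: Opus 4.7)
The strategy closely parallels that of Lemma \ref{L2bilinearvw}, with additional bookkeeping for the $\nabla_\xi$-derivative that distinguishes $O^1$ from the plain bilinear form $T_\mu$. After dyadic decomposition in all three frequencies, I split $O^1(t,\xi)\psi_k(\xi)$ into two subsums corresponding to the two terms in \eqref{octeq1}: the cutoff $\psi_{\geq -10}(|\xi-\eta|/|\xi|)$ in the first term restricts to $(k_1,k_2)\in \chi_k^1\cup\chi_k^2$, while $\psi_{<-10}(|\eta|/|\xi|)$ in the second term places us in $\chi_k^3$ with $k_1\leq k-10$ and $|k-k_2|\leq 5$. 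In each piece, $\nabla_\xi$ either lands on $m(\xi,v)$ or on a cutoff (producing an extra $2^{-k}$ loss that is compensated by the scaling assumption \eqref{generalestimateofsymbol} with $l=1$), on $\widehat{h^\mu}(t,\xi-\eta)$ (promoting $\|h^\alpha\|_{X_0}$ to $\|h^\alpha\|_{X_n}$ with $n\geq 1$), or, in the LH piece, on $\widehat{f}(t,\xi-\eta,v)$ (absorbed by the weighted $L^2_xL^2_v$-norm of $f$). The case $|t|\leq 1$ is immediate from crude volume-of-support bounds.

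For $|t|\geq 1$, the essential mechanism is integration by parts in $\eta$, exploiting the lower bound $|\mu\tfrac{\xi-\eta}{|\xi-\eta|}-\hat v|\gtrsim 1$ on the support of the first integrand and $|\tfrac{\xi}{|\xi|}-\hat v|\gtrsim 1$ on that of the second, exactly as in \eqref{deceqn3}. One integration by parts yields a prefactor $(1+|t|)^{-1}\cdot 2^{-\min\{k_1,k_2\}}$, while two give $(1+|t|)^{-2}\cdot 2^{-2\min\{k_1,k_2\}}$. Combining either with the frequency-side volume identity $2^{3k_2/2}\|\widehat f\,\psi_{k_2}\|_{L^\infty_\xi L^1_v}\lesssim \|(1+|v|)^{20}f\|_{L^2_xL^2_v}$ (and its analogue after $\nabla_\xi$), in analogy with \eqref{deceqn7500} and \eqref{deceqn158}, and then summing the two regimes via $\min\{A,B\}$-interpolation, yields the $(1+|t|)^{-2+\delta}$ contribution in the high-output-frequency regime, with the $\delta$-loss absorbing the logarithmic dyadic sum. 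The $(1+|t|)^{-1}2^{k_-}$ contribution arises specifically at low output frequency $k\leq 0$: the volume gain $\|\psi_k\|_{L^2_\xi}\lesssim 2^{3k/2}$ combined with the prefactor $2^{k/2}$ gives an overall $2^{2k}$, which after one integration by parts produces the stated $(1+|t|)^{-1}2^{k}=(1+|t|)^{-1}2^{k_-}$ piece and remains summable in $k_1,k_2$ thanks to the $2^{-\min\{k_1,k_2\}}$ gain of the integration by parts.

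The main obstacle is the HH sub-regime at low output frequency $k\leq 0$: here $(k_1,k_2)\in\chi_k^1$ allows arbitrarily large $k_1\sim k_2$, and $\nabla_\xi$ hitting the cutoff $\psi_{\geq -10}(|\xi-\eta|/|\xi|)$ threatens a $2^{-k}$-singular factor. This is resolved by observing that on the support of $\nabla_\xi\psi_{\geq-10}(|\xi-\eta|/|\xi|)$ one has $|\xi-\eta|\sim 2^{-10}|\xi|$, which forces $k_1\sim k$ and effectively reduces this piece to a boundary-HL contribution with $2^{-k}$ replaced by $2^{-k_1}\sim 2^{-k}$, cleanly cancelling against the $2^{k}$ gains above. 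With this handled, the three subcases (HH, HL, LH) proceed in full analogy with \eqref{feb10eqn3}, \eqref{feb10eqn31}, and \eqref{feb10eqn33}, and the $X_n$-hierarchy for $0\leq n\leq 3$ tracks the at-most-three $\nabla_\xi$'s that can fall on $\widehat{h^\mu}$ across the output derivative and the two integrations by parts.
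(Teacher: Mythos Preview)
Your toolkit is the same as the paper's (dyadic localization, integration by parts in $\eta$ against the nonvanishing $|\mu\tfrac{\sigma}{|\sigma|}-\hat v|$ factor, and interpolation between one and two IBP's), but your \emph{organization} differs from the paper's in a way that leaves a real gap. The paper does \emph{not} split by the sign of $k$. It splits by interaction type: for $(k_1,k_2)\in\chi_k^1\cup\chi_k^2$ it performs one and then two integrations by parts (obtaining \eqref{octe76} and \eqref{may15eqn22}) and interpolates to produce the $(1+|t|)^{-2+\delta}$ contribution \emph{for every} $k$; the $(1+|t|)^{-1}2^{k_-}$ term arises \emph{exclusively} from the Low$\times$High piece $(k_1,k_2)\in\chi_k^3$, where a single IBP and the sum $\sum_{k_1\le k}2^{k_1-2k_{1,+}}\lesssim 2^{k_-}$ give \eqref{octe138}.

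Your mechanism for $(1+|t|)^{-1}2^{k_-}$ (``volume gain $2^{2k}$, then one IBP reduces to $(1+|t|)^{-1}2^k$'') does not work for the HH piece at $k\le 0$. There $k_1\sim k_2$ can be arbitrarily larger than $k$, and the IBP gain is $2^{-\min\{k_1,k_2\}}=2^{-k_2}$, \emph{not} $2^{-k}$; combining with the volume of $\eta$ and the $X_0$ bound on $\widehat h$ leaves a per-dyadic contribution $\sim(1+|t|)^{-1}2^{k_1-4k_{1,+}}$, whose sum over $k_1\ge k-10$ is $(1+|t|)^{-1}\cdot O(1)$, not $(1+|t|)^{-1}2^k$. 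That is strictly larger than both terms in \eqref{octe140}, so the bound fails. The ``obstacle'' you single out (the cutoff derivative $\nabla_\xi\psi_{\ge-10}$) is a red herring: that term is indeed harmless because its support forces $k_1\sim k$, but the main HH contribution (where $\nabla_\xi$ hits $m$ or $\widehat{h^\mu}$) is what actually fails under your one-IBP-plus-volume scheme. The fix is exactly the paper's: run the one/two-IBP interpolation for $\chi_k^1\cup\chi_k^2$ regardless of $k$, and reserve the single-IBP argument for $\chi_k^3$, where two IBP's are unavailable (the $k_1$-sum would diverge). Also, a small slip: in the LH integrand the $\eta$-gradient of the phase is $it(\hat v-\mu\eta/|\eta|)$, so the relevant lower bound is $|\mu\eta/|\eta|-\hat v|\gtrsim(1+|v|^2)^{-1}$, not $|\xi/|\xi|-\hat v|$.
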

\begin{proof}
 Firstly,  we do dyadic decomposition for two frequencies `` $\xi-\eta$'' and ``$\eta$''. As a result, we have
 
\be\label{octe120}
O^1(t,\xi)\psi_k(\xi)=  \sum_{i=1,2,3}\sum_{(k_1,k_2)\in \chi_k^i} O^1_{k,k_1,k_2}(t, \xi)  ,
 \ee
 where $\chi_k^i, i\in\{1,2,3\}$ is defined in (\ref{highhighlow}) and (\ref{lowhigh}),
 \[
O^1_{k,k_1, k_2}(t, \xi)= \int_{\R^3}\int_{\R^3} e^{it|\xi|-i\mu t |\xi-\eta| - i  t \hat{v}\cdot \eta} \nabla_\xi\big(m(\xi, v)\widehat{h^\mu}(t, \xi-\eta)\psi_{\geq -10}(|\xi-\eta|/|\xi|) \big)\]
\be\label{octeq42}
\times \widehat{f}(t,\eta, v) \psi_{k_1}(\xi-\eta)\psi_{k_2}(\eta)\psi_k(\xi)  d \eta d v,\quad \textup{when\,} (k_1,k_2)\in \chi_k^1\cup \chi_k^2,
 \ee
  \[
O^1_{k,k_1, k_2}(t, \xi)= \int_{\R^3}\int_{\R^3} e^{it|\xi|-i\mu t | \eta| - i  t \hat{v}\cdot (\xi-\eta)} \nabla_\xi\big(m(\xi, v) \widehat{f}(t,\xi-\eta, v)\psi_{<  -10}(|\eta|/|\xi|) \big)\]
\be\label{octeq47}
\times \widehat{h^\mu}(t,  \eta) \psi_{k_1}( \eta)\psi_{k_2}(\xi-\eta)\psi_k(\xi)  d \eta d v,\quad \textup{when\,} (k_1,k_2)\in \chi_k^3.
 \ee
From the above detailed formulas of $O_{k,k_1,k_2}^1(t,\xi)$,  we know that our   desired estimate (\ref{octe140}) holds straightforwardly if $|t|\leq 1$. Therefore, from now on, we restrict ourself to the case when $|t|\geq 1$. For any fixed $k\in \mathbb{Z}$, we separate into two cases as follows. 

  $\bullet $\, \textbf{Case} 1:\quad If $(k_1,k_2)\in \chi_k^1\cup \chi_k^2$.\qquad For this case we have $k_1\geq k-10, k_2\leq k_1+10$. Recall  (\ref{octeq42}) and the estimate of symbol $m(\xi,v)$ in (\ref{generalestimateofsymbol}).  After  doing integration  by parts in ``$\eta$'' once, we have the following estimate, 
\[
 2^{ k/2 }\| O_{k,k_1,k_2}^{1}(t, \xi)\|_{L^2}
\lesssim 2^{2k} \| O_{k,k_1,k_2}^{1}(t, \xi)\|_{L^\infty_\xi}  \lesssim  (1+|t|)^{-1} 2^{ k+3k_2}\big[ \| (1+|v|)^{25} \widehat{f}(t, \xi, v)\psi_{k_2}(\xi)\|_{L^\infty_\xi L^1_v}
 \]
 \[
\times \big(
 2^{-k- k_2}\|\widehat{h}(t, \xi)\psi_{k_1}(\xi)\|_{L^\infty_\xi}+   2^{-\min\{k,k_2\} } \|\nabla_\xi \widehat{h}(t,\xi)\psi_{k_1}(\xi)\|_{L^\infty_\xi} +   \|\nabla_\xi^2 \widehat{h}(t,\xi)\psi_{k_1}(\xi)\|_{L^\infty_\xi}\big)    \]
 \[
 + \big( 2^{-\min\{k, k_2\} }\|\widehat{h}(t, \xi)\psi_{k_1}(\xi)\|_{L^\infty_\xi} + \|\nabla_\xi \widehat{h}(t, \xi)\psi_{k_1}(\xi)\|_{L^\infty_\xi}  \big)\| (1+|v|)^{25}\nabla_\xi \widehat{f}(t, \xi, v)\psi_{k_2}(\xi)\|_{L^\infty_\xi L^1_v} \big]\]
 \be\label{octe76}
\lesssim  \sum_{n=0,1,2}\sum_{\alpha \in \mathcal{B}, |\alpha|\leq 4}  (1+|t|)^{-1} 2^{   2k_2-k_1-2k_{1,+}}  \| {h^\alpha}(t)\|_{X_n} \|(1+|x|^2+|v|^2)^{20}  f(t,x,v)\|_{L^2_x L^2_v}.
\ee
Similarly, after doing integration by parts in ``$\eta$'' twice, the following estimate holds, 
\be\label{may15eqn22}
 2^{ k/2 }\| O_{k,k_1,k_2}^{1}(t, \xi)\|_{L^2}\lesssim \sum_{n=0,1,2,3}\sum_{\alpha \in \mathcal{B}, |\alpha|\leq 4}  (1+|t|)^{-2} 2^{k_2-k_1} \| {h^\alpha}(t)\|_{X_n} \|(1+|x|^2+|v|^2)^{20}  f(t,x,v)\|_{L^2_x L^2_v}. 
\ee
To sum up, after interpolating the estimates (\ref{octe76}) and (\ref{may15eqn22}), we have 
\be\label{octe137}
\sum_{(k_1, k_2)\in \chi_k^1\cup \chi_k^2}  2^{ k/2 }\| O_{k,k_1,k_2}^{1}(t, \xi)\|_{L^2} \lesssim  \sum_{\begin{subarray}{c}
n=0,1,2,3\\ 
\alpha \in \mathcal{B}, |\alpha|\leq 4\\ 
\end{subarray}}   (1+|t|)^{-2+\delta}   \| {h^\alpha}(t)\|_{X_n} \|(1+|x|^2+|v|^2)^{20}  f(t,x,v)\|_{L^2_x L^2_v}.
\ee
  $\bullet$\, \textbf{Case} 2: \quad If $(k_1,k_2)\in \chi_k^1\cup \chi_k^2$.\qquad   Recall (\ref{octeq47}). After doing integration by parts in ``$\eta$'' once, we have
\[
O^1_{k,k_1, k_2}(t, \xi)= \int_{\R^3}\int_{\R^3} e^{it|\xi|-i\mu t | \eta| - i  t \hat{v}\cdot (\xi-\eta)}\frac{1}{t}\nabla_\eta \cdot \Big[\frac{ -i\mu \eta/|\eta|+i \hat{v}}{|\mu \eta/|\eta|-\hat{v}|^2}   \nabla_\xi\big(m(\xi, v)\widehat{f}(t,\xi-\eta, v)\psi_{<  -10}(|\eta|/|\xi|) \big) \]
\[
\times  \widehat{h^\mu}(t,  \eta) \psi_{k_1}( \eta)\psi_{k_2}(\xi-\eta)\Big]\psi_k(\xi)  d \eta d v.
\]
If the derivative $\nabla_\xi$ doesn't hit on  $\widehat{f}(t, \xi-\eta, v)$ in the above integral, we use the volume of support of $\xi$ and $\eta$. Alternatively, if the derivative  $\nabla_\xi$ does hit on  $\widehat{f}(t, \xi-\eta, v)$, we first use the $L^2-L^\infty$ type bilinear estimate with  $\widehat{f}(t, \xi-\eta, v)$ put in $L^2$ and then use the volume of support of $\eta$. As a result, the following estimate holds, 
\[
2^{k/2}\|O^1_{k,k_1, k_2}(t, \xi)\|_{L^2}\lesssim  (1+|t|)^{-1} 2^{ -k/2+3k_1}\Big[\big( 2^{ -k_1}\|\widehat{h}(t,\xi)\psi_{k_1}(\xi)\|_{L^\infty_\xi} +   \| \nabla_\xi\widehat{h}(t, \xi)\psi_{k_1}(\eta)\|_{L^\infty_\xi}  \big) \]
\[
\times\big(2^{k/2} \| (1+|v|)^{25} \widehat{f}(t, \xi, v)\psi_{k_2}(\xi )\|_{ L^\infty_\xi L^1_v} +   \| (1+|v|)^{25} \nabla_\xi\widehat{f}(t, \xi, v)\psi_{k_2}(\xi )\|_{L^1_v L^2_\xi}
\]
\be\label{octe132}
+  2^{k_1} \| (1+|v|)^{25} \nabla_\xi^2 \widehat{f}(t, \xi, v)\psi_{k_2}(\xi )\|_{L^1_v L^2_\xi}\big)   \Big]. 
\ee
 From the above estimate (\ref{octe132}), we have 
\[
\sum_{(k_1,k_2)\in \chi_k^3}  2^{k/2}\|O^1_{k,k_1, k_2}(t, \xi)\|_{L^2} \lesssim \sum_{k_1\leq k} \sum_{n=0,1, \alpha\in \mathcal{B}, |\alpha|\leq 4 }  (1+|t|)^{-1} 	2^{ k_1 -2k_{1,+}}  \|   (1+|x|^2+|v|^2)^{20} f(t, x, v)\|_{L^2_x L^2_v}
\]
\be\label{octe138}
\times    \| {h^\alpha}(t)\|_{X_n}  
  \lesssim   \sum_{n=0,1, \alpha\in \mathcal{B}, |\alpha|\leq 4 }  (1+|t|)^{-1}2^{k_{-} } \| {h^\alpha}(t)\|_{X_n}    \|  (1+|x|^2+|v|^2)^{20}f(t, x, v)\|_{L^2_x L^2_v}.
\ee
Combining estimates (\ref{octe137}) and (\ref{octe138}),  our desired estimate (\ref{octe140}) holds.
\end{proof}
\begin{lemma}\label{highorderbilinear1}
For ``$O^1(t, \xi)$'' defined in \textup{(\ref{octeq1})},   the following estimate holds for any fixed $k\in \mathbb{Z}$,
\[
  2^{k/2}\| O^1(t, \xi) \psi_k(\xi)\|_{L^2}\lesssim\sum_{\beta\in\mathcal{S}, |\beta|\leq 3}   (1+|t|)^{-2}\big(\sup_{k\in \mathbb{Z}}2^{k}\|\widehat{h}(t, \xi)\psi_{k}(\xi)\|_{L^\infty_\xi} + 2^{k/2}\|\nabla_\xi \widehat{h}(t, \xi)\psi_k(\xi)\|_{L^2_\xi}  \big)
\] 
\be\label{octe540}
\times \| (1+|x|^2+|v|^2)^{20} \Lambda^\beta f(t, x, v)\|_{L^2_x L^2_v}.
\ee
\end{lemma}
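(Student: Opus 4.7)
The plan is to mirror the structure of the proof of Lemma \ref{basichighenergylemma1} exactly, using the same dyadic decomposition
\[
O^1(t,\xi)\psi_k(\xi)=\sum_{i=1,2,3}\sum_{(k_1,k_2)\in\chi_k^i} O^1_{k,k_1,k_2}(t,\xi),
\]
with $O^1_{k,k_1,k_2}(t,\xi)$ defined as in (\ref{octeq42}) for $(k_1,k_2)\in\chi_k^1\cup\chi_k^2$ and as in (\ref{octeq47}) for $(k_1,k_2)\in\chi_k^3$. The case $|t|\leq 1$ is trivial from the symbol bound (\ref{generalestimateofsymbol}) and volume of support, so we restrict to $|t|\geq 1$. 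The key difference from Lemma \ref{basichighenergylemma1} is that we now rebalance the integrations by parts so the ``cost'' of the derivatives is paid by $\widehat f$ (giving a $\Lambda^\beta f$ norm) rather than by $\widehat h$, while still extracting two full powers of $1/t$ in order to reach $(1+|t|)^{-2}$ rather than $(1+|t|)^{-2+\delta}$.

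For $(k_1,k_2)\in\chi_k^1\cup\chi_k^2$, the phase gradient $-\mu(\xi-\eta)/|\xi-\eta|+\hat v$ is of size $O(1)$ (bounded below since $|\hat v|<1$), so two integrations by parts in $\eta$ yield two clean factors of $1/t$. The resulting $\nabla_\eta^{\leq 2}$ can fall either on $\widehat f(t,\eta,v)$, the symbol, the cutoffs, or $\widehat h(t,\xi-\eta)$. When it hits $\widehat f$, by $\nabla_\eta \widehat f = -i\widehat{xf}$ it produces multiplication by $x$ in physical space, which is absorbed by the weight $(1+|x|^2+|v|^2)^{20}$ appearing on the right-hand side (the norm with $\beta=\vec 0$ controls $\|(1+|x|)^{\leq 2}f\|_{L^2_x L^2_v}$). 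When it hits $\widehat h$ we obtain $\nabla_\xi^{\leq 2}\widehat h\psi_{k_1}$, which we estimate either in $L^\infty_\xi$ (using the first factor in the right-hand side of (\ref{octe540}) at low order) or in $L^2_\xi$ via $2^{k/2}\|\nabla_\xi\widehat h\psi_k\|_{L^2}$ after Cauchy--Schwarz against the $L^2_\xi L^1_v$ norm of $\widehat f\psi_{k_2}$. Volume-of-support in $\eta$ then produces a $2^{ck_2-k_1-2k_{1,+}}$-type factor that is summable in $(k_1,k_2)$ under the constraint $k_1\geq k-10$, $k_2\leq k_1+10$.

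For $(k_1,k_2)\in\chi_k^3$, we start from the representation (\ref{octeq47}), in which the high frequency belongs to $\widehat f(t,\xi-\eta,v)$ and the low frequency to $\widehat h(t,\eta)$. The phase gradient in $\eta$ is now $-\mu\eta/|\eta|+\hat v$, which is again bounded below, so two integrations by parts in $\eta$ again gain $1/t^2$. Derivatives falling on $\widehat f$ become $x$-multiplication and are absorbed by the weight on the right-hand side. Derivatives falling on $\widehat h(t,\eta)$ produce at most $\nabla_\eta^2 \widehat h \psi_{k_1}(\eta)$; to control the ensuing $L^2_\xi$ norm I put $\widehat h$ in $L^2_\eta$ (this is where $2^{k/2}\|\nabla_\xi\widehat h\psi_k\|_{L^2}$ enters, after absorbing the $\eta$-support of size $2^{3k_1/2}$) and $\widehat f\psi_{k_2}$ in $L^\infty_\xi L^1_v$, upgraded from $L^2$ by the $(1+|x|^2+|v|^2)^{20}$ weight. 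The zero-order term on $\widehat h$, needed when both $\nabla_\eta$ fall elsewhere, is controlled by $\sup_k 2^{k}\|\widehat h\psi_k\|_{L^\infty_\xi}$. Summing in $k_1\leq k_2-5$, $|k-k_2|\leq 5$ produces a geometric factor $2^{k_1-k_{1,+}}$ that sums to $O(1)$.

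The principal obstacle is to close the summation in $(k_1,k_2)$ with the clean decay $(1+|t|)^{-2}$ (no $\delta$ loss), which forbids the interpolation trick used for (\ref{octe137}). This forces us, in the $\chi_k^1\cup\chi_k^2$ regime, to be precise about where each of the two $\nabla_\eta$ derivatives lands so that the frequency factor decays both as $k_1\to+\infty$ and as $k_2\to-\infty$; and in the $\chi_k^3$ regime, to exploit the $L^2_\xi$-type control of $\nabla_\xi\widehat h$ at low frequency (the sup-norm control alone would produce a non-summable $2^{-k_1/2}$ at small $k_1$). Once the frequency sums are handled case by case, the stated estimate follows directly.
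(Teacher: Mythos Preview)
Your plan has a genuine gap. When you perform two integrations by parts in $\eta$, both $\nabla_\eta$ can land on $\widehat{h^\mu}(t,\xi-\eta)$ (in the $\chi_k^1\cup\chi_k^2$ piece) or on $\widehat{h^\mu}(t,\eta)$ (in the $\chi_k^3$ piece), producing a $\nabla_\xi^2\widehat h$ term. But the right-hand side of (\ref{octe540}) only carries $\sup_k 2^k\|\widehat h\psi_k\|_{L^\infty_\xi}$ and $2^{k/2}\|\nabla_\xi\widehat h\psi_k\|_{L^2_\xi}$, i.e.\ zeroth- and first-order control of $\widehat h$; there is no way to bound $\|\nabla_\xi^2\widehat h\psi_{k_1}\|$ by these quantities. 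This is precisely why the companion Lemma \ref{basichighenergylemma1}, which \emph{does} use integrations by parts in $\eta$, needs the full family $\|h^\alpha\|_{X_n}$, $n\le 3$, on its right-hand side.

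The paper's proof avoids this by never differentiating $\widehat h$ through integration by parts at all: the single $\nabla_\xi$ on $\widehat h$ comes only from the definition of $O^1$. The decay in $t$ is obtained instead by integrating by parts in $v$ (using the phase factor $e^{-it\hat v\cdot\eta}$) three times, which places $\nabla_v^{\le 3}$ on $\widehat f$ --- this is exactly the origin of the $\Lambda^\beta f$, $|\beta|\le 3$, on the right-hand side. One then combines this $|t|^{-3}$ estimate with a direct (no-IBP, volume-of-support) estimate; the crossover at $2^{k_2}\sim |t|^{-1}$ yields the clean $(1+|t|)^{-2}$ without the $\delta$-loss. The case splitting in the paper is also slightly different from yours: it groups $\chi_k^1\cup\chi_k^3$ together (where $k_1\le k_2+10$) and treats $\chi_k^2$ separately.
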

\begin{proof}
Recall (\ref{octe120}), (\ref{octeq42}), and (\ref{octeq47}). Since the desired estimate (\ref{octe540}) is trivial when $|t|\leq 1$, we restrict ourself to the case when $|t|\geq 1$.  We separate into two cases as follows.

   $\bullet $\, \textbf{Case} 1:\quad    If $(k_1,k_2)\in \chi_k^1\cup \chi_k^3$.\qquad   For this case, we have $k_1\leq k_2+10$. Recall the estimate of symbol ``$m(\xi,v)$'' in (\ref{generalestimateofsymbol}).  On the one hand, after using the volume of support of $\xi$ and the Cauchy-Schwarz inequality for the integration with respect to`` $\eta$'', the following estimate holds, 
 \[
2^{ k/2 }\| O_{k,k_1,k_2}^{1}(t, \xi)\|_{L^2_\xi}
\lesssim 2^{k + 3k_1/2}\|(1+|v|)^{25}\widehat{f}(t, \xi, v)\psi_{k_2}(\xi)\|_{L^\infty_\xi L^1_v}\big(2^{-k} \|\widehat{h}(t, \xi)\psi_{k_1}(\xi)\|_{L^2_\xi}  +\|\nabla_\xi\widehat{h}(t, \xi)\psi_{k_1}(\xi)\|_{L^2_\xi}  \big)
 \]
\be\label{octe610}
 \lesssim 2^{k_1+\max\{k,k_1\}}\big( \sup_{k\in \mathbb{Z}} 2^{k}\|\widehat{h}(t, \xi)\psi_k(\xi)\|_{L^\infty_\xi} + 2^{k/2}\|\nabla_\xi \widehat{h}(t, \xi)\psi_k(\xi)\|_{L^2} \big) \|  (1+|x|^2+|v|^2)^{20} f(t,x, v )\|_{L^2_x L^2_v}.
\ee
On the other hand, if we do integration by parts in ``$v$'' three times first and then  use the volume of support of ``$\xi$''   and   $L^2-L^2$ type bilinear estimate,  then  we have, 
\[
2^{ k/2 }\| O_{k,k_1,k_2}^{1}(t, \xi)\|_{L^2_\xi}
\lesssim \sum_{|\beta|\leq 3}  |t|^{-3} 2^{  k-3k_2 +3k_1/2}\big(2^{-k} \|\widehat{h}(t, \xi)\psi_{k_1}(\xi)\|_{L^2_\xi}  +\|\nabla_\xi\widehat{h}(t, \xi)\psi_{k_1}(\xi)\|_{L^2_\xi}  \big)
\]
\[
\times \|(1+|v|)^{25} \nabla_v^\beta \widehat{f}(t, \xi, v)\psi_{k_2}(\xi)\|_{L^\infty_\xi L^1_v}\lesssim  \sum_{\beta\in \mathcal{S}, |\beta|\leq 3}  |t|^{-3}  2^{ -3k_2+k_1+\max\{k,k_1\}}\big( \sup_{k\in \mathbb{Z}} 2^{k}\|\widehat{h}(t, \xi)\psi_k(\xi)\|_{L^\infty_\xi} 
\]
\be\label{octe611}
  + 2^{k/2}\|\nabla_\xi \widehat{h}(t, \xi)\psi_k(\xi)\|_{L^2} \big) \|  (1+|x|^2+|v|^2)^{20} \Lambda^\beta  f(t,x, v)\|_{L^2_x L^2_v}.
\ee
 From (\ref{octe610}) and (\ref{octe611}), we have 
 \[
\sup_{(k_1,k_2)\in \chi_k^1\cup \chi_k^3} 2^{ k/2 }\| O_{k,k_1,k_2}^{1}(t, \xi)\|_{L^2_\xi}\]
\[
 \lesssim \sum_{\beta\in \mathcal{S}, |\beta|\leq 3}   \big(\sum_{k,2^{k}\leq  |t|^{-1}} 2^{2k} + \sum_{k,2^{k}\geq |t|^{-1}} 2^{ -k}  |t|^{-3}\big) \big( \sup_{k\in \mathbb{Z}} 2^{k}\|\widehat{h}(t, \xi)\psi_k(\xi)\|_{L^\infty_\xi} + 2^{k/2}\|\nabla_\xi \widehat{h}(t, \xi)\psi_k(\xi)\|_{L^2} \big)
 \]
\[
 \times \|  (1+|x|^2+|v|^2)^{20} \Lambda^\beta  f(t,x, v)\|_{L^2_x L^2_v}  \lesssim \sum_{\beta\in \mathcal{S}, |\beta|\leq 3}   |t|^{-2} \big( \sup_{k\in \mathbb{Z}} 2^{k}\|\widehat{h}(t, \xi)\psi_k(\xi)\|_{L^\infty_\xi} \]
\be\label{octe630}
+ 2^{k/2}\|\nabla_\xi \widehat{h}(t, \xi)\psi_k(\xi)\|_{L^2} \big)\|  (1+|x|^2+|v|^2)^{20}  \Lambda^\beta  f(t,x,  v)\|_{L^2_x L^2_v}.
\ee
 
  $\bullet $\, \textbf{Case} 2:\quad   If $(k_1,k_2)\in \chi_k^2$. \qquad  For this case, we have $k_2\leq k_1-10$ and $|k-k_1|\leq 10.$ 
Note that the following estimate holds after using the $L^2-L^\infty$ type bilinear estimate and  the volume of support of $\eta$, 
\[
2^{ k/2 }\| O_{k,k_1,k_2}^{1}(t, \xi)\|_{L^2_\xi}\]
\[
 \lesssim   2^{-k/2 +3k_2}\big(2^{-k}\|\widehat{h}(t, \xi)\psi_{k_1}(\xi)\|_{L^2} +\|\nabla_\xi\widehat{h}(t, \xi)\psi_{k_1}(\xi)\|_{L^2}   \big)\| (1+|v|)^{25}\widehat{f}(t, \xi, v)\psi_{k_2}(\xi)\|_{L^\infty_\xi L^1_v}  
\]
\be\label{octe621}
   \lesssim 2^{3k_2-k}\big(\sup_{k\in \mathbb{Z}} 2^{k}  \|\widehat{h}(t, \xi)\psi_{k_1}(\xi)\|_{L^2}+ 2^{k/2}\|\nabla_\xi\widehat{h}(t, \xi)\psi_{k_1}(\xi)\|_{L^2}  \big)\| (1+|x|^2+|v|^2)^{20}  f(t,x,  v)\|_{L^2_x L^2_v}.
\ee

Moreover, if we do integration by parts in ``$v$'' three times first and then using the $L^2-L^\infty$ type bilinear estimate and the   volume of support of the frequency of the input put in $L^\infty_x$, we have  
\[
2^{ k/2 }\| O_{k,k_1,k_2}^{1}(t, \xi)\|_{L^2_\xi}
\lesssim \sum_{|\beta|\leq 3} |t|^{-3} 2^{ - k/2 }\big(2^{-k} \|\widehat{h}(t, \xi)\psi_{k_1}(\xi)\|_{L^2_\xi}  +\|\nabla_\xi\widehat{h}(t, \xi)\psi_{k_1}(\xi)\|_{L^2_\xi}  \big)
\]
\[
\times \|(1+|v|)^{25} \nabla_v^\beta \widehat{f}(t, \xi, v)\psi_{k_2}(\xi)\|_{L^\infty_\xi L^1_v}\lesssim  \sum_{\beta\in \mathcal{S}, |\beta|\leq 3}   (1+|t|)^{-3}  2^{ -k}\big( \sup_{k\in \mathbb{Z}} 2^{k}\|\widehat{h}(t, \xi)\psi_k(\xi)\|_{L^\infty_\xi} 
\]
\be\label{octe622}
  + 2^{k/2}\|\nabla_\xi \widehat{h}(t, \xi)\psi_k(\xi)\|_{L^2} \big) \|  (1+|x|^2+|v|^2)^{20} \Lambda^\beta f(t,x, v )\|_{L^2_x L^2_v}.
\ee
From (\ref{octe621}) and (\ref{octe622}), we have
 \[
\sup_{(k_1,k_2)\in \chi_k^2} 2^{ k/2 }\| O_{k,k_1,k_2}^{1}(t, \xi)\|_{L^2_\xi} \lesssim \sum_{\beta\in \mathcal{S}, |\beta|\leq 3}   \big(\sum_{k,2^{k}\leq  |t|^{-1}} 2^{2k} + \sum_{k,2^{k}\geq  |t|^{-1}} 2^{ -k}  |t|^{-3}\big)  \]
\[  \times \big( \sup_{k\in \mathbb{Z}} 2^{k}\|\widehat{h}(t, \xi)\psi_k(\xi)\|_{L^\infty_\xi} 
  + 2^{k/2}\|\nabla_\xi \widehat{h}(t, \xi)\psi_k(\xi)\|_{L^2} \big)\|  (1+|x|^2+|v|^2)^{20} \Lambda^\beta f(t,x, v)\|_{L^2_x L^2_v}\]
\be\label{octe632}
  \lesssim \sum_{\beta\in \mathcal{S}, |\beta|\leq 3}   |t|^{-2} \big( \sup_{k\in \mathbb{Z}} 2^{k}\|\widehat{h}(t, \xi)\psi_k(\xi)\|_{L^\infty_\xi}+ 2^{k/2}\|\nabla_\xi \widehat{h}(t, \xi)\psi_k(\xi)\|_{L^2} \big) \|  (1+|x|^2+|v|^2)^{20} \Lambda^\beta f(t,x,   v)\|_{L^2_x L^2_v} .
\ee

To sum up, our desired estimate (\ref{octe540}) holds  
 from (\ref{octe630}) and (\ref{octe632}).
 \end{proof}
 \begin{lemma}\label{basichighenergylemma2}
For ``$O^2(t, \xi)$'' defined in \textup{(\ref{octeq2})},    the following estimate holds for any fixed $k\in \mathbb{Z}$,
\[
  2^{k/2}\| O^2(t, \xi) \psi_k(\xi)\|_{L^2}\lesssim  \sum_{0\leq n \leq 3}\sum_{ \alpha\in\mathcal{B},  |\alpha|\leq 3} \big( (1+|t|)^{-1}2^{k_{-}  } + (1+|t|)^{-2+\delta}\big) 
\]
\be\label{octe149}
 \times \| {h^\alpha}(t   )\|_{X_n}   \| (1+|x|^2+|v|^2)^{20} f(t, x, v)\|_{L^2_x L^2_v}.
 \ee
\end{lemma}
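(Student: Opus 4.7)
The plan is to adapt, almost verbatim, the strategy used for $O^1(t,\xi)$ in Lemma \ref{basichighenergylemma1}. After dyadic decomposition,
\[
O^2(t,\xi)\psi_k(\xi) = \sum_{i=1,2,3}\sum_{(k_1,k_2)\in\chi_k^i} O^2_{k,k_1,k_2}(t,\xi),
\]
the only structural novelty is the factor $it\bigl(\xi/|\xi|-\mu(\xi-\eta)/|\xi-\eta|\bigr)$ (respectively $it(\xi/|\xi|-\hat v)$) produced by letting $\nabla_\xi$ fall on the phase rather than on the symbol or the inputs. The explicit $t$-growth is the enemy; the angle factor is harmless, in fact small, because $\bigl|\xi/|\xi|-\mu(\xi-\eta)/|\xi-\eta|\bigr|\lesssim 2^{\min(k_1,k_2)-\max(k_1,k_2)}$ in the High--Low regime and $\bigl|\xi/|\xi|-\hat v\bigr|\lesssim 1$ in general.

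The first step is the High--High and High--Low regimes, $(k_1,k_2)\in\chi_k^1\cup\chi_k^2$. Here the $\eta$-gradient of the phase is $t\bigl(\mu(\xi-\eta)/|\xi-\eta|-\hat v\bigr)$ with modulus comparable to $t$ uniformly in $v$, so one integration by parts in $\eta$ against $\bigl(\mu(\xi-\eta)/|\xi-\eta|-\hat v\bigr)/\bigl|\mu(\xi-\eta)/|\xi-\eta|-\hat v\bigr|^2$ produces a factor $1/t$ that exactly cancels the outer $it$. The resulting integrand is structurally identical to the one analyzed in the proof of Lemma \ref{basichighenergylemma1} (at worst with one additional $\nabla_\xi$ on the amplitude), so repeating the argument leading to \eqref{octe76} gives a $(1+|t|)^{-1}$ estimate after one integration by parts, and iterating once more, as in \eqref{may15eqn22}, yields $(1+|t|)^{-2}$. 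The usual interpolation between these two bounds, summed over the dyadic regions, produces the $(1+|t|)^{-2+\delta}$ contribution.

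The second step is the Low--High regime, $(k_1,k_2)\in\chi_k^3$, where $|\eta|\ll|\xi|\sim|\xi-\eta|$. Now the phase is $e^{it|\xi|-i\mu t|\eta|-it\hat v\cdot(\xi-\eta)}$ and the outer amplitude factor is $it(\xi/|\xi|-\hat v)$. The $\eta$-gradient of the phase equals $-i t\mu\eta/|\eta|+it\hat v$, which is bounded below by $t$ in magnitude for fixed $v$ (up to an allowed loss absorbed by the $(1+|v|)^{25}$ weight), so the same integration-by-parts-in-$\eta$ trick applied to $O^2_{k,k_1,k_2}$ cancels the $t$-factor and places us in the setting of \eqref{octe132}. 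The $L^2$--$L^\infty$ bilinear estimate with the volume of support of $\eta$, optionally followed by three integrations by parts in $v$ for additional decay, then produces the $(1+|t|)^{-1}2^{k_-}$ piece by summation in $k_1$ exactly as in \eqref{octe138}.

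The main obstacle is purely bookkeeping: one must check that after each integration by parts the derivative of the cutoff $\psi_{\ge-10}(|\xi-\eta|/|\xi|)$ (or $\psi_{<-10}(|\eta|/|\xi|)$) contributes only $O(2^{-\max(k_1,k_2)})$, that the angle factor $\xi/|\xi|-\mu(\xi-\eta)/|\xi-\eta|$ is smooth of size $2^{\min(k_1,k_2)-\max(k_1,k_2)}$ away from the origin, and that each $\nabla_\xi$ or $\nabla_\eta$ landing on amplitudes respects the symbol bound \eqref{generalestimateofsymbol} with $l=1$. These are the same quantitative ingredients already used for $O^1$, so no new estimate is needed beyond careful tracking of the derivative count in each dyadic region, which accounts for the $n\in\{0,1,2,3\}$ and $|\alpha|\le 3$ ranges in \eqref{octe149}.
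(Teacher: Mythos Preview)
Your approach is correct and follows the paper's strategy: integration by parts in $\eta$ to trade the explicit $t$ factor for decay, followed by volume-of-support and $X_n$-norm bounds. In Case~1 the paper does exactly two and then three integrations by parts in $\eta$ (estimates \eqref{may15eqn41}--\eqref{may15eqn42}) and interpolates, matching your description. In Case~2 the paper takes a slightly different route: instead of first cancelling the $t$ and reducing to the $O^1$ setting, it interpolates between \emph{zero} and \emph{three} integrations by parts in $\eta$ (estimates \eqref{octe399} and \eqref{octe400}), splitting the sum at $2^{k_1}\sim|t|^{-1}$ to obtain $(1+|t|)^{-1}2^{k_-}$. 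Your reduction-to-$O^1$ scheme also works, though note that after one integration by parts in $\eta$ you are at the level of $O^1$ \emph{before} the \eqref{octe132} step, so one further $\eta$-integration by parts is needed to reach the $(1+|t|)^{-1}$ rate.

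One caveat: the remark about ``three integrations by parts in $v$'' is misplaced. All of the decay in this lemma (and in Lemma~\ref{basichighenergylemma1}) comes from $\eta$-integration by parts against the nonstationary phase $\mu(\xi-\eta)/|\xi-\eta|-\hat v$ (or $\mu\eta/|\eta|-\hat v$), not from $v$-oscillation; the $v$-integration-by-parts trick belongs to Lemmas~\ref{highorderbilinear1} and~\ref{highorderbilinear2}. Since you flagged it as optional it does no harm, but it should be dropped from the argument.
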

\begin{proof}
Recall (\ref{octeq2}). After doing dyadic decomposition for ``$\xi-\eta$'' and ``$\eta$'', we have
\[
O^2(t,\xi)\psi_k(\xi)= \sum_{i=1,2,3}\sum_{(k_1,k_2)\in \chi_k^i} O_{k,k_1,k_2}^2(t,\xi),
\]
where
 \[
 O_{k,k_1,k_2}^2(t,\xi)= \int_{\R^3}\int_{\R^3} e^{it|\xi|-i\mu t |\xi-\eta| - i  t \hat{v}\cdot \eta} it\big(\frac{\xi}{|\xi|} -\mu \frac{\xi-\eta}{|\xi-\eta|} \big)  m(\xi, v)\psi_{\geq -10}(|\xi-\eta|/|\xi|)  \widehat{h^\mu}(t, \xi-\eta) \]
 \be\label{octeq100}
 \times   \widehat{f}(t,\eta, v) \psi_k(\xi)\psi_{k_1}(\xi-\eta)\psi_{k_2}(\eta) d \eta d v, \quad \textit{if}\quad (k_1, k_2)\in \chi_k^1\cup \chi_k^2, 
 \ee
 \[
 O_{k,k_1,k_2}^2(t,\xi)= \int_{\R^3}\int_{\R^3}   e^{it|\xi|-i\mu t | \eta| - i  t \hat{v}\cdot (\xi-\eta)} it\big(\frac{\xi}{|\xi|} - \hat{v} \big)  m(\xi, v) \psi_{< -10}(|\eta|/|\xi|)	 \widehat{h^\mu}(t,  \eta) \]
 \be\label{feb21eqn21}
 \times  \widehat{f}(t,\xi-\eta, v)   \psi_k(\xi)\psi_{k_1}( \eta)\psi_{k_2}(\xi-\eta) d \eta d v, \quad \textit{if}\quad (k_1, k_2)\in \chi_k^3.  
 \ee
 From the above detailed formulas of ``$O^2_{k,k_1,k_2}$'',  our desired estimate (\ref{octe149}) holds straightforwardly when $|t|\leq 1$. Hence, from now on,  we restrict ourself to the case when $|t|\geq 1$.
 
 $\bullet $\, \textbf{Case} 1:\quad  If $(k_1,k_2)\in \chi_k^1\cup \chi_k^2$. \qquad For this case, we have $k_1\geq k-10$ and $k_2\leq k_1+10$.  Recall (\ref{octeq100}).   For this case, we do integration by parts in ``$\eta$'' twice for $O^2_{k,k_1,k_2}$. As a result, we have
\[
 O_{k,k_1,k_2}^2(t,\xi)= \int_{\R^3}\int_{\R^3} e^{it|\xi|-i\mu t | \xi-\eta| - i  t \hat{v}\cdot \eta} \frac{i}{t} \nabla_\eta \cdot \Big[ \frac{i \mu (\xi-\eta)/|\xi-\eta|-i  \hat{v}}{\big|\mu(\xi-\eta)/|\xi-\eta|- \hat{v}\big|^2} \nabla_\eta \cdot \big[ \frac{i \mu (\xi-\eta)/|\xi-\eta|-i  \hat{v}}{\big|\mu(\xi-\eta)/|\xi-\eta|- \hat{v}\big|^2}    \]
 \[
 \times  \big(\frac{\xi}{|\xi|} -\mu \frac{\xi- \eta}{| \xi- \eta|} \big) m(\xi, v)\psi_{\geq -10}(|\xi-\eta|/|\xi|) \widehat{h^\mu}(t,\xi-  \eta)  \widehat{f}(t,\eta, v) \psi_k(\xi)\psi_{k_1}(\xi-\eta)\psi_{k_2}(\eta) \big] \Big]d \eta d v.
\]
After using the volume of support of ``$\xi$'' and ``$\eta$'', the following estimate holds, 
\[
2^{k/2} \|  O_{k,k_1,k_2}^2(t,\xi)\|_{L^2_\xi} \lesssim  |t|^{-1} 2^{ k+3k_2}\big[\big(2^{-2k_2}\|\widehat{h}(t, \xi)\psi_{k_1}(\xi)\|_{L^\infty_\xi} + 2^{-k_2}\|\nabla_\xi\widehat{h}(t, \xi)\psi_{k_1}(\xi)\|_{L^\infty_\xi}\]
\[ +  \|\nabla_\xi^2\widehat{h}(t, \xi)\psi_{k_1}(\xi)\|_{L^\infty_\xi}  \big) \| (1+|v|)^{25} \widehat{f}(t, \xi, v)\psi_{k_2}(\xi)\|_{L^\infty_\xi L^1_v} + \big(2^{- k_2}\|\widehat{h}(t, \xi)\psi_{k_1}(\xi)\|_{L^\infty_\xi} +  \|\nabla_\xi\widehat{h}(t, \xi)\psi_{k_1}(\xi)\|_{L^\infty_\xi}\big) \]
\[ 
\times \| (1+|v|)^{25} \nabla_\xi\widehat{f}(t, \xi, v)\psi_{k_2}(\xi)\|_{L^\infty_\xi L^1_v} + \|\widehat{h}(t, \xi)\psi_{k_1}(\xi)\|_{L^\infty_\xi}  \| (1+|v|)^{25} \nabla_\xi^2\widehat{f}(t, \xi, v)\psi_{k_2}(\xi)\|_{L^\infty_\xi L^1_v}\big]
\]
\be\label{may15eqn41}
\lesssim |t|^{-1} 2^{ k_2-2k_{2,+}}\big(\sum_{n=0,1,2, |\alpha|\leq 4} \|  {h^\alpha}(t)\|_{X_n} \big)\|  (1+|x|^2+|v|^2)^{20} f(t, x, v)\|_{L^2_x L^2_v}.
\ee
Moreover, after doing integration by parts in ``$\eta$'' three times first and then  using the volume of support of $\xi$ and $\eta$, the following estimate holds, 
\be\label{may15eqn42}
 2^{k/2} \|  O_{k,k_1,k_2}^2(t,\xi)\|_{L^2_\xi} \lesssim \sum_{n=0,1,2,3, |\alpha|\leq 4}  |t|^{-2} \|  {h^\alpha}(t)\|_{X_n} \|  (1+|x|^2+|v|^2)^{20} f(t, x, v)\|_{L^2_x L^2_v}.
\ee
After interpolating the estimates (\ref{may15eqn41}) and (\ref{may15eqn42}),  we have
\be\label{octe402}
\sum_{(k_1, k_2)\in \chi_k^1\cup \chi_k^2} 2^{k/2} \|  O_{k,k_1,k_2}^2(t,\xi)\|_{L^2_\xi}\lesssim  \sum_{0\leq n\leq 3,  |\alpha|\leq 4}  |t|^{-2+\delta} \|  {h^\alpha}(t)\|_{X_n}  \|  (1+|x|^2+|v|^2)^{20} f(t, x, v)\|_{L^2_x L^2_v}.
 \ee 

  $\bullet $\, \textbf{Case} 2:\quad    If $(k_1,k_2)\in \chi_k^3$.\qquad  Recall (\ref{feb21eqn21}). For this case we have $k_1\leq k_2-10, |k-k_2|\leq 10$.  On one hand, the following estimate holds after using the volume of support of $\eta$ and the Minkowski inequality, 
\[
2^{k/2} \|  O_{k,k_1,k_2}^2(t,\xi)\|_{L^2_\xi} \lesssim 2^{ -k/2 + 3k_1}(1+|t|)^{ } \|\widehat{h}(t, \xi)\psi_{k_1}(\xi)\|_{L^\infty_\xi} \| (1+|v|)^{25} \widehat{f}(t, \xi, v)\psi_{k_2}(\xi) \|_{L^1_v L^2_\xi}
\]
\be\label{octe399}
\lesssim  (1+|t|) 2^{  2k_1} \min\{2^{-k/2}, 2^{k}\}  \| h(t)\|_{X_0}\| (1+|x|^2+|v|^2)^{20} f(t, x,v)\|_{L^2_x L^2_v}.
\ee
Alternatively,  we can do integration by parts in ``$\eta$'' three times first. As a result, the following estimate holds after using the volume of support of $\eta$ and the Minkowski inequality, 
\[
2^{k/2} \|  O_{k,k_1,k_2}^2(t,\xi)\|_{L^2_\xi}\lesssim  \sum_{0\leq |\beta|\leq 3} \sum_{0\leq |\alpha|\leq 3-|\beta|}     |t|^{-2} 2^{-k/2+3k_1-(3-|\beta|-|\alpha|) k_1} \| (1+|v|)^{25}\nabla_\xi^{\beta}\widehat{f}(t, \xi, v)\psi_{k_2}(\xi)\|_{L^1_v L^2_\xi}  \]
\[   
 \times     \|\nabla_\xi^{\alpha}\widehat{h}(t, \xi)\psi_{k_1}(\xi)\|_{L^\infty_\xi} \lesssim   \sum_{0\leq n\leq 3, \alpha\in \mathcal{B},  |\alpha|,|\beta|\leq 3}|t|^{-2} 2^{ -k/2- k_1}     \| (1+|v|)^{25}\nabla_\xi^{\beta}\widehat{f}(t, \xi, v)\psi_{k_2}(\xi)\|_{L^1_v L^2_\xi}\|  h^\alpha(t) \|_{X_n} 
\]
\be\label{octe400}
  \lesssim \sum_{0\leq n\leq 3, \alpha\in \mathcal{B},  |\alpha|\leq 3} |t|^{-2} 2^{ -k_1} \min\{2^{-k/2}, 2^{k}\} \| h^{\alpha}(t)\|_{X_i}   \| (1+|x|^2+|v|^2)^{20} f(t, x,v)\|_{L^2_x L^2_v}.
\ee
Therefore, from (\ref{octe399}) and (\ref{octe400}), we have
\[
\sum_{(k_1,k_2)\in \chi_k^3} 2^{k/2} \|  O_{k,k_1,k_2}^2(t,\xi)\|_{L^2_\xi} \lesssim  \sum_{0\leq n\leq 3}\sum_{|\alpha|\leq 3}  \big(\sum_{ 2^{k_1}\leq  |t|^{-1}}  |t|  2^{ 2k_1} + \sum_{  2^{k_1}\geq  |t|^{-1}}  |t|^{-2}2^{ -k_1} \big) \min\{2^{-k/2}, 2^{k}\} \| h^{\alpha}(t)\|_{X_n} \]
\be\label{octe403}
 \times  \| (1+|x|^2+|v|^2)^{20}f(t, x,v)\|_{L^2_x L^2_v} \lesssim  \sum_{0\leq n\leq 3, |\alpha|\leq 3}    |t|^{-1}2^{k_{-}} \| h^{\alpha}(t)\|_{X_n}   \| (1+|x|^2+|v|^2)^{20} f(t, x,v)\|_{L^2_x L^2_v}.
\ee

To sum up, our desired estimate (\ref{octe149}) holds after combining the  estimates (\ref{octe402}) and (\ref{octe403}).

\end{proof}

\begin{lemma}\label{highorderbilinear2}
For ``$O^2(t, \xi)$'' defined in \textup{(\ref{octeq2})},    the following estimate holds for any $k\in\mathbb{Z},$ 
\be\label{octe580} 
    2^{k/2}\| O^2(t, \xi) \psi_k(\xi)\|_{L^2}\lesssim\sum_{\beta\in\mathcal{S}, |\beta|\leq 4}    \sup_{k\in \mathbb{Z}}(1+|t|)^{-2} 2^{k}\|\widehat{h}(t, \xi)\psi_{k}(\xi)\|_{L^\infty_\xi} \| (1+|x|^2+|v|^2)^{20} \Lambda^\beta f(t, x, v)\|_{L^2_x L^2_v}.
\ee
\end{lemma}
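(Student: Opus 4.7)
The plan is to mirror the dyadic decomposition used in Lemma~\ref{basichighenergylemma2}, writing
$O^2(t,\xi)\psi_k(\xi)=\sum_{i=1,2,3}\sum_{(k_1,k_2)\in\chi_k^i}O^2_{k,k_1,k_2}(t,\xi)$, see \eqref{octeq100} and \eqref{feb21eqn21}. Since the bound is trivial for $|t|\leq 1$, I restrict to $|t|\geq 1$. The crucial structural feature that I exploit is that in $O^2$ the profile $\widehat{h^\mu}$ appears \emph{undifferentiated} in $\xi$: the one $\nabla_\xi$ from $\nabla_\xi\bigl(T_\mu(h,f)\bigr)$ has already been spent producing the explicit weights $t(\xi/|\xi|-\mu(\xi-\eta)/|\xi-\eta|)$ or $t(\xi/|\xi|-\hat{v})$. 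Consequently, I may control $\widehat{h}\psi_{k_1}$ purely through its $X_0$-norm, using the pointwise bound $\|\widehat{h}\psi_{k_1}\|_{L^\infty_\xi}\lesssim 2^{-k_1}\sup_k 2^k\|\widehat{h}\psi_k\|_{L^\infty_\xi}$, while routing any $\xi$-derivatives onto $\widehat{f}$, which through Plancherel and the vector-field identities are converted into the weighted $L^2_{x,v}$ norm of $\Lambda^\beta f$ with $|\beta|\leq 4$.

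For Case $1$, $(k_1,k_2)\in\chi_k^1\cup\chi_k^2$ so $k_1\geq k-10$, I integrate by parts in $\eta$ twice. The phase gradient $\nabla_\eta(\mathrm{phase})=it\bigl(\mu(\xi-\eta)/|\xi-\eta|-\hat{v}\bigr)$ satisfies the lower bound $\gtrsim |t|/(1+|v|^2)$ on the support of $m$, so two IBPs convert the explicit $t$-weight into a net $|t|^{-1}$ factor; the $(1+|v|^2)^2$ penalty from the denominator is absorbed by the $(1+|v|)^{20}$ safety margin in \eqref{generalestimateofsymbol}. I then reproduce the argument of Lemma~\ref{basichighenergylemma2}, Case~$1$: combining the volume of support in $\xi$ and in $\eta$ with three further integrations by parts in $v$ (each gaining $(t\cdot 2^{k_2})^{-1}$ since $\nabla_v(\mathrm{phase})=-it\nabla_v\hat{v}\cdot\eta$ has magnitude $|t|2^{k_2}/(1+|v|^2)$) and interpolating at the threshold $2^{k_2}\sim|t|^{-1}$. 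To upgrade the $(1+|t|)^{-2+\delta}$ decay of Lemma~\ref{basichighenergylemma2} into the sharp $(1+|t|)^{-2}$ here, I allow one additional $v$-IBP, which is affordable since $|\beta|\leq 4$ (as opposed to $|\beta|\leq 3$ there).

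For Case $2$, $(k_1,k_2)\in\chi_k^3$ with $k_1\leq k-10$ and $|k-k_2|\leq 10$, the phase gradient $\nabla_\eta(\mathrm{phase})=-it\mu\eta/|\eta|+it\hat{v}$ also satisfies $|\cdots|\gtrsim |t|(1-|\hat{v}|)\gtrsim|t|/(1+|v|^2)$, so a \emph{single} IBP in $\eta$ suffices to eliminate the explicit $t$. The resulting integrand is of $O^1$-type with $\widehat{h}$ still undifferentiated in $\xi$, and I then follow the argument of Lemma~\ref{highorderbilinear1}, Case~$1$: the direct bound (volume of $\xi$ plus Cauchy--Schwarz in $\eta$, using $\|\widehat{h}\psi_{k_1}\|_{L^2_\xi}\leq 2^{3k_1/2}\|\widehat{h}\psi_{k_1}\|_{L^\infty_\xi}\lesssim 2^{k_1/2}\|h\|_{X_0}$) combined with a triple-$v$-IBP alternative and interpolation at $2^{k_2}\sim|t|^{-1}$ yields $(1+|t|)^{-2}$ decay after dyadic summation in $k_1$ and $k_2$. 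Notably, since no $\nabla_\xi\widehat{h}$ terms arise in $O^2$, the term $2^{k/2}\|\nabla_\xi\widehat{h}\psi_k\|_{L^2_\xi}$ that appears in \eqref{octe540} is absent here, which is why only the $X_0$-norm of $h$ survives on the right-hand side of \eqref{octe580}.

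The main obstacle will be the combinatorial bookkeeping of the two $\eta$-integrations by parts in Case~$1$: the operator $\nabla_\eta$ can fall on $m$, on the cutoffs $\psi_{k_1}(\xi-\eta)\psi_{k_2}(\eta)$, on $\widehat{h^\mu}(t,\xi-\eta)$, or on $\widehat{f}(t,\eta,v)$, and each distribution must be checked to admit the claimed bound with full $(1+|t|)^{-2}$ decay, without triggering the small $\delta$-loss that appears in Lemma~\ref{basichighenergylemma2}. Because this runs in close parallel to the earlier proofs in this subsection, the presentation will record only the two interpolation bounds in each case and omit the routine distributional checks.
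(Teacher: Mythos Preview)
Your proposal has a genuine gap. The whole point of this lemma, as opposed to Lemma~\ref{basichighenergylemma2}, is that the right-hand side of \eqref{octe580} involves \emph{only} the $X_0$-norm of $h$. You correctly observe that in $O^2$ the factor $\widehat{h^\mu}$ enters undifferentiated. But you then propose to integrate by parts in $\eta$ (twice in Case~1, once in Case~2) in order to trade the explicit $t$-weight for decay. That step destroys the very feature you need: in \eqref{octeq100} the operator $\nabla_\eta$ acts on $\widehat{h^\mu}(t,\xi-\eta)$, and in \eqref{feb21eqn21} it acts on $\widehat{h^\mu}(t,\eta)$. There is no mechanism to ``route'' these $\eta$-derivatives onto $\widehat{f}$ alone; integration by parts distributes $\nabla_\eta$ over every $\eta$-dependent factor. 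After your two $\eta$-IBPs in Case~1 you will have terms carrying $\nabla_\xi\widehat{h}$ and $\nabla_\xi^2\widehat{h}$, which are controlled by $\|h\|_{X_1}$ and $\|h\|_{X_2}$, not by $\sup_k 2^k\|\widehat{h}\psi_k\|_{L^\infty_\xi}$. This is exactly why the bound in Lemma~\ref{basichighenergylemma2} involves $\|h^\alpha\|_{X_n}$ for $n\leq 3$, and following that template cannot yield \eqref{octe580}.

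The paper's proof avoids this entirely by never integrating by parts in $\eta$. Instead it records the crude direct bound \eqref{octe591} (retaining the factor $(1+|t|)$ from the explicit weight) and a second bound \eqref{octe592} obtained by integrating by parts \emph{in $v$} four times. Because $\widehat{h}$ is independent of $v$, the $v$-IBPs land only on $m(\xi,v)$, on the weight $(\xi/|\xi|-\hat v)$, and on $\widehat f(t,\cdot,v)$, producing the $\nabla_v^\beta\widehat f$ with $|\beta|\leq 4$ that translate into the $\Lambda^\beta f$ norm on the right of \eqref{octe580}, while $\widehat h$ stays undifferentiated throughout. Interpolating the two bounds at the threshold $2^{k_2}\sim(1+|t|)^{-1}$ (equivalently $2^k\sim(1+|t|)^{-1}$ after summing in $(k_1,k_2)$) then gives the clean $(1+|t|)^{-2}$. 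The moral is that when you want to keep $h$ in $X_0$, do the oscillatory gain through the $v$-phase $-it\hat v\cdot\eta$ rather than the $\eta$-phase.
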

 \begin{proof}
 Recall (\ref{octeq100}) and (\ref{feb21eqn21}).   From the estimate of symbol in (\ref{generalestimateofsymbol}),   the following estimate holds after using the volume of support of ``$\xi$'' and `` $\eta$'', 
\be\label{octe591}
 2^{k/2}\|O_{k,k_1,k_2}^2(t,\xi)\|_{L^2_\xi}\lesssim 2^{ k+3\min\{k_1,k_2\}}(1+|t|)  \|\widehat{h}(t, \xi)\psi_{k_1}(\xi)\|_{L^\infty_\xi} \|(1+|v|)^{25}\widehat{f}(t, \xi, v)\psi_{k_2}(\xi)\|_{L^\infty_\xi L^1_v}.
\ee

Moreover, after doing integration by parts in ``$v$'' four times first and then using the volume of support of ``$\xi$'' and `` $\eta$'', we have  
\[
 2^{k/2}\|O_{k,k_1,k_2}^2(t,\xi)\|_{L^2_\xi}
\]
\be\label{octe592}
\lesssim \sum_{ |\beta|\leq 4} (1+|t|)^{-3} 2^{ k+3\min\{k_1,k_2\}-4k_2}\|\widehat{h}(t, \xi)\psi_{k_1}(\xi)\|_{L^\infty_\xi}\|(1+|v|)^{25}\nabla_v^\beta \widehat{f}(t, \xi, v)\psi_{k_2}(\xi)\|_{L^\infty_\xi L^1_v}.
 \ee
Therefore, from the     estimates (\ref{octe591}) and  (\ref{octe592}), the following estimate holds for any $k\in \mathbb{Z} $, 
\[
 \sum_{(k_1,k_2)\in \chi_k^1\cup \chi_k^2\cup \chi_k^3}2^{k/2}\|O_{k,k_1,k_2}^2(t,\xi)\|_{L^2_\xi}\lesssim \sum_{   |\beta|\leq 4} \big(\sum_{k, 2^{k}\leq (1+|t|)^{-1} } (1+|t|) 2^{ 3k} +\sum_{k, 2^{k}\geq (1+|t|)^{-1}} (1+|t|)^{-3} 2^{ -k} \big)
\]
\[
\times \big(\sup_{k\in \mathbb{Z}}  2^{k}\|\widehat{h}(t, \xi)\psi_k(\xi)\|_{L^\infty_\xi} \big)\|(1+|v|)^{25}\nabla_v^\beta \widehat{f}(t, \xi, v)\psi_{k_2}(\xi)\|_{L^\infty_\xi L^1_v}
\]
\be
\lesssim \sum_{\beta\in \mathcal{S}, |\beta|\leq 4} (1+|t|)^{-2}\big(\sup_{k\in \mathbb{Z}}  2^{k}\|\widehat{h}(t, \xi)\psi_k(\xi)\|_{L^\infty_\xi} \big)\| (1+|x|^2+|v|^2)^{20} \Lambda^\beta {f}(t,x, v)\|_{L^2_x L^2_v}.
\ee
Hence finishing the proof of the desired estimate (\ref{octe580}).
 \end{proof}
\subsection{Bilinear estimates in the high order energy space: Vlasov-Vlasov type interaction}
In this subsection, we prove a bilinear estimate in the weighted $L^2$-type space for the Vlasov-Vlasov type interaction, which, more precisely, is the interaction between two density type functions. This  bilinear estimate has been  used in the high order energy estimate of the scalar field in the proof of Proposition \ref{highorderenergyprop1}. More precisely,  the estimate of the second integral in (\ref{may15eqn500}), which can be viewed as a linear combination of bilinear forms defined in (\ref{may15eqn60}).

 \begin{lemma}\label{twoaveragedistri}
For any symbols $m_1(\xi, v), m_2(\xi,v)$ that satisfy \textup{(\ref{generalestimateofsymbol})} with ``$l=1$'' and any two localized distribution functions $f, g:\R_{t}\times \R_x^3\times \R_v^3\longrightarrow \mathbb{R}$, we define a   bilinear operator  as follows, 
\be\label{may15eqn60}
 K^{\mu}  (g, f)(t, \xi):= \int_{\R^3}\int_{\R^3}\int_{\R^3} e^{it|\xi|-i\mu t \hat{u}\cdot (\xi-\eta) - i  t \hat{v}\cdot \eta} m_1(\xi,   v)m_2(\xi-\eta,   u)\widehat{g}(t, \xi-\eta, u) \widehat{f}(t,\eta, v) d \eta d u d v  .
\ee
Then the following bilinear estimate holds for any fixed $k\in \mathbb{Z}, $  
\[
2^{k/2}\|\nabla_\xi\big( K^{\mu}  (g, f)(t, \xi) \big)\psi_k(\xi) \|_{L^2_\xi }
\]
\be\label{may15eqn410}
 \lesssim \sum_{\beta\in \mathcal{S}, |\beta|\leq 5} (1+|t|)^{-2}\|(1+|x|^2+|v|^2)^{20}g(t,x,v)\|_{L^2_x L^2_v}\|(1+|x|^2+|v|^2)^{20} \Lambda^{\beta}f(t,x,v)\|_{L^2_x L^2_v}.
\ee
 \end{lemma}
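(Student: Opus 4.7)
The plan is to exploit the fact that after integrating out the velocity variables, $K^\mu(g,f)$ factors (up to the mild $\xi$-dependence of $m_1$) as a product in physical space of two density-type functions, both of which decay pointwise at rate $|t|^{-3}$ thanks to Lemma \ref{decayestimateofdensity}. Concretely, introduce
\[
P_g^\mu(t,\zeta) := \int e^{-i\mu t \hat u\cdot \zeta}\, m_2(\zeta,u)\,\widehat g(t,\zeta,u)\, du,\qquad
Q_f(t,\xi,\eta) := \int e^{-it\hat v\cdot \eta}\, m_1(\xi,v)\,\widehat f(t,\eta,v)\, dv,
\]
and write $\mathcal{G}^\mu(t,y) := \mathcal{F}^{-1}_\zeta[P_g^\mu(t,\cdot)](y)$, $\mathcal{F}_\xi(t,y) := \mathcal{F}^{-1}_\eta[Q_f(t,\xi,\cdot)](y)$. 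Fubini and the convolution theorem then yield
\[
K^\mu(g,f)(t,\xi) \;=\; e^{it|\xi|}\,\mathcal{F}_y\bigl[\mathcal{G}^\mu(t,\cdot)\,\mathcal{F}_\xi(t,\cdot)\bigr](\xi).
\]
Both factors are density-type quantities in the exact form covered by Lemma \ref{decayestimateofdensity} (with $a=0$), so $\|\mathcal{G}^\mu(t)\|_{L^\infty_y}$ and $\|\mathcal{F}_\xi(t)\|_{L^\infty_y}$ decay at rate $|t|^{-3}$, with constants controlled by the weighted $L^2_x L^2_v$ norms appearing on the right-hand side of \eqref{may15eqn410}.

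Next, differentiate. Two types of contributions arise: (i) $\nabla_\xi$ strikes $e^{it|\xi|}$, producing a factor $it\xi/|\xi|$; (ii) $\nabla_\xi$ strikes $m_1(\xi,v)$ inside $Q_f$, or produces multiplication by $y$ in physical space via $\nabla_\xi\widehat{\mathcal{G}^\mu\mathcal{F}_\xi}$. Type (i) is the borderline piece: the $t$-factor must be absorbed by the $|t|^{-3}$ of \emph{one} density together with a non-$t$-growing $L^2_y$ bound on the other. By Plancherel and Hölder,
\[
\|e^{it|\xi|}\mathcal{F}_y[\mathcal{G}^\mu\mathcal{F}_\xi]\psi_k\|_{L^2_\xi}
\;\lesssim\; \min\bigl\{\|\mathcal{G}^\mu\|_{L^\infty_y}\|\mathcal{F}_\xi\|_{L^2_y},\;\; 2^{3k/2}\|\mathcal{G}^\mu\|_{L^\infty_y}\|\mathcal{F}_\xi\|_{L^1_y}\bigr\},
\]
where the $L^1_y$-norm is recovered from a weighted $L^2_y$-norm by pairing with $(1+|y|)^{-2}$, the weight being supplied by the $(1+|x|^2+|v|^2)^{20}$ factor on $f$. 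The frequency weight $2^{k/2}$ is then handled dyadically: at low frequencies $2^k\lesssim 1$ the Bernstein bound dominates and $2^{k/2}\cdot 2^{3k/2}$ stays bounded; at high frequencies $2^k\gg 1$ one trades a few $\nabla_x$-derivatives onto $\mathcal{F}_\xi$ (these are among the vector fields $\Lambda^\beta f$ with $|\beta|\leq 5$, since $\nabla_x$-derivatives on $f$ correspond to polynomial factors of $\eta$ on $\widehat f$) to produce compensating factors of $2^{-k/2}$.

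Type (ii) contributions are easier. When $\nabla_\xi$ produces multiplication by $y$, split $y = (y-\mu t\hat u) + \mu t\hat u$ inside the representation $\mathcal{G}^\mu(t,y) = \int g(t,y-\mu t\hat u,u)\,du$ (and analogously for $\mathcal{F}_\xi$). The first piece is a density of $xg$, the second is $\mu t$ times a density of $\hat u\,g$; both yield density-decay $|t|^{-3}$, so $\|y\,\mathcal{G}^\mu\|_{L^\infty_y}\lesssim |t|^{-2}$, with the extra $x$-weight absorbed by the $(1+|x|^2+|v|^2)^{20}$ weight on $g$. Symmetric manipulations for $y\,\mathcal{F}_\xi$ invoke the analogous weight on $f$ through $\Lambda^\beta f$. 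The $\xi$-dependence of $m_1$, when hit by $\nabla_\xi$, produces a symbol of the same class by \eqref{generalestimateofsymbol} and does not affect the $t$-decay. Combining types (i) and (ii), summing the dyadic estimates in $k$, and tracking constants gives \eqref{may15eqn410}.

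The principal obstacle is the simultaneous control of the frequency weight $2^{k/2}$ across all dyadic ranges while preserving the full $(1+|t|)^{-2}$ gain: low frequencies force conversion of $L^2_y$-bounds to $L^1_y$-bounds (via Bernstein) at the cost of spending $(1+|x|)^2$ from the weight on the right-hand side, whereas high frequencies force trading up to five $\nabla_x$-derivatives onto $f$ through $\Lambda^\beta f$ to generate the needed $2^{-k/2}$ decay. The detailed bookkeeping of weights and derivatives across this dyadic split, all while keeping the coefficient estimate \eqref{generalestimateofsymbol} for $m_1,m_2$ respected, is where the main technical care is required.
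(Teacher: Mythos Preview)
Your factorization approach is a genuinely different route from the paper's, but as written it has a gap stemming from the asymmetry of \eqref{may15eqn410}: the right-hand side carries $\Lambda^\beta$ only on $f$, never on $g$. Every time you invoke Lemma~\ref{decayestimateofdensity} on the $g$-density $\mathcal{G}^\mu$ --- to get $\|\mathcal{G}^\mu\|_{L^\infty_y}\lesssim |t|^{-3}$ in type~(i), or $\|y\,\mathcal{G}^\mu\|_{L^\infty_y}\lesssim |t|^{-2}$ in type~(ii) --- you are spending up to five $\nabla_u$-derivatives on $g$ (this is exactly what \eqref{densitydecay} with $a=0$ costs), and those derivatives are simply not present in the allowed norm $\|(1+|x|^2+|v|^2)^{20}g\|_{L^2_{x,v}}$. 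Your sentence ``with constants controlled by the weighted $L^2_xL^2_v$ norms appearing on the right-hand side'' is therefore false for $\mathcal{G}^\mu$. The paper respects this asymmetry by extracting \emph{all} time decay from integration by parts in the $v$-variable (the velocity of $f$) alone; the $u$-integration is never touched, so $g$ stays undifferentiated.

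The obvious repair --- swap roles, put $\mathcal{F}_\xi$ in $L^\infty_y$ via Lemma~\ref{decayestimateofdensity} and $\mathcal{G}^\mu$ in an integrated norm --- does not close either. Because $m_1$ depends on the output frequency $\xi$, the function $\mathcal{F}_\xi(t,y)$ carries $\xi$ as a parameter and your ``Plancherel and H\"older'' line is not Plancherel at all; a pointwise bound yields only $2^{3k/2}\sup_\xi\|\mathcal{F}_\xi\|_{L^\infty_y}\|\mathcal{G}^\mu\|_{L^1_y}$, and since $m_2(\zeta,u)\sim|\zeta|^{-1}$ near the origin (this is what $l=1$ in \eqref{generalestimateofsymbol} encodes), $\mathcal{G}^\mu(t,y)$ behaves like a Riesz potential of a density, decays only like $|y|^{-2}$ at infinity, and has $\|\mathcal{G}^\mu\|_{L^1_y}$ generically infinite. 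The paper bypasses all of this by staying in Fourier variables throughout: it localizes $\xi-\eta$ and $\eta$ dyadically at scales $(k_1,k_2)$, bounds each piece either directly by the volume of the frequency support or after three or four integrations by parts in $v$, and then sums over $(k_1,k_2)$; the $\xi$-dependence of $m_1$ is absorbed pointwise via \eqref{generalestimateofsymbol} with no need for a physical-space product structure.
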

\begin{proof}
Similar to the decomposition we did in (\ref{octeq262}), 
we first separate ``$\nabla_\xi( K^{\mu}  (g, f)(t, \xi))$'' into two parts and then   do dyadic decompositions for ``$\xi-\eta$'' and ``$\eta$''. As a result, the following decompositions hold, 
\be\label{may15eqn321}
\nabla_\xi( K^{\mu}  (g, f)(t, \xi))\psi_k(\xi) =\sum_{i=1,2} K_k^{i}  (g, f)(t, \xi), \,\, K_k^{i}  (g, f)(t, \xi)=\sum_{(k_1, k_2)\in \chi_k^l, l=1,2,3} K^{k;i}_{k_1,k_2}  (g, f)(t, \xi),
\ee
where 
\[
K^{k;1}_{k_1,k_2}  (g, f)(t, \xi) =\int_{\R^3}\int_{\R^3}\int_{\R^3}  e^{it|\xi|-i\mu t \hat{u}\cdot (\xi-\eta) - i  t \hat{v}\cdot \eta} \nabla_\xi\big(m_1(\xi, v)m_2(\xi-\eta, u)\widehat{g}(t, \xi-\eta, u)\psi_{\geq -10}(|\xi-\eta|/|\xi|)\big)\]
\be\label{may15eqn110}
\times  \widehat{f}(t,\eta, v) \psi_k(\xi)\psi_{k_1}(\xi-\eta)\psi_{k_2}(\eta)  d \eta d u d v, \quad \textit{if\,\,} (k_1, k_2)\in \chi_k^1\cup \chi_k^2, 
\ee
\[
K^{k;1}_{k_1,k_2}  (g, f)(t, \xi) =\int_{\R^3}\int_{\R^3} \int_{\R^3} e^{it|\xi|-i\mu t \hat{u}\cdot  \eta  - i  t \hat{v}\cdot(\xi-\eta)} \nabla_\xi\big(m_1(\xi, v)\widehat{f}(t, \xi-\eta, v) \psi_{ <-10}(| \eta|/|\xi|)\big)\]
\be\label{may15eqn111}
\times \widehat{g}(t,  \eta, u)   m_2( \eta, u) \psi_k(\xi)\psi_{k_1}( \eta)\psi_{k_2}(\xi-\eta)  d \eta d u d v, \quad \textit{if\,\,} (k_1, k_2)\in \chi_k^3, 
\ee
\[
K^{k;2}_{k_1,k_2}  (g, f)(t, \xi) = \int_{\R^3}\int_{\R^3}\int_{\R^3}  e^{it|\xi|-i\mu t \hat{u}\cdot (\xi-\eta) - i  t \hat{v}\cdot \eta} it\big(\frac{\xi}{|\xi|} -\mu  \hat{u} \big)  m_1(\xi, v)m_2(\xi-\eta, u) \widehat{g}(t, \xi-\eta, u)\]
\be\label{may15eqn112}
\times   \widehat{f}(t,\eta, v)  \psi_k(\xi)\psi_{k_1}(\xi-\eta)\psi_{k_2}(\eta)  d \eta d u d v, \quad   \textit{if\,\,} (k_1, k_2)\in \chi_k^1\cup \chi_k^2, 
\ee
\[
K^{k;2}_{k_1,k_2}  (g, f)(t, \xi) = \int_{\R^3}\int_{\R^3}\int_{\R^3}  e^{it|\xi|-i\mu t \hat{u}\cdot  \eta  - i  t \hat{v}\cdot(\xi-\eta)}  it\big(\frac{\xi}{|\xi|} -   \hat{v} \big)  m_1(\xi, v)m_2(  \eta, u) \widehat{g}(t,  \eta, u)\]
\be\label{may15eqn113}
\times   \widehat{f}(t,\xi-\eta, v)  \psi_k(\xi)\psi_{k_1}( \eta)\psi_{k_2}(\xi-\eta)  d \eta d u d v, \quad   \textit{if\,\,} (k_1, k_2)\in \chi_k^3.  
\ee
From the above detailed formulas,  our  desired estimate (\ref{may15eqn410}) holds straightforwardly if $|t|\leq 1$. Hence, we restrict ourself to the case when $|t|\geq 1$. 

Based on the size of $k_1$, we separate into two cases as follows. 

  $\bullet $\, \textbf{Case} 1:\quad  If $(k_1,k_2)\in \chi_k^1\cup\chi_k^2$. \qquad  For this case we have $k_1\geq k-10$ and $k_2\leq k_1+10$.

On one hand, from the size of support of ``$\xi$'' and ``$\eta$'' and the estimate of symbols ``$m_i(\xi, v)$'', $i\in\{1,2\}$, in (\ref{generalestimateofsymbol}), we have  
\[
2^{k/2}\|K^{k;1}_{k_1,k_2}  (g, f)(t, \xi)\|_{L^2_\xi}\lesssim 2^{2k-k-k_1-\min\{k,k_1\}+3k_2}\big(\|(1+|u|)^{25} \widehat{g}(t, \xi, u)\psi_{k_1}(\xi)\|_{L^\infty_\xi L^1_u}\]
\[
 + 2^{k_1} \| (1+|u|)^{25} \nabla_\xi \widehat{g}(t, \xi, u)\psi_{k_1}(\xi)\|_{L^\infty_\xi L^1_u} \big)  \|(1+|v|)^{25} \widehat{f}(t, \xi, v)\psi_{k_2}(\xi)\|_{L^\infty_\xi L^1_v} 
\]
\be\label{may15eqn71}
  \lesssim 2^{2k_2}\|(1+|x|^2+|v|^2)^{20} g(t,x,v)\|_{L^2_x L^2_v}\|(1+|x|^2+|v|^2)^{20} f(t,x,v)\|_{L^2_x L^2_v}. 
\ee
Following the similar strategy, recall (\ref{may15eqn112}),   the following estimate holds for $K^{k;2}_{k_1,k_2}  (g, f)(t, \xi)$, 
\be\label{may15eqn181}
2^{k/2}\|K^{k;2}_{k_1,k_2}  (g, f)(t, \xi)\|_{L^2_\xi}\lesssim  |t| 2^{ 3k_2 } \| (1+|x|^2+|v|^2)^{20} g(t,x,v)\|_{L^2_x L^2_v}\| (1+|x|^2+|v|^2)^{20}f(t,x,v)\|_{L^2_x L^2_v}. 
\ee

On the other hand,   we first do integration by parts in $v$ three times  for $K_{k_1,k_2}^{k;1}$ and do integration by parts in $v$ four times   for $K_{k_1,k_2}^{k;2}$ and then use the volume of support of $\xi $ and $\eta$. As a result,  we have 
\[
2^{k/2}\|K^{k;1}_{k_1,k_2}  (g, f)(t, \xi)\|_{L^2_\xi} +2^{k/2}\|K^{k;2}_{k_1,k_2}  (g, f)(t, \xi)\|_{L^2_\xi} \]
 
\be\label{may15eqn101} 
 \lesssim \sum_{\beta\in \mathcal{S}, |\beta|\leq 5} (1+|t|)^{-3} 2^{-k_2} \| (1+|x|^2+|v|^2)^{20} g(t,x,v)\|_{L^2_x L^2_v} \|(1+|x|^2+|v|^2)^{20}  \Lambda^\beta f(t,x,v)\|_{L^2_x L^2_v}.
\ee
To sum up, from the estimates (\ref{may15eqn71}), (\ref{may15eqn181}), and  (\ref{may15eqn101}),   the following estimate holds, 
\[
\sum_{(k_1,k_2)\in \chi_k^1\cup \chi_k^2}\sum_{i=1,2} 2^{k/2}\|K^{k;i}_{k_1,k_2}  (g, f)(t, \xi)\|_{L^2_\xi}\lesssim \sum_{\beta\in \mathcal{S}, |\beta|\leq 5}\big(\sum_{2^{k_2}\leq |t|^{-1}}(2^{2k_2}+|t|2^{3k_2}) + \sum_{2^{k_2}\geq |t|^{-1}} |t|^{-3}2^{ -k_2}  \big)
\]
\[
\times \|(1+|x|^2+|v|^2)^{20}g(t,x,v)\|_{L^2_x L^2_v}\| (1+|x|^2+|v|^2)^{20} \Lambda^{\beta}f(t,x,v)\|_{L^2_x L^2_v}
\]
\be\label{may15eqn341}
\lesssim \sum_{\beta\in \mathcal{S}, |\beta|\leq 5} |t|^{-2}  \|(1+|x|^2+|v|^2)^{20}  g(t,x,v)\|_{L^2_x L^2_v}\| (1+|x|^2+|v|^2)^{20}  \Lambda^{\beta}f(t,x,v)\|_{L^2_x L^2_v}. 
\ee

 $\bullet $\, \textbf{Case} 2:\quad If $(k_1,k_2)\in \chi_k^3$.\qquad  For this case, we have $k_1\leq k-10$ and $|k_2-k|\leq 10$.

On one hand, similar to the strategy used in obtaining 	 the estimates (\ref{may15eqn71}) and (\ref{may15eqn181}), from the size of support of $\xi$ and $\eta$ and the estimate of symbols $m_i(\xi, v)$ in (\ref{generalestimateofsymbol}),  we have
\[
\sum_{i=1,2}2^{k/2}\|K^{k;i}_{k_1,k_2}  (g, f)(t, \xi)\|_{L^2_\xi}\lesssim 2^{2k- k-k_1 +3k_1} \big(1+|t| 2^{k_1})\big(2^{-k_1}\| (1+|v|)^{25} \widehat{f}(t, \xi, v)\psi_{k_1}(\xi)\|_{L^\infty_\xi L^1_v} 
\]
\[
+ 2^{k } \| (1+|v|)^{25} \nabla_\xi \widehat{f}(t, \xi, v)\psi_{k_1}(\xi)\|_{L^\infty_\xi L^1_v} \big) \|(1+|u|)^{25} \widehat{g}(t, \xi, u)\psi_{k_2}(\xi)\|_{L^\infty_\xi L^1_u}\]
\be\label{may15eqn300}
 \lesssim \big(2^{k_1+k_2}+|t| 2^{2k_1+k_2}\big) \| (1+|x|^2+|v|^2)^{20}  g(t,x,v)\|_{L^2_x L^2_v}\| (1+|x|^2+|v|^2)^{20}  f(t,x,v)\|_{L^2_x L^2_v}.  
\ee

On the other hand, similar to the strategy used in the  estimate  (\ref{may15eqn101}),  we do integration by parts in $v$ three times for   $K_{k_1,k_2}^{k;1}$  in (\ref{may15eqn111}) and do integration by parts in $v$ four times  for $K_{k_1,k_2}^{k;2}$  in (\ref{may15eqn113}). As a result, the following estimate holds after using the volume of support of $\xi$ and $\eta$,  
\[
\sum_{i=1,2}2^{k/2}\|K^{k;i}_{k_1,k_2}  (g, f)(t, \xi)\|_{L^2_\xi}\lesssim \sum_{\beta\in \mathcal{S}, |\beta|\leq 5}  |t|^{-3} 2^{2k-k-2k_1+3k_1-3k_2}\| (1+|x|^2+|v|^2)^{20}  g(t,x,v)\|_{L^2_x L^2_v}
\]
\be\label{may15eqn301}
\times \| (1+|x|^2+|v|^2)^{20} \Lambda^{\beta}f(t,x,v)\|_{L^2_x L^2_v}.
\ee
From the estimates (\ref{may15eqn300}) and (\ref{may15eqn301}), the following estimate holds, 
\[
\sum_{(k_1, k_2)\in \chi_k^3}\sum_{i=1,2} 2^{k/2}\|K^{k;i}_{k_1,k_2}  (g, f)(t, \xi)\|_{L^2_\xi}\lesssim \sum_{\beta\in \mathcal{S}, |\beta|\leq 5}\big(\sum_{2^{k_2}\leq |t|^{-1}}\sum_{k_2\leq k_1+10}\big(2^{k_1+k_2}+|t| 2^{2k_1+k_2}\big) \]
\[
+  \sum_{2^{k_2}\geq |t|^{-1}}\sum_{k_2\leq k_1+10}|t|^{-3}2^{k_1-2k_2}   \big)\| (1+|x|^2+|v|^2)^{20} g(t,x,v)\|_{L^2_x L^2_v}\|(1+|x|^2+|v|^2)^{20} \Lambda^{\beta}f(t,x,v)\|_{L^2_x L^2_v}
\]
\be\label{may15eqn342}
\lesssim \sum_{\beta\in \mathcal{S}, |\beta|\leq 5} |t|^{-2}\| (1+|x|^2+|v|^2)^{20} g(t,x,v)\|_{L^2_x L^2_v}\| (1+|x|^2+|v|^2)^{20} \Lambda^{\beta}f(t,x,v)\|_{L^2_x L^2_v} .
\ee

To sum up, our desired estimate  (\ref{may15eqn410}) holds from the decomposition (\ref{may15eqn321}) and the estimates (\ref{may15eqn341}) and (\ref{may15eqn342}). 
\end{proof}
 \section{Energy estimate for the Vlasov part}\label{energyestimatevlasov}

 In this section, we estimate both the low order energy $E_{\textup{low}}^f(t)$ defined in (\ref{octeqn1896}) and the high order energy $E_{\textup{high}}^f(t)$ defined in  (\ref{highorderenergy1})  of the profile $g(t,x,v)$ for the Vlasov part.

    The main ingredients are some general linear estimates and bilinear estimates, which will be used as black boxes  first in subsection \ref{highordervlasovestimate} and subsection \ref{lowordervlasovsec}. We will prove these estimates in    subsection \ref{multilinearestimate}.

\subsection{The high order energy estimate for the Vlasov part}\label{highordervlasovestimate}

 Recall  (\ref{highorderenergy1}) and (\ref{jan29eqn21}).   As the result of direct computation,  the following equality holds  for any fixed $t\in[0,T]$, $\alpha\in \mathcal{B}, \beta \in \mathcal{S}$,  s.t.,  $|\alpha| +|\beta|\leq N_0$,  
\[
 \h \| \omega_{\beta}^{\alpha}( x, v)  g^\alpha_\beta (t ,x,v)\|_{L^2_{x,v}}^2- \h \| \omega_{\beta}^{\alpha}( x, v)  g^\alpha_\beta (0,x,v)\|_{L^2_{x,v}}^2\]
 \be\label{eqq50}
 =  \textup{Re}\Big[\int_{0}^{t} \int_{\R^3} \int_{\R^3}\big(\omega_{\beta}^{\alpha}( x, v)\big)^2  g^\alpha_\beta (t  ,x,v) \p_t g^\alpha_\beta(t,x,v) d x d v \Big]=  \sum_{i=1,2,3  }  \textup{Re}[I_{\beta;i}^{\alpha}], 
\ee
where
\be\label{sepeqn809}
 I_{\beta;1}^{\alpha} = \int_{0}^{t}  \int_{\R^3} \int_{\R^3}\big(\omega_{\beta}^{\alpha}( x, v)\big)^2  g^\alpha_\beta (s ,x,v) K(s,x+\hat{v}s,v)\cdot  {D}_v g^\alpha_{\beta}(s,x,v)  d x d v ds ,
\ee
\be\label{sepeqn813}
 I_{\beta;2}^{\alpha} =  \int_{0}^{t} \int_{\R^3} \int_{\R^3}\big(\omega_{\beta}^{\alpha}( x, v)\big)^2  g^\alpha_\beta (s ,x,v)  \textit{l.o.t}_{\beta}^\alpha(s,x,v) d x  dv d s,  
 \ee
  \be\label{sepeqn810}
 I_{\beta;3}^{\alpha} =  \int_{0}^{t}  \int_{\R^3} \int_{\R^3}\big(\omega_{\beta}^{\alpha}( x, v)\big)^2  g^\alpha_\beta (s ,x,v) \textit{h.o.t}_{\beta}^\alpha(s,x,v)   d x  dv ds . 
 \ee

The main result of  this subsection is  summarized in the  following Proposition. For the sake of readers and for clarity, we  give a concise proof of Proposition \ref{notbulkterm1} by separate out two independent Lemmas first and then prove the validities of these two Lemmas. 
\begin{proposition}\label{notbulkterm1}
 Under the bootstrap assumption \textup{(\ref{bootstrapassumption})},  the following estimate holds for any $t\in[0,T]$, 
\be\label{sepeqn110}
E_{\textup{high}}^{f;1}(t) \lesssim (1+t)^{\delta}\epsilon_0,\quad E_{\textup{high}}^{f;2}(t) \lesssim (1+t)^{\delta/2}\epsilon_0 
\ee
\end{proposition}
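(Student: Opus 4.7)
The plan is to start from the energy identity \textup{(\ref{eqq50})} and estimate each of $I_{\beta;1}^{\alpha}$, $I_{\beta;2}^{\alpha}$, $I_{\beta;3}^{\alpha}$ separately. Throughout I will exploit the small coefficient ``$m^{2}/\sqrt{1+|v|^{2}}$'' built into $K_{i}(t,x+\hat{v}t,v)$ in \textup{(\ref{jan31eqn61})}--\textup{(\ref{jan31eqn63})}, the sharp decay estimate for the scalar field (\ref{decayingeneral}) coming from $E_{\textup{low}}^{\phi}(t)$, and the weighted bound (\ref{feb8eqn51}) from Lemma \ref{derivativeofweightfunction}.

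\textbf{Step 1: the bulk term $I_{\beta;1}^{\alpha}$.} This is the delicate one because of the $t$-growth hidden in $D_v$. First, rewrite
\[
K(s,x+\hat{v}s,v)\cdot D_v g^{\alpha}_{\beta}=\sum_{i=1,\dots,7}K_{i}(s,x+\hat{v}s,v)\,X_{i}g^{\alpha}_{\beta},
\]
using \textup{(\ref{april10eqn21})}. The decomposition $X_{i}\thicksim \alpha_{i}(v)^{-1}\sum_{\rho}d_{\rho}\Lambda^{\rho}$ from \textup{(\ref{summaryoftwodecomposition})} expresses each $X_i$ in terms of the vector fields $\Lambda^{\rho}$. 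When $\Lambda^{\rho}$ falls on $g^{\alpha}_{\beta}$ we integrate by parts in $(x,v)$ to symmetrize, gaining an expression that is symmetric in $g^{\alpha}_{\beta}\Lambda^{\rho}g^{\alpha}_{\beta}$, which can be absorbed after integration by parts in $v$; the resulting derivative of $\omega^{\alpha}_{\beta}(x,v)^{2}$ is controlled by \textup{(\ref{feb8eqn51})}, and the loss of $|t|$ trades against $(1+||t|-|x+\hat{v}t||)^{-1}$ from the sharp decay estimate of $\phi$ and its derivatives in \textup{(\ref{decayingeneral})}. Crucially, the factor $m^{2}/\sqrt{1+|v|^{2}}$ inside $K_{i}$ supplies the missing power of $\langle v\rangle^{-1}$ needed to beat the $(1+|v|)$ growth from the coefficients of the good decomposition (recall estimate \textup{(\ref{jan15eqn2})}). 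After these manipulations the integrand is bounded by
\[
(1+s)^{-1}\big\|\omega^{\alpha}_{\beta}g^{\alpha}_{\beta}(s)\big\|_{L^{2}_{x,v}}^{2}\,\cdot\,\textup{(decay factor of }\phi\textup{)},
\]
which integrates in $s$ to produce only a $\delta$-loss under the bootstrap \textup{(\ref{bootstrapassumption})} via Gr\"onwall.

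\textbf{Step 2: the high order commutator term $I_{\beta;3}^{\alpha}$.} I split $\textit{h.o.t}_{\beta}^{\alpha}$ via \textup{(\ref{jan30eqn100})}. The piece $\textit{h.o.t}_{\beta;1}^{\alpha}$ contains one derivative $\Lambda^{\iota}$ of $K_{i}$ and the remaining $|\beta|-1$ derivatives on $g$; this is  treated exactly as in Step 1 using sharp decay of $\Lambda^{\iota}\phi$. The piece $\textit{h.o.t}_{\beta;2}^{\alpha}$ contains at most one derivative on $\phi$ and uses the decay of $\partial_{t}\phi+\hat{v}\cdot\nabla_{x}\phi$ together with the small coefficient. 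The piece $\textit{h.o.t}_{\beta;3}^{\alpha}=K_{i}\cdot Y^{\beta}_{i}g^{\alpha}$ uses the refined structure of $Y^{\beta}_{i}$ from Lemma \ref{summaryofhighordercommutation}, where the top coefficients $\tilde{e}^{\kappa,1}_{\beta,i},\tilde{e}^{\kappa,2}_{\beta,i}$ satisfy the balanced estimate \textup{(\ref{sepeqn904})}; again the $\tilde{d}(t,x,v)$ factor is absorbed using \textup{(\ref{nov43})} and the sharp $1/(1+||t|-|x+\hat{v}t||)$ decay of $\phi$.

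\textbf{Step 3: the low order term $I_{\beta;2}^{\alpha}$.} Using the decomposition \textup{(\ref{jan31eqn151})}--\textup{(\ref{jan31eqn155})}, every summand has either a strict gain in derivatives on the scalar field side (so that sharp $L^{\infty}_{x}$ decay of $\phi^{\beta}$ via Lemma \ref{twistedlineardecay} is available up through $\delta\phi^{\beta}$ with $|\beta|\leq 11$) or a strict drop of order on the Vlasov side (so that we may use $E^{f;2}_{\textup{high}}$ with its better $\delta/2$ exponent). Each case is closed via the bilinear $L^{2}_{x,v}$ estimates proven later in subsection \ref{multilinearestimate}.

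\textbf{Step 4: the hierarchy and conclusion.} The weight $\omega^{\alpha}_{\beta}$ in \textup{(\ref{highorderweight})} decreases strictly as $|\alpha|+|\beta|$ increases, so that whenever $\textit{l.o.t}$ or $\textit{h.o.t}$ hands back a lower-order profile $g^{\alpha'}_{\beta'}$ one may dominate its weighted $L^{2}$ norm by $E^{f;2}_{\textup{high}}$, which by bootstrap only grows like $(1+t)^{\delta/2}$. Adding everything up, $\tfrac{d}{dt}\|\omega^{\alpha}_{\beta}g^{\alpha}_{\beta}\|_{L^{2}}^{2}\lesssim (1+t)^{-1}\epsilon_{1}\|\omega^{\alpha}_{\beta}g^{\alpha}_{\beta}\|_{L^{2}}^{2}+(\textup{inhomogeneous terms with integrable decay})$, which yields the two bounds in \textup{(\ref{sepeqn110})}: the $\delta$-loss when $|\alpha|+|\beta|=N_{0}$ (where we must interact with the strictly top order energy) and the $\delta/2$-loss when $|\alpha|+|\beta|<N_{0}$.

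\emph{Main obstacle.} The heart of the difficulty lies in Step 1: the bulk integral $I^{\alpha}_{\beta;1}$ cannot be closed by brute force because of the $t$-growth inside $D_v$. The success of the argument rests on the combined miracle that (i) the good decomposition \textup{(\ref{summaryoftwodecomposition})} localizes this growth into the modulation $\tilde{d}(t,x,v)$, (ii) the anisotropic weight chosen in \textup{(\ref{highorderweight})} satisfies the compatibility bound \textup{(\ref{feb8eqn51})} so that $D_v\omega^{\alpha}_{\beta}$ is controlled by $1+||t|-|x+\hat{v}t||$, and (iii) the sharp $1/((1+|t|)(1+||t|-|x||))$ pointwise decay of $\partial_{t,x}\phi$ exactly balances this. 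Making precise how these three ingredients combine, while simultaneously tracking the power of $\langle v\rangle$ supplied by the relativistic coefficient $m^{2}/\sqrt{1+|v|^{2}}$, is the technical crux of the proof.
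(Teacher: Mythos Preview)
Your plan is broadly correct and shares the paper's overall architecture, but your treatment of the bulk term $I^{\alpha}_{\beta;1}$ takes an unnecessarily circuitous route compared to the paper.

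\textbf{On Step 1.} The paper does \emph{not} decompose $D_v$ via \textup{(\ref{summaryoftwodecomposition})} for the bulk term. Instead it observes the elementary identity
\[
g^{\alpha}_{\beta}\,K\cdot D_v g^{\alpha}_{\beta}=\tfrac{1}{2}\,K\cdot D_v\bigl((g^{\alpha}_{\beta})^{2}\bigr),
\]
valid because $D_v=\nabla_v-t\nabla_v\hat{v}\cdot\nabla_x$ is a first-order differential operator in $(x,v)$, and then integrates by parts in both $x$ and $v$ directly. This produces exactly the two terms $(\omega^{\alpha}_{\beta})^{2}\,D_v\cdot K$ and $2\omega^{\alpha}_{\beta}\,K\cdot D_v\omega^{\alpha}_{\beta}$, each bounded pointwise by $(1+s)^{-1}E^{\phi}_{\textup{low}}(s)$ using \textup{(\ref{feb8eqn51})} and the decay \textup{(\ref{noveqn935})}. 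Your route---expanding $X_i$ in terms of $\Lambda^{\rho}$'s and then symmetrizing---would force you to track $\Lambda^{\rho}$ acting on the product $d_{\rho}\,K_i\,(\omega^{\alpha}_{\beta})^{2}$, dragging in Lemma~\ref{derivativesofcoefficient} and commutator bookkeeping that the direct argument avoids entirely. Also, your emphasis on the factor $m^{2}/\sqrt{1+|v|^{2}}$ as ``crucial'' for Step~1 is slightly misplaced: the paper remarks that \textup{(\ref{feb8eqn51})} is already strong enough here even without that gain; the good coefficient matters more in the h.o.t.\ analysis where the $D_v$ decomposition is actually invoked.

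\textbf{On Steps 2--4.} Your outline matches the paper's Lemmas~\ref{fixedtimeestimate1} and~\ref{fixedtimeestimate2}. One point to sharpen: in $\textit{l.o.t}^{\alpha}_{\beta;3}$ the scalar field carries more than eleven derivatives, so no $L^{\infty}_{x}$ decay is available; the paper closes this case with the dyadic splitting $H_k$, the modified-profile decomposition \textup{(\ref{feb1eqn1})}, and the specific trilinear/bilinear density estimates of Lemmas~\ref{multilinearlemma1} and~\ref{bilineardensitylemma}, not merely ``generic bilinear $L^{2}$ estimates''.
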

\begin{proof}
Recall the definition of the high order energy in  (\ref{highorderenergy1}) and the decomposition in (\ref{eqq50}).

We first estimate $I_{\beta;2}^\alpha$ and $I_{\beta;3}^\alpha$. Recall (\ref{sepeqn813}) and (\ref{sepeqn810}).  From the estimates  (\ref{jan15eqn21}) and (\ref{may17eqn71})  in Lemma \ref{fixedtimeestimate1} and the estimates (\ref{jan15eqn91}) and (\ref{may17eqn120}) in Lemma \ref{fixedtimeestimate2}, the following estimate holds from the $L_{x,v}^2-L^2_{x,v}$ type bilinear estimate, 
\be\label{march2eqn5}
\sum_{|\alpha|+|\beta |=N_0}|I_{\beta;2}^{\alpha}| + |I_{\beta;3}^{\alpha}| \lesssim \int_{0}^t (1+s)^{-1+2\delta}\epsilon_0 ds \lesssim (1+t)^{ 2\delta}\epsilon_0, \quad \sum_{|\alpha|+|\beta |< N_0}|I_{\beta;2}^{\alpha}| + |I_{\beta;3}^{\alpha}| \lesssim (1+t)^{\delta  }\epsilon_0.
\ee

Now, it remains to estimate $I_{\beta;1}^\alpha$. Recall (\ref{sepeqn809}). Note that
\[
D_v=\nabla_v - t\nabla_v\hat{v}\cdot\nabla_x, \quad \Longrightarrow g^\alpha_\beta (t ,x,v)    {D}_v g^\alpha_{\beta}(t,x,v) = \frac{1}{2} D_v\big(g^\alpha_\beta (t ,x,v)  \big)^2.
\]
Therefore, after doing integration by parts in $x$ and $v$, the following equality holds, 
\[
 I_{\beta;1}^{\alpha}= \int_{t_1}^{t_2} \int_{\R^3} \int_{\R^3} \h \big(\omega_{\beta}^{\alpha}( x, v)g^\alpha_\beta (t ,x,v)\big)^2 \big[ \frac{  2 K(t,x+\hat{v}t,v)\cdot  {D}_v \omega_{\beta}^{\alpha}( x, v)}{ \omega_{\beta}^{\alpha}( x, v)} +  {D}_v \cdot  K(t,x+\hat{v}t,v) \big]   d x dv.   \]
 
Recall  (\ref{jan31eqn1}). From the direct computation, the estimate (\ref{feb8eqn51}) in Lemma \ref{derivativeofweightfunction}, and   the decay estimate (\ref{noveqn935}) in Lemma \ref{sharplinftydecay}, we have 
\be\label{feb10eqn51}
\big|{D}_v \cdot  K(t,x+\hat{v}t,v) \big| + \Big| \frac{  K(t,x+\hat{v}t,v)\cdot  {D}_v \omega_{\beta}^{\alpha}( x, v)}{\omega_{\beta}^{\alpha}( x, v)}\Big| \lesssim (1+|t|)^{-1} E_{\textup{low}}^{\phi}(t)\lesssim (1+|t|)^{-1}\epsilon_1. 
\ee

To sum up,   our desired estimate (\ref{sepeqn110}) holds  from the above estimate (\ref{feb10eqn51}),  the $L^2_{x,v}-L^2_{x,v}-L^\infty_{x,v}$ type multi-linear estimate, and the estimate (\ref{march2eqn5}).
 \end{proof}

\begin{lemma}\label{fixedtimeestimate1}
 Under the bootstrap assumption \textup{(\ref{bootstrapassumption})},  the following estimate holds for any $t\in[0,T]$, 
\be\label{jan15eqn21}
 \sum_{\alpha\in \mathcal{B}, \beta\in \mathcal{S}, |\alpha|+|\beta|= N_0} \|\omega_{\beta}^\alpha(x,v) \textit{h.o.t}_\beta^\alpha(t,x,v)  \|_{L^2_x L^2_v} \lesssim (1+|t|)^{-1+\delta}\epsilon_0.
\ee
\be\label{may17eqn71}
\sum_{\alpha\in \mathcal{B}, \beta\in \mathcal{S}, |\alpha|+|\beta|<N_0}\|\omega_{\beta}^\alpha(x,v) \textit{h.o.t}_\beta^\alpha(t,x,v)  \|_{L^2_x L^2_v}\lesssim (1+|t|)^{-1+\delta/2}\epsilon_0.
\ee
\end{lemma}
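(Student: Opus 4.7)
The plan is to decompose $\textit{h.o.t}_\beta^\alpha = \textit{h.o.t}_{\beta;1}^\alpha + \textit{h.o.t}_{\beta;2}^\alpha + \textit{h.o.t}_{\beta;3}^\alpha$ as in (\ref{jan30eqn100}) and to handle each summand by an $L^\infty_{x,v}\times L^2_{x,v}$ split: put the coefficient, which is built from at most two derivatives of $\phi$ together with a polynomial in $(x,v)$, in $L^\infty_{x,v}$, and keep the highest-order Vlasov factor in $L^2_{x,v}$. Each Vlasov factor that appears carries exactly $|\alpha|+|\beta|$ derivatives on $g$, so its weighted $L^2_{x,v}$-norm is bounded by $E_{\textup{high}}^{f;1}(t)\lesssim (1+t)^\delta\epsilon_1$ when $|\alpha|+|\beta|=N_0$ and by $E_{\textup{high}}^{f;2}(t)\lesssim (1+t)^{\delta/2}\epsilon_1$ otherwise, which explains the two different decay rates stated in (\ref{jan15eqn21}) and (\ref{may17eqn71}). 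The engine is the sharp linear decay estimate of Lemma \ref{sharplinftydecay}, which not only gives $(1+|t|)^{-1}$ time decay for $\partial\phi$ and $\partial^2\phi$ but also supplies an extra $(1+||t|-|x||)^{-1}$ factor at the light cone; this is what allows every factor of the modulation $\tilde d$ appearing in the argument to be absorbed, since $|\tilde d|\lesssim 1+|t-|x+\hat{v}t||$ by (\ref{nov43}).

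For $\textit{h.o.t}_{\beta;1}^\alpha$ in (\ref{jan30eqn51}), only one vector field $\Lambda^\iota$ with $|\iota|=1$ lands on $K_i(t,x+\hat{v}t,v)$. Lemma \ref{decompositionofderivatives} rewrites this as a polynomial-in-$(x,v)$ weight times $\Gamma^\rho K_i$ with $|\rho|\leq 1$, i.e., at worst second derivatives of $\phi$ evaluated at $x+\hat{v}t$; the polynomial weight is absorbed by the large negative exponent of $\omega_\beta^\alpha$, and Lemma \ref{sharplinftydecay} supplies $(1+|t|)^{-1}$. The analogous scheme handles $\textit{h.o.t}_{\beta;2}^\alpha$ in (\ref{jan30eqn52}): the remaining Vlasov factor $D_v g_\beta^\gamma$ or $v\cdot D_v g_\beta^\gamma$ with $|\gamma|\leq |\alpha|-1$ is expanded via (\ref{summaryoftwodecomposition}) of Lemma \ref{twodecompositionlemma}, and the $(1+|v|)$ growth of the resulting coefficients (estimate (\ref{jan15eqn2})) is compensated by the $O((1+|v|)^{-1})$ Vlasov--Nordstr\"om prefactors $a^{3,4}_{\alpha;\rho,\gamma}$ in (\ref{jan31eqn91}), while the companion factor $\tilde d$ is again absorbed using the light-cone decay of $\partial\phi$.

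The critical piece is $\textit{h.o.t}_{\beta;3}^\alpha=\sum_i K_i(t,x+\hat{v}t,v)\, Y_i^\beta g^\alpha$ in (\ref{jan30eqn53}). By (\ref{sepeqn522}) the top-order commutator $Y_i^\beta$ still acts with $|\beta|$ vector fields on $g^\alpha$, so the anisotropic weight in $\omega$ provides no extra room. Moreover, the coefficient bound (\ref{sepeqn904}) of Lemma \ref{summaryofhighordercommutation} produces $(1+|v|)^{1+c_{\textup{vn}}(\kappa)-c_{\textup{vn}}(\beta)}$, one power of $|v|$ more than the ratio $\omega_\beta^\alpha/\omega_\kappa^\alpha=(1+|v|)^{c_{\textup{vn}}(\beta)-c_{\textup{vn}}(\kappa)}$ can absorb. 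This missing $(1+|v|)^{-1}$ is supplied precisely by the Vlasov--Nordstr\"om structure of $K_i$ read off from (\ref{jan31eqn61})--(\ref{jan31eqn63}): for $i\in\{2,3,4\}$ the prefactor $m^2\alpha_{i+1}(v)/\sqrt{1+|v|^2}$ is already $O((1+|v|)^{-1})$; for $i\in\{5,6,7\}$ the cutoff $\psi_{\leq 2}(|v|)$ keeps $v$ bounded; and for $i=1$ the improved bound (\ref{jan31eqn110}) replaces $(1+|v|)^{1+c_{\textup{vn}}(\kappa)-c_{\textup{vn}}(\beta)}$ by $(1+|v|)^{-1+c_{\textup{vn}}(\kappa)-c_{\textup{vn}}(\beta)}$, which pairs with the $|v|(\p_t+\hat{v}\cdot\nabla_x)\phi$ factor in $K_1$. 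The factor $\tilde d(t,x,v)$ multiplying $\tilde e^{\kappa,1}_{\beta,i}$ is then absorbed by the $(1+||t|-|x+\hat{v}t||)^{-1}$ factor in the sharp decay estimate of $\partial\phi$.

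The main obstacle is this simultaneous management of $\tilde d$ and the $(1+|v|)$ imbalance in the top-order commutator: it is exactly here that the new vector field set $\mathfrak{P}_2$, the inhomogeneous modulation defined in (\ref{inhomogeneousmodulation}), and the Vlasov--Nordstr\"om coefficient $m^2/\sqrt{1+|v|^2}$ must fit together precisely. Once (\ref{jan15eqn21}) and (\ref{may17eqn71}) are in hand, they feed into (\ref{sepeqn813}) and (\ref{sepeqn810}), and, after time integration, close the bootstrap for the high-order Vlasov energy in Proposition \ref{notbulkterm1}.
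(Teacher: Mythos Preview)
Your proposal is correct and follows essentially the same route as the paper: split $\textit{h.o.t}_\beta^\alpha$ into the three pieces (\ref{jan30eqn51})--(\ref{jan30eqn53}), place the $\phi$-factor in $L^\infty_{x,v}$ via Lemma~\ref{sharplinftydecay} (with the cone factor $(1+||t|-|x||)^{-1}$ absorbing every $\tilde d$), and control the remaining top-order Vlasov factor in $L^2_{x,v}$ using the coefficient estimates (\ref{jan15eqn2}), (\ref{sepeqn88}), (\ref{sepeqn904}), (\ref{jan31eqn110}) together with (\ref{jan31eqn91}). Your discussion of the $(1+|v|)$-bookkeeping in Case~3---pairing the generic bound (\ref{sepeqn904}) with the $m^2/\sqrt{1+|v|^2}$ prefactor of $K_{i}$ for $i\in\{2,3,4\}$, the cutoff for $i\in\{5,6,7\}$, and the improved bound (\ref{jan31eqn110}) with the $|v|$ in $K_1$ for $i=1$---is exactly the mechanism the paper invokes, spelled out more explicitly than the paper itself does.
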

\begin{proof}
 Recall (\ref{jan30eqn100}). Motivated from the the decomposition of $ \textit{h.o.t}_\beta^\alpha(t,x,v)$, we separate into three cases as follows. 

  $\bullet$ \textbf{Case} $1$:   \quad The estimate of $\textit{h.o.t}_{\beta;1}^\alpha(t,x,v) $. 

 Recall  the equations  (\ref{april10eqn21}), (\ref{jan30eqn51}), and  the first decomposition of $D_v$ in (\ref{summaryoftwodecomposition}) in Lemma \ref{twodecompositionlemma}. From the estimate   of coefficients in the estimate (\ref{jan15eqn2}) in Lemma \ref{twodecompositionlemma} and the estimate (\ref{sepeqn88}) in Lemma \ref{decompositionofderivatives}   and the decay estimate (\ref{noveqn935}) in Lemma \ref{sharplinftydecay}, we have 
\[
\sum_{|\alpha|+|\beta|=N_0}\|\omega_{\beta}^\alpha(x,v) \textit{h.o.t}_{\beta;1}^\alpha(t,x,v) \|_{L^2_{x,v}}  \lesssim \sum_{\rho, \iota\in \mathcal{S}, \gamma\in \mathcal{B}, |\rho| +|\gamma|\leq N_0, |\iota| \leq 1} \big( \|(1+|\tilde{d}(t,x,v)|) \p_t \phi^{\iota}(t,x+\hat{v}t)\|_{L^\infty_{x,v}}
\]
\be\label{jan31eqn131}
+ \|(1+|\tilde{d}(t,x,v)|) \nabla_x \phi^{\iota}(t,x+\hat{v}t)\|_{L^\infty_{x,v}} \big) \|\omega_{\rho}^\gamma( x,v)g_{\rho}^\gamma(t,x,v) \|_{L^2_{x,v}}\lesssim (1+|t|)^{-1} E_{\textup{high}}^{f }(t)E_{\textup{low}}^{\phi}(t).
\ee

 $\bullet$ \textbf{Case} $2$:   \quad  The estimate of $\textit{h.o.t}_{\beta;2}^\alpha(t,x,v) $. 

Recall the equation  (\ref{jan30eqn52})  and  the first decomposition of $D_v$ in (\ref{summaryoftwodecomposition}) in Lemma \ref{twodecompositionlemma}.  From the estimate   of coefficients in the estimate (\ref{jan15eqn2}) in Lemma \ref{twodecompositionlemma},  the estimate of coefficients in (\ref{jan31eqn91}) and    the decay estimate (\ref{noveqn935}) in Lemma \ref{sharplinftydecay}, we have
\[
\sum_{|\alpha|+|\beta|=N_0}\|\omega_{\beta}^\alpha(x,v) \textit{h.o.t}_{\beta;2}^\alpha(t,x,v) \|_{L^2_{x,v}}  \lesssim \sum_{\rho, \iota\in \mathcal{S}, \gamma\in \mathcal{B}, |\rho| +|\gamma|\leq N_0, |\iota| \leq 1}\big( \|(1+|\tilde{d}(t,x,v)|) \p_t \phi^{\iota}(t,x+\hat{v}t)\|_{L^\infty_{x,v}}
\]
\be\label{jan31eqn132}
+ \|(1+|\tilde{d}(t,x,v)|) \nabla_x \phi^{\iota}(t,x+\hat{v}t)\|_{L^\infty_{x,v}} \big) \|\omega_{\rho}^\gamma( x,v)g_{\rho}^\gamma(t,x,v) \|_{L^2_{x,v}}\lesssim (1+|t|)^{-1} E_{\textup{high}}^{f }(t)E_{\textup{low}}^{\phi}(t).
\ee

 $\bullet$ \textbf{Case} $3$:   \quad The estimate of $\textit{h.o.t}_{\beta;3}^\alpha(t,x,v) $. 

Recall (\ref{jan30eqn53}), the detailed formula of $Y^\beta_i$ in (\ref{sepeqn522}), and the detailed formula of $K_i(t,x,v), i\in\{1,\cdots, 7\},$ in (\ref{jan31eqn61}), (\ref{jan31eqn62}), and (\ref{jan31eqn63}).   From  the estimates of coefficients in  (\ref{sepeqn904}) and (\ref{jan31eqn110}) in Lemma \ref{summaryofhighordercommutation},  the $L^2_{x,v}-L^\infty_{x,v}$ type bilinear estimate and the   decay estimate (\ref{noveqn935}) in Lemma \ref{sharplinftydecay}, we have
\[
\sum_{|\alpha|+|\beta|=N_0}\|\omega_{\beta}^\alpha(x,v) \textit{h.o.t}_{\beta;3}^\alpha(t,x,v) \|_{L^2_{x,v}}   \lesssim \sum_{\rho, \iota\in \mathcal{S}, \gamma\in \mathcal{B}, |\rho| +|\gamma|\leq N_0, |\iota| \leq 1} \big( \|(1+|\tilde{d}(t,x,v)|) \p_t \phi^{\iota}(t,x+\hat{v}t)\|_{L^\infty_{x,v}}
\]
\be\label{jan31eqn133}
+ \|(1+|\tilde{d}(t,x,v)|) \nabla_x \phi^{\iota}(t,x+\hat{v}t)\|_{L^\infty_{x,v}} \big)  \|\omega_{\rho}^\gamma( x,v)g_{\rho}^\gamma(t,x,v) \|_{L^2_{x,v}}\lesssim (1+|t|)^{-1} E_{\textup{high}}^{f }(t)E_{\textup{low}}^{\phi}(t).
\ee

To sum up, our desired estimate (\ref{jan15eqn21}) holds from the estimates (\ref{jan31eqn131}), (\ref{jan31eqn132}), and (\ref{jan31eqn133}). With minor modifications, the desired estimate (\ref{may17eqn71}) holds very similarly. 
\end{proof}

\begin{lemma}\label{fixedtimeestimate2}
Under the bootstrap assumption \textup{(\ref{bootstrapassumption})}, 
the following estimate holds for any fixed  time $t\in [0,T]$, 
\be\label{jan15eqn91}
\sum_{\alpha\in \mathcal{B}, \beta\in \mathcal{S}, |\alpha|+|\beta|= N_0}\|\omega_{\beta}^\alpha(x,v)  \textit{l.o.t}_\beta^\alpha(t,x,v)  \|_{L^2_x L^2_v} \lesssim (1+|t|)^{-1+\delta} \epsilon_0.  
\ee
\be\label{may17eqn120}
\sum_{\alpha\in \mathcal{B}, \beta\in \mathcal{S}, |\alpha|+|\beta| < N_0}\|\omega_{\beta}^\alpha(x,v)  \textit{l.o.t}_\beta^\alpha(t,x,v)  \|_{L^2_x L^2_v} \lesssim (1+|t|)^{-1+\delta/2} \epsilon_0.  
\ee
\end{lemma}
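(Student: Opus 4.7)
The plan is to estimate each of the four constituents of $\textit{l.o.t}_\beta^\alpha(t,x,v)$ displayed in \eqref{jan31eqn151}--\eqref{jan31eqn155} separately, in each case pairing an $L^\infty_{x,v}$ bound for a wave factor with a weighted $L^2_{x,v}$ bound for a Vlasov factor. The common mechanism for ``canceling'' the polynomial $(1+|x|)^a(1+|v|)^b$ factors that emerge from commutators is the hierarchy built into the weight $\omega_\beta^\alpha$ in \eqref{highorderweight}: each time $|\alpha|+|\beta|$ decreases by one, the weight gains a factor comparable to $(1+|x|^2+(x\cdot v)^2+|v|^{20})^{10}$, which more than absorbs the $(1+|x|)(1+|v|)$ losses from Lemma \ref{summaryofhighordercommutation} and Lemma \ref{decompositionofderivatives}.

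For $\textit{l.o.t}_{\beta;1}^\alpha$ and $\textit{l.o.t}_{\beta;2}^\alpha$ (the commutator remainders, see \eqref{jan31eqn152}--\eqref{jan31eqn153}), I will place the coefficient $K_i(t,x+\hat{v}t,v)$ (or $\p_t\phi^\rho,\nabla_x\phi^\rho$ with $|\rho|\leq 1$) in $L^\infty_{x,v}$ and the remaining profile factor $\Lambda^\kappa g^\gamma$ in $L^2_{x,v}$. The commutator coefficients are of the form $\tilde{d}(t,x,v)\,\tilde{e}_{\beta,i}^{\kappa,1}(x,v)+\tilde{e}_{\beta,i}^{\kappa,2}(x,v)$ for $|\kappa|\leq|\beta|-1$, which by \eqref{sepeqn524} grow at most like $(1+|x|)^{|\beta|-|\kappa|+2}(1+|v|)^{|\beta|-|\kappa|+4}$. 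The ratio $\omega_\beta^\alpha/\omega_\kappa^\gamma$ with $|\gamma|+|\kappa|\leq|\alpha|+|\beta|-1$ absorbs these weights with room to spare, so the norm of the Vlasov factor is $\lesssim E_{\textup{high}}^{f;2}(t)$. For the $L^\infty$ factor I apply the sharp decay estimate \eqref{noveqn935}, which yields $|\p_t\phi|+|\nabla_x\phi| \lesssim (1+|t|)^{-1}(1+||t|-|x+\hat v t||)^{-1} E_{\textup{low}}^\phi(t)$; the extra cone decay $(1+||t|-|x+\hat v t||)^{-1}$ then absorbs the $(1+|\tilde d(t,x,v)|)$ factor by \eqref{nov43}. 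This yields an $L^\infty\times L^2$ bound of order $(1+|t|)^{-1} E_{\textup{low}}^\phi(t)E_{\textup{high}}^{f;2}(t)$, which under the bootstrap \eqref{bootstrapassumption} is $\lesssim (1+|t|)^{-1+\delta/2}\epsilon_0$, both for the top and sub-top strata.

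For $\textit{l.o.t}_{\beta;3}^\alpha$ (see \eqref{jan31eqn154}) the wave factor carries more than $11$ derivatives, so I invoke Lemma \ref{decompositionofderivatives} to expand $\Lambda^\rho(\phi^{\beta_i}(t,x+\hat v t))$ in terms of $\phi^{\iota}(t,x+\hat v t)$ with $|\iota|\leq|\rho|+|\beta_i|$; the coefficients $c_\rho^\iota(x,v)$ obey \eqref{sepeqn88}--\eqref{noveq781}, hence are absorbed by the weight hierarchy as above. Since $g^{\gamma_i}$ carries at most $N_0-12$ derivatives, I switch roles: the Vlasov factor is controlled in $L^\infty_{x,v}$ via Klainerman--Sobolev applied to its low-order energy (equivalently via the $L^2$ bound of $\Lambda^{\beta'} g$ with $|\beta'|\leq N_0-8$, which is $\lesssim E_{\textup{high}}^{f;2}(t)$), while the wave factor is placed in $L^2_x$ using $E_{\textup{high}}^\phi(t)\lesssim(1+|t|)^{\delta}\epsilon_0$. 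Combined with the linear decay $(1+|t|)^{-1}$ coming from the density-type $L^\infty$ estimate \eqref{densitydecay} on the Vlasov factor, this gives the required $(1+|t|)^{-1+\delta}\epsilon_0$ bound. For $\textit{l.o.t}_{\beta;4}^\alpha$ (see \eqref{jan31eqn155}), the wave factor has at most $11$ derivatives, so \eqref{noveqn935} still provides a clean $(1+|t|)^{-1}$-decay in $L^\infty_{x,v}$ (with the extra cone factor handling $\tilde d$), while the Vlasov factor carries strictly fewer than $|\alpha|+|\beta|$ derivatives and is bounded in $L^2_{x,v}$ by $E_{\textup{high}}^{f;2}(t)\lesssim(1+|t|)^{\delta/2}\epsilon_1$, closing the estimate.

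The main obstacle I anticipate is the bookkeeping in $\textit{l.o.t}_{\beta;1}^\alpha$: the precise use of the first/second decomposition of $D_v$ in \eqref{summaryoftwodecomposition} depending on whether $|v|\lesssim 1$ or $|v|\gg 1$ must be threaded through the commutator estimates so that the $(1+|v|)^{c_{\textup{vn}}(\beta)}$ factor in $\omega_\beta^\alpha$ matches up with the indices $c_{\textup{vn}}(\kappa)$ delivered by \eqref{sepeqn904}--\eqref{jan31eqn110}. The indices $c_{\textup{vn}}$ were set up for exactly this purpose, but verifying that no $|v|$-weight is lost in the worst subcase (when the commutator produces an $\Omega_i^x$ at large $|v|$) is the delicate part and will require invoking the improved estimate \eqref{jan31eqn110} whenever $\Lambda^\iota\thicksim\psi_{\geq 1}(|v|)\widehat S^v$. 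All the remaining steps are standard $L^\infty\times L^2$ pairings built on the decay estimate \eqref{noveqn935} and the bootstrap assumption \eqref{bootstrapassumption}.
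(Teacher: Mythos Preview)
Your treatment of $\textit{l.o.t}_{\beta;i}^\alpha$ for $i\in\{1,2,4\}$ matches the paper's Case~1 and is correct: put the wave factor (with $\leq 13$ derivatives) in $L^\infty_{x,v}$ via \eqref{noveqn935}, use the cone factor to kill $\tilde d$, and put the Vlasov factor in weighted $L^2_{x,v}$ with the hierarchy in $\omega_\beta^\alpha$ absorbing the commutator weights.

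The genuine gap is your handling of $\textit{l.o.t}_{\beta;3}^\alpha$. When $\phi^{\kappa_1}$ carries close to $N_0$ derivatives, neither the pointwise decay \eqref{noveqn935} nor any cone decay is available, and ``switching roles'' does not recover the missing $(1+|t|)^{-1}$. Concretely: placing the wave in $L^2_x$ gives only $\lesssim E_{\textup{high}}^\phi(t)\lesssim(1+|t|)^\delta\epsilon_1$, with no decay; and the density estimate \eqref{densitydecay} applies to $v$-averages $\int m(\xi,v)\widehat g\,dv$, not to the pointwise factor $g_\rho^{\kappa_2}(t,x,v)$ appearing here. You can extract at most $(1+|t|)^{-3/2}$ for the $L^\infty_xL^2_v$ norm of the Vlasov side by applying \eqref{densitydecay} to $|g_\rho^{\kappa_2}|^2$, but the rough bound $|\tilde d|\lesssim 1+|t|$ on the coefficients in \eqref{noveq130} (unavoidable since the cone factor is absent) eats one power back, and an unlocalized $\|\phi^{\kappa_1}\|_{L^2_x}$ bound neither decays nor sums over a dyadic decomposition. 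Your two sentences invoking Klainerman--Sobolev \emph{and} \eqref{densitydecay} on the same Vlasov factor are not compatible; neither by itself yields the required rate.

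What the paper actually does for $\textit{l.o.t}_{\beta;3}^\alpha$ is substantially more delicate and is flagged there as ``the main difficulty''. It dyadically localizes the wave factor at frequency $2^k$. For $2^k\lesssim|t|^{-1}$ it uses the $L^2_x\times L^\infty_xL^2_v$ pairing (which works only because the localized wave is small, cf.\ \eqref{may17eqn21} and \eqref{jan16eqn61}). For $2^k\gtrsim|t|^{-1}$ it invokes the modified-profile decomposition \eqref{feb1eqn1} to split $\phi^{\kappa_1}$ into a piece built from $\widetilde{h^{\kappa_1}}$ and a density-type correction $E_{\kappa_1;\eta}(g^\eta)$; the first is controlled by the trilinear estimate of Lemma~\ref{multilinearlemma1}, which exploits the smallness of the space-resonance set to gain $(1+|t|)^{-5}$, and the second (a Vlasov--Vlasov interaction) by Lemma~\ref{bilineardensitylemma}. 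Without this frequency splitting and the trilinear/bilinear machinery of subsection~\ref{multilinearestimate}, the estimate for $\textit{l.o.t}_{\beta;3}^\alpha$ does not close.
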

\begin{proof}

Since the case   $|t|\leq 1$ is trivial, it would be sufficient to consider  the case   $|t|\geq 1$. Recall the decomposition of $\textit{l.o.t}_{\beta}^\alpha(t,x,v)$ in (\ref{jan31eqn151}). Based on the total number of derivatives acting on the scalar field, we separate into two cases as follows.

 $\bullet$ \textbf{Case} $1$:  \quad  The estimate of $ \textit{l.o.t}_{\beta;i}^\alpha(t,x,v),$ $i\in \{1,2,4\}$.

Recall (\ref{jan31eqn152}), (\ref{jan31eqn153}), and (\ref{jan31eqn155}). Note that there are at most ten derivatives hit on the nonlinear wave part. Recall the commutation rule between $\Lambda^\beta$ and $X_i$ in  (\ref{noveq521}) and the equality (\ref{sepeqn610}). From the estimate of coefficients in (\ref{sepeqn524}), (\ref{sepeqn904}), (\ref{jan15eqn41}), (\ref{sepeqn88})  and (\ref{noveq781}), the following estimate holds from the linear decay estimate (\ref{noveqn935}) in Lemma \ref{sharplinftydecay} and the $L^2_{x,v}-L^\infty_{x,v}$ type bilinear estimate, 
\[
\sum_{i=1,2,4} \sum_{\alpha\in \mathcal{B}, \beta\in \mathcal{S}, |\alpha|+|\beta|=  N_0}\|\omega_{\beta}^\alpha(x,v)  \textit{l.o.t}_{\beta;i}^\alpha(t,x,v)  \|_{L^2_x L^2_v} \]
\[
\lesssim  \sum_{\rho, \gamma\in \mathcal{B}, \kappa\in \mathcal{S}, |\gamma|+|\kappa|\leq N_0, |\rho|\leq 13} \sum_{u\in\{\p_t \phi^\rho, \nabla   \phi^\rho\}} \| \omega_{\kappa}^\gamma(x,v)  g_{\kappa}^\gamma(t,x,v)\|_{L^2_{x,v}} \| (1+|\tilde{d}(t,x,v)|) u(t,x+\hat{v}t)\|_{L^\infty_{x,v}}\]
\be\label{jan16eqn71}
 \lesssim (1+|t|)^{-1} E_{\textup{high}}^{f}(t)E_{\textup{low}}^{\phi}(t)\lesssim (1+|t|)^{-1+\delta} \epsilon_1^2.
\ee

 $\bullet$ \textbf{Case} $2$:  \quad  The estimate of $ \textit{l.o.t}_{\beta;3}^\alpha(t,x,v)$.

 The main difficulty  in this case is caused by the scenario when the scalar field  has the maximal number of derivatives, for which neither the decay rate in time nor the decay with respect to the light cone of the scalar field is available. For this case, we will use both the decay estimate of the density type function and  the fact that the  space-resonance set is relatively small. This fact, which is high nontrivial and also is the starting point of our bilinear estimates in subsection \ref{multilinearestimate},  will be elaborated in subsection \ref{multilinearestimate}.

Recall (\ref{jan31eqn154}). From the equality (\ref{sepeqn610}) in Lemma \ref{decompositionofderivatives},  we have
\[
 \textit{l.o.t}_{\beta;3}^\alpha(t,x,v)= \sum_{
\begin{subarray}{c}
 \iota, \kappa\in \mathcal{S}, \beta_1, \gamma_1, \beta_2, \gamma_2\in \mathcal{B},\\ 
  |\rho|+|\beta_1|> 11,  |\beta_1|+|\gamma_1|\leq |\alpha|\\
 |\rho|+|\beta_2|> 11, |\beta_2|+|\gamma_2|\leq |\alpha|\\
 \end{subarray} } \Lambda^{\rho}\big( (\p_t\phi^{\beta_1} (t,x+\hat{v}t)   \]
\[
 +\hat{v}\cdot \nabla_x\phi^{\beta_1} (t,x+\hat{v}t) )  \big) \Lambda^{\kappa}\big( a_{\alpha;{\beta_1}, \gamma_1}^1(v) g^{\gamma_1}(t,x,v) + a_{\alpha;\beta_1, \gamma_1}^2(v) v\cdot D_v g^{\gamma_1}(t,x,v)\big)\]
\be\label{jan15eqn81}	
+   \Lambda^\rho \big( \big(  a_{\alpha;\beta_2, \gamma_2}^3(v)  \nabla_x \phi^{\beta_2} +   a_{\alpha;\beta_2, \gamma_2}^4(v)  \p_t \phi^{\beta_2}   \big)(t,x+\hat{v}t)  \cdot\alpha_i(v) \big) \Lambda^\kappa \big(   X_i g^{\gamma_2}(t,x,v)\big),
\ee

 From the equalities (\ref{noveq521}) and  (\ref{sepeqn522}) in Lemma \ref{summaryofhighordercommutation} and the first decomposition of $D_v$ in (\ref{summaryoftwodecomposition}) in Lemma \ref{twodecompositionlemma}, we have
\[
\Lambda^{\kappa}\big(X_i g^{\gamma_2}(t, x,v)\big)= \big[ \alpha_i(v)\cdot D_v
\circ \Lambda^{\kappa} + [\Lambda^\kappa, X_i] \big] g^{\gamma_2}(t,x,v)= \sum_{\rho\in \mathcal{K}, |\rho|=1} \alpha_i(v)\cdot d_{\rho}(t, x, v) \Lambda^{\rho\circ \kappa} g^{\gamma_2}(t,x,v) \]
\be\label{noveq102}
+ Y_i^\kappa g^{\gamma_2}(t,x,v) + \sum_{\kappa'\in \mathcal{S}, |\kappa'|\leq |\kappa|-1} \big[ \tilde{d}(t,x,v)\tilde{e}_{\kappa,i}^{\kappa', 1}(x,v) +\tilde{e}_{\kappa,i}^{\kappa', 2}(x,v)\big]  \Lambda^{\kappa'}g^{\gamma_2}(t, x,v). 
\ee
 
From (\ref{jan15eqn81}) and  (\ref{noveq102}), and the detailed formula of $d_{\rho}(t,x,v)$ in (\ref{sepeq947}),  we can rewrite ``$\textit{l.o.t}_{\beta;3}^\alpha(t, x, v)$'' as follows
\[
\textit{l.o.t}_{\beta;3}^\alpha(t, x, v)= \sum_{
\begin{subarray}{c}
\rho \in\mathcal{S}, \kappa_1, \kappa_2\in \mathcal{B},|\kappa_2|\leq |\alpha|, |\rho|\leq |\beta|, \\
|\rho|+|\kappa_1|+|\kappa_2|\leq |\alpha|+|\beta| \\
|\rho|+|\kappa_2|\leq  |\alpha|+|\beta|-12\\
\end{subarray}}\big[\big(\tilde{d}(t,x,v)\widehat{e}^1_{\kappa_1, \kappa_2,\rho}(t,x,v)  +\widehat{e}^2_{\kappa_1, \kappa_2,\rho}(t,x,v)   \big) \p_t \phi^{\kappa_1}(t, x+\hat{v}t)
\]
\be\label{noveq130}
+ \big(\tilde{d}(t,x,v)\widehat{e}^3_{\kappa_1, \kappa_2,\rho}(t,x,v)  +\widehat{e}^4_{\kappa_1, \kappa_2,\rho}(t,x,v)   \big)\cdot \nabla_x \phi^{\kappa_1}(t, x+\hat{v}t)\big] g_{\rho}^{\kappa_2}(t,x,v),
\ee
where the coefficients $\widehat{e}^i_{\kappa_1, \kappa_2,\rho}(t,x,v), i\in\{1,2,3,4\} $, satisfy the following rough estimate, 
\be\label{noveq220}
\sum_{i=1,\cdots, 4} | \widehat{e}^i_{\kappa_1, \kappa_2,\rho}(t,x,v) |\lesssim (1+|x|^2+|v|^2)^{|\alpha|+2|\beta|-2|\rho|-|\kappa_2|+5},
\ee
which can be derived from the estimate of coefficients in (\ref{jan31eqn91}),  the estimate (\ref{sepeqn88}) in Lemma \ref{decompositionofderivatives} and the estimates (\ref{sepeqn524}),  (\ref{sepeqn904}), and  (\ref{jan15eqn41})  in Lemma \ref{summaryofhighordercommutation}.

 Recall (\ref{noveq130}). We first do dyadic decomposition for the nonlinear wave part. More precisely, the following decomposition holds, 
\be\label{jan16eqn67}
\textit{l.o.t}_{\beta;3 }^\alpha(t,x,v)=\sum_{k\in \mathbb{Z}} H_k(t,x,v),
\ee
where
\[
 H_k(t,x,v):= \sum_{
\begin{subarray}{c}
\rho \in\mathcal{S}, \kappa_1, \kappa_2\in \mathcal{B},|\kappa_2|\leq |\alpha|, |\rho|\leq |\beta|, \\
|\rho|+|\kappa_1|+|\kappa_2|\leq |\alpha|+|\beta| \\
|\rho|+|\kappa_2|\leq  |\alpha|+|\beta|-12\\
\end{subarray}}\big[\big(\tilde{d}(t,x,v)\widehat{e}^1_{\kappa_1, \kappa_2,\rho}(t,x,v)  +\widehat{e}^2_{\kappa_1, \kappa_2,\rho}(t,x,v)   \big) (\p_t\phi^{\kappa_1})_k (t, x+\hat{v}t)
\]
\[
+ \big(\tilde{d}(t,x,v)\widehat{e}^3_{\kappa_1, \kappa_2,\rho}(t,x,v)  +\widehat{e}^4_{\kappa_1, \kappa_2,\rho}(t,x,v)   \big)  \big(\nabla\phi^{\kappa_1}\big)_k (t, x+\hat{v}t)\big] g_{\rho}^{\kappa_2}(t,x,v).
\]

Based on the possible size of $k$,  we separate into the low frequency case and the high frequency case as follows. 

\noindent $\bullet$\quad If $k\leq 0$.

  Recall the decomposition (\ref{feb1eqn41}).   From the estimate (\ref{may17eqn12}) in Lemma \ref{Alinearestimate}, the estimate of correction term in (\ref{may17eqn51})  and the $L^\infty_\xi$-type estimate of the modified profile in (\ref{octe741}), we have
\be\label{may17eqn21}
\sum_{\kappa\in \mathcal{B}, |\kappa|\leq N_0}\|P_k[\p_t\phi^{\kappa}]\|_{L^2} + \|P_k[\nabla_x\phi^{\kappa}]\|_{L^2}\lesssim 2^{k/2} \epsilon_1 + |t|2^{3k/2}\epsilon_1. 
\ee
From   the estimate of coefficients in (\ref{noveq220}), after  using the $L^2_x-L^\infty_x L^2_v$ type estimate, the estimate (\ref{may17eqn21}), and the decay estimate (\ref{densitydecay}) in Lemma \ref{decayestimateofdensity}, the following estimate holds if $2^{k}\leq |t|^{-1}$,
  \be\label{jan16eqn61}
\|\omega_{\beta}^\alpha(x,v)H_k(t,x,v) \|_{L^2_xL^2_v}\lesssim \big(2^{k/2}  + |t|2^{3k/2} \big)\epsilon_1(1+|t|)^{-1/2} E_{\textup{high}}^{f;2}(t)\lesssim  |t|^{-1/2+\delta/2 }2^{k/2}\big(1  + |t|  2^{ k }\big)\epsilon_0.
 \ee
Note that, we estimate   the inhomogeneous modulation $\tilde{d}(t,x,v)$ roughly from the above by $1+|t|$ in the above estimate. 

If $ |t|^{-1} \leq 2^{k}\leq 1$, from the decompositions in (\ref{feb1eqn1}),  the following decomposition holds for $H_k$,
\be\label{jan16eqn69}
H_k(t,x,v)= H_k^1(t,x,v) + H_k^2(t,x,v), 
\ee
where
\[
 H_k^1(t,x,v) := \sum_{
\begin{subarray}{c}
\rho \in\mathcal{S}, \kappa_1, \kappa_2\in \mathcal{B},|\kappa_2|\leq |\alpha|, |\rho|\leq |\beta|, \\
|\rho|+|\kappa_1|+|\kappa_2|\leq |\alpha|+|\beta| \\
|\rho|+|\kappa_2|\leq  |\alpha|+|\beta|-12\\
\end{subarray}}\big[\big(\tilde{d}(t,x,v)\widehat{e}^1_{\kappa_1, \kappa_2,\rho}(t,x,v)  +\widehat{e}^2_{\kappa_1, \kappa_2,\rho}(t,x,v)   \big)  { \widetilde{(\p_t \phi^{\kappa_1})_{k} } }(t, x+\hat{v}t)
\]
\be\label{2020feb13eqn1}
+ \big(\tilde{d}(t,x,v)\widehat{e}^3_{\kappa_1, \kappa_2,\rho}(t,x,v)  +\widehat{e}^4_{\kappa_1, \kappa_2,\rho}(t,x,v)   \big)\cdot \big(\nabla_x\widetilde{\phi^{\kappa_1}}\big)_k (t, x+\hat{v}t)\big] g_{\rho}^{\kappa_2}(t,x,v),
\ee
\[
 H_k^2(t,x,v) := \sum_{
\begin{subarray}{c}
\rho \in\mathcal{S}, \kappa_1, \kappa_2, \eta\in \mathcal{B},|\kappa_2|\leq |\alpha|, |\rho|\leq |\beta|  \\
|\rho|+|\kappa_1|+|\kappa_2|\leq |\alpha|+|\beta|, |\eta|\leq |\kappa_1| \\
|\rho|+|\kappa_2|\leq  |\alpha|+|\beta|-12\\
\end{subarray}}\big[\big(\tilde{d}(t,x,v)\widehat{e}^1_{\kappa_1, \kappa_2,\rho}(t,x,v)  +\widehat{e}^2_{\kappa_1, \kappa_2,\rho}(t,x,v)   \big)\textup{Re}[ E_{\kappa_1;\eta} (g^{\eta}_k)(t, x+\hat{v}t)]
\]
\be\label{noveq291}
- \big(\tilde{d}(t,x,v)\widehat{e}^3_{\kappa_1, \kappa_2,\rho}(t,x,v)  +\widehat{e}^4_{\kappa_1, \kappa_2,\rho}(t,x,v)   \big)\cdot R\big( \textup{Im}\big( E_{\kappa_1;\eta}(g^{\eta}_k)(t, x+\hat{v}t)\big)\big) \big] g_{\rho}^{\kappa_2}(t,x,v).
\ee
From the estimate of coefficients in (\ref{noveq220}), we have
\be\label{feb12eqn151}
 \| \omega_{\beta}^\alpha(x,v)H_k^1(t,x,v) \|_{L^2_xL^2_v}^2  \lesssim \sum_{\begin{subarray}{c}
\kappa, \gamma\in \mathcal{B},|\kappa|\leq N_0,|\gamma|\leq |\alpha|\\
  |\rho|+|\gamma|\leq N_0-12,  \\
 \end{subarray}} \sum_{u\in \{\widetilde{\p_t \phi^\kappa}, \nabla_x \widetilde{\phi^\kappa} \}}  (1+t)^2    \int_{\R^3} \int_{\R^3}  G_{\rho }^{\gamma}(t,x,v) | u_{k }(t,x+\hat{v}t) |^2   d x d v, 
\ee
where $ G_{\rho }^{\gamma}(t,x,v)$ is some determined function that satisfies the following estimate, 
\[
 |G_{\rho}^{\gamma}(t,x,v)|\lesssim |\omega_{\beta}^\alpha (x,v) (1+|x|^2+|v|^2)^{|\alpha|+2|\beta|-2|\rho|-|\gamma|+5} g_{\rho}^\gamma(t,x,v)|^2 .
\]
Recall (\ref{feb1eqn10}). From the estimate (\ref{feb12eqn151}),  the  multilinear estimate (\ref{multilinear1jan16}) in Lemma \ref{multilinearlemma1}, the estimate of modified profiles $\widehat{\widetilde{h}^\alpha}(t,\xi)$ in (\ref{octe741}) and (\ref{octe742}), and the hierarchy between the different order of weight functions, we have
\[
 \|\omega_{\beta}^\alpha(x,v)H_k^1(t,x,v) \|_{L^2_xL^2_v}\lesssim \sum_{|\rho|+|\kappa|\leq N_0- 7}   \| (1+|x|^2+|v|^2)^{|\alpha|+2|\beta|-2|\rho|-|\gamma|+22}   {\omega_{\beta}^\alpha}(x  ,v) g_{\rho}^\kappa(t,x ,v)\|_{L^2_x L^2_v} 
\]
\be\label{jan16eqn62}
\times  t^{-3/2}  2^{-k_{}/2}\big(\epsilon_0  + |t|^{2\delta} 2^{k_{-}}\epsilon_0 \big)   
 \lesssim |t|^{-3/2+\delta/2} 2^{-k/2} \epsilon_1^2 + |t|^{-3/2+3\delta} 2^{-k/2+k_{-}} \epsilon_1^2.
\ee

It remains to  estimate   $H_k^2(t,x,v)$. Recall (\ref{noveq291}),   (\ref{noveq510}), and  (\ref{feb1eqn1}). Note that the terms inside $H_k^2(t,x,v)$ have the same structure as the bilinear form that we will define in (\ref{noveq260}).  From the estimate of coefficients in (\ref{noveq220}) and the bilinear estimate (\ref{bilineardensity}) in Lemma \ref{bilineardensitylemma} and the estimate of correction term in (\ref{may17eqn51}), we have
\be\label{jan16eqn64}
 \|\omega_{\beta}^\alpha(x,v)H_k^2(t,x,v) \|_{L^2_xL^2_v} \lesssim \big(|t|^{-1+\delta/2}+ |t|^{-2+\delta/2}2^{-k} + |t|^{-2+2\delta}2^{k } \big)\epsilon_0. 
 \ee
To sum up, from the decomposition  (\ref{jan16eqn69}) and the  estimates (\ref{jan16eqn61}), (\ref{jan16eqn62}), and (\ref{jan16eqn64}), we have
\[
\sum_{k\in \mathbb{Z}, k\leq 0} \|\omega_{\beta}^\alpha(x,v)H_k (t,x,v) \|_{L^2_xL^2_v} \lesssim  \big[\big(\sum_{  2^{k}\leq  |t| ^{-1 }}   |t|^{-1/2+\delta/2 }2^{k/2}\big(1  + |t|  2^{ k }\big)\epsilon_0 \big) + \big( \sum_{  |t|^{-1}\leq 2^k\leq 2}       \big(|t|^{-1+\delta/2}
\]
 \be\label{jan16eqn72}
+|t|^{-3/2+\delta/2} 2^{-k/2} + |t|^{-2+\delta/2}2^{-k} + |t|^{-3/2+2\delta}2^{k/2} \big)\epsilon_0 \big]     \lesssim  (1+t)^{-1+\delta/2}\log(1+|t|) \epsilon_0 .
\ee 

\noindent $\bullet$\quad If $k\geq 0$.

 From the estimate of coefficients in (\ref{noveq220}) and the bilinear estimate (\ref{bilineardensitylargek}) in Lemma \ref{bilineardensitylemma}, we have 
\be\label{feb12eqn161}
\sum_{k\geq 0, k\in \mathbb{Z}} \|\omega_{\beta}^\alpha(x,v)H_k^2(t,x,v) \|_{L^2_xL^2_v} \lesssim \sum_{k\geq 0 } (1+t)^{-2 +2\delta } 2^{-k} \epsilon_1^2\lesssim (1+|t|)^{  -2 +2\delta }\epsilon_0.
\ee
Hence, it would be sufficient to estimate ``$H_k^1(t,x,v)$''. Similar to the strategy used in  deriving  the  estimate (\ref{feb12eqn151}), we  have 
  \be\label{feb12eqn201}
 \|\sum_{k\in \mathbb{Z}, k\geq 0}\omega_{\beta}^\alpha(x,v)H_k^1(t,x,v) \|_{L^2_xL^2_v}^2 \lesssim   
 \sum_{ k_1, k_2\in \mathbb{Z},   k_1, k_2\geq 0  }  (1+t)^2  K_{k_1,k_2},   
  \ee
  where
\be\label{jan16eqn191}
  K_{k_1,k_2}:= \sum_{\begin{subarray}{c}
\kappa, \gamma\in \mathcal{B},|\kappa|\leq N_0\\
  |\rho|+|\gamma|\leq N_0-12,  |\gamma|\leq |\alpha|\\
 \end{subarray}}  \sum_{u, v\in \{\widetilde{\p_t \phi^\kappa}, \nabla_x \widetilde{\phi^\kappa} \}}   \Big| \int_{\R^3} \int_{\R^3}  G_{\rho;u}^{\gamma;v}(t,x,v)      u_{k_1}(t,x+\hat{v}t) v_{k_2}(t,x+\hat{v}t) d x d v \Big|,
\ee
where $ G_{\rho;u}^{\gamma;v}(t,x,v) $, $u, v\in \{\widetilde{\p_t \phi^\kappa}, \nabla_x \widetilde{\phi^\kappa} \}$, are some determined function that satisfies the following estimate, 
\be\label{feb13eqn1}
\sum_{
\begin{subarray}{l}
|a|+|b|\leq 5\\
u, v\in \{\widetilde{\p_t \phi^\kappa}, \nabla_x \widetilde{\phi^\kappa} \}\\ 
\end{subarray}} |\nabla_x^a \nabla_v^b G_{\rho;u}^{\gamma;v}(t,x,v) |\lesssim \sum_{|\iota|+|\kappa|\leq |\rho|+|\gamma|+5} |\omega_{\beta}^\alpha (x,v) (1+|x|^2+|v|^2)^{|\alpha|+2|\beta|-2|\iota|-|\kappa|+10} g_{\iota}^{\kappa}(t,x,v)|^2.
\ee

We first consider the case when $|k_1-k_2|\geq 5$. Recall (\ref{jan16eqn191}). From the orthogonality on the Fourier side, we know  that the frequency of ``$G_{\rho;i}^{\gamma}(t,x,v)$'' is localized around $2^{\max\{k_1,k_2\}}$ when $|k_1-k_2|\geq 5$. Hence,   from the above estimate (\ref{feb13eqn1}), and  the trilinear estimate (\ref{trilinearfeb20est1}) in Lemma \ref{multilinearlemma1}, we have 
\[
\sum_{
\begin{subarray}{c}
k, k_1,k_2\in \mathbb{Z}, 
 k_1,k_2\geq 0\\
 |k_1-k_2|\geq 5\\
 \end{subarray} } | K_{k_1,k_2}| 
\lesssim  
\sum_{\begin{subarray}{c}
i=1,2,3,|a|\leq 4\\
\rho\in\mathcal{S}, \gamma\in \mathcal{B},   |\gamma|\leq |\alpha|,
  |\rho|+|\gamma|\leq N_0-12\\
 \end{subarray}} \sum_{
\begin{subarray}{c}
k, k_1,k_2\in \mathbb{Z}, 
 k_1,k_2\geq 0\\
 |k_1-k_2|\geq 5\\
 \end{subarray} }
2^{-\max\{k_1k_2\}}  |t|^{-5+2\delta } \epsilon_1^2
\]
\be\label{feb13eqn9}
 \times \|(1+|x|^2)(1+|v|^{25}) \nabla_x \nabla_{v}^a G_{\rho; i}^{\gamma}(t,x,v) \|_{L^1_{x,v}}  \lesssim |t|^{-5+4\delta}\epsilon_0^2. \ee

Lastly, we consider the case when $|k_1-k_2|\leq 5$. Recall (\ref{jan16eqn191}). Again, from   the   estimate (\ref{feb13eqn1}),  the trilinear estimate (\ref{multilinear1jan16}) in Lemma \ref{multilinearlemma1}, and the Cauchy-Schwarz inequality,   we know that the following  estimate holds, 
\[
\sum_{
\begin{subarray}{c}
k, k_1,k_2\in \mathbb{Z}\\
 k_1,k_2\geq 0\\
 |k_1-k_2|\leq 5\\
 \end{subarray} } |K_{k_1,k_2}| \lesssim 
\sum_{
\begin{subarray}{c}
k_1,k_2\in \mathbb{Z}\\  k_1,k_2\geq 0\\ 
|k_1-k_2|\leq 5 \\ 
\end{subarray}}
\sum_{\begin{subarray}{c}
i=1,2,3,|a|\leq 4\\
\rho\in\mathcal{S}, \gamma\in \mathcal{B},   |\gamma|\leq |\alpha|\\
  |\rho|+|\gamma|\leq N_0-12\\
 \end{subarray}} (1+|t|)^{-5} \|(1+|x|^2)(1+|v|^{25})  \nabla_{v}^\alpha  G_{\rho; i}^{\gamma}(t,x,v)  \|_{L^1_{x,v}}
\]
\[
\times \big(2^{-k_{1}/2} E_{\textup{high}}^{\phi}(t) +\sum_{\iota\in \mathcal{B}, |\iota|\leq N_0} \| \widehat{h^\iota}(t, \xi)\psi_{k_1}(\xi)\|_{L^2} )   \big(2^{-k_{2}/2} E_{\textup{high}}^{\phi}(t) +\sum_{\iota\in \mathcal{B}, |\iota|\leq N_0} \| \widehat{h^\iota}(t, \xi)\psi_{k_2}(\xi)\|_{L^2} ) 
\]
\be\label{feb13eqn10}
  \lesssim    |t|^{-5}     \big(E_{\textup{high}}^{\phi}(t)E_{\textup{high}}^{f}(t)\big)^2\lesssim  |t|^{-5+4\delta} \epsilon_0^2.
\ee
From   the estimates (\ref{feb12eqn201}), (\ref{feb13eqn9}), and (\ref{feb13eqn10}), we have
\be\label{feb13eqn19}
   \|\sum_{k\in \mathbb{Z}, k\geq 0}\omega_{\beta}^\alpha(x,v)H_k^1(t,x,v) \|_{L^2_xL^2_v} \lesssim   |t|^{-3/2+2\delta} \epsilon_0. 
\ee

To sum up,  recall the decompositions (\ref{jan16eqn67}) and (\ref{jan16eqn69}), the following estimate holds   from the estimates (\ref{jan16eqn72}), (\ref{feb12eqn161}), and (\ref{feb13eqn19}),
\be\label{feb13eqn24}
\sum_{\alpha\in\mathcal{B}, \beta\in \mathcal{S}, |\alpha|+|\beta|=  N_0} \| \omega_{\beta}^\alpha(x,v) \textit{l.o.t}_{\beta;3}^\alpha(t,x,v) \|_{L^2_{x,v}}\lesssim (1+|t|)^{-1+\delta} \epsilon_0. 
\ee
Therefore, our desired estimate    (\ref{jan15eqn91}) holds   from the estimates (\ref{jan16eqn71})  and  the above estimate  (\ref{feb13eqn24}).  

Since the correction term $ \widetilde{g}_{\alpha,\gamma}(t, v)$ defined in (\ref{correctionterm2}), which contributes the logarithmic growth in the estimate (\ref{jan16eqn72}),  equals zero if $|\alpha|< N_0$, with minor modifications in the above argument, our desired estimate (\ref{may17eqn120}) holds  similarly. 
\end{proof}
 
 \subsection{The low order energy estimate for the Vlasov part}\label{lowordervlasovsec}

In this subsection, as summarized in Proposition \ref{loworderenergy},  we show that the  low order energy $E_{\textup{low}}^f(t)$ will be remained bounded over time. 

The main ideas of proving this fact are same as the one we used in a relatively simpler system, Vlasov-Poisson system, in \cite{wang3}. For the sake of readers, we explain concisely main ideas behind as follows: (i) The main obstruction of  preventing the nonlinearities of the Vlasov-Nordstr\"om system decay faster comes from the low frequency part of the scalar field, which, intuitively speaking,  of size $1/t$.  (ii) Note that, the frequency of the Vlasov part is exactly same as the frequency of the scalar field since the output frequency is zero in the definition of low order energy, see (\ref{octeqn1896}). Due to the presence of spatial derivative in the bulk term $t\nabla_v \hat{v}\cdot \nabla_x g(t,x,v)$, in the aforementioned worst case,  the decay rate is compensated by the spatial derivative.

 \begin{proposition}\label{loworderenergy}
Under the bootstrap assumption \textup{(\ref{bootstrapassumption})},  the following estimate holds for any $t\in[0,T]$, 
\be\label{feb20eqn10}
\big| E_{\textup{low}}^{f}(t  ) \big| \lesssim \epsilon_0 +\int_0^t (1+s)^{-3/2+3\delta} \epsilon_1^2 d s\lesssim \epsilon_0.
\ee
\end{proposition}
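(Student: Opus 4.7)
The plan is to prove the estimate by differentiating $E_{\textup{low}}^{f}(t)$ with respect to time and showing that the pointwise time-derivative of each summand, weighted by $\widetilde{\omega_\gamma^\alpha}(v)$ in $L^2_v$, decays at rate $(1+s)^{-3/2+3\delta}\epsilon_1^2$; an integration in time then yields the stated bound. By the fundamental theorem of calculus, it suffices to control
\[
\p_t \big[\widetilde{\omega_\gamma^\alpha}(v)\big(\nabla_v^\alpha \widehat{g^\gamma}(t,0,v) - \nabla_v\cdot \widetilde{g}_{\alpha,\gamma}(t,v)\big)\big]
=\widetilde{\omega_\gamma^\alpha}(v)\Big(\int_{\R^3}\nabla_v^\alpha \p_t g^\gamma(t,x,v)\,dx - \nabla_v\cdot\!\!\int_{\R^3} K(t,x+\hat v t,v)\nabla_v^\alpha g^\gamma(t,x,v)\,dx\Big)
\]
in $L^2_v$, where the second term is present only when $|\alpha|+|\gamma|=N_0$. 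The expansion of $\p_t g^\gamma$ from \textup{(\ref{jan30eqn1})} consists of the main bulk $K(t,x+\hat v t,v)\cdot D_v g^\gamma$ and lower-order terms of type $(\p_t\phi^\beta + \hat v\cdot\nabla_x\phi^\beta)$ and $\nabla_x\phi^\beta\cdot\nabla_v$ applied to $g^\delta$ with $|\beta|+|\delta|\leq |\gamma|$ and $|\beta|\geq 1$.

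The crucial step is the top-order case $|\alpha|+|\gamma|=N_0$, where the correction term $\widetilde{g}_{\alpha,\gamma}$ is tailor-made to absorb the only irreducible obstruction. The worry is that $D_v$ contains the factor $-t\nabla_v\hat v\cdot\nabla_x$, producing a term growing linearly in $t$. I would compute $\int K\cdot D_v(\nabla_v^\alpha g^\gamma)\,dx$ by first integrating by parts in $x$ against the $t\nabla_v\hat v\cdot\nabla_x$ piece, which produces $t(\nabla_v\hat v)\cdot\nabla_x K\cdot \nabla_v^\alpha g^\gamma$; then I would integrate by parts in $v$ against the $\nabla_v$ piece and observe that one of the resulting terms, $t(\nabla_v\hat v)(\nabla_x K)$, comes from $\nabla_v$ hitting the second argument $x+\hat v t$ of $K$ and exactly cancels the earlier $t$-growing contribution. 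What remains from this computation is precisely $\nabla_v\cdot\int K\,\nabla_v^\alpha g^\gamma\,dx$ — which is cancelled by $\nabla_v\cdot\p_t\widetilde{g}_{\alpha,\gamma}$ — together with the benign piece $\int (\nabla_v K)\,\nabla_v^\alpha g^\gamma\,dx$ (where $\nabla_v K$ means the explicit $v$-derivative) and commutator terms $\int K\cdot[\nabla_v^\alpha,D_v] g^\gamma dx$. The commutators contain a factor $t\nabla_v^{a}\nabla_v\hat v\cdot \nabla_x$, but the extra $\nabla_x$ can be absorbed by pulling it out as a frequency factor in the Fourier-side analysis.

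The remaining terms (both from the top-order cancellation residue and all lower-order terms) are estimated via the density decay in Lemma \ref{decayestimateofdensity} combined with the sharp decay of the scalar field obtained from $E_{\textup{low}}^\phi$ and $E_{\textup{high}}^\phi$. Since we integrate in $x$ against an oscillating phase $e^{is\hat v\cdot\eta}$, the $L^\infty_\xi$-type information on $\widehat{g^\delta}(s,\eta,v)$ near $\eta=0$ combines with the $L^\infty_x$ or dyadic $L^2_x$ control of $\phi^\beta$ to produce an $s^{-3/2+}$ factor. For the low-order case $|\alpha|+|\gamma|<N_0$, no correction is needed because the extra available derivatives can be placed on $g^\delta$, allowing the bulk $K\cdot D_v g^\delta$ term to be estimated directly using $E_{\textup{high}}^{f;2}$ and the sharp decay $\|K\|_{L^\infty}\lesssim (1+s)^{-1}$, followed by dyadic decomposition in the frequency of $\phi$ to gain the additional half-power.

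The main obstacle will be closing the sharp $(1+s)^{-3/2+3\delta}$ decay in the low-frequency regime of $\phi$, where $\|\phi\|_{L^\infty}\sim (1+s)^{-1}$ alone is insufficient. As pointed out in \textup{\ref{proofofmaintheorem}}, because the output frequency is $\xi=0$, the input frequencies of $\phi$ and $g$ are matched; the bulk derivative $D_v$ then contributes $t\eta$ at frequency $\eta$, which is problematic only when $t|\eta|\gg 1$, but in that regime the stationary-phase-type integration by parts in $\eta$ (as already used in Lemma \ref{decayestimateofdensity}) produces the required extra decay. Putting these pieces together, the integrand is bounded by $(1+s)^{-3/2+3\delta}\epsilon_1^2$, and integrating in $s$ yields the $\epsilon_0$ bound.
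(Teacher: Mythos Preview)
Your cancellation of the single top-order piece $\int K\cdot D_v(\nabla_v^\alpha g^\gamma)\,dx$ against $\nabla_v\cdot\p_t\widetilde g_{\alpha,\gamma}$ is correct, and the commutator $K\cdot[\nabla_v^\alpha,D_v]g^\gamma$ indeed carries only a single factor $t\nabla_x$. But the Leibniz expansion of $\nabla_v^\alpha\big(K\cdot D_v g^\gamma\big)$ also produces the cross terms
\[
\sum_{|\beta_1|\ge 1}\big(\nabla_v^{\beta_1}K\big)(t,x+\hat v t,v)\cdot \nabla_v^{\beta_2}\big(D_v g^\gamma\big),
\qquad \beta_1+\beta_2=\alpha,
\]
and each $\nabla_v$ hitting $K(t,x+\hat v t,v)$ through the implicit argument generates $t\nabla_v\hat v\cdot\nabla_x$, so $\nabla_v^{\beta_1}K$ contains contributions of size $t^{|\beta_1|}\nabla_x^{|\beta_1|}\phi$. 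At output frequency $\xi=0$ this becomes $(t|\eta|)^{|\beta_1|}$, and for large $|\beta_1|$ (up to $N_0$) your proposed ``stationary-phase in $\eta$'' would require $|\beta_1|$ derivatives $\nabla_\eta$ landing on $\widehat{h^\kappa}$ --- but $E^\phi_{\textup{high}}$ only controls one $\nabla_\xi$ on the high-regularity profiles. Routing the $\nabla_\eta$'s onto $\widehat g$ instead (or doing IBP in $v$) forces $|\beta_1|$ extra $v$-derivatives on a term that already carries $|\beta_2|+1$ of them, overshooting $N_0$. Also, Lemma~\ref{decayestimateofdensity} concerns $v$-averages at a spatial point, not $x$-integrals at fixed $v$, so it is not the right tool here.

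The paper closes exactly this gap by a physical-space light-cone trick you have not used: writing
\[
\nabla_v^{\beta}\big[f(t,x+\hat v t)\big]=\sum_{|\gamma|\le |\beta|} t^{|\gamma|}\,||t|-|x+\hat v t||^{-|\gamma|}\,C_{\beta,\gamma}(x,v)\,\big(\Gamma^{\gamma}f\big)(t,x+\hat v t),
\]
via the identity $(t^2-|y|^2)\nabla_y=\sum c_\alpha(t,y)\Gamma^\alpha$ of Lemma~\ref{somebasicidentity} applied at $y=x+\hat v t$. One then splits according to the cutoff $\psi_{\le -10}\big(|x|(1+|v|^2)/|t|\big)$: in the region $|x|(1+|v|^2)\ll|t|$ one has $||t|-|x+\hat v t||\gtrsim |t|(1+|v|^2)^{-1}$, so all powers $t^{|\gamma|}$ are absorbed and one is left with $\Gamma^{\gamma}\phi$, $|\gamma|\le N_0$, controlled by $E^\phi_{\textup{high}}$; in the complementary region $|x|(1+|v|^2)\gtrsim|t|$ the large $x$-weight in $\omega^\alpha_\beta$ absorbs $t^{|\gamma|}$ directly. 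Only after this reduction does one dyadically localize $\phi$ and integrate by parts in $x$ to handle the \emph{single} remaining factor $t\nabla_x$ from $D_v$. Your outline needs this step (or an equivalent mechanism) to control the $|\beta_1|\ge 2$ Leibniz terms.
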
 
\begin{proof}
Recall the definition of the low order energy $E_{\textup{low}}^f(t)$ in   (\ref{octeqn1896}),  the definition of correction term in (\ref{correctionterm2}),   the equation satisfied by $ {g}(t, x, v)$ in  (\ref{eqnn5}) and the equation satisfied by $g^{\gamma}(t,x,v)$ in (\ref{jan30eqn1}). For the sake of simplicity in notation, we focus on the case   $|\gamma|=0$, i.e., $\gamma=Id$. With minor modification, we can show  general case with $\gamma\in \mathcal{B}, |\gamma|\leq N_0$ similarly.

Let $\widetilde{\omega_{ }^\alpha}(v):=\widetilde{\omega_{Id}^\alpha}(v)$. Note that,  the following equality holds for any $\alpha$ s.t., $|\alpha|\leq N_0$,
\[
\p_t \big( \nabla^\alpha   \widehat{g^\gamma}(t,0 , v) - \nabla_v\cdot g_{\alpha,\gamma}(t,v)\big) = \sum_{\beta_1 + \gamma_2=\alpha} \int_{\R^3} 4 \nabla_{v}^{\beta_1}\big((\p_t +\hat{v}\cdot \nabla_x) \phi(t,x+\hat{v}t)\big)     \nabla_{v}^{\gamma_1} g(t,x,v) \]
\[
+\sum_{\beta_2 + \gamma_2=\alpha, |\gamma_2|\leq N_0-1} \nabla_v^{\beta_2} (\p_t \phi(t, x+\hat{v}t))\big(c_{\beta_2, \gamma_2}^{\alpha, 1}(v) \nabla_v^{\gamma_2 + 1} g(t, x,v) - t c_{\beta_2, \gamma_2}^{\alpha, 2}  \nabla_x\nabla_v^{\gamma_2  }  g(t, x,v)\big)
\]
\be\label{may27eqn301}
+\nabla_v^{\beta_2} (\nabla_x \phi(t, x+\hat{v}t))\big(c_{\beta_2, \gamma_2}^{\alpha, 3}(v) \nabla_v^{\gamma_2 + 1} g(t, x,v) - t c_{\beta_2, \gamma_2}^{\alpha, 4}(v) \nabla_x \nabla_v^{\gamma_2  } g(t, x,v)\big) d x,
\ee
where $c_{\beta_2, \gamma_2}^{\alpha,i}(v), i\in\{1,2,3,4\},$ are some uniquely determined coefficients that satisfy the following estimate, 
\be\label{may16eqn91}
 |c_{\beta_2, \gamma_2}^{\alpha, i}(v) |\lesssim (1+|v|), \quad \textup{for any\,\,}  i \in\{1,2,3,4\}. 
\ee 

Since our desired estimate (\ref{feb20eqn10}) holds straightforwardly for 	the case   $|t|\leq 1$, we focus on the case when $|t|\geq 1$. 
 Note that, after using the equality (\ref{sepeqn190}) repeatedly, the following equality holds,
 \be\label{may16eqn21}
\nabla_v^\beta f(t,x+\hat{v}t)= \sum_{|\gamma|\leq |\beta|} t^{\gamma} c_{\beta, \gamma}(v)\nabla_x^\gamma f(t,x+\hat{v}t)= \sum_{\gamma\in \mathcal{B}, |\gamma|\leq |\beta|} t^{\gamma} ||t|-|x+\hat{v}t||^{-\gamma} C_{\beta, \gamma}( x,v ) \Gamma^\gamma f(t,x+\hat{v}t),
 \ee
where the uniquely determined coefficients $C_{\beta,\gamma}(x,v)$ satisfy the following estimate, 
\be\label{may16eqn92}
|C_{\beta, \gamma}(x,v)|\lesssim 1, \quad |\nabla_x C_{\beta, \gamma}(x,v)|\lesssim 1/(|t|+|x+\hat{v}t|). 
\ee

Note that we have   $||t|-|x+\hat{v} t ||\gtrsim |t|(1+|v|^2)^{-1}$ if $|x|(1+|v|^2) \leq 2^{-10}|t|$. This fact motivates us to decompose (\ref{may27eqn301}) into two parts as follows, 
\be\label{may16eqn151}
 \p_t \big( \nabla^\alpha   \widehat{g}(t,0 , v) - \nabla_v\cdot g_{\alpha}(t,v)\big)= \sum_{\kappa\in \mathcal{B}, |\kappa|, |\gamma_1|+|\gamma_2|\leq N_0, |\gamma_2|\leq N_0-1} H_{\gamma_1, \gamma_2}^{\alpha, \kappa}(t,v) + K_{\gamma_1, \gamma_2}^{\alpha, \kappa}(t,v), 
\ee
where
\[
H_{\gamma_1, \gamma_2}^{\alpha, \kappa}(t,v) = \int_{\R^3} \psi_{> -10}(|x|(1+|v|^2)/(1+|t|)) |t|^{\gamma_1}\big[ \nabla_x^{\gamma_1}\p_t \phi(t,x+\hat{v}t)\big(\alpha_{\gamma_1, \gamma_2}^{\alpha, \kappa;1}(v)\nabla_v^{\gamma_2}  g(t,x,v) - t \alpha_{\gamma_1, \gamma_2}^{\alpha, \kappa;2}( v)\]
\[
 \times \nabla_x \nabla_v^{\gamma_2}g(t,x,v)  \big) + \nabla_x^{\gamma_1+1} \phi(t,x+\hat{v}t)\big(\alpha_{\gamma_1, \gamma_2}^{\alpha, \kappa;3}(v)\nabla_v^{\gamma_2}  g(t,x,v)  - t \alpha_{\gamma_1, \gamma_2}^{\alpha, \kappa;4}( v) \nabla_x \nabla_v^{\gamma_2}g(t,x,v)  \big)\big] d x 
\]
\[
K_{\gamma_1, \gamma_2}^{\alpha, \kappa}(t,v) = \int_{\R^3}c_{\gamma_1}( t,x,v)  \big[ \p_t \Gamma^{\gamma_1} \phi(t,x+\hat{v}t)\big(b_{\gamma_1, \gamma_2}^{\alpha, \kappa;1}(x, v)\nabla_v^{\gamma_2}  g(t,x,v) - t b_{\gamma_1, \gamma_2}^{\alpha, \kappa;2}(x, v)\nabla_x \nabla_v^{\gamma_2}g(t,x,v)  \big)\]
\[
  +  \nabla_x \Gamma^{\gamma_1} \phi(t,x+\hat{v}t)\big(b_{\gamma_1, \gamma_2}^{\alpha, \kappa;3}(x, v)\nabla_v^{\gamma_1}  g(t,x,v)  - t b_{\gamma_1, \gamma_2}^{\alpha, \kappa;4}(x, v) \nabla_x \nabla_v^{\gamma_2}g(t,x,v)  \big)\big] d x,
\]
where 
\be
c_{\gamma_1}( t,x,v):=\psi_{\leq  -10}(|x|(1+|v|^2)/(1+|t|)) |t|^{|\gamma_1|}||t|-|x+\hat{v}t||^{-|\gamma_1|}, \quad \Longrightarrow | c_{\gamma_1}( t,x,v)|\lesssim (1+|v|)^{2|\gamma_1|},
\ee
and  the coefficients $\alpha_{\gamma_1, \gamma_2}^{\alpha, \kappa,i}(v) $ and $b_{\gamma_1, \gamma_2}^{\alpha, \kappa,i}(x,v) $, $i\in \{1,2,3,4\}$, satisfy the following estimate,
\be\label{may16eqn61}
|\alpha_{\gamma_1, \gamma_2}^{\alpha, \kappa,i}(v) | +  |b_{\gamma_1, \gamma_2}^{\alpha, \kappa,i}(x,v) | + |t| |\nabla_x b_{\gamma_1, \gamma_2}^{\alpha, \kappa,i}(x,v) | \lesssim (1+|v|), \quad 
\ee
which can be derived from the estimates (\ref{may16eqn91}) and (\ref{may16eqn92}).

Due to the high order weight function $\omega_{\beta}^{\vec{0}}(t,x,v)$ (see (\ref{highorderweight}) ) we associated with $\Lambda^{\beta} g(t,x,v)$, the inverse of  weight function provides fast decay rate if $|x|(1+|v|^2)\gtrsim (1+|t|)$. As a result, the following estimate holds after using the $L^2_x-L^2_xL^2_v$ type bilinear estimate, 
\be\label{may16eqn194}
 \sum_{\kappa\in \mathcal{B}, |\kappa|, |\gamma_1|+|\gamma_2|\leq N_0, |\gamma_2|\leq N_0-1} \| \widetilde{\omega_{ }^{\alpha}}(  v) H_{\gamma_1, \gamma_2}^{\alpha, \kappa}(t,v)\|_{L^2_v}\lesssim (1+|t|)^{-2} E_{\textup{high}}^{\phi}(t)E_{\textup{high}}^{f}(t)\lesssim (1+|t|)^{-2+2\delta}\epsilon_1^2.  
\ee

 It remains to estimate $K_{\gamma_1, \gamma_2}^{\alpha, \kappa}(t,v)$.  We emphasize that, due to the cutoff function $\psi_{\leq  -10}(|x|(1+|v|^2)/(1+|t|))$ in $K_{\gamma_1, \gamma_2}^{\alpha, \kappa}(t,v)$,  the following estimate holds, 
 \[
||t|-|x+\hat{v} t ||\gtrsim |t|(1+|v|^2)^{-1}.
 \]
  For fixed  $\alpha,\kappa, \gamma_1, \gamma_2$, we do
dyadic decomposition for the scalar field. As a result, we have
\be\label{may16eqn121}
 K_{\gamma_1, \gamma_2}^{\alpha, \kappa}(t,v)= \sum_{k\in \mathbb{Z}} J_{\gamma_1, \gamma_2}^{\alpha, \kappa;k}(t,v),
 \ee
 \[
J_{\gamma_1, \gamma_2}^{\alpha, \kappa;k}(t,v):=\int_{\R^3}c_{\gamma_1}( t,x,v)  \big[P_k\big( \p_t \Gamma^{\gamma_1} \phi\big)(t,x+\hat{v}t)\big(b_{\gamma_1, \gamma_2}^{\alpha, \kappa;1}(x,v)\nabla_v^{\gamma_2}  g(t,x,v) - t b_{\gamma_1, \gamma_2}^{\alpha, \kappa;2}(x, v)\nabla_x \nabla_v^{\gamma_2}g(t,x,v)  \big)
 \]
\be\label{may16eqn19}
   +  P_k\big(\nabla_x \Gamma^{\gamma_1} \phi\big)(t,x+\hat{v}t)\big(b_{\gamma_1, \gamma_2}^{\alpha, \kappa;3}(x, v)\nabla_v^{\gamma_2}  g(t,x,v)  - t b_{\gamma_1, \gamma_2}^{\alpha, \kappa;4}( x,v) \nabla_x \nabla_v^{\gamma_2}g(t,x,v)  \big)\big] d x.
\ee

 Based on the size of $k$, we separate into two case as follows.

\noindent $\bullet$\quad If $k\geq 0$. 

If $|\gamma_1|\leq 10$,   we use the  $L^\infty_{x }-L^1_{x}L^2_v$-type bilinear estimate by putting $P_k[\p_t \Gamma^{\gamma_1}\phi](t, x+\hat{v}t)$ and $P_k[\nabla_x\Gamma^{\gamma_1} \phi ](t,$ $ x+\hat{v}t)$ in $L^\infty_x$
and  the linear decay estimate (\ref{noveqn935}) in Lemma \ref{sharplinftydecay}. If $|\gamma_1|\geq 10$, we redo the argument used in the estimate $H_k$ (see (\ref{jan16eqn67})) in the proof of Lemma \ref{fixedtimeestimate2}. Recall the estimates (\ref{feb12eqn161}) and (\ref{feb13eqn19}).  As a result, from the estimate of coefficients in (\ref{may16eqn61}), we have 
\[
\sum_{\kappa\in \mathcal{B}, |\kappa|, |\gamma_1|+|\gamma_2|\leq N_0, |\gamma_2|\leq N_0-1} \sum_{k\geq 0} \|\widetilde{\omega_{ }^{\alpha}}(  v) J_{\gamma_1, \gamma_2}^{\alpha, \kappa;k}(t,v) \|_{L^2_v}\lesssim \sum_{k\geq 0}(1+|t|)^{-2} 2^{-k} \big(E_{\textup{high}}^{\phi}(t)  +E_{\textup{low}}^{\phi}(t)  \big)E_{\textup{high}}^{f}(t)
\]
\be\label{may16eqn110}
 +  |t|^{-3/2+2\delta} \epsilon_0\lesssim  |t|^{-3/2+2\delta} \epsilon_0. 
\ee
 \noindent $\bullet$\quad If $k\leq 0$.

 For this case,    we  first do integration by parts in ``$x$'' in (\ref{may16eqn19}) to move around the spatial derivative ``$\nabla_x$'' in front of   $ \nabla_x \nabla_v^{\gamma_2  } g(t, x,v)$ .  

 If $|\gamma_1|\leq 10$,  similar to the strategies we used in the case   $k\geq 0 $, we use the $L^\infty_{x }-L^1_{x}L^2_v$-type bilinear estimate. As a result,  from    the linear decay estimates (\ref{noveqn555}) in Lemma \ref{twistedlineardecay}
and  the estimate  (\ref{noveqn935}) in Lemma \ref{sharplinftydecay} for the scalar field and the estimate of coefficients in (\ref{may16eqn61}),  we have 
\[
 \sum_{\kappa\in \mathcal{B}, |\kappa|, |\gamma_1|+|\gamma_2|\leq N_0, |\gamma_2|\leq N_0-1, |\gamma_1|\leq 10}\sum_{k\in \mathbb{Z}, k\leq 0} \| \widetilde{\omega_{ }^{\alpha}}(  v)  J_{\gamma_1, \gamma_2}^{\alpha, \kappa;k}(t,v)\|_{L^2_v}\lesssim\sum_{k\in \mathbb{Z}, k\leq 0} \min\{(1+|t|)^{-2}, (1+|t|)^{-1} 2^{k} \} 
\]
\be\label{may27eqn1}
\times  E_{\textup{low}}^{\phi}(t)  E_{\textup{high}}^{f}(t) \lesssim (1+|t|)^{-2+4\delta}  \epsilon_1^2.	
\ee

If   $|\gamma_1|\geq 10$, then we  rerun the argument used in the estimate of $H_k$, $k\leq 0$, in the proof of Lemma    \ref{fixedtimeestimate2}. If moreover $2^k\leq |t|^{-1}$, then with minor modification in the estimate (\ref{jan16eqn61}), we have
\[
 \sum_{\kappa\in \mathcal{B}, |\kappa|, |\gamma_1|+|\gamma_2|\leq N_0, |\gamma_2|\leq N_0-1, |\gamma_1|> 10}\sum_{k\in \mathbb{Z}, 2^k\leq |t|^{-1}} \| \widetilde{\omega_{ }^{\alpha}}(  v)  J_{\gamma_1, \gamma_2}^{\alpha, \kappa;k}(t,v)\|_{L^2_v}\lesssim \sum_{k\in \mathbb{Z}, 2^k\leq |t|^{-1}}  \big(2^{k/2}  + |t|2^{3k/2} \big)\epsilon_0 \]
\be\label{may27eqn2}
 \times(1+2^k|t|) (1+|t|)^{-3/2} E_{\textup{high}}^{f;2}(t) \lesssim (1+|t|)^{-2+\delta}\epsilon_0. 
\ee

Alternatively, if $2^{k}\geq |t|^{-1}$, then as what we did in  (\ref{jan16eqn69}), in (\ref{may16eqn19}), we   use the decomposition of the profile in (\ref{feb1eqn1}) for $\phi^{\gamma_1}$ and $\p_t \phi^{\gamma_1}$. From the estimate (\ref{multilinear1jan16})  in Lemma \ref{multilinearlemma1} and the estimate (\ref{bilineardensitylargek}) in Lemma \ref{bilineardensitylemma},  the following estimate holds  because of  the extra spatial derivative that comes from the integration by parts in ``$x$'',
\[
 \sum_{\kappa\in \mathcal{B}, |\kappa|, |\gamma_1|+|\gamma_2|\leq N_0, |\gamma_2|\leq N_0-1, |\gamma_1|> 10}\sum_{k\in \mathbb{Z},   |t|^{-1}\leq 2^k\leq 1} \| \widetilde{\omega_{ }^{\alpha}}(  v)  J_{\gamma_1, \gamma_2}^{\alpha, \kappa;k}(t,v)\|_{L^2_v}\lesssim \sum_{k\in \mathbb{Z},   |t|^{-1}\leq 2^k\leq 1}  |t|^{-3/2+3\delta} 2^{ k/2 } \epsilon_1^2.
\]
\be\label{may27eqn4}
+ |t|^{-2}\epsilon_1^2 + |t|^{-3}2^{-k} \epsilon_1^2\lesssim  |t|^{-3/2+3\delta}\epsilon_0.
\ee
To sum up, from the estimates  (\ref{may27eqn1}), (\ref{may27eqn2}), and (\ref{may27eqn4}), the following estimate holds, 
\be\label{may17eqn161}
 \sum_{\kappa\in \mathcal{B}, |\kappa|, |\gamma_1|+|\gamma_2|\leq N_0, |\gamma_2|\leq N_0-1    } \sum_{k\in \mathbb{Z}, k\leq 0} \| \widetilde{\omega_{ }^{\alpha}}(  v)  J_{\gamma_1, \gamma_2}^{\alpha, \kappa;k}(t,v)\|_{L^2_v} \lesssim  |t|^{-3/2+3\delta}\epsilon_0.
\ee
From the estimates (\ref{may16eqn110}) and (\ref{may17eqn161}) and the decomposition (\ref{may16eqn121}), we have
\be\label{may16eqn196}
  \sum_{\kappa\in \mathcal{B}, |\kappa|, |\gamma_1|+|\gamma_2|\leq N_0, |\gamma_2|\leq N_0-1    } \| \widetilde{\omega_{ }^{\alpha}}(  v) K_{\gamma_1, \gamma_2}^{\alpha, \kappa}(t,v)\|_{L^2_v}\lesssim  |t|^{-3/2+3\delta}\epsilon_0.
\ee
Recall the decomposition (\ref{may16eqn151}). Our desired estimate (\ref{feb20eqn10}) follows directly from the estimates  (\ref{may16eqn194}) and (\ref{may16eqn196}).
\end{proof}
\subsection{Toolkit}\label{multilinearestimate}

In this subsection, we prove some basic estimates used in the previous two subsections.

The first basic estimate is a linear decay estimate for the scalar field part, which show that the nonlinear solutions decays sharply at rate $1/\big( (1+|t| )(1+||t|-|x||) \big)$ as long as the low order energy of scalar  field doesn't grow over time.  It is a natural application of our general  linear decay estimate of the half wave in Lemma  \ref{twistedlineardecay}.

\begin{lemma}\label{sharplinftydecay}
Given  Fourier multiplier operator   $T$ with Fourier multiplier symbol $m(\xi)\in \mathcal{S}^\infty$, the following estimate holds,
\be\label{noveqn935}
\sum_{\begin{subarray}{c}
u\in\{\p_t\phi ^\rho,\nabla_x \phi^\rho\},
\rho \in \mathcal{B}, |\rho|\leq 13\\
\end{subarray}} | T(u)(t, x)|\lesssim  (1+|t|)^{-1} (1+||t|-|x||)^{-1} \|m(\xi)\|_{\mathcal{S}^\infty }  E_{\textup{low}}^{\phi}(t). 
\ee
\end{lemma}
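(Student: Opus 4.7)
The plan is to exploit the Fourier-side trade identity of Lemma \ref{tradethreetimes}, which is the operator version of the algebraic identity $(t^2-|x|^2)\p_i = -\sum_j x_j\Omega_{ij} + tL_i - x_i S$ from Lemma \ref{somebasicidentity}, in order to convert factors of $|t|^2-|x|^2 = (|t|-|x|)(|t|+|x|)$ into vector-field derivatives of $\phi^\rho$. Via (\ref{feb8eqn32}), both $\p_t\phi^\rho$ and $\nabla_x\phi^\rho$ are, up to bounded Fourier multipliers absorbed into $m(\xi)$, linear combinations of half-waves $e^{-i\mu t\d}h^\rho$. After dyadic frequency localization $T_k := T\circ P_k$, the problem reduces to bounding $|T_k[e^{-i\mu t\d}h^\alpha](t,x)|$ in terms of the $X_n$-norms of $h^\alpha$ with $|\alpha|\leq |\rho|+3\leq 16$, and all such norms are controlled by $E_{\textup{low}}^\phi(t)$ since $16\leq 20-3n$ for $n\leq 1$.

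When $1+||t|-|x|| \lesssim 1$, the announced bound reduces to the standard pointwise decay $(1+|t|+|x|)^{-1}$ for linear waves, which follows directly from (\ref{noveqn555}) of Lemma \ref{twistedlineardecay} after dyadic summation against the $X_n$-norms, choosing to use the $2^{k_-}$ regime for $2^k\lesssim (1+|t|+|x|)^{-1}$ and the $(1+|t|+|x|)^{-1}$ regime otherwise. When $1+||t|-|x|| \gg 1$, I apply (\ref{noveqn171}) for $\nabla_x\phi^\rho$ and (\ref{feb12eqn39}) for $\p_t\phi^\rho$ with $f = \phi^\rho$, rewriting $(|t|^2-|x|^2)^3 T_k(u)$ as a finite sum of operators applied to $\p_t^i\phi^{\rho\circ\alpha}$ with $|\alpha|\leq 3$ and coefficients of size $(|t|+|x|)^3$ (by (\ref{noveqn152})--(\ref{feb12eqn15})), plus a remainder involving $(\p_t^2-\Delta)\phi^{\rho\circ\alpha}$. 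For the first class, invoking Lemma \ref{twistedlineardecay} one more time on each term and dividing through by $(|t|-|x|)^3(|t|+|x|)^3$ produces a bound of order $||t|-|x||^{-3}(1+|t|+|x|)^{-1}$, which is strictly stronger than what is claimed in this regime.

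The main obstacle is the inhomogeneous remainder $\tilde T^3_{k,\alpha}\big((\p_t^2-\Delta)\phi^{\rho\circ\alpha}\big)$: by the wave equation in (\ref{vlasovnordstrom}), this quantity is a velocity average of $f^{\rho\circ\alpha}$, and I will control it using the density decay estimate (\ref{densitydecay}) of Lemma \ref{decayestimateofdensity}, which furnishes an $|t|^{-3}$ decay at the level of the density. Combined with the accompanying coefficient of size at most $(|t|^2-|x|^2)^j(|t|+|x|)^{3-j}$ for $j\in\{1,2\}$ from (\ref{feb12eqn10}) and (\ref{feb12eqn15}), this produces a contribution $\lesssim |t|^{-3}(|t|+|x|)^{3-j}||t|-|x||^{j-3}$, bounded by $(1+|t|)^{-1}(1+||t|-|x||)^{-1}$ in the interior $|x|\lesssim |t|$, while the outer region $|x|\gtrsim |t|$ is handled via the inequality $|x|\lesssim ||t|-|x||$. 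The only delicate bookkeeping is the interplay between the $\mathcal{S}^\infty_k$-symbol estimates (\ref{noveqn141}), (\ref{feb12eqn1}) and the three regimes of Lemma \ref{twistedlineardecay} across frequencies, which determines exactly how many times to invoke the trade identity and which $X_n$-norm to contract against; everything else is routine bookkeeping once the trading mechanism is in place.
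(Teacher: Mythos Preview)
Your overall architecture---dyadically localize, apply Lemma~\ref{twistedlineardecay} to get (\ref{noveqn930}), then for $||t|-|x||\gg 1$ trade three times via Lemma~\ref{tradethreetimes} and re-apply Lemma~\ref{twistedlineardecay} to the homogeneous pieces---matches the paper exactly. The gap is in how you treat the inhomogeneous remainder $(\p_t^2-\Delta)\phi^{\rho\circ\alpha}$.

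You propose to control it through the density decay estimate (\ref{densitydecay}), i.e.\ through norms of the Vlasov profile $g^\gamma$. That route produces a bound of the shape
\[
(1+|t|)^{-1}(1+||t|-|x||)^{-1}\|m\|_{\mathcal{S}^\infty}\bigl(E_{\textup{low}}^\phi(t)+\text{Vlasov energies}\bigr),
\]
which is not the statement (\ref{noveqn935}): the lemma asserts a bound by $E_{\textup{low}}^\phi(t)$ \emph{alone}. Under the bootstrap the distinction is harmless for the applications, but as a proof of the lemma as written it is incomplete.

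The paper's route is shorter and stays inside $E_{\textup{low}}^\phi$. Since $h^\alpha=e^{it\d}(\p_t-i\d)\phi^\alpha$, one has on the Fourier side
\[
\widehat{(\p_t^2-\Delta)\phi^{\alpha}}(t,\xi)=e^{-it|\xi|}\,\widehat{\p_t h^{\alpha}}(t,\xi),
\]
so each remainder $\tilde T^j_{k,\alpha}\bigl((\p_t^2-\Delta)\phi^{\rho\circ\alpha}\bigr)$ is controlled directly by $\|\widehat{\p_t h^{\rho\circ\alpha}}(t,\xi)\psi_k(\xi)\|_{L^\infty_\xi}$. These frequency-localized norms, weighted by $(1+|t|)$ and by $(1+|t|)^2$ after one spatial derivative, are precisely the second and third pieces of $E_{\textup{low}}^\phi(t)$ in (\ref{secondorderloworder}); that norm was \emph{designed} so this lemma closes without touching the Vlasov side. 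The resulting dyadic estimate is (\ref{noveqn938})--(\ref{noveqn940}), and optimizing it against (\ref{noveqn930}) over $k$ gives (\ref{noveqn935}). One further remark: in your ``first class'' the $i=2$ term $\p_t^2\phi^{\rho\circ\alpha}$ is not a pure half-wave either; write $\p_t^2\phi=\Delta\phi+(\p_t^2-\Delta)\phi$ and send its inhomogeneous part into the same $\p_t h$ bucket.
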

\begin{proof}
We first do dyadic decomposition for the frequency of $u$. As a result, we have
\[
T(u)(t, x)=\sum_{k\in \mathbb{Z}} T_k(u), \quad T_k=T\circ P_k.
\]
Recall (\ref{feb8eqn31}) and (\ref{feb8eqn32}).  From the linear decay estimate (\ref{noveqn555}) in Lemma \ref{twistedlineardecay},  we have
\be\label{noveqn930}
\sum_{\begin{subarray}{c}
u\in\{\nabla \phi^\rho,\p_t \phi^\rho\},
\rho \in \mathcal{B}, |\rho|\leq 13\\
\end{subarray}} | T_k(u)(t,x)| \lesssim \sum_{
\begin{subarray}{c}
i=0, 1,2,
\alpha\in \mathcal{B},|\alpha|\leq  15\\
\end{subarray} } (1+|t|+|x|)^{-1} 2^{k-2k_{+	}}\| m(\xi)\|_{\mathcal{S}^\infty_k} \| P_{k}h^\alpha (t) \|_{X_i}. 
\ee
From the above estimate,   our desired estimate (\ref{noveqn935}) holds straightforwardly if $||t|-|x||\leq 1$. Hence, it would be sufficient to consider the case $||t|-|x||\geq 1$.

From the equalities (\ref{noveqn171}) and (\ref{feb12eqn39}) in Lemma \ref{tradethreetimes},  we  can trade regularity for the weight of the   modulation ``$|t|-|x|$''. More precisely, from the  estimates of coefficients in  (\ref{noveqn152}),  (\ref{feb12eqn2}), and (\ref{feb12eqn10}), the estimates of symbols in (\ref{feb12eqn1}) and  (\ref{feb12eqn10}),  and the linear decay estimate (\ref{noveqn555}) in Lemma \ref{twistedlineardecay},  we have 
\[
\sum_{\begin{subarray}{c}
u\in\{\nabla\phi^\rho,\p_t \phi^\rho\},
\rho \in \mathcal{B}, |\rho|\leq 13\\
\end{subarray}} \big|\big(|t|-|x| \big)^3 T_k(u)(t,x)\big|\lesssim \sum_{\alpha\in\mathcal{B}, |\alpha|\leq 16} 2^{-2k} (1+|t|+|x|)^{-1} \|m(\xi)\|_{\mathcal{S}^\infty_k}\big(\| P_k h^\alpha(t)\|_{X_0} \]
\be\label{noveqn938}
 + \| P_k h^\alpha(t)||_{X_1}  \big)+ \big( 2^{-5k} 2^{k+3k} + ||t|-|x|| 2^{-4k}2^{k+3k}  +||t|-|x||^2 2^{-3k}2^{k+3k}  \big)\|m(\xi)\|_{\mathcal{S}^\infty_k}\|\widehat{\p_t h^\alpha}(t, \xi)\psi_k(\xi) \|_{L^\infty_\xi}.
\ee

Recall the low order energy of the nonlinear wave part  $E_{\textup{low}}^{\phi}(t)$ in (\ref{secondorderloworder}), from the above estimate, we have 
\[
\sum_{\begin{subarray}{c}
u\in\{\nabla\phi^\rho,\p_t \phi^\rho\},
\rho \in \mathcal{B}, |\rho|\leq 13\\
\end{subarray}} \big| T_k(u)(t,x)\big|\lesssim (||t|-|x||)^{-1}   \|m(\xi)\|_{\mathcal{S}^\infty_k}  E_{\textup{low}}^{\phi}(t)
\]
\be\label{noveqn940}
\times  \big[ (1+|t|)^{-1} 2^{ -2k}(||t|-|x||)^{-2}  + (1+|t|)^{-1} 2^{ -k} (||t|-|x||)^{-1} + (1+|t|)^{-2} 2^{  -k}\big]. 
\ee
Therefore, after optimizing  the estimates (\ref{noveqn930}) and (\ref{noveqn940}), we have
\[
\sum_{\begin{subarray}{c}
u\in\{\nabla\phi^\rho,\p_t \phi^\rho\},
\rho \in \mathcal{B}, |\rho|\leq 13\\
\end{subarray}}  | T(u)(t, x)| \lesssim    (1+|t|)^{-1}(||t|-|x||)^{-1}\|m(\xi)\|_{\mathcal{S}^\infty_k}  E_{\textup{low}}^{\phi}(t) .
\]
Hence finishing the proof of the desired estimate (\ref{noveqn935}).  
\end{proof}

Next, we prove   trilinear  estimates, which are used in the energy estimate of the Vlasov part for the case when the scalar field has relatively more derivatives than the Vlasov part in the nonlinearity of the Vlasov equation. More precisely, the estimate of $ H_k^1(t,x,v)$  in (\ref{2020feb13eqn1}).  In our desired  trilinear estimates, instead of using the decay estimate of density type function in Lemma \ref{decayestimateofdensity} directly,  we exploit the smallness of the space-resonance set which enable us to get a improved decay estimate.

 \begin{lemma}\label{multilinearlemma1}
Given any fixed signs $\mu, \nu\in\{+,-\}$, fixed time $t\in \R_{+}$, fixed $k_1,k_2 \in \mathbb{Z}$.  Moreover, given     any functions $f_1, f_2:\R_t\times \R_x^3 \rightarrow \mathbb{C}$, and any distribution function $g:\R_t\times \R_x^3\times \R_v^3\rightarrow \mathbb{R}$, we define a trilinear form as follows,
\be\label{jan16eqn1}
T(f_1,f_2, g):= \int_{\R^3} \int_{\R^3} e^{-i\mu t\d} P_{k_1}[f_1](t,x+\hat{v}t )e^{-i\nu t\d}  P_{k_2}[f_2](t,x+\hat{v}t ) g(t,x,v) d x d v,
\ee
then the following estimate holds, 
\[
|T(f_1,f_2,g)| \lesssim \sum_{|\alpha|\leq 4} (1+|t|)^{-5} \|(1+|x|)^{2}(1+|v|)^{25} \nabla_v^\alpha g(t,x,v)\|_{L^1_{x,v}}\big(  2^{-k_{1,-}}\|\widehat{f_1}(t,\xi)\psi_{k_1}(\xi) \|_{L^2} 
\]
\be\label{multilinear1jan16}
 +\|\nabla_\xi \widehat{f_1}(t,\xi)\psi_{k_1}(\xi) \|_{L^2} \big) \big(  2^{-k_{2,-}}\|\widehat{f_2}(t,\xi)\psi_{k_2}(\xi) \|_{L^2}  +\|\nabla_\xi \widehat{f_2}(t,\xi)\psi_{k_2}(\xi) \|_{L^2} \big).
\ee
Moreover,  if $|k_1-k_2|\geq 5$, then the following estimate holds, 
\[
|T(f_1,f_2,g)| \lesssim \sum_{|\alpha|\leq 4} (1+|t|)^{-5} 2^{-\max\{k_1,k_2\}} \|(1+|x|)^{2}(1+|v|)^{25} \nabla_x \nabla_v^\alpha g(t,x,v)\|_{L^1_{x,v}}\big(  2^{-k_{1,-}}\|\widehat{f_1}(t,\xi)\psi_{k_1}(\xi) \|_{L^2} 
\]
\be\label{trilinearfeb20est1}
 +\|\nabla_\xi \widehat{f_1}(t,\xi)\psi_{k_1}(\xi) \|_{L^2} \big) \big(  2^{-k_{2,-}}\|\widehat{f_2}(t,\xi)\psi_{k_2}(\xi) \|_{L^2}  +\|\nabla_\xi \widehat{f_2}(t,\xi)\psi_{k_2}(\xi) \|_{L^2} \big).
\ee
 \end{lemma}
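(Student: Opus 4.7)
The plan is to extract $(1+|t|)^{-5}$ by combining two complementary sources of decay: an $(1+|t|)^{-2}$ gain from integrating by parts once in each of the wave frequencies $\xi_1,\xi_2$ (exploiting the half-wave phases $-\mu t|\xi_1|$ and $-\nu t|\xi_2|$), together with an additional $(1+|t|)^{-3}$ obtained by recognizing the residual integral in $v$ as a density-type integral governed by Lemma \ref{decayestimateofdensity}.

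Concretely, after changing variables $y = x+\hat{v}t$, expanding the wave factors $e^{-i\mu_i t|\nabla|}P_{k_i}f_i$ via inverse Fourier transform, and integrating out $y$, I would rewrite $T$ as the triple integral
$$T = \int\int\int e^{it\Phi}\,\widehat{f_1}(\xi_1)\psi_{k_1}(\xi_1)\,\widehat{f_2}(\xi_2)\psi_{k_2}(\xi_2)\,\widehat{g}(t,-\xi_1-\xi_2,v)\,d\xi_1\,d\xi_2\,dv,$$
with phase $\Phi = \hat{v}\cdot(\xi_1+\xi_2) - \mu|\xi_1| - \nu|\xi_2|$, and then integrate by parts once in $\xi_1$ using $\partial_{\xi_1}\Phi = \hat{v} - \mu\xi_1/|\xi_1|$, whose reciprocal is of size $O((1+|v|)^2)$ and is harmless thanks to the $(1+|v|)^{25}$-weight on $g$. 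The derivative lands either on $\widehat{f_1}\psi_{k_1}$, producing the $\|\nabla_\xi\widehat{f_1}\psi_{k_1}\|_{L^2}$-term (or, when it falls on $\psi_{k_1}'$ or on $\xi_1/|\xi_1|$, an $O(2^{-k_{1,-}})\|\widehat{f_1}\psi_{k_1}\|_{L^2}$-term), or on $\widehat{g}(t,-\xi_1-\xi_2,v)$, giving $-\partial_\zeta\widehat{g}$ whose Fourier preimage is $ix\,g$ and is absorbed by one factor of the $(1+|x|)^2$-weight. Repeating the step in $\xi_2$ yields a second $(1+|t|)^{-1}$ and exhausts the remaining $(1+|x|)$-weight. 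The resulting integral, for each fixed $(\xi_1,\xi_2)$, has the structure $\int e^{it\hat{v}\cdot\zeta}\,m(\xi_1,\xi_2,v)\,\widehat{g}(t,-\zeta,v)\,dv$ with an amplitude $m$ of $L^\infty_v\mathcal{S}^\infty$-class, precisely the class treated by Lemma \ref{decayestimateofdensity}: applying that lemma with $a(v) = \hat{v}$ supplies the remaining $(1+|t|)^{-3}$ decay together with the weighted $L^1_{x,v}$-norms of $\nabla_v^{\leq 4}g$. A Cauchy--Schwarz pairing in each of $\xi_1$ and $\xi_2$ (i.e.\ $\|F_1*F_2\|_{L^\infty}\leq\|F_1\|_{L^2}\|F_2\|_{L^2}$) then converts the pointwise estimate into the $L^2_\xi$-norms appearing in \eqref{multilinear1jan16}.

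For the refined estimate \eqref{trilinearfeb20est1} in the regime $|k_1-k_2|\geq 5$, I would instead integrate by parts directly in $v$: since $|\xi_1+\xi_2|\sim 2^{\max\{k_1,k_2\}}$ on the support and the smallest eigenvalue of the positive-definite matrix $\nabla_v\hat{v} = (1+|v|^2)^{-1/2}(I-\hat{v}\hat{v}^{\top})$ is comparable to $(1+|v|)^{-3}$, one has $|\nabla_v(\hat{v}\cdot(\xi_1+\xi_2))|\gtrsim 2^{\max\{k_1,k_2\}}(1+|v|)^{-3}$, so five integrations by parts in $v$ produce the gain $(t\,2^{\max\{k_1,k_2\}})^{-5}(1+|v|)^{15}$. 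The extra factor $\nabla_x g$ on the right-hand side corresponds, on the Fourier side, to multiplication of $\widehat{g}(-\zeta,v)$ by $|\zeta|\sim 2^{\max\{k_1,k_2\}}$; combined with the volume $\sim 2^{3\max\{k_1,k_2\}}$ of $\{|\zeta|\sim 2^{\max\{k_1,k_2\}}\}$ and Cauchy--Schwarz in $\xi_1$, this delivers precisely the stated $(1+|t|)^{-5}\,2^{-\max\{k_1,k_2\}}$. The main obstacle throughout is tracking the low-frequency $2^{-k_{i,-}}$-factors precisely: this requires separating, at each IP step, contributions of derivatives falling on the cutoffs $\psi_{k_i}$ from those falling on $\widehat{f_i}$, since the former produce $O(2^{-k_i})$-sized terms that must be absorbed into the $2^{-k_{i,-}}\|\widehat{f_i}\psi_{k_i}\|_{L^2}$-piece of the norm.
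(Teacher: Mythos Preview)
Your overall two-step strategy for \eqref{multilinear1jan16} (two integrations by parts in the wave frequencies to extract $t^{-2}$, then a density-type argument for the remaining $t^{-3}$) is exactly the paper's strategy. The gap is in your execution of the second step. After your two integrations by parts you write that ``for each fixed $(\xi_1,\xi_2)$'' the remaining expression $\int e^{it\hat v\cdot\zeta}\,m(\xi_1,\xi_2,v)\,\widehat g(t,-\zeta,v)\,dv$ (with $\zeta=\xi_1+\xi_2$ fixed) is governed by Lemma~\ref{decayestimateofdensity}. It is not: that lemma crucially requires an integral over the frequency variable $\xi$ in addition to the $v$-integral, and its $t^{-3}$ gain comes from balancing the volume of the low-frequency region $\{|\xi|\lesssim t^{-1}\}$ against integrations by parts in $v$ on the complement. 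With $\zeta$ fixed there is no such balancing, and in particular for $|\zeta|\ll t^{-1}$ you obtain no decay at all from the $v$-oscillation. Moreover the extra phases $e^{-i\mu t|\xi_1|-i\nu t|\xi_2|}$ prevent you from simply unfixing $(\xi_1,\xi_2)$ and applying Lemma~\ref{decayestimateofdensity} to a $\zeta$-integral, since they are not of the form $m(\zeta,v)$.

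The paper closes this step by hand: after the two frequency IBPs it dyadically decomposes in $|\xi|\sim 2^{k}$ (the total frequency), then records two bounds for each piece, one using only the volume $2^{3k}$ of the $\xi$-support and one coming from four integrations by parts in $v$ (each gaining $t^{-1}2^{-k}$). Summing the minimum over $k$ gives the missing $t^{-3}$ and exactly the $\sum_{|\alpha|\le 4}\|(1+|x|)^2(1+|v|)^{25}\nabla_v^\alpha g\|_{L^1_{x,v}}$ on the right. Your absorption of the two $\nabla_\xi\widehat g$ factors into the $(1+|x|)^2$ weight is correct and is precisely what the paper uses.

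For \eqref{trilinearfeb20est1} your proposed five integrations by parts in $v$ is not what the paper does and, as written, does not close: five $v$-IBPs would place $\nabla_v^5 g$ on the right, while the statement only allows $\nabla_v^{\le 4}$ together with a single $\nabla_x$; and your power-counting (five factors of $2^{-\max\{k_1,k_2\}}$ against the volume and $|\zeta|$) does not land on $2^{-\max\{k_1,k_2\}}$ without also producing the unwanted $2^{3k_i/2}$ from Cauchy--Schwarz in $\xi_i$. The paper instead simply reruns the argument for \eqref{multilinear1jan16}, observing that when $|k_1-k_2|\ge 5$ the total frequency is pinned at $|\xi|\sim 2^{\max\{k_1,k_2\}}$, so writing $\widehat g(t,\xi,v)=|\xi|^{-2}\xi\cdot\widehat{\nabla_x g}(t,\xi,v)$ trades one $\nabla_x$ on $g$ for the extra factor $2^{-\max\{k_1,k_2\}}$.
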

 \begin{proof}
Recall (\ref{jan16eqn1}). Note that the following equality holds on the Fourier side, 
\be\label{feb20eqn30}
T(f_1,f_2, g):= \int_{\R^3}  \int_{\R^3}\int_{\R^3} e^{it\hat{v}\cdot \xi - i t \mu |\xi-\eta|-i t \nu |\eta|} \overline{\widehat{g}(t,\xi,v)}    \widehat{f_1}(t,\xi-\eta)\widehat{f_2}(t, \eta)\psi_{k_1}(\xi-\eta)\psi_{k_2}(\eta) d\eta d \xi  d v.
\ee
From the above formula of $T(f_1,f_2, g)$,  our desired estimate (\ref{multilinear1jan16}) holds straightforwardly if $|t|\leq 1$. Hence, we restrict ourself to the case when $|t|\geq 1$.

Firstly, we do integration by parts in ``$\xi$'' once. As a result, we have
\be\label{jan16eqn29}
T(f_1,f_2, g)=T_1(f_1,f_2, g)+ T_2(f_1,f_2, g),
 \ee
where
\[
T_1(f_1,f_2, g):= \frac{i}{t}\int_{\R^3} \int_{\R^3} \int_{\R^3} e^{it\hat{v}\cdot \xi - i t \mu |\xi-\eta|-i t \nu |\eta|}  \nabla_\xi\overline{\widehat{g}(t,\xi,v)} \cdot \big(  \frac{\hat{v}- \mu(\xi-\eta)/|\xi-\eta|}{\big|\hat{v}- \mu(\xi-\eta)/|\xi-\eta|\big|^2}\widehat{f_1}(t,\xi-\eta) \]
\[
\times\psi_{k_1}(\xi-\eta) \big)    \widehat{f_2}(t, \eta) \psi_{k_2}(\eta) d\eta d \xi  d v,
\]
\[
T_2(f_1,f_2, g):= \frac{i}{t}\int_{\R^3} \int_{\R^3} \int_{\R^3} e^{it\hat{v}\cdot \xi - i t \mu |\xi-\eta|-i t \nu |\eta|} \overline{\widehat{g}(t,\xi,v)}  \nabla_\xi \cdot \big(  \frac{\hat{v}- \mu(\xi-\eta)/|\xi-\eta|}{\big|\hat{v}- \mu(\xi-\eta)/|\xi-\eta|\big|^2} \widehat{f_1}(t,\xi-\eta)  \]
\[
\times  \psi_{k_1}(\xi-\eta)\big)  \widehat{f_2}(t, \eta) \psi_{k_2}(\eta) d\eta d \xi  d v.
\]
For $T_1(f_1,f_2, g)$, we do integration by parts in ``$\xi$'' one more time. As a result, we have
\[
T_1(f_1,f_2, g):= \frac{-1}{t^2}\int_{\R^3} \int_{\R^3} \int_{\R^3} e^{it\hat{v}\cdot \xi - i t \mu |\xi-\eta|-i t \nu |\eta|} \widehat{f_2}(t, \eta)\psi_{k_2}(\eta) \nabla_\xi\cdot\big[ \frac{\hat{v}- \mu(\xi-\eta)/|\xi-\eta|}{\big|\hat{v}- \mu(\xi-\eta)/|\xi-\eta|\big|^2}     \]
\be\label{jan16eqn5}
\times  \nabla_\xi \overline{\widehat{g}(t,\xi,v)} \cdot \big(  \frac{\hat{v}- \mu(\xi-\eta)/|\xi-\eta|}{\big|\hat{v}- \mu(\xi-\eta)/|\xi-\eta|\big|^2} \psi_{k_1}(\xi-\eta) \widehat{f_1}(t,\xi-\eta) \big)  \big]   d\eta d \xi  d v.
\ee
For $T_2(f_1,f_2,g)$, we first switch the role of ``$\xi-\eta$'' and ``$\eta$'' and then do integration by parts in ``$\xi$'' once. As a result, we have
 \[
T_2(f_1,f_2, g):= \frac{i}{t}\int_{\R^3} \int_{\R^3} \int_{\R^3} e^{it\hat{v}\cdot \xi - i t \mu | \eta|-i t \nu |\xi-\eta|} \overline{\widehat{g}(t,\xi,v)}  \nabla_\eta \cdot \big(  \frac{\hat{v}- \mu  \eta /| \eta|}{\big|\hat{v}- \mu  \eta /| \eta|\big|^2}   \widehat{f_1}(t, \eta)\]
\[
\times\psi_{k_1}( \eta)\big)  \widehat{f_2}(t, \xi-\eta) \psi_{k_2}(\xi-\eta) d\eta d \xi  d v  
\]
\[
= \frac{-1}{t^2}\int_{\R^3}  \int_{\R^3} \int_{\R^3} e^{it\hat{v}\cdot \xi - i t \mu |\eta|-i t \nu |\xi-\eta|}  \nabla_\xi\cdot \big[\frac{\hat{v}- \nu(\xi-\eta)/|\xi-\eta|}{\big|\hat{v}- \nu(\xi-\eta)/|\xi-\eta|\big|^2}     \overline{\widehat{g}(t,\xi,v)}  \widehat{f_2}(t,\xi- \eta)\psi_{k_2}(\xi-\eta) \big]\]
\[
\times   \nabla_\eta \cdot \big(  \frac{\hat{v}- \mu  \eta /| \eta|}{\big|\hat{v}- \mu  \eta /| \eta|\big|^2}   \widehat{f_1}(t, \eta) \psi_{k_1}( \eta)\big)  d\eta d \xi  d v. 
\]
Now, we do dyadic decomposition for the size of frequency of ``$\xi$'' and have the following decomposition, 
\be\label{jan16eqn26}
T_1(f_1,f_2, g)= \sum_{k\in \mathbb{Z}} T_{1}^k (f_1,f_2, g), \quad T_2(f_1,f_2, g)= \sum_{k\in \mathbb{Z}} T_{2}^k (f_1,f_2, g),
\ee
where
\[
T_1^k(f_1,f_2, g):= \frac{-1}{t^2}\int_{\R^3} \int_{\R^3} \int_{\R^3} e^{it\hat{v}\cdot \xi - i t \mu |\xi-\eta|-i t \nu |\eta|} \widehat{f_2}(t, \eta)\psi_{k_2}(\eta) \nabla_\xi\cdot\big[ \frac{\hat{v}- \mu(\xi-\eta)/|\xi-\eta|}{\big|\hat{v}- \mu(\xi-\eta)/|\xi-\eta|\big|^2}     \]
\be\label{jan16eqn10}
\times  \nabla_\xi \overline{\widehat{g}(t,\xi,v)} \cdot \big(  \frac{\hat{v}- \mu(\xi-\eta)/|\xi-\eta|}{\big|\hat{v}- \mu(\xi-\eta)/|\xi-\eta|\big|^2} \psi_{k_1}(\xi-\eta) \widehat{f_1}(t,\xi-\eta) \big)  \big]  \psi_k(\xi) d\eta d \xi  d v,
\ee
\[
T_2^k (f_1,f_2, g):=  \frac{-1}{t^2}\int_{\R^3}  \int_{\R^3} \int_{\R^3} e^{it\hat{v}\cdot \xi - i t \mu |\eta|-i t \nu |\xi-\eta|}  \nabla_\xi\cdot \big[\frac{\hat{v}- \nu(\xi-\eta)/|\xi-\eta|}{\big|\hat{v}- \nu(\xi-\eta)/|\xi-\eta|\big|^2}   \overline{\widehat{g}(t,\xi,v)}  \widehat{f_2}(t,\xi- \eta)  \]
\be\label{jan16eqn12}
\times  \psi_{k_2}(\xi-\eta) \big] \nabla_\eta \cdot \big(  \frac{\hat{v}- \mu  \eta /| \eta|}{\big|\hat{v}- \mu  \eta /| \eta|\big|^2}   \widehat{f_1}(t, \eta) \psi_{k_1}( \eta)\big) \psi_k(\xi) d\eta d \xi  d v. 
\ee
 
On one hand, if we use the volume of support of ``$\xi$'' and the Cauchy-Schwarz inequality for the integration with respect to ``$\eta$'', we have
\[
|T_1^k (f_1,f_2, g)| + |T_2^k (f_1,f_2, g)| \lesssim \sum_{0\leq |\beta|\leq 2} (1+|t|)^{-2} 2^{3k} \big(\| (1+|v|)^{10}  \nabla_\xi^\beta \widehat{g}(t,\xi, v)\|_{L^\infty_\xi L^1_v} \big(  2^{-k_{1,-}}\|\widehat{f_1}(t,\xi)\psi_{k_1}(\xi) \|_{L^2} 
\]
\be\label{jan16eqn20}
  +\|\nabla_\xi \widehat{f_1}(t,\xi)\psi_{k_1}(\xi) \|_{L^2} \big)\big(  2^{-k_{2,-}}\|\widehat{f_2}(t,\xi)\psi_{k_2}(\xi) \|_{L^2}  +\|\nabla_\xi \widehat{f_2}(t,\xi)\psi_{k_2}(\xi) \|_{L^2} \big).
\ee
On the other hand, if we do integration by parts in ``$v$''four times for $T_1^k (f_1,f_2, g)$ and $T_2^k (f_1,f_2, g)$, then the following estimate holds after using the volume of support of ``$\xi$'' and  the Cauchy-Schwarz inequality for the integration with respect to ``$\eta$'',
\[
|T_1^k (f_1,f_2, g)| + |T_2^k (f_1,f_2, g)| \lesssim \sum_{|\alpha|\leq 4, |\beta|\leq 2} (1+|t|)^{-6} 2^{-k}  \| (1+|v|)^{25 } \nabla_v^\alpha \nabla_\xi^\beta \widehat{g}(t,\xi, v)\|_{L^\infty_\xi L^1_v} 
\]
\be\label{jan16eqn21}
 \times \big(  2^{-k_{1,-}}\|\widehat{f_1}(t,\xi)\psi_{k_1}(\xi) \|_{L^2} +\|\nabla_\xi \widehat{f_1}(t,\xi)\psi_{k_1}(\xi) \|_{L^2} \big) \big(  2^{-k_{2,-}}\|\widehat{f_2}(t,\xi)\psi_{k_2}(\xi) \|_{L^2}  +\|\nabla_\xi \widehat{f_2}(t,\xi)\psi_{k_2}(\xi) \|_{L^2} \big).
\ee
Recall the decompositions in (\ref{jan16eqn29}) and (\ref{jan16eqn26}). From the estimates (\ref{jan16eqn20}) and (\ref{jan16eqn21}), we have
\[
|T(f_1,f_2,g)| \lesssim \sum_{|\alpha|\leq 4} (1+|t|)^{-5} \|(1+|x|)^{2}(1+|v|)^{25} \nabla_v^\alpha g(t,x,v)\|_{L^1_{x,v}}\big(  2^{-k_{1,-}}\|\widehat{f_1}(t,\xi)\psi_{k_1}(\xi) \|_{L^2} 
\]
\be
 +\|\nabla_\xi \widehat{f_1}(t,\xi)\psi_{k_1}(\xi) \|_{L^2} \big) \big(  2^{-k_{2,-}}\|\widehat{f_2}(t,\xi)\psi_{k_2}(\xi) \|_{L^2}  +\|\nabla_\xi \widehat{f_2}(t,\xi)\psi_{k_2}(\xi) \|_{L^2} \big).
\ee
Hence finishing the proof of the desired estimate (\ref{multilinear1jan16}). 

 Due to the symmetry between $k_1$ and $k_2$, it would be sufficient to consider the case when $k_2\leq k_1-10$. Recall (\ref{feb20eqn30}), we have ``$|\xi|\sim 2^{k_1}$''   for this case. With minor modifications, our desired estimate (\ref{trilinearfeb20est1}) holds from redoing the proof of the estimate (\ref{multilinear1jan16})
 \end{proof}

Lastly, we prove bilinear estimates for the Vlasov-Vlasov type interaction, which are used in the energy estimate of the Vlasov part for the case when the scalar field has relatively more derivatives than the Vlasov part in the nonlinearity of the Vlasov equation. More precisely, the estimate of $ H_k^2(t,x,v)$  in (\ref{noveq291}).  

 For any fixed sign $\mu\in\{+,-\}$, any two distribution functions $f_1(t,x,v)$ and $f_2(t,x,v)$, any fixed $k\in\mathbb{Z}$, any symbol $m(\xi,v)\in L^\infty_v \mathcal{S}^\infty_k$, and  any differentiable coefficient $c(v)$,  we define a bilinear operator as follows, 
\be\label{noveq260}
B_k  (f_1,f_2)(t,x,v):= f_1(t,x,v) E (P_{k}[f_2(t)])(x+a(v)t), 
\ee
where
\[
E(P_k[f])(t,x):=\int_{\R^3}\int_{\R^3} e^{i x \cdot \xi} e^{-i \mu t \hat{u}\cdot \xi} {c(u)  m(\xi,u) \psi_k(\xi)}  \widehat{f}(t, \xi, u) d\xi d u. 
\]

As stated in the following Lemma, we have two bilinear estimates for  the above defined bilinear operator. 
 \begin{lemma}\label{bilineardensitylemma}
For any fixed $t\in \R, |t|\geq 1$, and any localized differentiable function $f_3(t,v):\R_t\times \R_v^3\longrightarrow \mathbb{C}$,
   the following bilinear estimate  holds for the bilinear operators defined in  \textup{(\ref{noveq260})},
 \[
\| B_k (f_1,f_2)(t,x  ,v)\|_{L^2_x L^2_v}  \lesssim \sum_{|\alpha|\leq 5} \big( \|m(\xi,v)\|_{L^\infty_v\mathcal{S}^\infty_k} + \|m(\xi,v)\|_{L^\infty_v\mathcal{S}^\infty_k}\big) \big[ |t|^{-2}2^{k} \|\big(|c(v)|+|\nabla_v c(v)|\big) f_3(t,v) \|_{L^2_v} 
     \]
 \[
+ |t|^{-3}  2^{ k }    \|(1+ |v|+|x|)^{20}c(v) f_2(t,x,v) \|_{  L^2_x L^2_v} +  |t|^{-3} \| c(v)\big( \widehat{f_2}(t,0,v ) -\nabla_v\cdot f_3(t, v)\big)\|_{L^2_v}   \big]
 \]
 \be\label{bilineardensity}
  \times  \|(1+ |v|+|x|)^{20}  \nabla_v^\alpha f_1(t,x,v)\|_{L^2_xL^2_v}, \quad \textit{if\,\,} k\in \mathbb{Z}, |t|^{-1}\lesssim  2^{k}\leq 1.
 \ee
Alternatively,  the following bilinear estimate holds for any $k\in \mathbb{Z}$,
 \[
\| B_k (f_1,f_2)(t,x  ,v)\|_{L^2_x L^2_v}  \lesssim \sum_{|\alpha|\leq 5}\min\{ |t|^{-3}    ,2^{3k}\}   \|m(\xi,v)\|_{L^\infty_v \mathcal{S}^\infty_k}    
 \|(1+ |v|+|x|)^{20}c(v) f_2(t,x,v) \|_{  L^2_x L^2_v}   \]
\be\label{bilineardensitylargek}
 \times \|(1+ |v|+|x|)^{20} \nabla_v^\alpha f_1(t,x,v)\|_{L^2_xL^2_v} .
\ee

 \end{lemma}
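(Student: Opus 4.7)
\emph{Plan.} My plan is to reduce both estimates to a sharp $L^\infty_y$-decay bound on the density-type function $E(P_k[f_2])(t,y)$, and then invoke Lemma~\ref{decayestimateofdensity} and estimate~(\ref{may17eqn12}) of Lemma~\ref{Alinearestimate}. The starting observation is that the factor $E(P_k[f_2])(t,x+a(v)t)$ depends on $(x,v)$ only through the single variable $y=x+a(v)t$; hence for each fixed $v$ the change of variables $x\mapsto y-a(v)t$ preserves the $L^2_x$ norm and yields
\[
\|B_k(f_1,f_2)(t,\cdot,v)\|_{L^2_x}\leq \|f_1(t,\cdot,v)\|_{L^2_x}\,\|E(P_k[f_2])(t,\cdot)\|_{L^\infty_y}.
\]
Integration in $v$ then reduces both estimates to $L^\infty_y$ bounds on $E(P_k[f_2])$, with the sum $\sum_{|\alpha|\leq 5}$ and the weight $(1+|v|+|x|)^{20}$ on $f_1$ in the right-hand sides absorbed trivially at $\alpha=0$.

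For the unrestricted estimate~(\ref{bilineardensitylargek}) I would obtain $\|E(P_k[f_2])(t,\cdot)\|_{L^\infty_y}\lesssim \min\{|t|^{-3},2^{3k}\}$ times a weighted $L^2$ norm of $f_2$ by combining the trivial volume-of-support bound $\lesssim 2^{3k}\|\widehat{f_2}\|_{L^\infty_\xi L^1_u}$ with the stationary-phase bound of Lemma~\ref{decayestimateofdensity} (applied with phase $y\cdot\xi-\mu t\hat u\cdot\xi$ and $a=0$), which contributes the $|t|^{-3}$ factor; taking the pointwise minimum completes the argument. For the refined estimate~(\ref{bilineardensity}) in the regime $|t|^{-1}\lesssim 2^k\leq 1$, the strategy is the telescoping decomposition
\[
\widehat{f_2}(t,\xi,u)=\bigl[\widehat{f_2}(t,0,u)-\nabla_u\!\cdot\!f_3(t,u)\bigr]+\nabla_u\!\cdot\!f_3(t,u)+\int_0^1 \xi\cdot\nabla_\xi\widehat{f_2}(t,s\xi,u)\,ds,
\]
mirroring the proof of~(\ref{may17eqn12}). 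The first (corrected zero-mode) bracket is $\xi$-independent and feeds straight into Lemma~\ref{decayestimateofdensity}, producing the $|t|^{-3}$ term. For the second bracket, integration by parts in $u$ either drops a derivative onto $c(u)m(\xi,u)$ (subdominant, controlled by $|t|^{-3}$) or onto the phase $e^{-i\mu t\hat u\cdot\xi}$, generating the factor $-i\mu t(\nabla_u\hat u)\cdot\xi$ of size $|t|\cdot 2^k$, which when combined with the $|t|^{-3}$ density decay yields the $|t|^{-2}\cdot 2^k$ term paired with $(|c|+|\nabla_v c|)f_3$. The third bracket carries an explicit $\xi\sim 2^k$ factor and translates, via $\nabla_\xi\widehat{f_2}\leftrightarrow xf_2$, into the $|t|^{-3}\cdot 2^k$ term paired with the $(1+|x|+|v|)^{20}f_2$ norm.

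The main technical obstacle will be the middle bracket: after the integration by parts in $u$, I must verify that the new symbol $t(\nabla_u\hat u)\cdot\xi\,c(u)m(\xi,u)\psi_k(\xi)$ still falls within the class to which Lemma~\ref{decayestimateofdensity} applies, and carefully track how the factor $\nabla_u\hat u\sim (1+|u|)^{-1}$ interacts with the polynomial $v$-weights already attached to $f_3$. The restriction $|t|^{-1}\lesssim 2^k\leq 1$ is essential here, as it ensures $|t|\cdot 2^k\gtrsim 1$ (making the size gain from the integration by parts meaningful) while $2^k\leq 1$ prevents the symbol norm $\|m\|_{L^\infty_v\mathcal{S}^\infty_k}+\|\nabla_v m\|_{L^\infty_v\mathcal{S}^\infty_k}$ from erasing the $2^k$ gain. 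Once this bookkeeping is carried out, summation of the three contributions gives~(\ref{bilineardensity}).
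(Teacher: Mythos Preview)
Your factorization
\[
\|B_k(f_1,f_2)\|_{L^2_{x,v}}\le \|f_1\|_{L^2_{x,v}}\,\|E(P_k[f_2])(t,\cdot)\|_{L^\infty_y}
\]
is fine for the trivial $2^{3k}$ bound, but it cannot give the $|t|^{-3}$ part of either estimate. The reason is that the stationary-phase bound of Lemma~\ref{decayestimateofdensity}, applied to the density integral $E(P_k[f_2])(t,y)=\int\!\!\int e^{iy\cdot\xi-i\mu t\hat u\cdot\xi}c(u)m(\xi,u)\psi_k(\xi)\widehat{f_2}(t,\xi,u)\,d\xi\,du$, costs $u$-derivatives of the integrand: with $a=0$ it requires control of $\nabla_u^{\alpha}\bigl(c(u)\widehat{f_2}(t,\xi,u)\bigr)$ for $|\alpha|\le 5$, hence of $\nabla_u^{\alpha}f_2$. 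Look again at the right-hand sides of \eqref{bilineardensity} and \eqref{bilineardensitylargek}: they contain only the undifferentiated norms $\|c(v)f_2\|$, $\|c(v)(\widehat{f_2}(0,v)-\nabla_v\!\cdot f_3)\|$, and $\|(|c|+|\nabla_vc|)f_3\|$, and the derivative budget $\sum_{|\alpha|\le 5}\nabla_v^{\alpha}$ is spent entirely on $f_1$, not on $f_2$ or $f_3$. Your plan forces the regularity onto the wrong function.

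The placement of $\nabla_v^{\alpha}$ on $f_1$ is the hint to the actual mechanism: the $|t|^{-3}$ decay is produced by the oscillation in the \emph{outer} variable $v$, through the phase $e^{it\hat v\cdot\xi}$ hidden in $E(P_k[f_2])(t,x+\hat v t)$. Once you factor pointwise as above, that oscillation has been discarded. The argument in the reference the paper cites (Lemmas~3.2--3.3 of \cite{wang3}) instead keeps the two factors coupled --- for instance by writing $\|B_k\|_{L^2_{x,v}}^2=\int_{\R^3}|E(P_k[f_2])(t,y)|^2\Bigl(\int_{\R^3}|f_1(t,y-\hat v t,v)|^2\,dv\Bigr)dy$ and extracting the decay from the density $\int|f_1(t,y-\hat v t,v)|^2\,dv$, which is where the $\nabla_v^{\alpha}f_1$ terms and the weight $(1+|x|+|v|)^{20}$ on $f_1$ actually get used. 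Your telescoping decomposition of $\widehat{f_2}$ for \eqref{bilineardensity} is the right first move, but each of the three pieces then has to be paired with the $v$-oscillation on the $f_1$ side, not fed back into Lemma~\ref{decayestimateofdensity} on the $u$ side.
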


  \begin{proof}
  See \cite{wang3}[Lemma 3.2\& Lemma 3.3].
  \end{proof}

\appendix
\section{Commutation rules}\label{toolkit}
  Although it is tedious to compute the commutation rules and check all the first order derivatives in  $\mathcal{S}$ acting on the inhomogeneous modulation $\tilde{d}(t,x,v)$ and , for the sake of readers, we do detailed computations and  prove   Lemma \ref{firstordercommutationlemma} and   Lemma \ref{derivativesofcoefficient} in this appendix.

In the following lemma, we compute several basic quantities to be used later, which also directly imply our desired results in    Lemma \ref{derivativesofcoefficient}. 
\begin{lemma}\label{derivativeofmodulation}
For any $\rho \in \mathcal{S}, |\rho|=1$, the following equality holds, 
\be\label{jan26eqn100}
\Lambda^\rho(\tilde{d}(t,x,v)) = e^\rho_1(x,v) \tilde{d}(t,x,v) + e^\rho_2(x,v),  \quad 
D_v(\tilde{d}(t,x,v)) = \hat{e}_1(x,v) \tilde{d}(t,x,v) + \hat{e}_2(x,v),
\ee
where the coefficients  satisfy the following estimate, 
\be\label{jan26eqn101}
|e^\rho_1(x,v)|+ |e^\rho_2(x,v)| + |\hat{e}_1(x,v)|+ |\hat{e}_2(x,v)|   \lesssim 1,\quad |\hat{e}_2(x,v)|\psi_{\geq 2}(|x|)=0.
\ee
Moreover, the following rough estimate holds for any $\beta\in \mathcal{S}$, 
\be\label{jan26eqn401}
\sum_{i=1,2}|\Lambda^\beta e^\rho_i(x,v) | + |\Lambda^\beta \hat{e}_i (x,v) |\lesssim (1+|x|)^{|\beta|} (1+|v|)^{|\beta|}.
\ee
\end{lemma}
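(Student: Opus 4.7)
The overall strategy is to exploit the explicit decomposition
\[
\tilde{d}(t,x,v)=\frac{t}{1+|v|^2}-\frac{\omega(x,v)}{\sqrt{1+|v|^2}},
\]
and to note that every vector field $\Lambda^\rho$, $\rho\in\mathcal{K}$, $|\rho|=1$, as well as the bulk derivative $D_v$, treats ``$t$'' purely as a parameter. Consequently $\Lambda^\rho\tilde{d}$ is affine in $t$ with coefficients depending only on $(x,v)$, and writing it in the ansatz form $e_1^\rho(x,v)\tilde{d}(t,x,v)+e_2^\rho(x,v)$ just amounts to re-expressing the coefficient of $t$ as $e_1^\rho(x,v)(1+|v|^2)^{-1}$, absorbing the compensating multiple of $\omega(x,v)/\sqrt{1+|v|^2}$ into $e_2^\rho(x,v)$. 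Thus the entire content of the lemma becomes: for each of the seventeen listed vector fields, verify that the resulting coefficients, together with all their derivatives under repeated application of $\Lambda^\beta$, enjoy the bounds \textup{(\ref{jan26eqn101})}--\textup{(\ref{jan26eqn401})}.

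The verification proceeds by grouping the vector fields according to where they act. For the purely spatial fields $\Gamma_2=\psi_{\geq 1}(|v|)S^x$, $\Gamma_{i+5}=\psi_{\geq 1}(|v|)\Omega_i^x$, and $\Gamma_{i+11}=\psi_{\leq 0}(|v|)\partial_{x_i}$, there is no $t$-dependence in the output, so $e_1^\rho\equiv 0$ and $e_2^\rho$ is $-(1+|v|^2)^{-1/2}$ times the corresponding $x$-derivative of $\omega(x,v)$; using the homogeneity of $\omega_+$ in $(x,v)$ and the cutoff $\psi_{\geq 0}(|x|^2+(x\cdot v)^2)$, the required uniform bound $|e_2^\rho|\lesssim 1$ follows. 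For the $v$-velocity fields $\Gamma_1=\psi_{\geq 1}(|v|)\widehat{S}^v$, $\Gamma_{i+2}=\psi_{\geq 1}(|v|)\widehat{\Omega}_i^v$, and $\Gamma_{i+8}=\psi_{\leq 0}(|v|)K_{v_i}$, one uses the identities \textup{(\ref{noveq751})}--\textup{(\ref{noveq753})} to reduce them on the $t$-part of $\tilde{d}$: for instance $\widehat{S}^v[t/(1+|v|^2)]$ produces a factor of order $|v|/(1+|v|^2)^2\cdot t$, which rewritten through $t=(1+|v|^2)(\tilde{d}+\omega/\sqrt{1+|v|^2})$ gives $e_1^\rho$ of order $|v|/(1+|v|)^2$---bounded under $\psi_{\geq 1}(|v|)$. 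The mixed vector fields $\tilde{\Omega}_i$ contain no time derivative, and the cancellation $[\hat{v}\cdot\nabla_x,\tilde{\Omega}_i]=0$ from \textup{(\ref{dec27eqn1})} shows that their action on $\omega(x,v)$ again yields a bounded $e_2^\rho$ (with $e_1^\rho=0$). In every case the higher-order bound \textup{(\ref{jan26eqn401})} follows directly from the fact that each iterated application of $\Lambda^\beta$ introduces at most one extra factor of $|x|+|v|$.

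For the $D_v$ identity we exploit a crucial algebraic observation: since $D_v|x+\hat v t|^2=2t\nabla_v\hat v\cdot(x+\hat v t)-2t\nabla_v\hat v\cdot(x+\hat v t)=0$ and obviously $D_v(t^2)=0$, in the region $|x|\geq 2$ where $\omega=\omega_+$ the factorization \textup{(\ref{eqn20})} gives
\[
0=D_v\bigl(t^2-|x+\hat v t|^2\bigr)=\bigl(D_v\tilde{d}\bigr)\bigl(t-\sqrt{1+|v|^2}\,\omega_-(x,v)\bigr)+\tilde{d}\cdot D_v\bigl(t-\sqrt{1+|v|^2}\,\omega_-(x,v)\bigr).
\]
Since the second factor $t-\sqrt{1+|v|^2}\,\omega_-(x,v)$ is non-vanishing by \textup{(\ref{eqn50})} for $|x|\geq 2$, this forces $D_v\tilde{d}$ to be a pure multiple of $\tilde{d}$ there, i.e.\ $\hat e_2(x,v)$ vanishes on $|x|\geq 2$, which is exactly the support claim in \textup{(\ref{jan26eqn101})}. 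The quantitative bound on $\hat e_1$ then follows from the lower bound $|t-\sqrt{1+|v|^2}\omega_-|\gtrsim |x|$ in the same region, while in the complementary region $|x|\leq 2$ one computes $D_v\tilde{d}$ directly from \textup{(\ref{pullback})} and bounds both $\hat e_1$ and $\hat e_2$ trivially using the localization provided by the cutoff in $\omega(x,v)$.

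The main obstacle I anticipate is purely bookkeeping: making sure that across all seventeen vector fields the powers of $(1+|v|)$ generated by differentiating the factor $(1+|v|^2)^{-1}$ or by the matrix $\nabla_v\hat v$ are matched by the velocity cutoffs $\psi_{\geq 1}(|v|)$ or $\psi_{\leq 0}(|v|)$ attached to the corresponding $\Gamma_i$. This is precisely why the cutoffs were inserted in the definitions \textup{(\ref{dec26eqn5})}--\textup{(\ref{dec26eqn6})}, and once they are tracked consistently the uniform bound $|e_i^\rho|\lesssim 1$ drops out. The growth in \textup{(\ref{jan26eqn401})} under repeated $\Lambda^\beta$ then follows from the Leibniz rule, recalling that each factor $x_j$ or $v_j$ appearing in $\omega(x,v)$ or in the coefficients of the vector fields themselves contributes at most one power of $|x|+|v|$ per derivative.
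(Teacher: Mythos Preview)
Your factorization argument for $D_v\tilde d$ is correct and genuinely nicer than what the paper does: since $D_v$ annihilates $t^2-|x+\hat v t|^2$, and this factors as $\tilde d\cdot(t-\sqrt{1+|v|^2}\,\omega_-)$ on $\{|x|\ge 2\}$, the affine-in-$t$ structure forces $\hat e_2$ to vanish there. The paper instead computes the $\tilde v\cdot D_v$ and $\tilde V_i\cdot D_v$ components acting on $\omega_\pm$ by hand (equations \textup{(\ref{jan25eqn51})}--\textup{(\ref{jan25eqn710})}), watching the factor $t/\sqrt{1+|v|^2}-\omega_\pm$ emerge explicitly; your route avoids that calculation.

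There is, however, a real gap in your treatment of the $K_v$-type fields $\widehat S^v$, $\widehat\Omega_i^v$, $K_{v_i}$. First, the identities \textup{(\ref{noveq751})}--\textup{(\ref{noveq753})} you invoke apply to functions of the form $f(t,x+\hat v t)$, and $\tilde d$ is not of that form. Second, and more importantly, you only check that $e_1^\rho$ is bounded and dismiss $e_2^\rho$ as ``bookkeeping''. That is where the content is: once you substitute $t=(1+|v|^2)\tilde d+\sqrt{1+|v|^2}\,\omega$ into the $t$-coefficient, $e_2^\rho$ picks up a term of size $\omega/\sqrt{1+|v|^2}\sim|x|$, which must cancel against the action of the field on $-\omega/\sqrt{1+|v|^2}$. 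That cancellation is not automatic and is exactly what the paper's explicit computations (e.g.\ \textup{(\ref{jan26eqn181})}--\textup{(\ref{jan26eqn183})}) are verifying.

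The gap is easily closed using your own $D_v$ result. From \textup{(\ref{dec22eqn1})} one has $K_v=D_v+(1+|v|^2)\,\tilde d\,\nabla_v\hat v\cdot\nabla_x$, hence
\[
K_v\tilde d \;=\; D_v\tilde d \;+\; (1+|v|^2)\,(\nabla_v\hat v\cdot\nabla_x\tilde d)\,\tilde d.
\]
You already have $D_v\tilde d=\hat e_1\tilde d+\hat e_2$, and a short computation (using \textup{(\ref{jan17eqn1})} and your own bound on $\nabla_x\tilde d$ from the spatial-field case) shows $(1+|v|^2)\nabla_v\hat v\cdot\nabla_x\tilde d$ is uniformly bounded. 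Projecting onto $\tilde v$, $\tilde V_i$, $e_i$ then gives the $\widehat S^v$, $\widehat\Omega_i^v$, $K_{v_i}$ identities, with $e_2^\rho$ inheriting the support property of $\hat e_2$.
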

\begin{proof}
First of all, we compute   two basic quantities which will be used later. Recall (\ref{eqn19}). For any $\mu\in\{+,-\}$, we have
\[
\big(\Omega^v_i-\frac{t}{\sqrt{1+|v|^2}}\Omega^x_i\big)\omega_{\mu}(x,v)= \big[\big(\Omega^v_i-\frac{t}{\sqrt{1+|v|^2}}\Omega^x_i\big) \big( (x\cdot v) + \mu \sqrt{(x\cdot v)^2+|x|^2}\big)\big] 
\vspace{-0.5\baselineskip}
\]
\[
=\big[(\tilde{V}_i \cdot x)+ \mu \frac{(x\cdot v)(\tilde{V}_i \cdot x)}{\sqrt{(x\cdot v)^2+|x|^2}} -\mu \frac{t}{\sqrt{1+|v|^2}}\frac{\tilde{V}_i\cdot x}{\sqrt{(x\cdot v)^2+|x|^2}}\big] 
\]
\be\label{jan25eqn51}
= \frac{-\mu \tilde{V}_i\cdot x }{\sqrt{(x\cdot v)^2+|x|^2} }\big(\frac{t}{\sqrt{1+|v|^2}}- ( x\cdot v    + \mu \sqrt{(x\cdot v)^2+|x|^2} ) \big)= \frac{-\mu \tilde{V}_i\cdot x }{\sqrt{(x\cdot v)^2+|x|^2} }\big(\frac{t}{\sqrt{1+|v|^2}}- \omega_{\mu}(x,v)\big),
\vspace{-0.5\baselineskip}
\ee
\[
\big(S^v -\frac{t}{ (1+|v|^2)^{3/2}}S^x 	\big) \omega_{\mu}(x,v)=   \big(S^v -\frac{t}{ (1+|v|^2)^{3/2}}S^x 	\big) \big( (x\cdot v) + \mu \sqrt{(x\cdot v)^2+|x|^2}\big)  
\]
\[
=(\tilde{v}\cdot x +\mu \frac{(x\cdot v )\tilde{v}\cdot x }{\sqrt{(x\cdot v)^2 +|x|^2}}) - \frac{t}{(1+|v|^2)^{3/2}}(\tilde{v}\cdot v  + \mu \frac{(x\cdot v)(\tilde{v}\cdot v ) + x \cdot \tilde{v}}{\sqrt{(x\cdot v)^2 +|x|^2}}  )
\]
\be\label{jan25eqn52}
=   \frac{-\mu x\cdot \tilde{v}   }{\sqrt{(x\cdot v)^2+|x|^2} }\big(\frac{t}{\sqrt{1+|v|^2}}- \omega_{\mu}(x,v)\big) - \frac{t|v|}{(1+|v|^2)^{3/2}}. 
\ee
Therefore,
\be\label{jan25eqn71}
 \big(S^v -\frac{t}{ (1+|v|^2)^{3/2}}S^x 	\big)\big(  \frac{t}{\sqrt{1+|v|^2}}- \omega_{\mu}(x,v) \big)
=\frac{\mu \tilde{v} \cdot x }{\sqrt{(x\cdot v)^2+|x|^2} }\big(\frac{t}{\sqrt{1+|v|^2}}- \omega_{\mu}(x,v)\big),
\ee
\be\label{jan25eqn710}
  \big(\Omega^v_i-\frac{t}{\sqrt{1+|v|^2}}\Omega^x_i\big) \big(  \frac{t}{\sqrt{1+|v|^2}}- \omega_{\mu}(x,v) \big)
=\frac{\mu \tilde{V}_i \cdot x }{\sqrt{(x\cdot v)^2+|x|^2} }\big(\frac{t}{\sqrt{1+|v|^2}}- \omega_{\mu}(x,v)\big).
\ee 
Now, we consider the case with the cutoff function defined in (\ref{sepeqn975}). From the equalities (\ref{jan25eqn51}), we have, 
\[
\big(\Omega^v_i-\frac{t}{\sqrt{1+|v|^2}}\Omega^x_i\big)\omega(x,v)=    \frac{- \tilde{V}_i\cdot x }{\sqrt{(x\cdot v)^2+|x|^2} }\big(\frac{t}{\sqrt{1+|v|^2}}- \omega_{+}(x,v)\big) \psi_{\geq 0 }((x\cdot v)^2 +|x|^2)
\]
\be\label{sepeq21}
+ 2 \omega_{+}(x,v) {\psi}'_{\geq 0 }((x \cdot v )^2 +|x|^2)\big((x\cdot v)(\tilde{V}_i\cdot x ) -\frac{t}{\sqrt{1+|v|^2}}\tilde{V}_i \cdot x 	 \big)=c_{i}(x, v) \tilde{d}(t,x,v) + e_{i}(x, v),
\ee
where $c_i(x,v)$ and  $e_i(x,v)$, $i\in\{1,2,3\}$, are defined as follows,
\be\label{jan26eqn500}
c_i(x,v)=  -\sqrt{1+|v|^2}  \big[\psi_{\geq 0 }((x\cdot v)^2 +|x|^2)\frac{ \tilde{V}_i\cdot x}{\sqrt{(x\cdot v)^2+|x|^2}} +2 \omega_{+}(x, v){\psi}'_{\geq 0}((x\cdot v)^2 +|x|^2) \tilde{V}_i \cdot  x\big],
\ee
\[
  e_i(x,v)=\frac{ \tilde{V}_i\cdot x }{\sqrt{(x\cdot v)^2+|x|^2} } \omega_{+}(x,v)\psi_{<0}((x\cdot v)^2+|x|^2)  \psi_{\geq 0 }((x\cdot v)^2 +|x|^2)
\]
\be\label{jan26eqn501}
+ 2 \omega_{+}(x,v) {\psi}'_{\geq 0 }((x \cdot v )^2 +|x|^2)\big((x\cdot v)(\tilde{V}_i\cdot x ) -\omega(x,v) \tilde{V}_i \cdot x 	 \big).
\ee
From the detailed formulas of $c_i(x,v)$ and $e_i(x,v)$ in (\ref{jan26eqn500}) and (\ref{jan26eqn501}),  we have 
\be\label{feb11eqn1}
|c_i(x,v)|  \lesssim 1+|v|, \quad |e_i(x,v)|\lesssim 1, \quad |e_i(x,v)|\psi_{\geq 2}(|x|)=0.
\ee

From the equality (\ref{jan25eqn52}), we have
\[
\big(S^v -\frac{t}{ (1+|v|^2)^{3/2}}S^x 	\big)\omega(x,v)= \big[\frac{-  \tilde{v} \cdot x }{\sqrt{(x\cdot v)^2+|x|^2} }\big(\frac{t}{\sqrt{1+|v|^2}}- \omega_{+}(x,v)\big) - \frac{t|v|}{(1+|v|^2)^{3/2}}\big]\]
\[
\times \psi_{\geq 0}(|x|^2+(x\cdot v)^2)
+ 2\omega_{+}(x,v)\psi'_{\geq 0}((x\cdot v)^2 +|x|^2)\big( (x\cdot v)\tilde{v}\cdot x - \frac{t}{(1+|v|^2)^{1/2}} \tilde{v}\cdot x  \big)
\]
\be\label{sepeq6}
= \tilde{c}(x,v)\tilde{d}(t,x,v) -\frac{t|v|}{(1+|v|^2)^{3/2}}\psi_{\geq 0}( (x\cdot v)^2+|x|^2) + \tilde{e}(x,v),
\ee
where
\be\label{jan26eqn502}
\tilde{c}(x,v)= - x\cdot \tilde{v}  \sqrt{1+|v|^2} \big[  \frac{\psi_{\geq 0}((x\cdot v)^2+|x|^2)}{\sqrt{(x\cdot v)^2+|x|^2}} + 2
 \psi'_{\geq 0}((x\cdot v)^2+|x|^2) { \omega_{+}(x, v)}   \big],
\ee
\[
\tilde{e}(x, v)= 2\omega_{+}(x,v)\psi'_{\geq 0}((x\cdot v)^2 +|x|^2)(\tilde{v}\cdot x )\big( x\cdot v-\omega(x,v) \big) \]
\be\label{jan26eqn503}
+\omega_+(x,v) \frac{\tilde{v}\cdot x}{\sqrt{|x|^2 +(x\cdot v)^2}} \psi_{\geq 0}((x\cdot v)^2 +|x|^2) \psi_{< 0}((x\cdot v)^2 +|x|^2).
\ee
From the detailed formulas of $\tilde{c}(x,v)$ and $\tilde{e}(x,v)$ in (\ref{jan26eqn502}) and (\ref{jan26eqn503}), we have
\be\label{feb11eqn2}
|\tilde{c}(x,v)| \lesssim 1, \quad |\tilde{e}(x,v)| \lesssim 1, \quad |\tilde{e}(x,v)|\psi_{\geq 2}(|x|)=0.
\ee

Now, we are ready to compute the quantity $\Lambda^\rho( \tilde{d}(t,x,v))$, where $\rho\in \mathcal{S}, |\rho|=1$. Recall that
\be\label{feb11eqn8}
\tilde{d}(t,x,v)= \frac{1}{\sqrt{1+|v|^2}}\big(  \frac{t}{\sqrt{1+|v|^2}}- \omega(x,v)\big)
\ee
\be\label{jan26eqn129}
= \frac{1}{\sqrt{1+|v|^2}}\big(  \frac{t}{\sqrt{1+|v|^2}} - (x\cdot v + \sqrt{(x\cdot v)^2 +|x|^2}) \psi_{\geq 0}((x\cdot v)^2 +|x|^2).
\ee
A direct computation gives us the following equality,  
\be\label{feb11eqn3}
\nabla_x \tilde{d}(t,x,v) = \frac{-1}{\sqrt{1+|v|^2}} \big(    {v} + \frac{(x\cdot v) v + x}{\sqrt{(x\cdot v)^2+|x|^2}}  \big)\psi_{\geq 0}\big((x\cdot v)^2+|x|^2\big) - \frac{ \omega_{+}(x,v) }{\sqrt{1+|v|^2}}	\psi'_{\geq 0}((x\cdot v)^2 +|x|^2)\big((x\cdot v) v + x  \big).
\ee
Therefore, from the above equality (\ref{feb11eqn3}), we have 
\be\label{feb11eqn11}
|\tilde{v} \cdot \nabla_x \tilde{d}(t,x,v)|\lesssim 1, \quad \sum_{i=1,2,3} |\tilde{V}_i \cdot \nabla_x \tilde{d}(t,x,v)  | \lesssim (1+|v|)^{-1}. 
\ee

Moreover, from the 	equalities (\ref{sepeq21}) and (\ref{sepeq6}), we know that the following two equalities hold, 
\be\label{jan26eqn191}
\big(\Omega^v_i-\frac{t}{\sqrt{1+|v|^2}}\Omega^x_i\big)\big(\frac{t}{\sqrt{1+|v|^2}}- \omega(x,v)\big) = - \big(\Omega^v_i-\frac{t}{\sqrt{1+|v|^2}}\Omega^x_i\big)  \omega(x,v)=-\big( c_{i}(x, v) \tilde{d}(t,x,v) + e_{i}(x, v)\big), 
\ee
\[
\big(S^v -\frac{t}{(1+|v|^2)^{3/2}}S^x \big)\big(\frac{t}{\sqrt{1+|v|^2}}- \omega(x,v)\big) =  
-\frac{t|v|}{(1+|v|^2)^{3/2}} - \big(S^v -\frac{t}{(1+|v|^2)^{3/2}}S^x \big)  \omega(x,v) 
 \]
 \be\label{jan26eqn181}
 =-\tilde{c}(x,v)\tilde{d}(t,x,v) -\frac{t|v|}{(1+|v|^2)^{3/2}}\psi_{< 0}( (x\cdot v)^2+|x|^2) - \tilde{e}(x,v)
 = \tilde{\hat{c}}(x,v)\tilde{d}(t,x,v) + \tilde{\hat{e}}(x,v),
 \ee
 where
 \be\label{jan26eqn182}
 \tilde{\hat{c}}(x,v):= -\tilde{c}(x,v) - \frac{|v|}{\sqrt{1+|v|^2}} \psi_{< 0}( (x\cdot v)^2+|x|^2),
 \ee
 \be\label{jan26eqn183}
\tilde{\hat{e}}(x,v):= - \tilde{e}(x,v) - \frac{|v|}{1+|v|^2}\omega_{+}(x,v)\psi_{\geq 0}( (x\cdot v)^2+|x|^2) \psi_{< 0}( (x\cdot v)^2+|x|^2).
 \ee
From the detailed formulas in  above equalities    (\ref{jan26eqn182}) and (\ref{jan26eqn183}) and the estimate of coefficients in (\ref{feb11eqn2}), we have 
\be\label{feb11eqn6}
|\tilde{\hat{c}}(x,v)| + |\tilde{\hat{e}}(x,v)| \lesssim 1, \quad |\tilde{\hat{e}}(x,v)|\psi_{\geq 2}(|x|)=0.
\ee

Recall (\ref{feb11eqn8}). To sum up,  from  the equalities  (\ref{feb11eqn3}), (\ref{jan26eqn191}), and (\ref{jan26eqn181}) and the   decomposition of ``$D_v$'' in (\ref{eqq13}),  we know  that the desired equalities in  (\ref{jan26eqn100}) hold for some uniquely determined coefficients.

Moreover, our  desired estimates (\ref{jan26eqn101}) and (\ref{jan26eqn401}) hold   from the estimate (\ref{feb11eqn11}) and  the estimates of coefficients in (\ref{feb11eqn1}) and (\ref{feb11eqn6}).

\end{proof}
 
Our desired results in Lemma  \ref{firstordercommutationlemma} follow directly from the following  Lemma. 

\begin{lemma}\label{proofoffirstordercommutationrule}
For any $\rho \in \mathcal{K}, |\rho|=1$, and $i\in \{1,\cdots, 7\}$, the following commutation rule holds, 
\be\label{jan25eqn81}
[X_i, \Lambda^\rho]= \sum_{\kappa\in \mathcal{K},|\kappa|=1} \big(\tilde{c}_{i}^{\rho, \kappa}( x,v) \tilde{d}(t,x,v) + \hat{c}_{i}^{\rho, \kappa}( x,v)\big) \Lambda^\kappa, 
\ee
where the coefficients $\tilde{c}_{i}^{\rho, \kappa}(t,x,v) $ and $\hat{c}_{i}^{\rho, \kappa}(t,x,v)$ satisfy the following  rough estimates,
\be\label{jan26eqn1}
 |\tilde{c}_{i}^{\rho, \kappa}( x,v)  | +  |\hat{c}_{i}^{\rho, \kappa}( x,v)  |  \lesssim \min\{(1+|v|)^{1+c_{\textup{vn}}(\kappa)-c_{\textup{vn}}(\rho)},(1+|v|)^{ c_{\textup{vm}}(\kappa)-c_{\textup{vm}}(\rho)}\},
\ee
\be\label{feb11eqn21}
| \Lambda^\beta \big( \tilde{c}_{i}^{\rho, \kappa}( x,v)\big)|+ | \Lambda^\beta \big( \hat{c}_{i}^{\rho, \kappa}( x,v)\big)|\lesssim (1+|v|)^{|\beta|+2} (1+|x|)^{|\beta|+2}, \quad \beta \in \mathcal{S}.
\ee
If   $i(\kappa)-i(\rho)>0$,  where $i(\kappa)$ denotes the total number of vector fields $\Omega_i^x$ in $\Lambda^\kappa$, then the following improved estimate holds for the coefficients $\hat{c}_{i}^{\rho, \kappa}( x,v)$,
\be\label{april5eqn16}
  |\hat{c}_i^{\rho, \kappa}(x,v)|\lesssim  (1+|v|)^{-1+c_{\textup{vm}}(\kappa)-c_{\textup{vm}}(\rho)}.
\ee
Moreover, for the case when $i=1,$ the following improved estimate holds, 
\be\label{jan31eqn71}
 |\tilde{c}_{1}^{\rho, \kappa}(  x,v)  | +  |\hat{c}_{1}^{\rho, \kappa}( x,v)  |  \lesssim  (1+|v|)^{-1+c_{\textup{vn}}(\kappa)-c_{\textup{vn}}(\rho)}.
\ee
\end{lemma}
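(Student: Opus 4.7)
\medskip

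\noindent\textbf{Proof proposal.} The plan is to verify the commutation rule \textup{(\ref{jan25eqn81})} by direct case-by-case computation over the seven possible choices of $X_i$ and the seventeen possible choices of $\Lambda^\rho$, grouped systematically by the structural type of $\Lambda^\rho$. The starting point is the observation that every $X_i$ is (up to a scalar or vector coefficient depending only on $v$ and an $L^\infty$-localizer in $|v|$) a component of the bulk derivative $D_v=\nabla_v - t\nabla_v\hat v\cdot\nabla_x$, while every $\Lambda^\rho$ is either a first-order differential operator in $(x,v)$ whose coefficients are smooth functions of $x,v$, or is built from $K_v$, which itself differs from $D_v$ by the \emph{modulation factor} $\tilde d(t,x,v)$ through the identity $D_v - K_v = -\sqrt{1+|v|^2}\,\tilde d(t,x,v)\,\nabla_v\hat v\cdot\nabla_x$. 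Consequently, every commutator $[X_i,\Lambda^\rho]$ is, at worst, a first-order operator in $(x,v)$ whose coefficients are polynomial expressions in $\tilde d(t,x,v)$ with coefficients depending only on $x,v$.

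\medskip

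\noindent First I would handle the cases where $\Lambda^\rho\in\{\widehat S^v,\widehat\Omega_i^v\}$, which are exactly $\tilde v\cdot K_v$ and $\tilde V_i\cdot K_v$. Here the commutator $[D_v,K_v]$ produces terms involving $\nabla_v(\sqrt{1+|v|^2}\omega(x,v))$ and $\nabla_x\omega(x,v)$, both of which were computed explicitly in the proof of Lemma \ref{derivativeofmodulation}; re-expressing the resulting combinations by means of Lemma \ref{twodecompositionlemma} feeds them back into the basis $\{\Lambda^\kappa:\kappa\in\mathcal{K},|\kappa|=1\}$, with at most one explicit $\tilde d(t,x,v)$-factor accompanying each coefficient. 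Next I would handle the pure $x$-derivative cases $\Lambda^\rho\in\{S^x,\Omega_i^x,\p_{x_i},\tilde\Omega_i\}$: these commute with $\nabla_x$ and reduce the computation to $[\nabla_v, \tilde v\cdot]$ or $[\nabla_v,\tilde V_i\cdot]$ hitting the $v$-coefficients, together with the Leibniz rule on the cutoff $\psi_{\geq 1}(|v|)$ or $\psi_{\leq 0}(|v|)$ which produces bounded error terms localized in the overlap region $|v|\sim 1$. Finally, the $K_{v_i}$-cases are symmetric to the first group after using $K_v=D_v+\sqrt{1+|v|^2}\,\tilde d(t,x,v)\nabla_v\hat v\cdot\nabla_x$. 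The size estimates \textup{(\ref{jan26eqn1})} and \textup{(\ref{feb11eqn21})} then follow from \textup{(\ref{jan26eqn101})} and \textup{(\ref{jan26eqn401})} in Lemma \ref{derivativeofmodulation}, together with the elementary bounds on $\tilde v$, $\tilde V_i$, and $\nabla_v\hat v$.

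\medskip

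\noindent The main obstacle is the verification of the two improved estimates \textup{(\ref{april5eqn16})} and \textup{(\ref{jan31eqn71})}, both of which require extracting an additional factor of $(1+|v|)^{-1}$ or $(1+|v|)^{-2}$. The improved estimate for $i=1$ reflects the fact that $X_1=\psi_{\geq 1}(|v|)\tilde v\cdot D_v$ is the \emph{radial} component of $D_v$, whose coefficient in the $S^x$ direction (namely $t/(1+|v|^2)^{3/2}$) carries two more powers of $(1+|v|)^{-1}$ than the tangential $X_{i+1}$. To capture this one must keep track of the exact $v$-scaling in each step of the computation and observe that the dangerous coefficients produced by the commutator either come paired with an extra factor of $\tilde v\cdot\nabla_v\hat v\sim (1+|v|)^{-3}$ (instead of $\tilde V_i\cdot\nabla_v\hat v\sim (1+|v|)^{-1}$), or are multiplied by $\tilde v\cdot\tilde V_i=0$ and thus vanish. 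The improved estimate \textup{(\ref{april5eqn16})} in the case $i(\kappa)>i(\rho)$ is of the same flavor: the only mechanism through which commutation can \emph{create} an $\Omega_i^x$-factor is the identity $t\nabla_v\hat v\cdot\nabla_x = -\tilde d(t,x,v)\sqrt{1+|v|^2}\,\tilde V_j\cdot\nabla_x + (\text{lower order})$ absorbed in the $\tilde d$-proportional part, so any $\Omega_i^x$-contribution appearing in $\hat c_i^{\rho,\kappa}$ (the $\tilde d$-free remainder) must come from $\nabla_v$ hitting the coefficient $\omega(x,v)$ in the definition of $\widehat\Omega_i^v$ or $\widehat S^v$, which by \textup{(\ref{feb11eqn11})} provides the extra $(1+|v|)^{-1}$ gain. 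With these structural observations in place, the remaining task is bookkeeping.
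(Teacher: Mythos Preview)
Your approach matches the paper's: both proceed by case-by-case explicit computation of the commutators, using the derivative identities for $\omega(x,v)$ established in Lemma~\ref{derivativeofmodulation} (specifically \textup{(\ref{sepeq21})} and \textup{(\ref{sepeq6})}) to extract the $\tilde d(t,x,v)$ factor and leave time-independent remainders. One small correction to your heuristic for \textup{(\ref{april5eqn16})}: the extra $(1+|v|)^{-1}$ gain in the $\tilde d$-free part $\hat c_i^{\rho,\kappa}$ does not come from \textup{(\ref{feb11eqn11})} but from the fact that these remainders---the functions $e_i$, $\tilde e$, $\hat e_i$, $e_{i,j}$ in \textup{(\ref{jan26eqn501})}, \textup{(\ref{jan26eqn503})}, \textup{(\ref{sepeq16})}, \textup{(\ref{sepeq31})}---all carry a factor of $\psi_{<0}((x\cdot v)^2+|x|^2)$ or $\psi'_{\geq 0}((x\cdot v)^2+|x|^2)$ localizing to $|x|^2+(x\cdot v)^2\lesssim 1$, where the explicit formulas are uniformly bounded by inspection.
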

\begin{proof}
Note that the following commutation rules hold for any differentiable coefficients ``$c_1(v)$'' and  ``$c_2(v)$'' and any two vector fields ``$X$'' and ``$Y$''
\be\label{jan26eqn3100}
[c_1(v)X, c_2(v) Y] = c_1(v)( X c_2(v)) Y + c_1(v) c_2(v)[X, Y]  - c_2(v)( Y c_1(v)) X, 
\ee
\[
[c_1(v)\big(S^v-\frac{t}{(1+|v|^2)^{3/2}}S^x \big) , c_2(v)\cdot \nabla_x ] = c_1(v)\big(\tilde{v}\cdot \nabla_v c_2(v)\big)\cdot \nabla_x,
\]
\[
[c_1(v)\big(\Omega_i^v-\frac{t}{(1+|v|^2)^{1/2}}\Omega_i^x \big) , c_2(v)\cdot \nabla_x ] =c_1(v) \big(\tilde{V}_i\cdot \nabla_v c_2(v)\big)\cdot \nabla_x, \quad i\in\{1,2,3\}.
\]
Moreover,   the following commutation rules hold  for the vector field $\tilde{\Omega}_i$,$i \in\{1,2,3\}$,
 \[
[ S^v- \frac{t}{(1+|v|^2)^{3/2}}S^x,X_i\cdot\nabla_x + V_i\cdot \nabla_v]= \frac{1}{|v|}\big(\Omega_i^v- \frac{t}{(1+|v|^2)^{3/2}}\Omega_i^x\big)- \frac{1}{|v|}\Omega_i^v + \frac{t}{(1+|v|^2)^{3/2}}\Omega_i^x=0,
\]
\[
[ \Omega_j^v- \frac{t}{(1+|v|^2)^{1/2}}\Omega_j^x,X_i\cdot\nabla_x + V_i\cdot \nabla_v]=\frac{1}{|v|}\big(V_j\cdot \nabla_v V_i-V_i\cdot \nabla_v V_j)\cdot \nabla_v\]
\be\label{april5eqn41}
-\frac{t}{(1+|v|^2)^{1/2}}\frac{1}{|v|}\big(V_j\cdot \nabla_x(X_i)-V_i\cdot\nabla_v V_j  \big)\cdot \nabla_x=\sum_{k=1,2,3} {\epsilon_{ij}^k} \big(\Omega_k^v-\frac{t}{(1+|v|^2)^{1/2}}\Omega_k^x \big),
\ee
where $\epsilon_{ij}^k\in\{0,1\}$ are some uniquely determined coefficients.

Hence, to prove the desired equality (\ref{jan25eqn81}) and the desired estimates (\ref{jan26eqn1}), (\ref{feb11eqn21}), (\ref{april5eqn16}), and (\ref{jan31eqn71}),   it would be sufficient to consider the case when  $\Lambda^{\rho}\in \{\psi_{\geq 1}(|v|)\widehat{S}^{v},\psi_{\geq 1}(|v|) \widehat{\Omega}^v_i, \psi_{< 1}(|v|) K_{v_i}, i\in\{1,2,3\}\}$. Moreover, from the equality (\ref{jan26eqn3100}),  {it would be sufficient to compute the commutation rules without the cutoff functions $\psi_{\geq 1}(|v|)$ and $\psi_{\leq 0}(|v|)$  during computations. }
 
   We first consider the case when $\Lambda^\rho =\psi_{\geq 1}(|v|) \widehat{S}^v $ and $X_i= \psi_{\geq 1}(|v|) \tilde{v}\cdot D_v$.  From the equality (\ref{sepeq6}), we have 
\[
[\big(S^v-\frac{t}{(1+|v|^2)^{3/2}}S^x \big) ,  \big(S^v  -  \frac{\omega(x,v)}{1+|v|^2} S^x  \big) ] 
\]
\[
=   - [\frac{t}{(1+|v|^2)^{3/2}}S^x, S^v] -[S^v, \frac{\omega(x,v)}{1+|v|^2} S^x ]+ [\frac{t}{(1+|v|^2)^{3/2}}S^x,   \frac{\omega(x,v)}{1+|v|^2} S^x ]   
\]
\[
=    \tilde{v}\cdot\nabla_v\big(\frac{t}{(1+|v|^2)^{3/2}} \big) S^x- \big(S^v(\frac{\omega(x,v)}{1+|v|^2}) \big) S^x+ \frac{t}{(1+|v|^2)^{5/2}} \big( S^x(\omega(x,v))\big) S^x    
\]
\[ 
= -\frac{3 t |v|}{(1+|v|^2)^{5/2}}S^x +\frac{2|v| \omega(x,v)}{(1+|v|^2)^2} S^x - \frac{1}{1+|v|^2}\big[\big(S^v -\frac{t}{ (1+|v|^2)^{3/2}}S^x 	\big)\omega(x,v)\big] S^x  
\]
\[ 
 = -\frac{3 t |v|}{(1+|v|^2)^{5/2}}S^x +\frac{2|v| \omega(x,v)}{(1+|v|^2)^2} S^x  -\frac{1}{1+|v|^2}
\big[ \tilde{c}(x,v) \tilde{d}(t,x,v)+ \tilde{e}(x,v) -\frac{t|v|}{(1+|v|^2)^{3/2}}\]
\be\label{jan26eqn32} 
 \times \psi_{\geq 0}( (x\cdot v)^2+|x|^2) \big] S^x=  	 \big( \widehat{c}(x,v)\tilde{d}(t,x,v) + \widehat{e}(x,v) )S^x,
\ee
where
\be\label{sepeq9}
\widehat{c}(x,v)=-  \frac{1}{1+|v|^2}  \big( \tilde{c}(x,v) +\frac{2|v|}{(1+|v|^2)^{1/2}}  +\frac{|v| }{(1+|v|^2)^{1/2}}\psi_{< 0}((x\cdot v)^2+|x|^2)\big),
\ee
\be\label{sepeq10} 
\widehat{e}(x,v)= -\frac{1}{1+|v|^2}\big(\tilde{e}(x,v)+ \frac{|v|}{1+|v|^2}\omega(x,v)\psi_{<0}((x\cdot v)^2+|x|^2)\big),
\ee
where $ \tilde{c}(x,v)$ and $\tilde{e}(x,v)$ are defined in (\ref{jan26eqn502}) and (\ref{jan26eqn503}) respectively.

From  the detailed formulas of $\widehat{c}(x,v)$ and $\widehat{e}(x,v)$ in (\ref{sepeq9}) and (\ref{sepeq10}) and the estimate of coefficients in (\ref{feb11eqn2}),  we have 
\be\label{feb11eqn31}
|\widehat{c}(x,v)|   + |\widehat{e}(x,v)|   \lesssim (1+|v|)^{-2}.
\ee

Next, we consider the case   $\Lambda^\rho =\psi_{\geq 1}(|v|) \widehat{\Omega}^v_i, X_i=\psi_{\geq 1}(|v|) \tilde{v}\cdot D_v$. Recall (\ref{sepeq6}), we have
\[
[\big(S^v-\frac{t}{(1+|v|^2)^{3/2}}S^x \big) , \big(\Omega^v_i -  \omega(x,v) \Omega^x_i \big) ]=   [S^v, \Omega^v_i] - [S^v, \omega(x,v) \Omega^x_i]-[\frac{t}{(1+|v|^2)^{3/2}}S^x, \Omega^v_i] 
\]
\[
+ [\frac{t}{(1+|v|^2)^{3/2}}S^x, \omega(x,v) \Omega^x_i]  = -\frac{1}{|v|}\Omega^v_i -(S^v \omega(x,v)) \Omega_i^x  - \omega(x,v) [S^v, \Omega_i^x]  + \frac{t}{(1+|v|^2)^{3/2}}[\Omega_i^v, S^x]
\]
\[
+\frac{t}{(1+|v|^2)^{3/2}}\big(S^x \omega(x,v)\big)\Omega_i^x  = -\frac{1}{|v|}\Omega^v_i +  \frac{1}{|v|}\frac{t}{(1+|v|^2)^{3/2}}\Omega_i^x
-\big(  \tilde{c}(x,v)\tilde{d}(t,x,v) + \tilde{e}(x,v) -\frac{t|v|}{(1+|v|^2)^{3/2}}\]
\be\label{jan26eqn34}
\times \psi_{\geq 0}( (x\cdot v)^2+|x|^2) \big) \Omega_i^x =  -\frac{1}{|v|}  \widehat{\Omega}_i + \big(\hat{c}_i (x,v) \tilde{d}(t,x,v) + \hat{e}_i (x,v) \big) \Omega_i^x, 
\ee
where
\be\label{sepeq15}
 \hat{c}_i  (x,v)=  -\tilde{c} (x,v)  + \frac{\sqrt{1+|v|^2}}{|v|} -\frac{|v|}{(1+|v|^2)^{1/2}}\psi_{<0}((x\cdot v)^2+|x|^2)  ,
 \ee
 \be\label{sepeq16}
  \hat{e}_i(x, v)=  -\tilde{e}(x,v)-\frac{|v|}{1+|v|^2}\omega(x,v)\psi_{<0}((x\cdot v)^2+|x|^2) . 
\ee
From the above detailed formulas and the estimate of coefficients in (\ref{feb11eqn2}),  we have 
\be\label{feb11eqn39}
\big( | \hat{c}_i  (x,v) | + | \hat{e}_i  (x,v)|\big)\psi_{\geq -10}(|v|)\lesssim 1.
\ee

 Next, we consider the case   $\Lambda^\rho = \psi_{\geq 1}(|v|) \widehat{S}^v, X_i= \psi_{\geq 1}(|v|) \tilde{V}_i\cdot D_v$. Recall (\ref{sepeq21}), we have
\[
[\big(\Omega^v_i-\frac{t}{\sqrt{1+|v|^2}}\Omega^x_i\big) , \big(S^v - \frac{\omega(x,v)}{(1+|v|^2)^{ }} S^x \big) ] = [\Omega^v_i, S^v ] - [\frac{t}{\sqrt{1+|v|^2}}\Omega^x_i, S^v ] - [\Omega^v_i, \frac{\omega(x,v)}{(1+|v|^2)^{ }} S^x  ] 
\]
\[
+ [\frac{t}{\sqrt{1+|v|^2}}\Omega^x_i, \frac{\omega(x,v)}{(1+|v|^2)^{ }} S^x]=\frac{1}{|v|}\Omega^v_i + \tilde{v}\cdot \nabla_v \big(\frac{t}{\sqrt{1+|v|^2}}\big) \Omega_i^x + \frac{t}{\sqrt{1+|v|^2}}[S^v, \Omega_i^x]
\]
\[
 - \big( \Omega^v_i(\frac{\omega(x,v)}{(1+|v|^2)^{ }})\big) S^x- \frac{\omega(x,v)}{1+|v|^2} [\Omega_i^v, S^x]  + \frac{t}{\sqrt{1+|v|^2}} \big( \Omega_i^x\big(\frac{\omega(x,v)}{1+|v|^2}\big)\big) S^x   
\]
 \[
= \frac{1}{|v|}\big(\Omega^v_i- \frac{\omega(x,v)}{1+|v|^2}\Omega^x_i -\frac{t|v|^2}{(1+|v|^2)^{3/2}}  \Omega_i^x \big)- \frac{1}{1+|v|^2}\big( c_{i}(x, v) \tilde{d}(t,x,v) + e_{i}(x, v)\big)S^x  
\]
\be\label{jan26eqn2}
= \frac{1}{|v|}\widehat{\Omega}^v_i+ \big[   \widehat{\tilde{c}}_i(x,v))  \tilde{d}(t,x,v)+\widehat{\tilde{e}}_i(x,v)) \big]\cdot (S^x, \Omega_1^x,\Omega_2^x, \Omega_3^x),
\ee
where
\be\label{sepeq27}
\widehat{\tilde{c}}_i(x,v))= - \big(\frac{c_i(x,v)}{1+|v|^2}, \frac{ |v| \delta_{1i}}{(1+|v|^2)^{1/2}} ,  \frac{  |v|\delta_{2i}}{(1+|v|^2)^{1/2}} ,  \frac{ |v| \delta_{3i}}{(1+|v|^2)^{1/2}}\big), \quad 
\widehat{\tilde{e}}_i(x,v)=- \big( \frac{e_i(x,v)}{1+|v|^2}, 0,0,0 \big).
\ee
From the above detailed formulas and the estimate (\ref{feb11eqn1}), we have
\be\label{feb11eqn51}
| \widehat{\tilde{c}}_{i;1}(x,v))|\lesssim (1+|v|)^{-1}, \quad \sum_{l=2,3,4} | \widehat{\tilde{c}}_{i;l}(x,v))|\lesssim 1,  \quad | \widehat{\tilde{e}}_i(x,v))| \lesssim (1+|v|)^{-2}. 
\ee

 Next, we consider the case   $\Lambda^\rho =\psi_{\geq 1}(|v|) \widehat{\Omega}_j^v, X_i=\psi_{\geq 1}(|v|) \tilde{V}_i\cdot D_v$, $i, j\in \{1,2,3\}$. Recall (\ref{sepeq21}), we have
\[
[\big(\Omega^v_i-\frac{t}{\sqrt{1+|v|^2}}\Omega^x_i\big) , \big(\Omega^v_j- \omega(x,v) \Omega^x_j\big) ]
=  [\Omega_i^v, \Omega_j^v] -  [\frac{t}{\sqrt{1+|v|^2}}\Omega^x_i,\Omega^v_j ] +  \frac{t}{\sqrt{1+|v|^2}}\big(\Omega^x_i   \omega(x,v) \big)\Omega^x_j  
\]
\[
- [\Omega^v_i, \omega(x,v) \Omega^x_j ]=  \big(\tilde{V}_i \cdot\nabla_v \tilde{V}_j -\tilde{V}_j \cdot\nabla_v \tilde{V}_i \big)\cdot \nabla_v  +\frac{t}{\sqrt{1+|v|^2}}\big(\tilde{V}_j\cdot \nabla_v \tilde{V}_i\big)\cdot \nabla_x- \omega(x,v)\big(\tilde{V}_i\cdot \nabla_v \tilde{V}_j\big)\cdot \nabla_x \]
\[  - \big(\Omega_i^v \omega(x,v) \big)  \Omega^x_j+ \frac{t}{\sqrt{1+|v|^2}} 
\big(\Omega^x_i \omega(x,v)\big)\Omega^x_j = \frac{(e_j\times e_i )\times v}{|v|^2}\cdot\big(\nabla_v - \omega(x,v)\nabla_x \big)
\]
\be\label{sepeq39}
 +  \sqrt{1+|v|^2}\tilde{d}(t,x,v) \big((\tilde{V}_j\cdot \nabla_v) \tilde{V}_i \big)\cdot \nabla_x-   \big[c_{i}(x, v)\tilde{d}(t,x,v) + e_{i}(x, v)\big]\Omega_j^x.
\ee
 Note that
\[
\nabla_x = \tilde{v} S^x + \sum_{k=1,2,3} \tilde{V}_k \Omega_k^x,   \Longrightarrow
(\tilde{V}_j\cdot\nabla_v)\tilde{V}_i \cdot\nabla_x= \big((   \tilde{V}_j \cdot\nabla_v )\tilde{V}_i\big)\cdot \tilde{v}  S^x + \sum_{k=1,2,3}  \big((   \tilde{V}_j \cdot\nabla_v )\tilde{V}_i\big)\cdot \tilde{V}_k\Omega^x_k,
\]
\[
(e_j \times e_i)\times v= \epsilon_{ij}^k \tilde{V}_k, \quad 
\]
where $\epsilon_{ij}^k\in \{0,1\}$, are some uniquely determined constants.  Hence, 
\be\label{jan26eqn35}
\textup{(\ref{sepeq39})}=  \frac{ \epsilon_{ij}^k   }{|v|}\widehat{\Omega}^v_k + \big[a_{i,j}(x,v)\tilde{d}(t,x,v) + e_{i,j}(x,v) \big]\cdot (S^x, \Omega_1^x,\Omega_2^x, \Omega_3^x),
\ee
where
\be\label{sepeq30}
a_{i,j}(x,v)=(a_{i,j}^0(x,v), a_{i,j}^1(x,v),a_{i,j}^2(x,v),a_{i,j}^3(x,v) ),\quad a_{i,j}^0(x,v)=  \sqrt{1+|v|^2}\big((   \tilde{V}_j \cdot\nabla_v )\tilde{V}_i\big)\cdot \tilde{v},
\ee
\[
  a_{i,j}^k(x,v)=  {\sqrt{1+|v|^2}}\big((   \tilde{V}_j \cdot\nabla_v )\tilde{V}_i\big)\cdot \tilde{V}_k -c_i(x,v) \delta_{jk} ,\quad k=1,2,3,\]
 \be\label{sepeq31} 
  e_{i,j}(x,v)=- (0,   e_i(x,v) \delta_{j1},  e_i(x,v) \delta_{j2},   e_i(x,v) \delta_{j3} ).
\ee
From the above detailed formulas and the estimate of coefficients in (\ref{feb11eqn1}), we have
\be\label{feb11eqn71}
|a_{i,j}(x,v)|\psi_{\geq -10}(|v|) \lesssim 1,  \quad |e_{i,j}(x,v)| \lesssim 1. 
\ee

 Lastly, we consider the case when $X_i=\psi_{\leq 0}(|v|) D_{v_i}, i \in \{1,2,3\}$. For this case we have $|v|\lesssim 1$, which means that our desired estimates in (\ref{jan26eqn1}), (\ref{feb11eqn21}), (\ref{april5eqn16}), and (\ref{jan31eqn71}) are trivial. Hence, it would be sufficient to verify that the desired equality (\ref{jan25eqn81}) holds.

  Note that   the following commutation rules hold for any  $i,j\in\{1,2,3\}$,
\[
[\p_{v_i}- t \p_{v_i}\hat{v}\cdot \nabla_x, \p_{v_j}-\omega(x,v)\sqrt{1+|v|^2}\p_{v_j}\hat{v}\cdot \nabla_x ]= -[t \p_{v_i}\hat{v}\cdot \nabla_x, \p_{v_j}]-[\p_{v_i}, \omega(x,v)\sqrt{1+|v|^2}\p_{v_j}\hat{v}\cdot \nabla_x ]
\]
\[
+ [t \p_{v_i}\hat{v}\cdot \nabla_x, \omega(x,v)\sqrt{1+|v|^2}\p_{v_j}\hat{v}\cdot \nabla_x]= t\p_{v_i}\p_{v_j}\hat{v}\cdot \nabla_x  -\p_{v_i}\big(\omega(x,v)\sqrt{1+|v|^2} \p_{v_j}\hat{v}\big)\cdot \nabla_x 
\]
\[
+ t\p_{v_i}\hat{v}\cdot \nabla_x \omega(x,v) \sqrt{1+|v|^2}\p_{v_j}\hat{v}\cdot \nabla_x
=( t - \omega(x,v)\sqrt{1+|v|^2}) \p_{v_i}\p_{v_j}\hat{v}\cdot \nabla_x \]
\be\label{eqq20}
+ \big[ t\p_{v_i}\hat{v}\cdot \nabla_x \omega(x,v)\sqrt{1+|v|^2}-\p_{v_i}\big( \omega(x,v)\sqrt{1+|v|^2}\big)\big]\p_{v_j}\hat{v}\cdot \nabla_x. 
\ee
 Note that
\[
 t\p_{v_i}\hat{v}\cdot \nabla_x \omega(x,v)\sqrt{1+|v|^2}-\p_{v_i}\big( \omega(x,v)\sqrt{1+|v|^2}\big)
\]
\[
= t \Big[\big(v_i+\frac{(x\cdot v)v_i + x_i}{\sqrt{(x\cdot v)^2 +|x|^2}}\big)- \frac{v_i}{(1+|v|^2)^{ }}\big(|v|^2 + \frac{(x\cdot v)(1+|v|^2)}{\sqrt{(x\cdot v)^2 +|x|^2}} \big)\Big]\psi_{\geq 0}((x\cdot v)^2 +|x|^2)
\]
\[
+   2t \omega_{+}(x,v)\Big(\psi'_{\geq 0}((x\cdot v)^2+|x|^2)((x\cdot v)v_i + x_i)- \frac{v_i}{1+|v|^2}\psi'_{\geq 0}((x\cdot v)^2+|x|^2)(x\cdot v)(1+|v|^2) \Big)
\]
\[
  -\frac{v_i}{\sqrt{1+|v|^2}} \omega(x,v)- \sqrt{1+|v|^2}\big(x_i + \frac{(x\cdot v) x_i}{\sqrt{(x\cdot v)^2 +|x|^2}} \big) \psi_{\geq 0} ((x\cdot v)^2+|x|^2)
\]
\[
-2\omega_{+}(x,v)\sqrt{1+|v|^2}\psi'_{\geq 0}((x\cdot v)^2+|x|^2)(x\cdot v) x_i
\]
\[
= t\big(\frac{v_i}{1+|v|^2} + \frac{x_i}{\sqrt{(x\cdot v)^2+|x|^2}} \big)\psi_{\geq 0}((x\cdot v)^2 +|x|^2)
+ 2 t \omega_{+	}(x,v) x_i\psi'_{\geq 0}((x\cdot v)^2+|x|^2)-\frac{v_i}{\sqrt{1+|v|^2}} \omega(x,v)
\]
\[
 -\sqrt{1+|v|^2}\omega(x,v)\frac{x_i}{\sqrt{(x\cdot v)^2+|x|^2}} 
-2 \sqrt{1+|v|^2}\omega_{+}(x,v)\psi'_{\geq 0}((x\cdot v)^2+|x|^2)(x\cdot v) x_i
\]
\[
=\psi_{\geq 0}((x\cdot v)^2+|x|^2) \big(t-\sqrt{1+|v|^2}\omega_{+}(x,v) \big)\big(\frac{v_i}{1+|v|^2} + \frac{x_i}{\sqrt{(x\cdot v)^2+|x|^2}} \big)
\]
\be\label{jan26eqn10}
+2x_i\omega_{+}(x,v)\psi'_{\geq 0}((x\cdot v)^2+|x|^2)\big(t-\sqrt{1+|v|^2}(x\cdot v)\big) 
=\mathfrak{c}_i(x,v)\tilde{d}(t,x,v) + \mathfrak{e}_i(x,v),
\ee
where
\be\label{jan26eqn11}
\mathfrak{c}_i(x,v)=  \psi_{\geq 0}((x\cdot v)^2+|x|^2)  \big( {v_i}  + \frac{x_i (1+|v|^2)}{\sqrt{(x\cdot v)^2+|x|^2}} \big)
+ 2x_i\omega_{+}(x,v) \psi'_{\geq 0}((x\cdot v)^2+|x|^2) (1+|v|^2), 
\ee
\[
\mathfrak{e}_i(x,v)=- \omega_{+}(x,v)\psi_{\geq 0}((x\cdot v)^2+|x|^2)\psi_{< 0}((x\cdot v)^2 +|x|^2 ) \big( \frac{v_i}{\sqrt{1+|v|^2}}  + \frac{x_i  \sqrt{1+|v|^2}}{\sqrt{(x\cdot v)^2+|x|^2}} \big)\]
\be\label{jan26eqn12}
+ 2x_i\omega_{+}(x,v)\psi'_{\geq 0}((x\cdot v)^2+|x|^2)\sqrt{1+|v|^2}\big(\omega(x,v)-x\cdot v\big). 
\ee
Combining equalities in  (\ref{eqq20}) and (\ref{jan26eqn10}), we have 
\be\label{jan26eqn41}
[\p_{v_i}- t \p_{v_i}\hat{v}\cdot \nabla_x, \p_{v_j}-\omega(x,v)\sqrt{1+|v|^2}\p_{v_j}\hat{v}\cdot \nabla_x ]= \big[\tilde{\mathfrak{c}}_{i,j}(x,v) \tilde{d}(t,x,v) + \tilde{\mathfrak{e}}_{i,j}(x,v)\big]\cdot \nabla_x,
\ee
where
\be\label{jan26eqn31}
\tilde{\mathfrak{c}}_{i,j}(x,v)= (1+|v|^2) \p_{v_i}\p_{v_j}\hat{v} + \mathfrak{c}_i(x,v)  \p_{v_j}\hat{v}, \quad 
\tilde{\mathfrak{e}}_{i,j}(x,v)= \mathfrak{e}_i(x,v)  \p_{v_j}\hat{v}.
\ee

To sum up, our desired equality (\ref{jan25eqn81}) holds from the equalities (\ref{april5eqn41}), (\ref{jan26eqn32}), (\ref{jan26eqn34}), (\ref{jan26eqn2}), (\ref{jan26eqn35}), and (\ref{jan26eqn41}). Moreover, recall the definition of indexes in (\ref{countingnumber}),  our  desired estimates  (\ref{jan26eqn1}), (\ref{feb11eqn21}),  (\ref{april5eqn16}), and (\ref{jan31eqn71}) hold  from the estimates of coefficients in (\ref{feb11eqn31}), (\ref{feb11eqn39}), (\ref{feb11eqn51}), and (\ref{feb11eqn71}). 
\end{proof}

\end{document}